\definecolor{OliveGreen}{rgb}{0,0.6,0}
\newcommandx{\unsure}[2][1=]{\todo[linecolor=red,backgroundcolor=red!25,bordercolor=red,#1]{#2}}
\newcommandx{\change}[2][1=]{\todo[linecolor=blue,backgroundcolor=blue!25,bordercolor=blue,#1]{#2}}
\newcommandx{\info}[2][1=]{\todo[linecolor=OliveGreen,backgroundcolor=OliveGreen!25,bordercolor=OliveGreen,#1]{#2}}
\newcommand{\centre}[1]{\begin{array}{c} #1 \end{array}}
\newtheorem{theorem}{Theorem}[section]
\newtheorem{proposition}[theorem]{Proposition}
\newtheorem{lemma}[theorem]{Lemma}
\newtheorem{corollary}[theorem]{Corollary}
\newtheorem{conjecture}[theorem]{Conjecture}
\newtheorem{problem}[theorem]{Problem}
\theoremstyle{definition}
\newtheorem{example}[theorem]{Example}
\newtheorem{examples}[theorem]{Examples}
\theoremstyle{remark}
\newtheorem{remark}[theorem]{Remark}
\newtheorem{notation}[theorem]{Notation}
\newtheorem{warning}[theorem]{Warning}
\numberwithin{equation}{section}
\newcommand{\hooklongrightarrow}{\lhook\joinrel\longrightarrow}
\newcommand{\longtwoheadrightarrow}{\relbar\joinrel\twoheadrightarrow}
\newcommand{\Z}{\mathbb{Z}}
\newcommand{\Q}{\mathbb{Q}}
\newcommand{\Qe}{\mathbb{Q}_\varepsilon}
\newcommand{\Qeh}{\mathbb{Q}_\varepsilon [[h]]}
\newcommand{\R}{\mathbb{R}}
\newcommand{\A}{\mathcal{A}}
\newcommand{\D}{\mathbb{D}}
\newcommand{\T}{\mathcal{T}}
\newcommand{\id}{\mathrm{Id}}
\let\hom\relax
\newcommand{\hom}[3]{\mathrm{Hom}_{#1}(#2,#3)}
\renewcommand{\to}{\longrightarrow}
\DeclareMathOperator{\im}{\mathrm{Im}}
\newcommand{\toiso}{\overset{\cong}{\to}}
\newcommand{\otimeshat}{\mathbin{\hat\otimes}}
\newcommand{\uhat}{\underaccent{\check}}
\newcommand{\cupr@tip}{\text{\raisebox{-0.1ex}{$\m@th\hat{}$}}}
\newcommand{\cupr}{\mathbin{\cup\cupr@}}
\newcommand{\cupr@}{%
  \mathchoice
  {\mkern-1.35mu\cupr@tip}
  {\mkern-1.35mu\cupr@tip}
  {\mkern-1.55mu\cupr@tip}
  {\mkern-1.875mu\cupr@tip}
}
\newcommand{\capr@tip}{\text{\raisebox{0.47ex}{$\m@th\uhat{}$}}}
\newcommand{\capr}{\mathbin{\capr@\cap}}
\newcommand{\capr@}{%
  \mathchoice
  {\mkern11.6mu\capr@tip\mkern-11.6mu}
  {\mkern11.4mu\capr@tip\mkern-11.4mu}
  {\mkern11.1mu\capr@tip\mkern-11.1mu}
  {\mkern10.2mu\capr@tip\mkern-10.2mu}
}
\newcommand{\capl@tip}{\text{\raisebox{0.47ex}{$\m@th\uhat{}$}}}
\newcommand{\capl}{\mathbin{\capl@\cap}}
\newcommand{\capl@}{%
  \mathchoice
  {\mkern2.1mu\capl@tip\mkern-2.1mu}
  {\mkern2.1mu\capl@tip\mkern-2.1mu}
  {\mkern2.3mu\capl@tip\mkern-2.3mu}
  {\mkern2.1mu\capl@tip\mkern-2.1mu}
}
\newcommand{\cupl@tip}{\text{\raisebox{-0.1ex}{$\m@th\hat{}$}}}
\newcommand{\cupl}{\mathbin{\cupl@\cup}}
\newcommand{\cupl@}{%
  \mathchoice
  {\mkern1.35mu\cupl@tip\mkern-1.35mu}
  {\mkern1.35mu\cupl@tip\mkern-1.35mu}
  {\mkern1.55mu\cupl@tip\mkern-1.55mu}
  {\mkern1.875mu\cupl@tip\mkern-1.875mu}
}
\DeclareFontFamily{U}{mathx}{}
\DeclareFontShape{U}{mathx}{m}{n}{ <-> mathx10 }{}
\DeclareSymbolFont{mathx}{U}{mathx}{m}{n}
\DeclareMathAccent{\widecheck}{0}{mathx}{"71}
\begin{document}


\title{On Bar-Natan - van der Veen's perturbed Gaussians}

\date{\today}

\author{Jorge Becerra}
\address{Bernouilli Institute, University of Groningen, Nijenborgh 9, 9747 AG, Groningen, The Netherlands}
\email{\href{mailto:j.becerra@rug.nl}{j.becerra@rug.nl}}
\urladdr{ \href{https://sites.google.com/view/becerra/}{https://sites.google.com/view/becerra/}} 




\begin{abstract}
We elucidate further properties of the novel family of polynomial time knot polynomials devised by Bar-Natan and van der Veen based on the Gaussian calculus of generating series for noncommutative algebras. These polynomials determine all coloured Jones polynomials and the simplest of these is expected to coincide with the one-variable 2-loop polynomial. We prove a conjecture stating that half of these polynomials vanish and give concrete formulas for three of these knot polynomial invariants. We also study the behaviour of these polynomials under the connected sum of knots.

\end{abstract}

\subjclass{57K14, 16T05, 17B37}


\maketitle

\setcounter{tocdepth}{1}
\tableofcontents


\section{Introduction}

Quantum invariants of knots have been widely studied for thirty years now. These are typically constructed from the representation theory of \textit{quantum groups} \cite{jimbo85,chari_pressley,kassel}, which is an umbrella term that includes quantisations (also called deformations) of universal enveloping algebras of complex semisimple Lie algebras. Among those, the quantum algebra $U_h (\mathfrak{sl}_2)$ stands out, as not only it is the most elementary to study but also gives rise to a sequence of knot invariants called the coloured Jones polynomials, the first of them being the celebrated Jones polynomial \cite{jones}.

The main feature of these algebras in constructing quantum invariants is that they are \textit{ribbon}. Roughly speaking, a ribbon algebra $A$ has two preferred, invertible elements $$R = \sum_i \alpha_i \otimes \beta_i \in A \otimes A \qquad , \qquad \kappa \in A,$$ (see \cref{section Topological ribbon Hopf algebras} for precise definitions) which turn the category $\mathsf{Mod}_A$ of finite-dimensional representations into a ribbon or tortile category \cite{ohtsukibook, turaev}. In this paper, we will study a knot invariant which is also produced by a ribbon algebra and contains the invariants produced by all its representations, which gives it the name of \textit{universal knot invariant} associated to $A$ \cite{lawrence, habiro}.

Let us sketch the construction of the universal invariant here, for details see \cref{section The universal tangle invariant}. Given a diagram of a (long, framed, oriented) knot $K$ where all crossings are assumed to point upwards, place copies of $R$ (resp. $R^{-1}$) on the positive (resp. negative) crossings, where the first factor goes in the overpass and the second factor in the underpass. Likewise, place a copy of $\kappa$ (resp. $\kappa^{-1}$) on each counterclockwise full rotation (resp. clockwise full rotation), and multiply the beads along the knot following the orientation to obtain an element $Z_A(K) \in A$. Below we illustrate the construction for the right-handed trefoil $T_{2,3}$:

\vspace{0.3cm}\noindent
 \begin{minipage}{.45\textwidth}
 \begin{equation*} 
\labellist \small  \hair 2pt
\pinlabel{$ \color{magenta} \bullet$} at 61 123
\pinlabel{$ \color{magenta} \alpha_i$} at -12 123
\pinlabel{$ \color{magenta} \bullet$} at 138 123
\pinlabel{$ \color{magenta} \beta_i$} at 206 123
\pinlabel{$ \color{blue} \bullet$} at 56 268
\pinlabel{$ \color{blue} \alpha_j$} at -12 268
\pinlabel{$ \color{blue} \bullet$} at 140 268
\pinlabel{$ \color{blue} \beta_j$} at 206 268
\pinlabel{$ \color{OliveGreen} \bullet$} at 56 419
\pinlabel{$ \color{OliveGreen} \alpha_\ell$} at -12 419
\pinlabel{$ \color{OliveGreen} \bullet$} at 135 419
\pinlabel{$ \color{OliveGreen} \beta_\ell$} at 206 419
\pinlabel{$ \color{orange} \bullet$} at 347 313
\pinlabel{$ \color{orange} \kappa^{-1}$} at 423 325
\endlabellist
\includegraphics[width=0.3\textwidth]{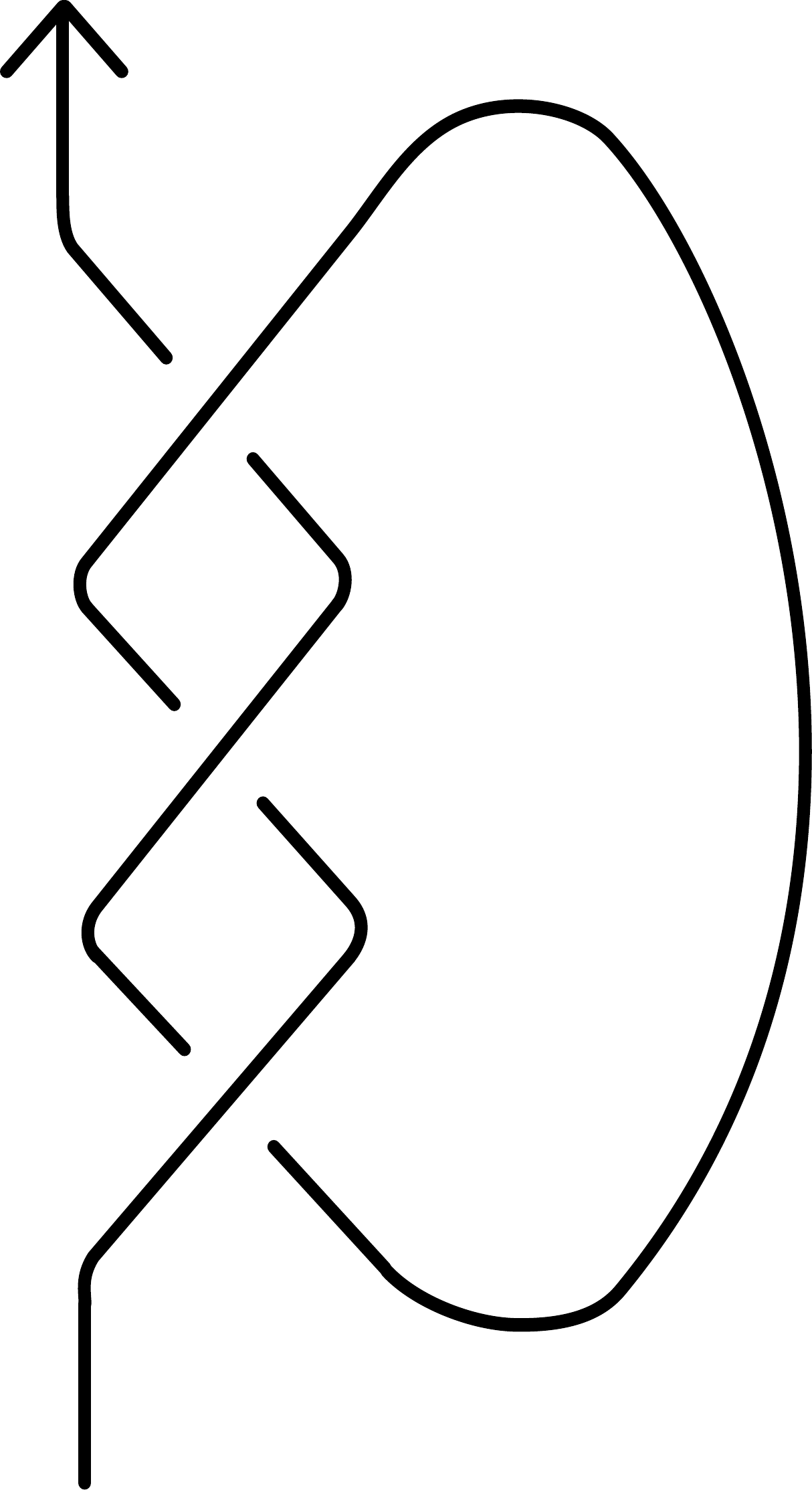} 
\end{equation*}
 \end{minipage}
  \begin{minipage}{.45\textwidth}
 $$Z_A(T_{2,3}) = \sum_{i,j,\ell} \color{magenta}{\alpha_i} \  \color{blue}{\beta_j} \  \color{OliveGreen}{\alpha_\ell} \  \color{orange}{\kappa^{-1}} \  \color{magenta}{\beta_i} \  \color{blue}{\alpha_j} \ \color{OliveGreen}{\beta_\ell}.$$
 \end{minipage}
 \vspace{0.3cm}

In the last two decades the universal invariant has been extensively studied \cite{habiro_WRT, suzuki10,suzuki13,suzuki16,suzuki18}. Although the definition looks harmless, computing this invariant may turn out to be a challenging task. The main difficulty lies in the fact that, to obtain refined knot invariants, one has to take noncommutative, infinite-dimensional ribbon algebras where typically  the elements $R$ and $\kappa$ are expressed by infinite sums\footnote{Such infinite sums make sense as long as the algebra has as base ring a power series ring over an indeterminate $h$ and the algebra is considered as a topological algebra endowed with the $h$-adic topology.} of products of the generators of the algebra, and it is hopeless to obtain a concrete expression out of these products for an arbitrary knot. To the author's knowledge, the values  $Z_{U_h (\mathfrak{sl}_2)} (K)$ are only known for the four  knots $K$ with four or fewer crossings \cite[\S 3.4]{habiro_sl2}.

In a series of publications \cite{barnatanveenpolytime,barnatanveengaussians}, D. Bar-Natan and R. van der Veen developed a novel technique to tackle the problem of performing iterated products in noncommutative algebras as above efficiently. The observation that triggers the so-called \textit{Gaussian calculus} is that the algebras that typically appear in the theory of quantum groups (e.g. the quantisations $U_h (\mathfrak{g})$  of universal enveloping algebras) possess Poincaré-Birkhoff-Witt-type bases and hence they are isomorphic (as topological modules) to  polynomial rings in commuting variables, with an extra parameter $h$ adjoint as power series.


To illustrate this idea, let us omit the parameter $h$ for simplicity. The linear algebra finite-dimensional $k$-vector space isomorphism $\hom{k}{V}{W} \cong V^* \otimes W$ can be upgraded, if $V\cong k[x]$ and $W \cong k[y]$, to a $k$-linear isomorphism $$\hom{k}{V}{W} \toiso k[y][[\xi]].$$ This allows to view $k$-linear maps as power series, and the composite of maps as some pairing between power series. Applying this point of view to an algebra $A$, we can view the multiplication map $\mu: A \otimes A \to A$ as a power series $\bar{\mu}$ (for a more general discussion on Gaussian calculus see \cref{sect mot example} and \ref{sect contraction}).

Bar-Natan and van der Veen use then this approach for a (topological) ribbon algebra $\D$ over the ring $\Qeh$, where $\Qe:= \Q [\varepsilon]$. The key property of this algebra is that, from the perspective above, the series $\bar{\mu}$, as well as the series corresponding to the elements $R$ and $\kappa$, can be more succinctly written as \textit{perturbed Gaussians}, that is,  power series of the form
\begin{equation}\label{eq perturbed introduction}
    e^G (P_0 + P_1 \varepsilon + P_2 \varepsilon^2 + P_3 \varepsilon^3 + \cdots)  
\end{equation}
where $G$ and every $P_i$ are finite expressions, and the pairing of two such power series, which can be performed using a concrete formula (see \cref{thm contraction}), takes the same form.


The consequence of this novel approach is that for any knot $K$, one can compute the value
\begin{equation}\label{eq ZDK modN}
Z_\D (K) \pmod {\varepsilon^N} 
\end{equation}
using truncations of \eqref{eq perturbed introduction}, which are finite expressions. The main computational feature of this approach is that, unlike invariants coming from representation theory, which are computed in exponential time, the value \eqref{eq ZDK modN} can be computed in polynomial time. 

It is worth mentioning that the algebra $\D$ surjects onto $U_h (\mathfrak{sl}_2)$ and hence  the invariant $Z_\D (K)$ determines $Z_{U_h (\mathfrak{sl}_2)} (K)$ and therefore all coloured Jones polynomials. In fact, we expect $Z_\D(K)$ to be equivalent to the collection of coloured Jones polynomials. Moreover, $Z_\D (K)$ is closely related to  Rozansky's rational expansion of the coloured Jones polynomials, see \cref{subsec coloured Jones}.

Using the toolbox sketched above,  it is  shown in \cite{barnatanveengaussians} that for a 0-framed knot $K$, the universal invariant $Z_\D (K)$ is completely determined by a family of knot polynomial invariants
\begin{equation}
    \rho_K^{i,j} \in \Q [t,t^{-1}] \qquad , \qquad i \geq 0, \ 0 \leq j \leq 2i,
\end{equation}
and the celebrated Alexander polynomial $\Delta_K \in \Z [t+t^{-1}]$ of $K$, so that studying features of $Z_\D (K)$ amounts to studying features of the given family of polynomials.

Among this collection of polynomials, the value $\rho_K^{1,0}$ stands out: only the pair $(\Delta_K,\rho_K^{1,0})$ produces the strongest  knot invariant  that can be computed efficiently known up to date \cite{barnatanveenpolytime} : 
\begin{itemize}
\item it is computable in order $O(c^5)$, where $c$ is the number of crossings of the knot diagram,
\item it takes 2883 different values on the set of the 2978 prime knots with twelve or fewer crossings. As a comparison, if $Kh_\Q(K)$ denotes rational Khovanov homology and $P(K)$ denotes the HOMFLY-PT polynomial of $K$, then the pair $(Kh_\Q(K), P(K))$ takes about a hundred  fewer values than the pair $(\Delta_K,\rho_K^{1,0})$.
\end{itemize}
In fact, we expect this polynomial $\rho_K^{1,0}$ to coincide to the one-variable 2-loop polynomial, see \cref{sect relation 2-loop}. For the rest of the family, it is expected that some of these polynomials are weaker. In fact, our first main result is that half of these polynomials are trivial.

\begin{theorem}[\cref{thm rho ij =0 for j>i}, a Conjecture in \cite{barnatanveengaussians}]\label{thm A}
For any $0$-framed knot $K$, $$\rho_K^{i,j}(t)=0$$ for $j>i>0$.
\end{theorem}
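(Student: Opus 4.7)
The plan is to introduce a weight grading on perturbed Gaussians, tailored so that (a) the contraction operation of \cref{thm contraction} preserves it, and (b) the conjecture reduces to a bookkeeping check on the elementary building blocks $R^{\pm 1}$ and $\kappa^{\pm 1}$ of $Z_\D(K)$. The range $0 \leq j \leq 2i$ already gives the trivial bound obtained by declaring $\varepsilon$ of weight $2$ and the $t$-variable of weight $1$; the sharper bound $j \leq i$ will come from refining this to weight $1$ on both sides.

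Concretely, I would assign weight $1$ to $\varepsilon$ and weight $1$ to the distinguished generator of $\D$ whose contribution produces the variable $t$ in $\rho_K^{i,j}$, and weight $0$ to every other contraction variable. Call a perturbed Gaussian $e^G \bigl(\sum_i P_i \varepsilon^i \bigr)$ \emph{admissible} if every monomial of $P_i$ has weight at most $i$. Since $\rho_K^{i,j}$ reads off the $\varepsilon^i t^j$-coefficient of $Z_\D(K)$ after the final contraction that closes the long knot, the theorem amounts to showing that $Z_\D(K)$ is admissible.

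I would then execute two steps. First, direct inspection of the explicit formulas in \cite{barnatanveengaussians} for $R^{\pm 1}$ and $\kappa^{\pm 1}$ to verify that each is admissible (a finite bookkeeping exercise). Second, establish that contraction preserves admissibility: the Wick-type pairing formula of \cref{thm contraction} amounts to applying an exponential of a second-order differential operator to the product of the two polynomial prefactors, followed by a restriction that absorbs the paired variables into the Gaussian exponent. Each such derivative simultaneously removes one weight-$1$ generator from one prefactor and pairs it off, so the total weight of any monomial in the resulting $P_i$ cannot exceed $i$. Combined with an induction along the tangle decomposition of $K$ into crossings, cups and caps, this yields admissibility of $Z_\D(K)$.

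The main obstacle is step two. A naive count shows that differentiating $e^G$ against a polynomial prefactor can in principle inflate the $t$-degree relative to the $\varepsilon$-order, since cross-terms in the quadratic form underlying $G$ may transport weight-$1$ generators across the contraction. To close this gap I expect to exploit a symmetry of $\D$ that controls the off-diagonal entries of that quadratic form: either a compatible filtration coming from the Hopf-algebraic structure, or a rescaling $\varepsilon \mapsto \lambda \varepsilon$ combined with a change of generator that leaves the elementary building blocks and the contraction formula invariant. Propagating that invariance through the tangle decomposition then forces the monomials of $P_i$ that violate the weight bound to cancel, and hence $\rho_K^{i,j} = 0$ whenever $j > i$.
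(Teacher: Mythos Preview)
Your proposal has two genuine gaps, one conceptual and one structural.

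First, you have misidentified the index $j$. In the normal form of \cref{thm rhos existence}, $j$ is the exponent of the central element $\bm{w}$, not a power of the polynomial variable $t$; the latter is merely the argument at which each $\rho_K^{i,j}$ is evaluated (namely $\bm{T}=e^{-h\bm t}$). By \cref{lemma w mod eps2}, the leading $\bm a$-term of $\bm w^j\varepsilon^i$ is $\bigl(\tfrac{1-\bm T}{h}\bigr)^j\bm a^j\varepsilon^i$, so the correct quantity to bound against the $\varepsilon$-order is the $\bm a$-degree of $Z_\D(K)$, not any $t$-degree. Your weight should sit on the generator $\bm a$ (and its dual $\alpha$), not on whatever produces $t$.

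Second, even with that correction, the strategy of propagating an admissibility bound through the building blocks $R^{\pm1},\kappa^{\pm1}$ and the contraction formula cannot succeed as stated. The universal $R$-matrix
\[
\bm R=\sum_{n,m\ge 0}\frac{h^{n+m}}{[m]_q!\,n!}\,\bm y^m\bm b^n\otimeshat \bm a^n\bm x^m
\]
already contains $\bm a^n$ with no compensating power of $\varepsilon$, so a single crossing is \emph{not} admissible for the weight you want; your step~1 fails before contraction is ever invoked, and no rescaling $\varepsilon\mapsto\lambda\varepsilon$ will repair this since the $\bm a$-powers in $R$ are visible already at $\varepsilon=0$. The bound $j\le i$ is a genuinely global statement about $0$-framed knots, not a local one about crossings.

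The paper's route is exactly to exploit that global structure: every $0$-framed knot is realised as $\widecheck{Th}(L)$ for a vertical bottom tangle $L$, and one proves that the \emph{algebraic thickening map} $Th=\mu^{[g]}\circ B^{\otimeshat g}\circ Cr^{\otimeshat g}$ has image containing no monomial $\bm a^{n+p}\varepsilon^n$ with $p>0$. The crucial cancellation happens inside the band map $B$: one checks directly that $B(\bm a\otimes 1)=B(1\otimes\bm a)=0$, that the defining relations of $\D$ and the maps $\Delta,S,\mu$ preserve the property ``$\bm a$-degree $\le$ $\varepsilon$-degree'', and that $B$ is multiplicative with respect to this property. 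The offending $\bm a$'s coming from the $R$-matrices in $Z_\D(L)$ are therefore annihilated when they pass through $B$, and the bound on $Z_\D(K)=Th(Z_\D(L))$ follows. Your inductive scheme along the tangle decomposition never sees this cancellation because it happens at the Seifert-surface level, not at the crossing level.
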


Our second main result, which gives evidence to a Conjecture in \cite{barnatanveengaussians}, states that the triple $(\Delta_K,\rho_K^{1,0},\rho_K^{2,0})$ completely determines the value of $ Z_\D (K)$ modulo $\varepsilon^3$. Even more, we are able to give concrete formulas for the rest of the knot polynomial invariants in terms of this triple (a formula for (1) was already given in the aforementioned paper).

\begin{theorem}[\cref{thm rho 2 sth}]\label{thm B}
For any 0-framed knot $K$ we have
\vspace*{0.3cm}
\begin{enumerate}
\setlength\itemsep{0.7em}
\item $\displaystyle \rho_K^{1,1}(t)= \frac{2t}{1-t} \Delta_K ' (t),  $
\item $\displaystyle \rho_K^{2,1}(t)= \frac{2t \Delta_K^4(t)}{t-1} \left( \frac{\rho_K^{1,0}}{ \Delta_K^3}  \right) ' (t),  $
\item $\displaystyle \rho_K^{2,2}(t)= t \left( \frac{\Delta_K(t)}{1-t} \right)^3 \left( (3-t) (\Delta_K^{-1})'(t) + 2t(1-t) (\Delta_K^{-1})''(t)  \right).$
\end{enumerate}
\vspace*{0.1cm}
\end{theorem}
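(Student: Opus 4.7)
The plan is to exploit the perturbed-Gaussian form of $Z_\D(K)$, combined with Theorem A, to read off the polynomials $\rho_K^{i,j}$ for $j\ge 1$ directly from the $\varepsilon^0$ and $\varepsilon^1$ data, namely $\Delta_K$ and $\rho_K^{1,0}$. The first step is to normalise $Z_\D(K)\pmod{\varepsilon^3}$ for a $0$-framed long knot $K$ as a perturbed Gaussian $e^G(1 + \varepsilon P_1 + \varepsilon^2 P_2)$ in the variables attached to the two endpoints of the long knot, where $G$ encodes the Alexander polynomial $\Delta_K$ and the coefficients of $P_1,P_2$ in the endpoint-variable monomials are the polynomials $\rho_K^{i,j}$. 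By Theorem A, half of these coefficients already vanish, leaving the six polynomials $\Delta_K,\rho_K^{1,0},\rho_K^{1,1},\rho_K^{2,0},\rho_K^{2,1},\rho_K^{2,2}$ to pin down.

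The second step is to produce nontrivial identities among these polynomials. The most direct route is to exploit the invariance of $Z_\D$ under a chosen elementary diagrammatic manipulation of the long knot, for instance re-cutting the closure of $K$ at a different point along the underlying circle, or inserting an $RR^{-1}$ pair followed by a Reidemeister II move. Either operation leaves $Z_\D(K)$ unchanged but, after running the contraction formula \cref{thm contraction} and re-expressing the result in normal form, produces relations between the new and the old perturbed-Gaussian coefficients, from which the target identities can be extracted. An alternative, which I would run in parallel as a consistency check, is to compare $Z_\D(K)$ with $Z_\D$ of the framing-shifted copy of $K$: the shift multiplies the invariant by a power of $\kappa$, whose perturbed-Gaussian expansion produces precisely a logarithmic derivative of the Alexander-encoding Gaussian.

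The algebraic mechanism behind the appearance of the $t$-derivatives is inherent to the contraction formula: contracting two perturbed Gaussians together produces, order by order in $\varepsilon$, differential operators in the underlying variables acting on the pre-exponential polynomials. At order $\varepsilon^1$, only single derivatives arise, which after identifying the variable with $t$ turn into $\Delta_K'$ and give formula~(1). At order $\varepsilon^2$, both single and mixed second derivatives appear; the former gives the derivative of $\rho_K^{1,0}/\Delta_K^3$ appearing in formula~(2) after clearing the $\Delta_K^3$ normalisation, while the latter produces the combination $(3-t)(\Delta_K^{-1})' + 2t(1-t)(\Delta_K^{-1})''$ in formula~(3).

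I expect the main obstacle to be the combinatorial bookkeeping at order $\varepsilon^2$: tracking which derivatives act on which pre-exponential factor, and correctly identifying the rational coefficients in $t$ that arise from substituting the Alexander-encoding Gaussian $G$ and from the asymmetric appearance of the two endpoint variables. In particular, isolating $\rho_K^{2,2}$ requires separating a genuine second-derivative contribution from a first-derivative one, which is where the specific combination $(3-t)$ versus $2t(1-t)$ emerges. Once the identities are in hand, a final sanity check is to evaluate both sides on a small family of knots, say the two trefoils and the figure-eight, whose $\rho$-values are already tabulated in \cite{barnatanveenpolytime,barnatanveengaussians}, and to verify compatibility with the connected-sum formula studied elsewhere in the paper.
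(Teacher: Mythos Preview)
Your proposal has a genuine gap: the sources of identities you suggest do not produce nontrivial relations among the $\rho_K^{i,j}$. Invariance under Reidemeister~II and under re-cutting the closure at a different basepoint are already built into $Z_\D$ --- the first is part of the proof that $Z_\D$ is an invariant at all, and the second is equivalent to the centrality of $Z_\D(K)$, which is precisely what underlies the normal form of \cref{thm rhos existence} in the first place. Running the contraction formula on two diagrams related by such a move yields the \emph{same} perturbed Gaussian, not two different-looking expressions whose comparison would constrain the coefficients. The framing-shift alternative is also problematic: the shifted knot is no longer $0$-framed, so the normal form of \cref{thm rhos existence} does not apply to it, and the shift multiplies by a power of the ribbon element $v=\kappa^{-1}u$ rather than of $\kappa$.

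The paper's argument proceeds quite differently and does not rely on any diagrammatic invariance trick. One writes $K=\widecheck{Th}(L)$ for a $2g$-component vertical bottom tangle $L$, so that $Z_\D(K)=Th(Z_\D(L))$, and then exploits explicit structural identities for the generating series of the band map $B$, the key building block of $Th$. The crucial computations in \cref{sect band map} (partly computer-assisted) show that the coefficients of $a_0$ and $a_0^2$ in the perturbation part of $(\mu^{[g]}\circ B^{\otimeshat g})_{|\alpha=0}$ are obtained by applying $-2hT_0\partial_{T_0}$ and $2h^2T_0(\partial_{T_0}+T_0\partial_{T_0}^2)$, respectively, to the Gaussian $T_0^g e^{\frac{T_0-1}{h}\xi Q\eta}$. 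Since contracting that Gaussian against $e^{yVx\cdot h}$ already gives $1/\Delta_K(T_0)$ by \cref{prop Z_D for eps ^ 0 and 1}, these identities translate directly, after comparison with the $\bm{w}$-expansion of \cref{lemma w mod eps2}, into formulas~(1) and~(3). Formula~(2) requires the further identity of \cref{prop coef a esp2 = -2hT coeff 1eps}, relating the $a_0\varepsilon^2$-coefficient of $Z_\D(K)$ to the $T_0$-derivative of its $\varepsilon^1$-part, which in turn rests on another computer-verified property of the band map (\cref{lemma 77}). The derivatives in the final formulas thus arise not from the contraction formula acting generically, but from these specific, non-obvious features of the thickening map.
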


The strategy we followed in these two results is the same, and we briefly sketch it here. Every oriented 0-framed knot $K$ arises from a tangle $L$ with $2g$ open components, $g \geq 1$, by adding a few crossings, doubling each strand, reversing the orientation of some of the strands and merging endpoints, so  that $K$ equals the \textit{thickening} of $L$, denoted $\widecheck{Th}(L)$, as illustrated below:
\begin{equation}
\centre{
\labellist \small \hair 2pt
\pinlabel{$\vdots$}  at 514 880
\pinlabel{$\vdots$}  at 1756 870
\pinlabel{$\vdots$}  at 2057 870
\pinlabel{$\vdots$}  at 2511 870
\pinlabel{ \normalsize $L$}  at 131 1437
\pinlabel{ \LARGE $\overset{\widecheck{Th}}{\longmapsto}$}  at 894 894
\endlabellist
\centering
\includegraphics[width=0.7\textwidth]{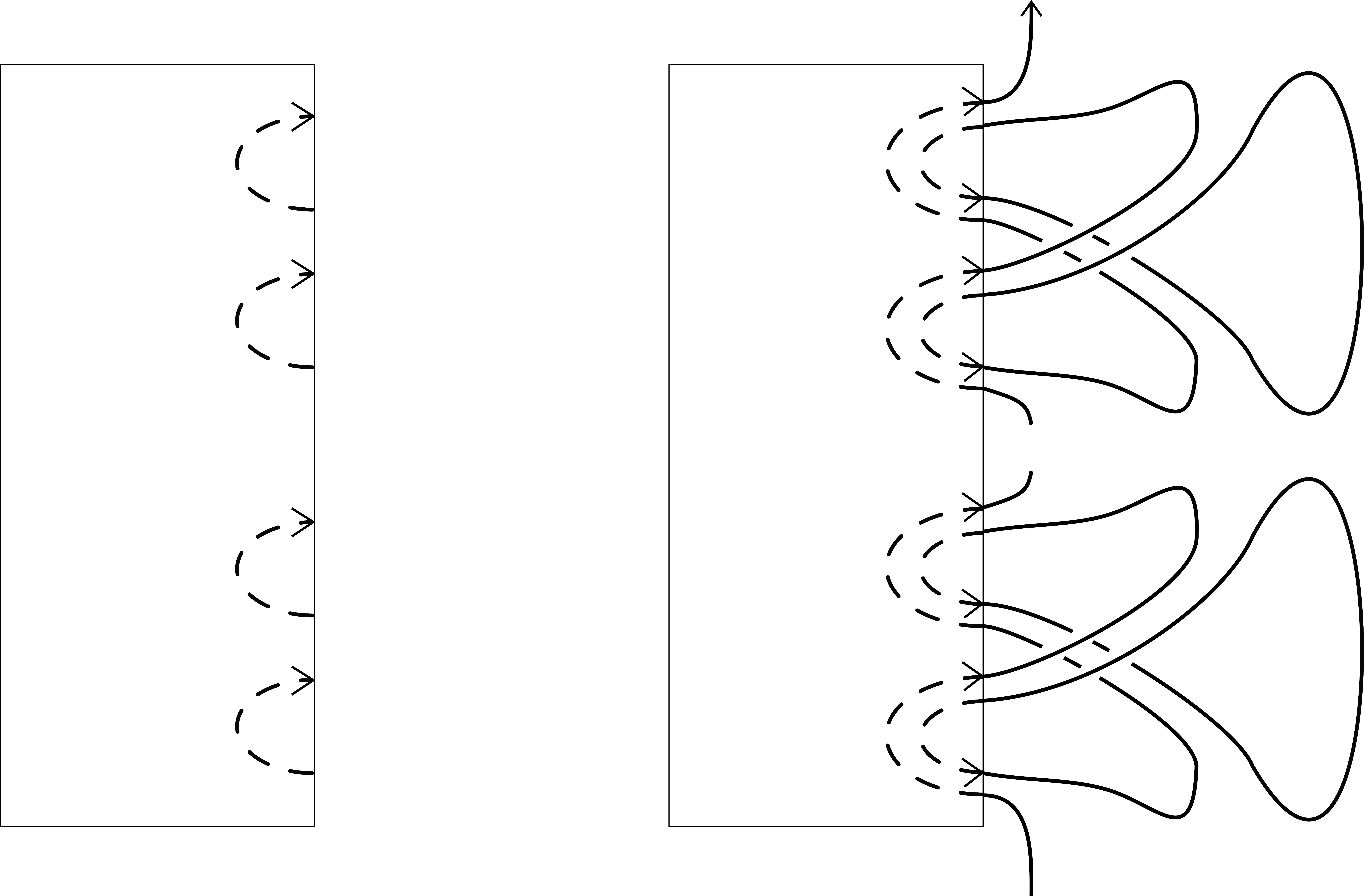}}
\end{equation}
The universal invariant $Z_\D$ is \textit{natural} with respect to the topological operations mentioned before, so that there exists an algebraic thickening map $$ Th: \D^{\otimeshat 2g} \to \D$$
with the property that $$Z_\D (\widecheck{Th}(L)) = Th (Z_\D(L)),$$
where the universal invariant is extended to open tangles in the previous equality. This implies that to study properties of the knot polynomial invariants $\rho_K^{i,j}$, it suffices to study properties of $Z_\D(L)$ and  the series in commuting variables associated to the thickening map.




\subsection*{Organisation of the paper} The rest of the paper is organised as follows: in \cref{sec rot tangles}, we review oriented, framed tangles and introduce rotational tangles as a convenient class. In \cref{sec univ inv}, we recall the notion of topological ribbon Hopf algebra and the universal tangle invariant subject to such ribbon algebra, bringing into focus the naturality of the universal invariant with respect to the Hopf operations. In \cref{sec gauss}, we review Bar-Natan - van der Veen's Gaussian calculus for the ribbon algebra $\D$. We have included extensive motivation to the usage of commutative generating series in noncommutative algebra. In \cref{sect ZD}, we introduce a thickening map defined for vertical bottom tangles, and give an explicit argument for the naturality of with map with respect to the universal tangle invariant. We also describe an algebraic property about the image of the elementary building block of the thickening map, namely the band map, that will lead to an easy proof of \cref{thm A}. In this section we also review how $Z_\D (K)$ mod $\varepsilon$ recovers the Alexander polynomial of the knot and will build upon that to prove \cref{thm B}. We also give explicit formulas of the family of polynomials $\rho_K^{i,j}$ for the connected sum of knots. In \cref{sect band map}, we give proofs of technical results about the structure of the generating series of the band map that will be key to demonstrate   \cref{thm B}. Lastly in \cref{sect perpectives}, we discuss several open problems and conjectures concerning the universal invariant $Z_\D$. \cref{sect appendix} includes a computer implementation to show a couple of properties about the band map that are beyond the scope of manual calculation.

\subsection*{Acknowledgments} The author would like to thank Roland van der Veen for many helpful discussions and suggestions on the content of this paper.

\section{Rotational tangles}\label{sec rot tangles}

In this section, we review oriented, framed tangles in the cube and pay special attention to those whose strands start at the bottom of the cube and end at the top, as they admit a particularly convenient description called rotational.

\subsection{The category \texorpdfstring{$\mathcal{T}$}{T}  of tangles}

 Let $D^1 =[-1,1] \subset \R$ be the one-dimensional disc and let $n,m \geq 0$ be  integers. An  \textit{(oriented, framed)  tangle} is an isotopy class of an embedding $$L:  \left( \coprod_n D^1 \times D^1 \right) \amalg  \left( \coprod_m D^1 \times S^1 \right)  \hooklongrightarrow (D^1)^{\times 3}$$
with the property that it restricts to a orientation-preserving homeomorphism $$\coprod_n   D^1 \times \{-1, 1 \}       \toiso \Big(   \bigcup_{i=1}^{n_1} F_i \Big)  \cup  \Big(   \bigcup_{i=1}^{n_2} H_i \Big)  $$
where $n_1, n_2  \geq 0$, $ n_1+n_2=2n$,  $F_i := \left[ \frac{2i-1}{2n_1+1} , \frac{2i}{2n_1+1} \right] \times 0 \times -1 \subset  (D^1)^{\times 3}$ and $H_i := \left[ \frac{2i-1}{2n_2+1} , \frac{2i}{2n_2+1} \right] \times 0 \times 1 \subset  (D^1)^{\times 3}$, with the orientations on $D^1 \times \pm 1$ induced by the usual one in $D^1 \times D^1$ and the orientations on $F_i$ and $H_i$ are induced by the ones of the positive and negative direction, respectively.  The isotopy is understood to be relative to $\coprod_n (D^1 \times \pm 1  )$. Moreover, the cores of the strips $ 0 \times D^1 $ and annuli $0 \times S^1$ are also endowed with an orientation. If $m=0$, we say that the tangle is \textit{open}.



As it is customary we only depict the cores of the strips of a tangle on the plane in general position with the blackboard framing, and we call these the \textit{strands} of the tangle, see the figure below. We consider these diagrams up to oriented, framed Reidemeister moves of type I, II and III.

\begin{equation*} 
\centering
\includegraphics[width=0.6\textwidth]{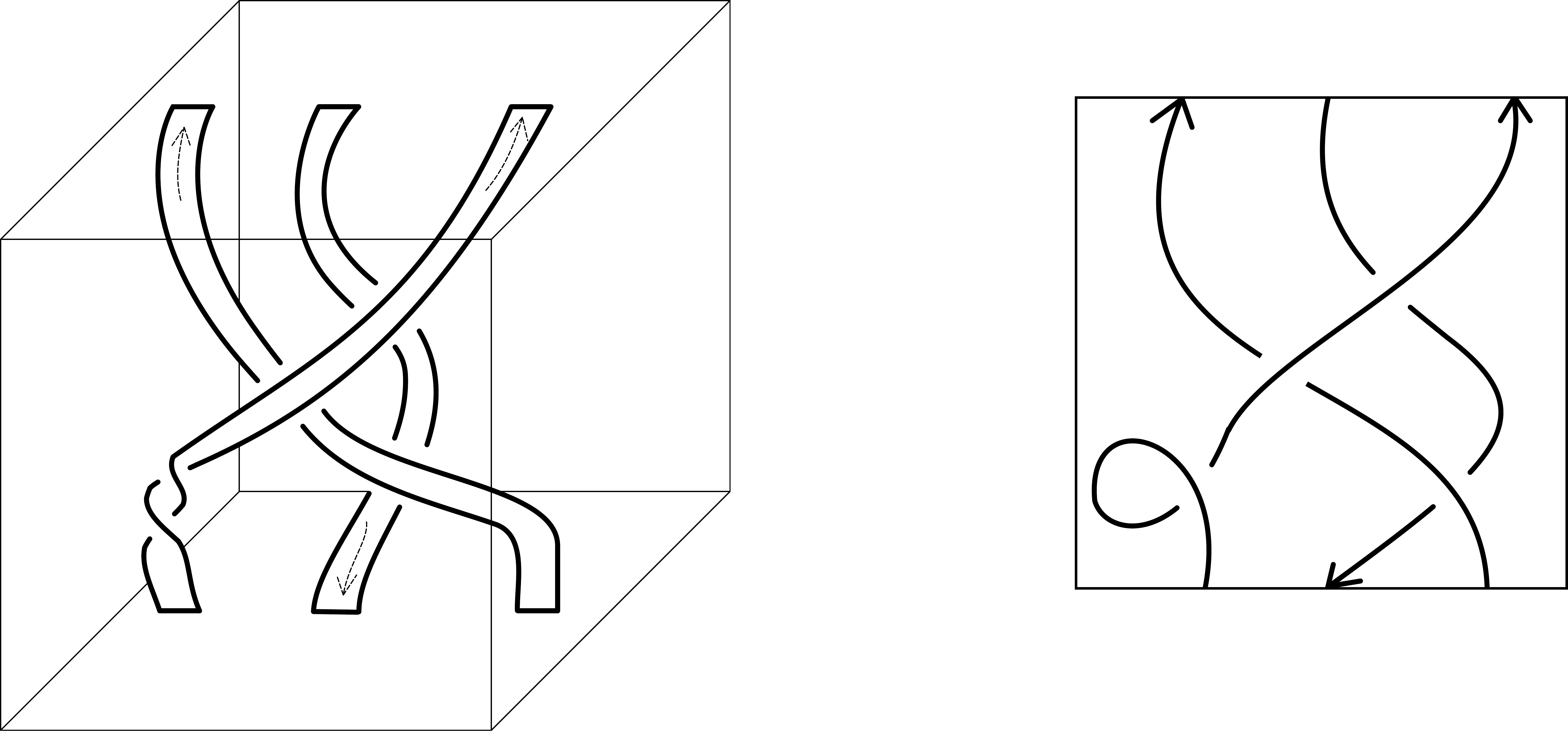}
\end{equation*}
\vspace{0.1cm}\noindent

%

It is well-known that the set of tangles can be organised into a strict monoidal category (in fact ribbon). Let $\mathrm{Mon}(+,-)$ be the free monoid on the set $\{+,- \}$. Given a tangle $L$, assign to every $F_i$ and $H_j$  the symbol $+$ or $-$  depending on whether $L$ points upwards or downwards, respectively. This assignment defines two elements $s(L), t(L) \in \mathrm{Mon}(+,-)$ of lengths $n_1 $ and $n_2$  called the \textit{source} and the \textit{target} of $L$.

The category $\mathcal{T}$ of tangles is defined to have objects $\mathrm{Mon}(+,-)$ and morphisms $\hom{\mathcal{T}}{s}{t}$ the set of tangles $L$ such that $s= s(L)$ and $t=t(L)$. The composite $L_2 \circ L_1$ of tangles $L_1, L_2$ is the tangle resulting from stacking $L_2$ on top of $L_1$, and the identity of a word $w \in \mathrm{Mon}(+,-)$  is the tangle $\uparrow_w$ given by a number of parallel, vertical strands with orientations determined by $s(\uparrow_w)=t(\uparrow_w)=w$. The monoidal product is given by concadenation of words at the level of the object, and at the level of morphisms $L_1 \otimes L_2$ is the tangle resulting from placing $L_2$ to the right of $L_1$ (and normalising the length of the cube). The unit object is the empty word, that is the unit of $\mathrm{Mon}(+,-)$.

It is a classical result (cf. \cite{ohtsukibook, habiro}) that the category $\mathcal{T}$ is monoidally generated by the objects $+,-$ and the morphisms $X, X^-, \cupr , \cupl ,  \capr , \capl $ shown below.


\begin{equation} 
\centre{
\labellist \small \hair 2pt
\pinlabel{$X$}  at 85 -78
\pinlabel{$X^-$}  at 460 -78
 \pinlabel{$ \cupr $}  at 753 -78
 \pinlabel{$ \cupl $}  at 1111 -78
  \pinlabel{$ \capr$}  at 1475 -78
 \pinlabel{$ \capl $}  at 1830 -78
\endlabellist
\centering
\includegraphics[width=0.6\textwidth]{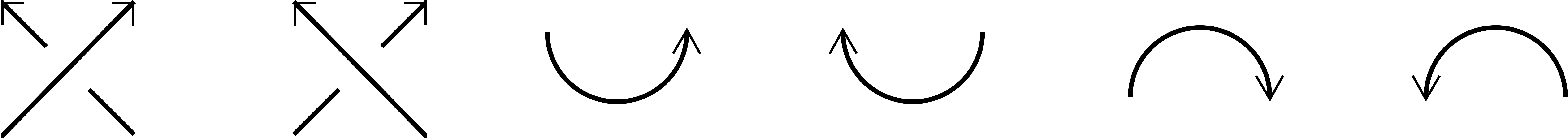}}
\end{equation}
\vspace{0.1cm}\noindent


In what follows, we will mostly restrict our attention to a certain subcategory of $\mathcal{T}$. The category of \textit{upwards tangles} is the monoidal subcategory  $\mathcal{T}^{\mathrm{up }} \subset \mathcal{T}$ on the objects $\mathrm{Mon}(+) \subset \mathrm{Mon}(+,-)$ and arrows open tangles. An upward tangle $K \in \hom{\mathcal{T}^{\mathrm{up }}}{+}{+}$ is called a \textit{long knot} or by simplicity a \textit{knot}. There is a canonical closure operation that establishes a well-know bijection between isotopy classes of  long knots (relative to its endpoints) and isotopy classes of closed knots, so we can do knot theory studying upwards tangles.


\subsection{Rotational tangles}
For our purposes it will be convenient to restrict our\-selves to a particular class of tangles that we introduce now. We say that a tangle diagram $D$ is in \textit{rotational form}  if
\begin{enumerate}[(1)]
\item as a tangle, $D$ is upwards.
\item all crossings in $D$ point upwards and all maxima and minima appear in pairs of the following two forms,
\begin{equation}\label{eq rot tangle}
\centre{
\centering
\includegraphics[width=0.3\textwidth]{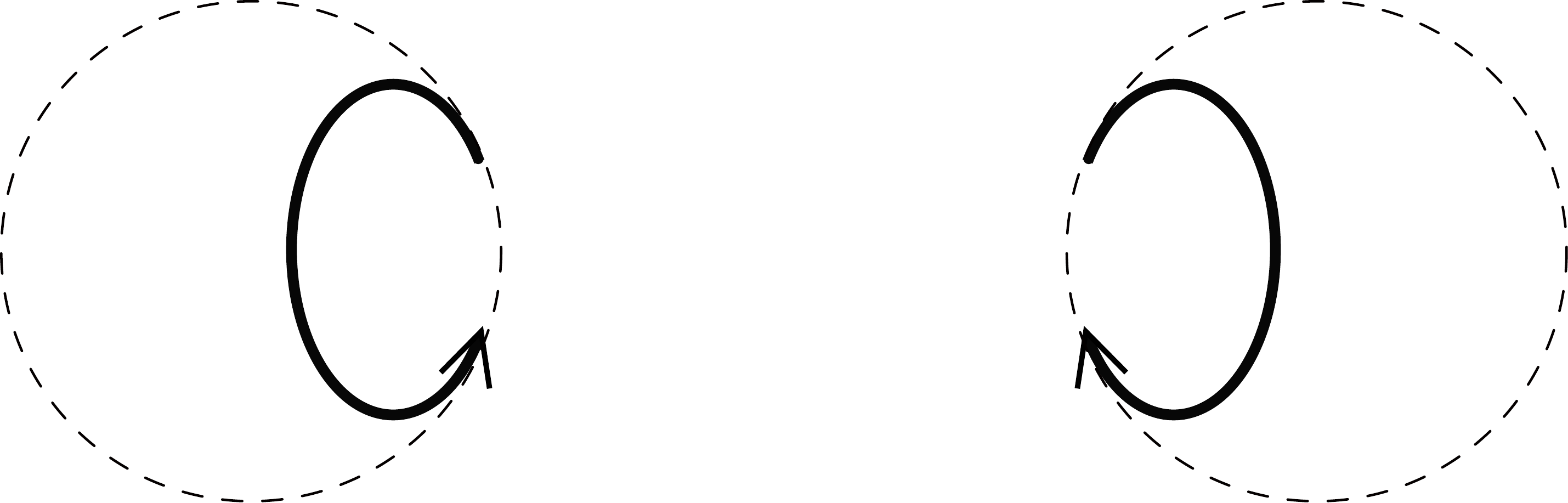}}
\end{equation}
where the dashed discs denote some neighbourhoods of that piece of strand in the tangle diagram.
\end{enumerate}


We regard rotational tangle diagrams up to \textit{Morse isotopy}, that is, planar isotopy that preserve all maxima and minima. We do not allow isolated cups and caps (``half rotations''), instead they must appear in pairs, either  $\capl$ and  $\cupr$ or  $\capr$ and  $\cupl$. This notion of tangle diagram where only ``full rotations'' are allowed was first introduced in \cite{barnatanveenpolytime} and \cite{barnatanveengaussians}.

The following lemma is contained in the computer implementation of the latter reference for knots, but we record here the same argument for tangles .

\begin{lemma}\label{lemma every tangle has rot diag}
Any upwards tangle has a diagram in rotational form.
\end{lemma}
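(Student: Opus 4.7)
The plan is to start from any diagram of the upward tangle $L$, first make every crossing point upward by a standard local procedure, and then collect the resulting caps and cups into paired full rotations.

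After placing a chosen diagram $D$ of $L$ in general Morse position (so that the height function restricted to the strands has only nondegenerate local maxima and minima, together with transverse crossings, all at mutually distinct heights), I would treat each non-upward crossing via the classical ``rotate-the-crossing'' move: on every strand that passes downward through the crossing, insert a cancelling cap just above and cup just below the crossing, and redraw the segment through the crossing so that it now points upward. Because this amounts to a combination of planar isotopy and Reidemeister II moves, the isotopy class of $L$ is unchanged, and the resulting diagram $D'$ has every crossing upward-pointing at the cost of a finite collection of additional cups and caps.

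In $D'$, the downward-oriented portion of each strand decomposes into maximal sub-arcs, each starting at some cap $c$, ending at the next cup $c'$ along the strand's orientation, and --- because every crossing is now upward --- meeting no other strand. Together with a short upward segment of the strand just before $c$ and a short upward segment just after $c'$, such a downward sub-arc $\gamma$ bounds a planar half-disc. A Morse isotopy supported on a neighbourhood of this half-disc contracts $\gamma$ to an arbitrarily short arc, bringing $c$ directly above $c'$ and producing either a $\capl\ \cupr$ or $\capr\ \cupl$ rotation pair as in \eqref{eq rot tangle}, the choice being dictated by the side on which the adjacent upward portion of the strand lies. Doing this for every cap--cup pair puts $D'$ in rotational form.

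The main technical point is ensuring that the contracting isotopies for different cap--cup pairs can be performed without interfering with one another and without spoiling the upward-crossing property established in the previous step. This is handled by processing pairs in a nested order: begin with an \emph{outermost} pair, whose half-disc may be taken disjoint from the rest of the diagram (after first shrinking the half-disc if necessary), perform the contraction, and then induct inwards on the remaining pairs. Since each individual contraction is supported in the interior of a half-disc meeting no other strand, no new crossings are ever introduced, and the process terminates after finitely many steps.
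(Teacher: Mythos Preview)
Your approach differs from the paper's. Rather than modifying a given diagram, the paper reads off the combinatorial data (the crossings and the order in which each strand meets them), then rebuilds a fresh diagram from scratch by stacking the crossings one per horizontal band and reconnecting the loose ends; what remains are paired rotations together with some isolated maxima and zig-zags, which are removed by planar isotopy. This sidesteps entirely the question of how to isotope the original picture.

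Your first step (rotating each crossing to point upward) is standard and correct --- though it is pure planar isotopy, not Reidemeister II as you state. The real gap is in your second step. The ``half-disc'' is never made precise: the arc you describe (short up-segment, cap $c$, $\gamma$, cup $c'$, short up-segment) is an embedded arc, not a closed curve, and does not by itself bound anything. More seriously, you assert that a Morse isotopy \emph{supported on a neighbourhood of this half-disc} contracts $\gamma$, but sliding $c$ downward along $\gamma$ drags the attached upward arc with it, and that arc typically carries crossings and leaves any thin neighbourhood of $\gamma$ immediately. Keeping the isotopy local while keeping the adjacent up-arcs monotone (so that no new extrema appear) is exactly the nontrivial point, and your outermost/innermost induction does not address it: you never explain why an ``outermost'' half-disc exists, nor why processing it leaves the remaining pairs in a state where the induction can proceed.

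Finally, once $c$ sits directly above $c'$ the local picture is a full rotation only if the adjacent upward arcs lie on opposite sides; if they lie on the same side you get a zig-zag (total winding zero) instead. The paper explicitly straightens such zig-zags as a separate step, but you claim without justification that a rotation always results.
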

\begin{proof}
Given a tangle diagram $D$ of an upwards tangle, we will construct another diagram in rotational form which is related to the former only by planar isotopy.

For each tangle component, label the edges of the strand according to the orientation. Let the pair $(i,j)$ denote the $j$-th edge of the strand $D_i$. Write $X_{(i,j), (i',j')}$ for the crossing that has the edge $(i,j)$ as the foot of the overstrand and $(i',j')$ as the foot of the understrand. According to the total order $(i,j)<(i',j) $ if $i<i'$ or $i=i'$ and $j<j'$, place the crossings in the bands $\R  \times [k,k+1]$ of the plane  in a upward fashion, placing a cup at the end of the foot of the edge with the greatest pair, see below:

\vspace{0.2cm}
\begin{equation*} 
\labellist \tiny \hair 2pt
\pinlabel{$(i,j)$}  at 44 -35
\pinlabel{$(i',j'+1)$}  at -15 195
\pinlabel{$(i,j+1)$}  at 230 195
\pinlabel{$(i',j')$}  at 345 65
\pinlabel{$(i,j)$}  at 655 -35
\pinlabel{$(i',j'+1)$}  at 620 195
\pinlabel{$(i,j+1)$} [l] at 749 195
\pinlabel{$(i',j')$}  at 960 65
\pinlabel{$(i,j)$}  at 1536 -35
\pinlabel{$(i',j'+1)$}  at 1630 195
\pinlabel{$(i,j+1)$} [r] at 1500 195
\pinlabel{$(i',j')$}  at 1230 65
\pinlabel{$(i,j)$}  at 2150 -35
\pinlabel{$(i',j'+1)$}  at 2250 195
\pinlabel{$(i,j+1)$} [r] at 2120 195
\pinlabel{$(i',j')$}  at 1850 65
\endlabellist
\centering
\includegraphics[width=0.8\textwidth]{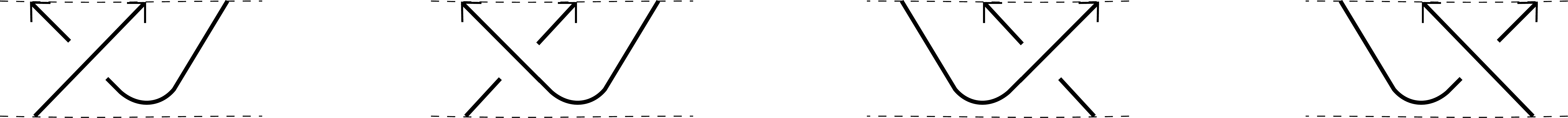}
\end{equation*}
\vspace*{1pt}

\noindent
For each $i$, we connect the edges $(i,j)$ according to the order and extend all edges up if they have not been connected yet.  In doing so, we must place some caps when we have to merge with an edge that is already on the diagram. The resulting diagram has cups and caps appearing in pairs as in  \eqref{eq rot tangle}, but also isolated maxima and zig-zag curves, as shown below to left and right respectively. However these two can be removed by a planar isotopy.

\begin{equation}
\centre{
\centering
\includegraphics[width=0.45\textwidth]{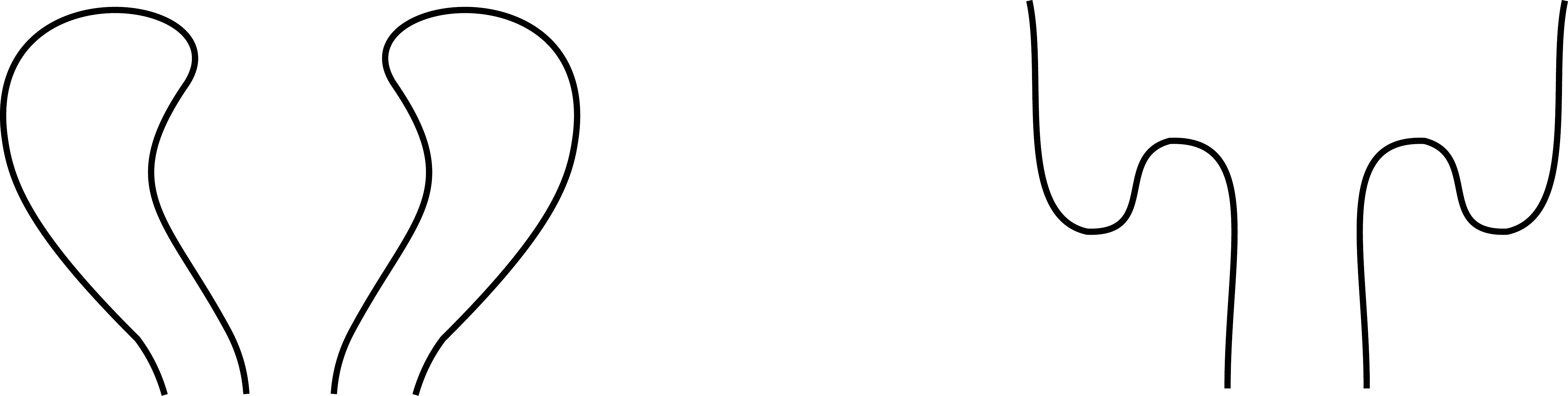}}
\end{equation}

\noindent By construction, the resulting diagram is planar isotopic to the original one.
\end{proof}

\begin{example}
Let us illustrate the proof of the  previous lemma. Suppose we start with the 2-component tangle showed below:
\begin{equation*}
\labellist \tiny \hair 2pt
\pinlabel{$1$}  at 45 50
\pinlabel{$2$}  at 195 321
\pinlabel{$2'$}  at 394 321
\pinlabel{$1'$}  at 538 50
\pinlabel{$3'$}  at 195 58
\pinlabel{$3$}  at 400 58
\pinlabel{$4'$}  at 55 313
\pinlabel{$4$}  at 545 313
\endlabellist
\centering
\includegraphics[width=0.2\textwidth]{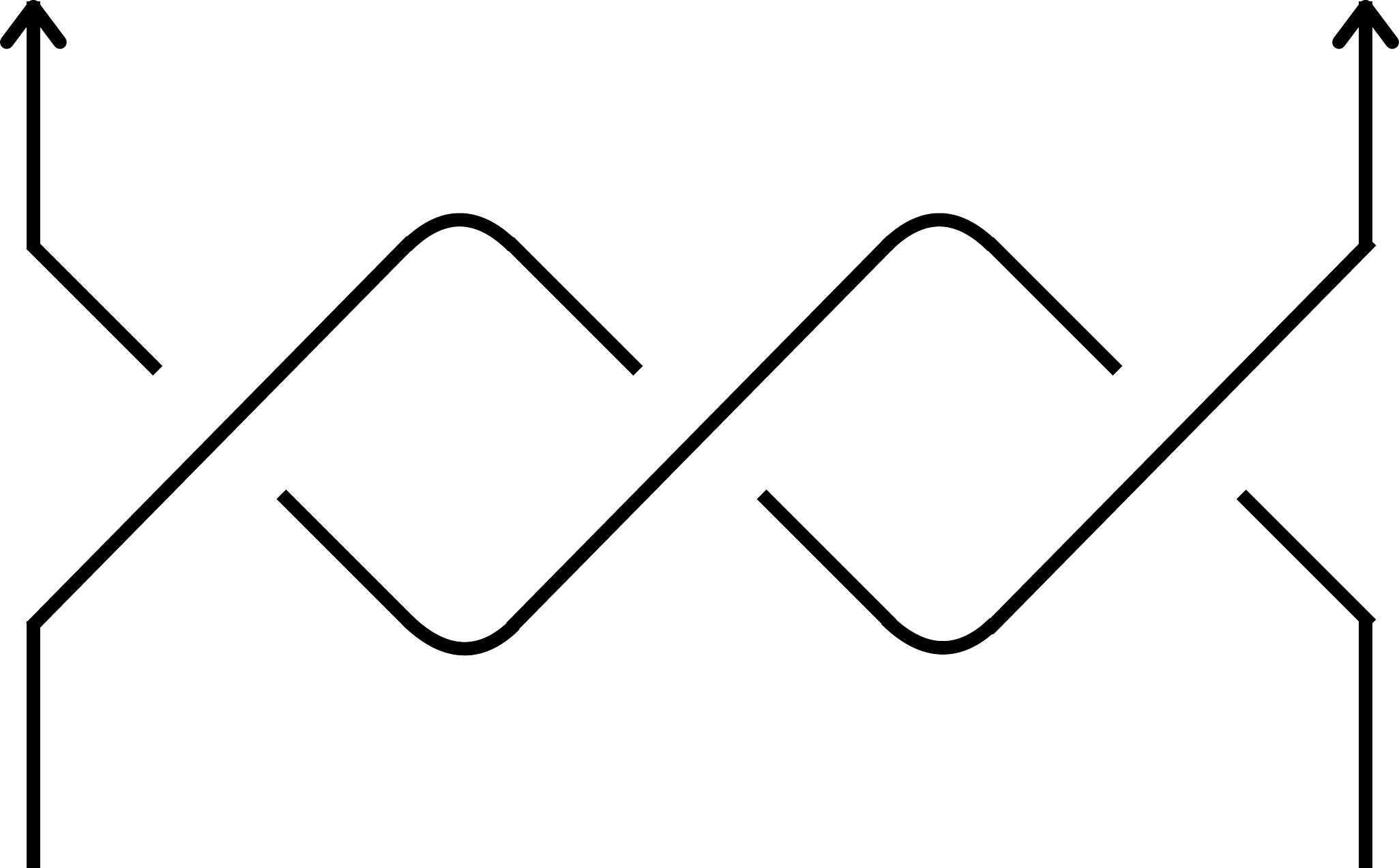}
\end{equation*}
For simplicity we  have labelled the edges  as $j=(1,j)$ and $j'=(2,j)$. The algorithm described produces the following tangle diagram:
\begin{equation*}
\labellist \tiny \hair 2pt
\pinlabel{$1$}  at  228 52
\pinlabel{$2$}  at 406 214
\pinlabel{$3$}  at 236 400
\pinlabel{$4$}  at 355 550
\pinlabel{$1'$}  at 718 64
\pinlabel{$2'$}  at 200 600
\pinlabel{$3'$}  at 555 290
\pinlabel{$4'$}  at 41 432
\endlabellist
\centering
\includegraphics[width=0.26\textwidth]{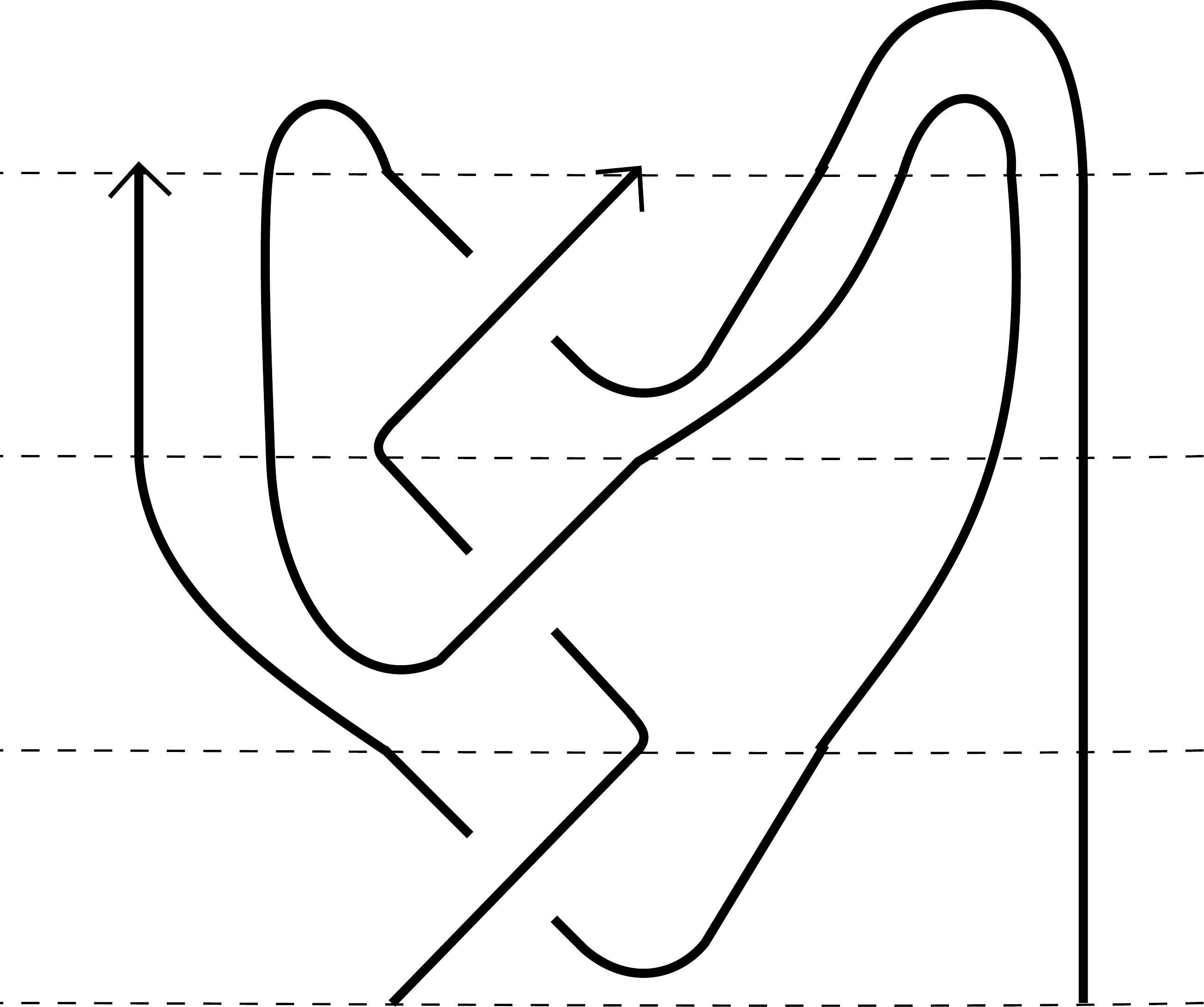}
\end{equation*}
Removing isolated maxima, we obtain the following rotational diagram of the original tangle:
\begin{equation*}
\centering
\includegraphics[width=0.2\textwidth]{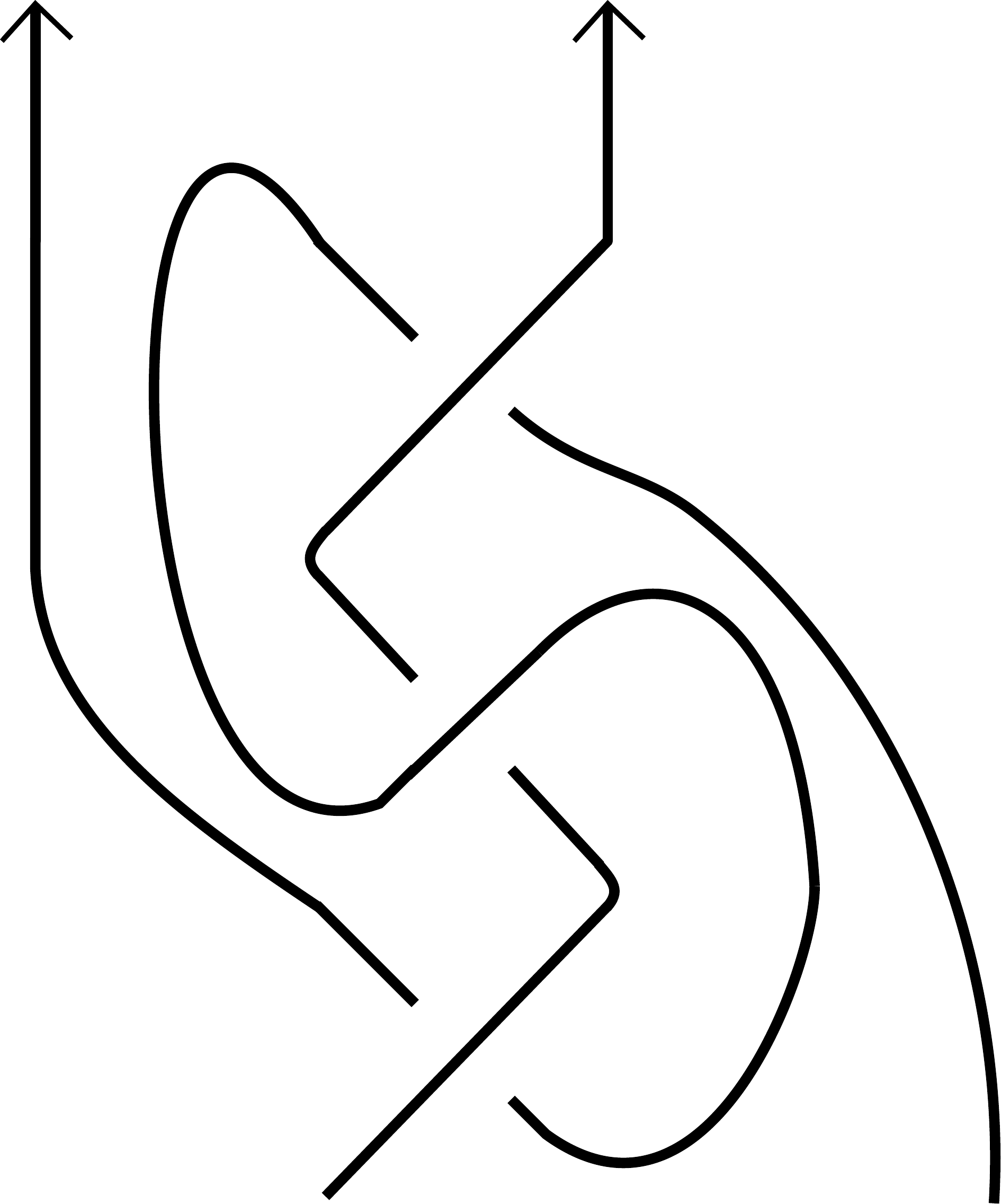}
\end{equation*}
\end{example}


We can hence restrict ourselves to study only tangle diagrams in rotational form that we more succinctly call \textit{rotational tangles}.  Mimicking \cite[Theorem 3.3]{ohtsukibook} and \cite{polyak10}, it is not difficult  to see that the following relations form a set of \textit{rotational Reidemeister moves} for rotational tangles:
\begin{equation}
\centre{
\labellist \small \hair 2pt
\pinlabel{$=$}  at 760 240
\pinlabel{$=$}  at 1830 240
\pinlabel{$=$}  at 2030 240
\pinlabel{$,$}  at 1260 240
\endlabellist
\centering
\includegraphics[width=0.8\textwidth]{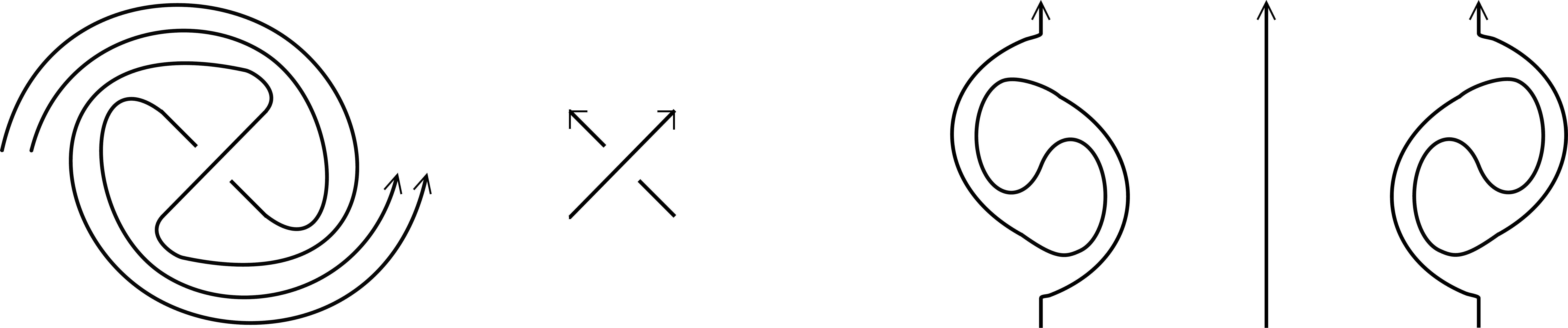}}
\end{equation}
\vspace*{10pt}
\begin{equation}
\centre{
\labellist \small \hair 2pt
\pinlabel{$=$}  at 452 168
\pinlabel{$=$}  at 1570 180
\pinlabel{$,$}  at 1089 168
\endlabellist
\centering
\includegraphics[width=0.65\textwidth]{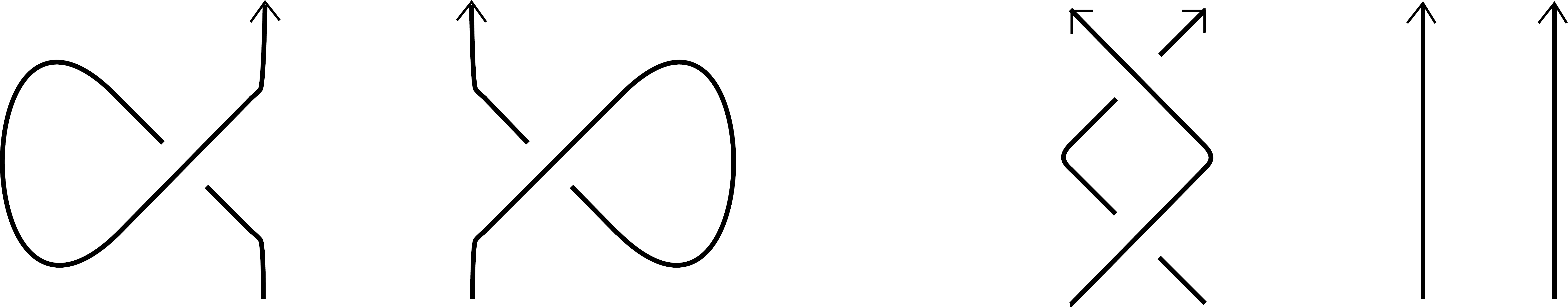}}
\end{equation}
\vspace*{10pt}
\begin{equation}
\centre{
\labellist \small \hair 2pt
\pinlabel{$=$}  at 360 260
\pinlabel{$=$}  at 1590 250
\pinlabel{$,$}  at 960 260
\endlabellist
\centering
\includegraphics[width=0.70\textwidth]{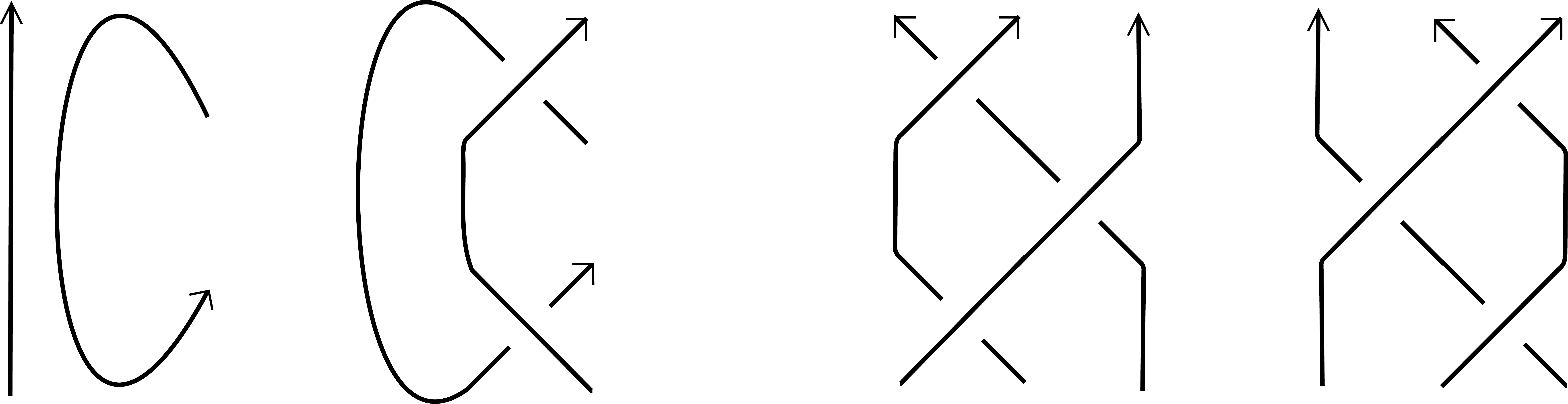}}
\end{equation}
\vspace*{5pt}

\noindent As usual, the above pictures are to be understood as identifying two tangle diagrams that are identical except in an open neighbourhood, where they look like as shown.

From this it follows

\begin{corollary}
There are bijections
\vspace*{5pt}
\begin{center}
\begin{tikzcd}[column sep=2em,row sep=2ex]
 \frac{ \left\{ \parbox[c][2.5em]{7em}{\centering
                      {\small  \textnormal{upwards tangles \\ in $(D^1)^{\times 3}$ }}} \right\} }{  \parbox[c][1.5em]{8em}{\centering  {\small  \textnormal{isotopy} }}} \arrow[equals]{r} &   \frac{ \left\{ \parbox[c][2.5em]{8em}{\centering
                      {\small  \textnormal{upwards  tangle diagrams in $(D^1)^{\times 2}$ }}} \right\} }{ \parbox[c][2.5em]{8em}{\centering  {\small \textnormal{  planar isotopy and Reidemeister moves }}}} \arrow[equals]{r} &   \frac{ \left\{ \parbox[c][2.5em]{8em}{\centering
                      {\small  \textnormal{rotational tangles \\ in $(D^1)^{\times 2}$ }}} \right\} }{ \parbox[c][3.8em]{8em}{\centering  {\small  \textnormal{Morse isotopy and rotational Reidemeister moves }}}}
\end{tikzcd}
\end{center}
\end{corollary}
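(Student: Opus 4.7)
The plan is to treat the two equalities separately. The first, between isotopy classes of upwards tangles in the cube and equivalence classes of upwards tangle diagrams (modulo planar isotopy and oriented framed Reidemeister moves I, II, III), is the classical Reidemeister theorem for framed oriented tangles in a cube, so nothing new has to be established there beyond citing it.

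For the second equality, I would exhibit a canonical map from the set of equivalence classes on the right to the set on the middle by forgetting the rotational structure, i.e.\ by sending a rotational tangle to its underlying upwards diagram. Surjectivity of this map is precisely the content of \cref{lemma every tangle has rot diag}: every upwards tangle diagram admits a rotational diagram related to it by planar isotopy. Injectivity is the substantive part, namely that any chain of planar isotopies and framed oriented Reidemeister moves between two rotational diagrams can be simulated by a chain of Morse isotopies and rotational Reidemeister moves.

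To establish injectivity I would argue in two steps. First, I would decompose an arbitrary planar isotopy between two rotational diagrams into Morse isotopy plus the elementary moves shown in the first display (the ``rotational R0'' moves that slide an isolated maximum/minimum across a vertical strand). Any planar isotopy can be cut into generic moments, and the only non-Morse events are exactly these R0-type exchanges, so this step is routine. Second, I would show that each oriented framed Reidemeister move appearing in the middle equivalence relation can be realised by a composition of the listed rotational R1, R2 and R3 together with Morse isotopy and the R0 moves. The listed moves cover only a few sign/orientation combinations, and the remaining variants of oriented R1, R2 and R3 must be deduced by inserting cancelling pairs of rotations on the relevant strands and then reducing.

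The main obstacle is this second step: the combinatorial bookkeeping needed to check that \emph{every} oriented variant of each classical Reidemeister move (there are many, indexed by orientations and crossing signs) can indeed be derived from the small set of rotational moves pictured, using R0 and Morse isotopy as auxiliary moves. The argument is analogous to the one used in \cite[Theorem 3.3]{ohtsukibook} and in the rotational treatment of \cite{polyak10}, where one first reduces all R3 variants to a single variant modulo R1 and R2, and then reduces the various R2 variants modulo R1. I would give a brief case analysis for the representative variants, referring to those sources for the remaining cases, since no new phenomenon appears in the rotational setting.
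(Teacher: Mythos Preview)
Your proposal is correct and follows essentially the same approach as the paper. In fact, the paper gives no proof at all for this corollary: it simply asserts, citing \cite[Theorem~3.3]{ohtsukibook} and \cite{polyak10}, that the displayed moves form a complete set of rotational Reidemeister moves, and then writes ``From this it follows'' before stating the corollary. Your outline makes explicit the two-step structure (surjectivity via \cref{lemma every tangle has rot diag}, injectivity via decomposing planar isotopy and deriving all oriented Reidemeister variants from the listed rotational ones) that the paper leaves implicit in those citations.
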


Note that rotational tangles do not fit in $\mathcal{T}$, as the full rotations of \eqref{eq rot tangle} cannot be seen as morphisms;  yet any rotational tangle can be decomposed as the merging in the plane of the elementary building blocks depicted below: the single, unknotted strand (denoted by $I$), the positive and negative crossings (denoted $X$ and $X^-$), and the anticlockwise and clockwise full rotations (denoted $C$ and $C^-$). We will call the element $C$ the \textit{spinner}.

\begin{equation}\label{eq crossings and spinners}
\centre{
\labellist \small \hair 2pt
\pinlabel{$I$}  at 20 -78
\pinlabel{$X$}  at 350 -78
 \pinlabel{$ X^- $}  at 800 -78
 \pinlabel{$ C$}  at 1140 -78
  \pinlabel{$ C^- $}  at 1560 -78
\endlabellist
\centering
\includegraphics[width=0.6\textwidth]{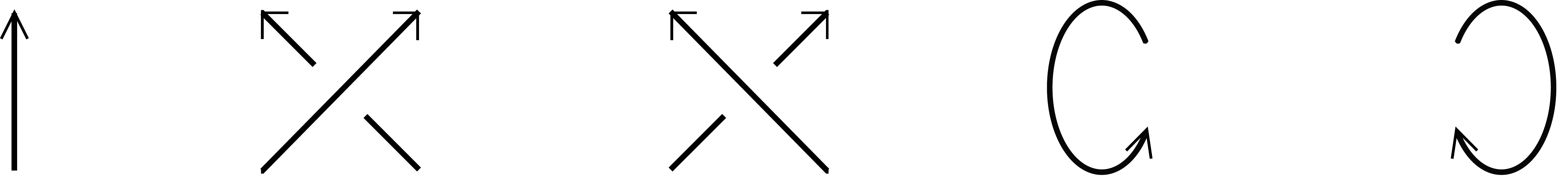}}
\end{equation}
\vspace{10pt}



\section{The universal invariant of tangles} \label{sec univ inv}

In this section, mostly to fix notation and conventions, we review the notions of topological ribbon Hopf algebra and the construction of the universal tangle invariant subject to one such, putting an emphasis on the description for rotational tangles. We also describe naturality and functoriality properties of the universal invariant and how it relates to classical quantum invariants that come from the representation  theory of quantum groups.

\subsection{Topological ribbon Hopf algebras}\label{section Topological ribbon Hopf algebras} We briefly recall the definitions of topological modules and topological ribbon Hopf algebras, highlighting the main features we will use later. For further details we refer the reader to \cite{kassel} or \cite{habiro}. 

Let $k$ be a commutative ring with unit, and consider the ring $k[[h]]$ of formal power  series in the variable $h$ with coefficients in $k$. As usual we topologise $k[[h]]$ with the (inverse) limit topology coming from the ring isomorphism $$k[[h]] \toiso \mathrm{lim}_n k[h]/(h^n),$$ which is known as the \textit{$h$-adic topology}.

Given a $k[[h]]$-module $M$, its \textit{completion} with respect to the $h$-adic topology is $\widehat{M}:=\mathrm{lim}_n M/h^nM$, which comes with the limit topology. When the canonical map $M \to \widehat{M}$ is an isomorphism, we say that $M$ is \textit{complete}, and $M$ is endowed with the $h$-adic topology via this isomorphism, so that $M$ can be seen as a topological $k[[h]]$-module. We will focus on the following class of $k[[h]]$-modules: if $V$ is a free $k$-module, the set $V[[h]]$ of formal power series with coefficients in $V$ is naturally a $k[[h]]$-module. It can be readily seen that $V[[h]]$ is complete, thus it can be realised as a topological $k[[h]]$-module. We call this class of modules \textit{topologically free}. Note that $$V[[h]] \cong \widehat{V\otimes_k k[[h]]}$$ as topological $k[[h]]$-modules.

Given two $k[[h]]$-modules $M$ and $N$, its \textit{topological tensor product} is defined as the completion of the usual tensor product, 
$$M \otimeshat N := \widehat{M \otimes_{k[[h]]} N}.$$ Let us denote by $\mathsf{Mod}_{k[[h]]}^{\mathsf{t}}$  the symmetric monoidal category of topological $k[[h]]$-modules and continuous $k[[h]]$-module maps, and write $\tau_{M,N}: M \otimeshat N \to N \otimeshat M$ for its symmetric braiding. We will also write $\mathsf{Mod}_{k[[h]]}^{\mathsf{tf}}$ for the full subcategory of $\mathsf{Mod}_{k[[h]]}^{\mathsf{t}}$  of topologically free $k[[h]]$-modules. Similarly, we put $\mathsf{Mod}_{k}^{\mathsf{f}}$ for the category of free $k$-modules and $k$-linear maps. For convenience we write $\mathrm{Hom}_{k}$ and $\mathrm{Hom}_{k[[h]]}$ for the hom-sets in the categories  $\mathsf{Mod}_{k}^{\mathsf{f}}$ and $\mathsf{Mod}_{k[[h]]}^{\mathsf{tf}}$, respectively.

The following proposition is a reformulation of well-known properties of topologically free modules (see \cite{kassel}).

\begin{proposition}\label{prop adjuction and top free}
The passage from $V$ to $V[[h]]$ gives rise to a free-forgetful adjuction
$$\begin{tikzcd}[column sep={4em,between origins}]
\mathsf{Mod}_{k}^{\mathsf{f}}
  \arrow[rr, bend left, swap, "{-[[h]]}"' pos=0.55]
& \bot &  \mathsf{Mod}_{k[[h]]}^{\mathsf{tf}}
  \arrow[ll, bend left, swap, "U"' pos=0.45]
\end{tikzcd}$$
Besides, the completed tensor product restricts to topologically free modules via the isomorphism
\begin{equation}
V[[h]] \otimeshat W[[h]] \toiso (V \otimes_{k} W)[[h]],\label{eq top free tensor}
\end{equation}
which makes $\mathsf{Mod}_{k[[h]]}^{\mathsf{tf}}$ a symmetric monoidal category and the functor $-[[h]]$ strong monoidal and faithful (and essentially surjective).

Furthermore, $\mathsf{Mod}_{k[[h]]}^{\mathsf{tf}}$ is enriched\footnote{For a precise definition of  an enriched category, see \cite{riehl2014} } over itself via the isomorphism
\begin{equation}
\hom{k[[h]]}{V[[h]]}{W[[h]]} \toiso \hom{k}{V}{W}[[h]]
\end{equation}
induced by the adjunction, which turns it into a closed symmetric monoidal category, meaning that there are natural isomorphisms
\begin{align}
\begin{split}
\hom{k[[h]]}{M}{\hom{k[[h]]}{N}{P}} &\overset{\cong}{\longleftarrow} \hom{k[[h]]}{M \otimeshat N}{P}\\
&\toiso \hom{k[[h]]}{N}{\hom{k[[h]]}{M}{P}}
\end{split}
\end{align}
where $N,M,P$ are topologically free. 

Lastly, the forgetful functor $U: \mathsf{Mod}_{k[[h]]}^{\mathsf{tf}} \to \mathsf{Set}$ is represented by the unit object $k[[h]]$, that is, there is a natural isomorphism
\begin{equation}\label{eq forgetful represented}
\hom{k[[h]]}{k[[h]]}{V[[h]]} \toiso U(V[[h]])
\end{equation}
\end{proposition}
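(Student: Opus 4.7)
The plan is to bootstrap all six statements from two ingredients: the definition of $V[[h]]$ as a completion, and the fact that a continuous $k[[h]]$-linear map out of a topologically free module is determined by its restriction to the generating $k$-submodule. I begin by establishing the adjunction directly. Given a free $k$-module $V$ and a topologically free $k[[h]]$-module $M$, I define the unit $\eta_V : V \to U(V[[h]])$ as the inclusion of constants and the counit $\varepsilon_M : U(M)[[h]] \to M$ by $\sum_{n} m_n h^n \mapsto \sum_n m_n h^n$, which converges in $M$ by completeness. The triangle identities are routine, and the bijection $\hom{k[[h]]}{V[[h]]}{M} \cong \hom{k}{V}{U(M)}$ is the restriction-to-constants map, inverse to the power-series extension $\sum v_n h^n \mapsto \sum g(v_n) h^n$.

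For the monoidal isomorphism \eqref{eq top free tensor}, I fix $k$-bases of $V$ and $W$, which gives a basis of elementary tensors in $V \otimes_k W$. Then both $V[[h]] \otimeshat W[[h]]$ and $(V \otimes_k W)[[h]]$ identify with power series whose coefficients are finite $k$-linear combinations of such elementary tensors, and the two completions agree because they are taken along the same $h$-adic filtration of $V \otimes_k W \otimes_k k[[h]]$. The naturality and coherence of this isomorphism are standard, so $-[[h]]$ becomes strong monoidal; faithfulness follows from $V \hookrightarrow V[[h]]$, and essential surjectivity is the very definition of topologically free. For the enriched hom isomorphism, a continuous $k[[h]]$-linear map $V[[h]] \to W[[h]]$ is determined by its restriction $f|_V : V \to W[[h]]$ via the adjunction; expanding $f|_V(v) = \sum_n f_n(v) h^n$ with $f_n \in \hom{k}{V}{W}$ produces the identification with $\hom{k}{V}{W}[[h]]$.

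The closed structure then follows by transporting the $k$-module hom-tensor adjunction across the isomorphisms already established: the two displayed natural bijections become, under the identifications $\hom{k[[h]]}{V[[h]]}{W[[h]]} \cong \hom{k}{V}{W}[[h]]$ and $V[[h]] \otimeshat W[[h]] \cong (V \otimes_k W)[[h]]$, the standard curry/uncurry isomorphism at the level of $k$-modules, taken power-series-wise in $h$. Representability of $U$ is the specialisation $V = k$ of the previous paragraph: one has $\hom{k[[h]]}{k[[h]]}{V[[h]]} \cong \hom{k}{k}{V}[[h]] \cong V[[h]] = U(V[[h]])$, and chasing the adjunction reveals that this composite is simply evaluation at $1 \in k[[h]]$.

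The only genuinely non-trivial step is the tensor product identification \eqref{eq top free tensor}; once that is in hand, all of the subsequent statements are formal consequences of the adjunction and the universal properties involved. I therefore expect the main care to be needed in verifying that the $h$-adic completion of $V \otimes_k W \otimes_k k[[h]]$ coincides with both of the prescribed completions, which is ultimately a statement about the commutativity of $h$-adic completion with tensor product over $k$ for free modules.
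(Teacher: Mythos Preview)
The paper does not give a proof of this proposition at all: it is stated as ``a reformulation of well-known properties of topologically free modules'' with a reference to Kassel, and no argument is supplied. Your sketch is therefore not competing with anything in the paper, and as an outline it is correct and standard; the reductions you describe (adjunction via restriction-to-constants, the tensor identification via bases and the $h$-adic filtration, the hom isomorphism by expanding $f|_V$ in powers of $h$, and the closed structure by transporting the $k$-level hom--tensor adjunction degree-wise) are exactly how one would verify these facts.

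One small point worth being aware of: you silently assume that the forgetful $U$ lands in $\mathsf{Mod}_k^{\mathsf{f}}$, but for a general commutative ring $k$ the underlying $k$-module of $V[[h]]\cong\prod_{n\geq 0}V$ need not be free (e.g.\ $\prod_n\mathbb{Z}$ is not free over $\mathbb{Z}$). This does not affect the hom-set bijection $\hom{k[[h]]}{V[[h]]}{M}\cong\hom{k}{V}{U(M)}$, which is what you actually use, but strictly speaking the adjunction as drawn in the paper's diagram requires either interpreting $U$ as landing in all $k$-modules or tacitly restricting to rings (such as fields) where the issue does not arise. The paper is equally silent on this, so it is not a defect of your argument relative to the source.
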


%
%
%
%
%

A \textit{topological Hopf algebra} is a topological $k[[h]]$-module $A$ endowed with an algebra structure $(A,  \mu, \eta)$, a coalgebra structure $(A,  \Delta, \epsilon)$ (compatible in the sense that the coalgebra structure maps are algebra morphisms), and an isomorphism $S: A \to A$, called the \textit{antipode}, which is the inverse of $\mathrm{Id}_A$ in the convolution monoid $\hom{k[[h]]}{A}{A}$, that is $$ m(S \otimeshat \id)\Delta = \eta \epsilon = m(\id \otimeshat S)\Delta.$$ All structure maps in the definition are required to be continuous (that is, they all must be morphisms in $\mathsf{Mod}_{k[[h]]}^{\mathsf{t}}$).

A \textit{topological quasi-triangular Hopf algebra} is a topological Hopf algebra $A$ together with an invertible element $R \in A \otimeshat A$, called the \textit{universal R-matrix}, satisfying the following properties:
\begin{align}
(\Delta \otimeshat \id)R &= R_{13} \cdot R_{23} \label{eq Rmatrix_axiom1}\\
(\id \otimeshat \Delta)R &= R_{13}\cdot  R_{12} \label{eq Rmatrix_axiom2}\\
\tau_{A,A} \Delta  &=   R \cdot \Delta(-) \cdot  R^{-1} \label{eq Rmatrix_axiom3}
\end{align}
where $R_{12} := R\otimeshat 1 $, $R_{13}:= (\id \otimeshat \tau_{A,A})R_{12}$ and $R_{23}:= 1 \otimeshat R$.

We will write $R= \sum_i \alpha_i \otimeshat \beta_i$ for the universal $R$-matrix and $R^{-1}= \sum_i \overline{\alpha}_i \otimeshat \overline{\beta}_i$ for its inverse. Note that in general these sums will be infinite.

A \textit{topological ribbon Hopf algebra} is a topological quasi-triangular Hopf algebra $A$ together with a preferred element $\kappa \in A$, called the \textit{balancing element}  satisfying 
\begin{align}
\Delta \kappa &= \kappa \otimeshat \kappa \label{eq balancing_axiom1}\\
\epsilon (\kappa) &= 1 \label{eq balancing_axiom2} \\
\kappa^2 &= u \cdot S(u^{-1}) \label{eq balancing_axiom3}\\
S^2 &= \kappa \cdot (-) \cdot \kappa^{-1}\label{eq balancing_axiom4}
\end{align}
where $u:= \mu(S \otimeshat \id_A)R_{21} = \sum_i S(\beta_i) \cdot \alpha_i$ is called the \textit{Drinfeld element} and $u^{-1} := \mu (\id_A \otimeshat S^2)R_{21} = \sum_i \beta_i \cdot S^2 (\alpha_i) $ is its inverse. Here we wrote $R_{21}= \tau_{A,A} (R)$.

The element $v :=  \kappa^{-1} \cdot u$ is classically called the \textit{ribbon element} (hence the name of the algebraic structure). Note that the ribbon is central: for any $x \in A$ we have $vx= \kappa^{-1}ux= \kappa^{-1} S^{2}(x)u=x\kappa^{-1}u=xv$. It can be shown \cite{kauffman_radford} that the set of axioms \eqref{eq balancing_axiom1} -- \eqref{eq balancing_axiom4} for the balancing element are equivalent to the usual set of axioms
$$ v \in \mathcal{Z}(A) \ \ , \ \ v^2 = u \cdot S(u)  \ \ , \ \  \Delta(v)=(R_{21}R)^{-1}(v \otimeshat v)  \ \ , \ \  \epsilon(v)=1  \ \ , \ \  S(v)=v$$ for the ribbon element, where $\mathcal{Z}(A)$ denotes the centre of $A$.

\begin{example}\label{ex quantisation}
For any semisimple complex Lie algebra $\mathfrak{g}$, Drinfeld \cite{drinfeld85} and Jimbo \cite{jimbo85} constructed a topological ribbon Hopf algebra $U_h(\mathfrak{g})$ over $\mathbb{C} [[h]]$ which is a ``quantisation'' of the universal enveloping algebra $U(\mathfrak{g})$ of $\mathfrak{g}$, meaning that $U_h(\mathfrak{g})/ hU_h(\mathfrak{g}) \cong U(\mathfrak{g})$ as bialgebras and $U_h(\mathfrak{g}) \cong U(\mathfrak{g})[[h]]$ as $\mathbb{C}[[h]]$-modules.
\end{example}

In \cref{subsec algebra D} we will introduce the central example of topological ribbon Hopf algebra that we will study.

\subsection{The universal tangle invariant}\label{section The universal tangle invariant}

We are now ready to define the universal tangle invariant, introduced for the first time in \cite{lawrence} (for a brief historical note see \cite[\S 1.2.1]{habiro}).  Let $A$ be a topological ribbon Hopf algebra with unit $1 \in A$, universal $R$-matrix $R= \sum_i \alpha_i \otimeshat \beta_i$, inverse $R^{-1} = \sum_i \bar{\alpha}_i \otimeshat \bar{\beta}_i$ and balancing element $\kappa \in A$. 

For simplicity we will restrict to the case of open tangles (otherwise one has to use a quotient of the ribbon Hopf algebra for the close components, but we will not do that here), and besides we will always assume that the tangle components are ordered.  Given an $n$-component open tangle  $L=L_1 \cup  \cdots \cup L_n$, place beads representing the elements $1, (S^i \otimes S^j)(R^{\pm 1}) , \kappa^{\pm 1} $, $i,j =0,1$, in the lowest part of the trivial tangle, crossings and cups and caps, putting the  ``alpha'' always in the overstrand and the ``beta'' in the understrand,  as depicted below:

\begin{align}\label{eq beads norot}
\begin{split}
\centre{
\labellist \footnotesize \hair 2pt
\pinlabel{$ \color{violet} \bullet$} at 16 30
\pinlabel{$ \color{violet} 1$} at -26 30
\pinlabel{$ \color{violet} \bullet$} at 278 30
\pinlabel{$ \color{violet} \bullet$} at 381 30
\pinlabel{$ \color{violet} \alpha_i$} [r] at 263 30
\pinlabel{$ \color{violet} \beta_i$} [l] at 396 30
\pinlabel{$ \color{violet} \bullet$} at 671 30
\pinlabel{$ \color{violet} \bullet$} at 780 30
\pinlabel{$ \color{violet} \alpha_i$} [r] at 656 30
\pinlabel{$ \color{violet} S \beta_i$} [l] at 795 30
\pinlabel{$ \color{violet} \bullet$} at 1093 30
\pinlabel{$ \color{violet} \bullet$} at 1200 30
\pinlabel{$ \color{violet} S\alpha_i$} [r] at 1078 30
\pinlabel{$ \color{violet}  \beta_i$} [l] at 1215 30
\pinlabel{$ \color{violet} \bullet$} at 1483 30
\pinlabel{$ \color{violet} \bullet$} at 1589 30
\pinlabel{$ \color{violet} S\alpha_i$} [r] at 1468 30
\pinlabel{$ \color{violet} S \beta_i$} [l] at 1604 30
\pinlabel{$ \color{violet} \bullet$} at 1943 45
\pinlabel{$ \color{violet} \kappa$} [t] at 1943 30
\pinlabel{$ \color{violet} \bullet$} at 2349 135
\pinlabel{$ \color{violet} 1$} [b] at 2349 150
\endlabellist
\centering
\includegraphics[width=0.9\textwidth]{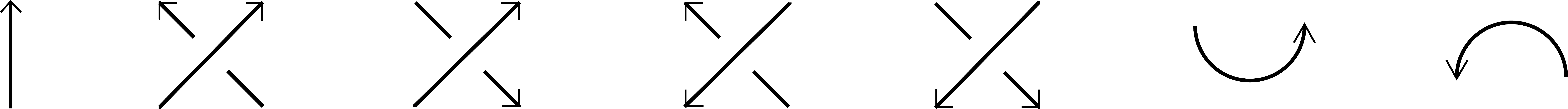}}
\\
\centre{
\labellist \footnotesize \hair 2pt
\pinlabel{$ \color{violet} \bullet$} at 16 30
\pinlabel{$ \color{violet} 1$} at -26 30
\pinlabel{$ \color{violet} \bullet$} at 278 30
\pinlabel{$ \color{violet} \bullet$} at 381 30
\pinlabel{$ \color{violet} \bar{\beta}_i$} [r] at 263 30
\pinlabel{$ \color{violet} \bar{\alpha}_i$} [l] at 396 30
\pinlabel{$ \color{violet} \bullet$} at 671 30
\pinlabel{$ \color{violet} \bullet$} at 780 30
\pinlabel{$ \color{violet} \bar{\beta}_i$} [r] at 656 30
\pinlabel{$ \color{violet} S \bar{\alpha}_i$} [l] at 795 30
\pinlabel{$ \color{violet} \bullet$} at 1093 30
\pinlabel{$ \color{violet} \bullet$} at 1200 30
\pinlabel{$ \color{violet} S\bar{\beta}_i$} [r] at 1078 30
\pinlabel{$ \color{violet}  \bar{\alpha}_i$} [l] at 1215 30
\pinlabel{$ \color{violet} \bullet$} at 1483 30
\pinlabel{$ \color{violet} \bullet$} at 1589 30
\pinlabel{$ \color{violet} S\bar{\beta}_i$} [r] at 1468 30
\pinlabel{$ \color{violet} S \bar{\alpha}_i$} [l] at 1604 30
\pinlabel{$ \color{violet} \bullet$} at 1943 132
\pinlabel{$ \color{violet} \kappa^{-1}$} [b] at 1973 147
\pinlabel{$ \color{violet} \bullet$} at 2349 47
\pinlabel{$ \color{violet} 1$} [t] at 2349 32
\endlabellist
\centering
\includegraphics[width=0.9\textwidth]{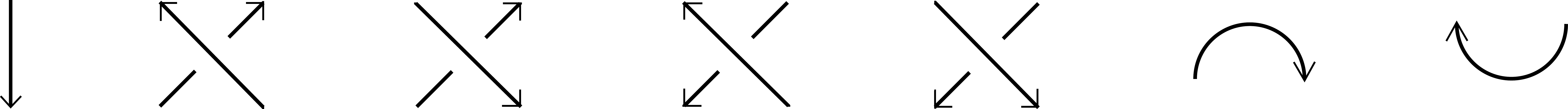}}
\end{split}
\end{align}

The \textit{universal invariant} of $L$ subject to the ribbon  Hopf algebra $A$   is the element  $Z_A(L) \in A \otimeshat \overset{n}{\cdots} \otimeshat A$   defined as follows: for $1 \leq i \leq n$ let $Z_A(L)_{(i)}$ be the (formal) word given by writing from left to right the labels of the beads in the $i$-th component according to the  orientation of the strand. Then put
$$Z_A(L) := \sum Z_A(L)_{(1)} \otimeshat \cdots \otimeshat Z_A(L)_{(n)} \in A \otimeshat \overset{n}{\cdots} \otimeshat A$$ where the summation runs through all subindices in $R^{\pm 1}$ (one for each crossing).  

If $L$ is an upwards tangle, then the description of the universal invariant is particularly simple, as we only have to specify the values for the five (instead of fourteen) building blocks from \eqref{eq crossings and spinners}, once it is in rotational form. It follows from above that the assignments are:

\begin{equation}\label{eq beads rot}
\centre{
\labellist \small \hair 2pt
\pinlabel{$ \color{violet} \bullet$} at 16 30
\pinlabel{$ \color{violet} 1$} at -26 30
\pinlabel{$ \color{violet} \bullet$} at 294 30
\pinlabel{$ \color{violet} \bullet$} at 406 30
\pinlabel{$ \color{violet} \alpha_i$} [r] at 279 30
\pinlabel{$ \color{violet} \beta_i$} [l] at 421 30
\pinlabel{$ \color{violet} \bullet$} at 711 30
\pinlabel{$ \color{violet} \bullet$} at 825 30
\pinlabel{$ \color{violet} \bar{\beta}_i$} [r] at 696 30
\pinlabel{$ \color{violet} \bar{\alpha}_i$} [l] at 850 30
\pinlabel{$ \color{violet} \bullet$} at 1083 92
\pinlabel{$ \color{violet} \kappa$} [r] at 1068 92
\pinlabel{$ \color{violet} \bullet$} at 1599 92
\pinlabel{$ \color{violet} \kappa^{-1}$} [l] at 1614 104
\endlabellist
\centering
\includegraphics[width=0.6\textwidth]{sketch_figures/building_blockss}}
\end{equation}
\vspace{0.01cm}

It is shown in \cite{ohtsukibook} that $Z_A(L)$ is preserved under Reidemeister moves and hence defines an isotopy invariant for tangles.

\begin{example}
Let $L$ be the 2-component  upwards tangle below, that we have already decorated  according to \eqref{eq beads rot}:
\vspace*{5pt}
\begin{equation*}
\centre{
\labellist \footnotesize \hair 2pt
\pinlabel{{\normalsize $L$}} at -134 854
\pinlabel{$ \color{red} \bullet$} at 262 172
\pinlabel{$ \color{red} \bullet$} at 356 172
\pinlabel{$ \color{red} \alpha_i$} [r] at 247 172
\pinlabel{$ \color{red} \beta_i$} [l] at 371 172
\pinlabel{$ \color{OliveGreen} \bullet$} at 262 363
\pinlabel{$ \color{OliveGreen} \bullet$} at 356 363
\pinlabel{$ \color{OliveGreen} \alpha_j$} [r] at 247 363
\pinlabel{$ \color{OliveGreen} \beta_j$} [l] at 371 363
\pinlabel{$ \color{magenta} \bullet$} at 428 535
\pinlabel{$ \color{magenta} \bullet$} at 542 535
\pinlabel{$ \color{magenta} \bar{\beta}_\ell$} [r] at 413 535
\pinlabel{$ \color{magenta} \bar{\alpha}_\ell$} [l] at 557 535
\pinlabel{$ \color{blue} \bullet$} at 250 712
\pinlabel{$ \color{blue} \bullet$} at 364 712
\pinlabel{$ \color{blue}  \bar{\beta}_s$} [r] at 235 712
\pinlabel{$ \color{blue} \bar{\alpha}_s$} [l] at 379 712
\pinlabel{$ \color{orange} \bullet$} at 3 498
\pinlabel{$ \color{orange} \kappa$} [r] at -13 498
\endlabellist
\centering
\includegraphics[width=0.2\textwidth]{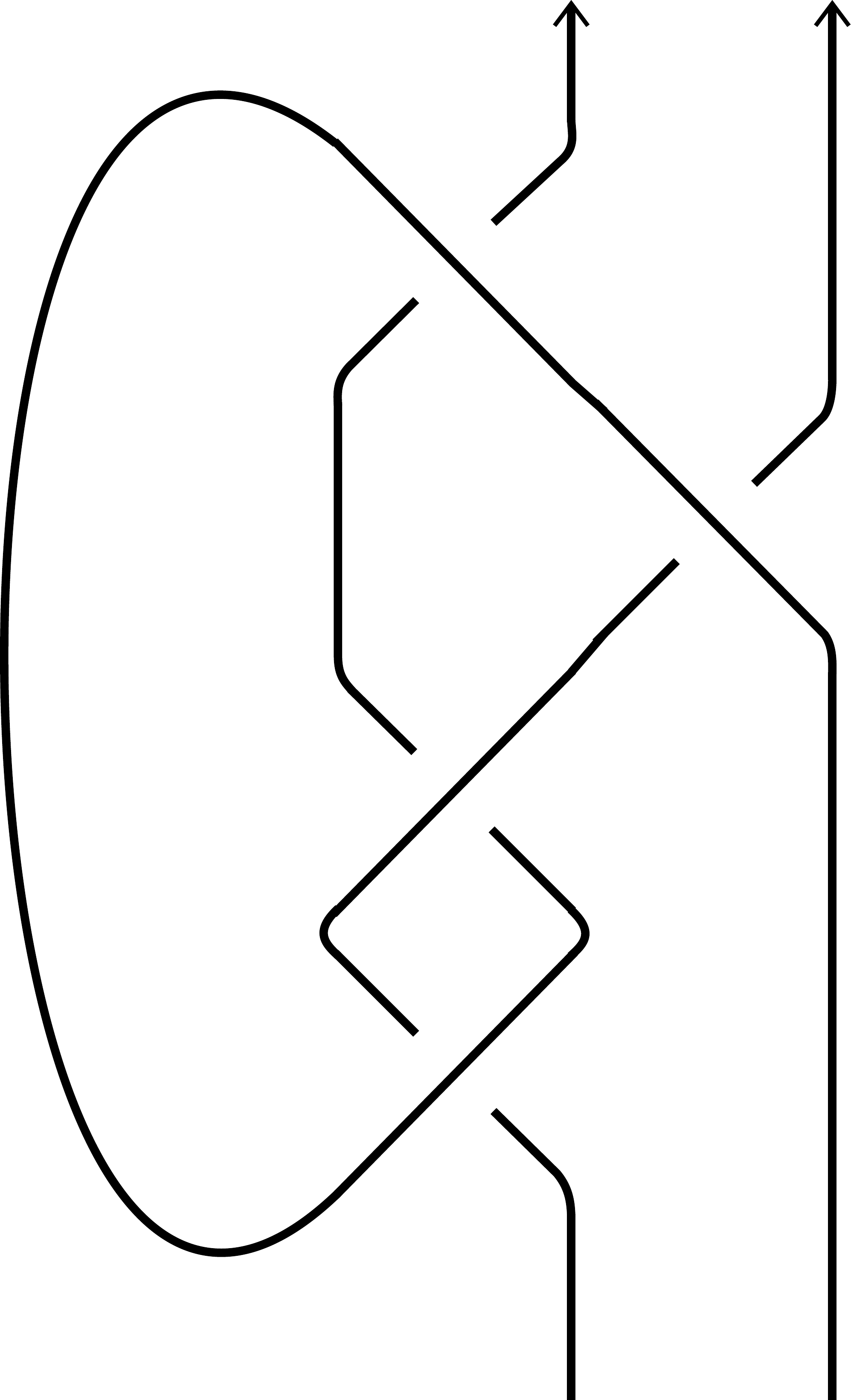}}
\end{equation*}
Its universal invariant is given by
$$Z_A(L)= \sum_{i,j,\ell,s} \beta_i \alpha_j \bar{\beta}_\ell \otimeshat \bar{\alpha}_\ell \bar{\alpha}_s \kappa \alpha_i \beta_j \bar{\beta}_s \in A \otimeshat A.$$
\end{example}

\begin{warning}
We ask the reader to be mindful of the different conventions and notations that exist in the literature about the universal invariant, which is also denoted as $J_L$ or $Q^{A;*}$. 

Our convention is closest to Ohtsuki's  \cite{ohtsukibook}, just up to turning the tangle upside down, since his tangles are preferably  oriented downwards. That is, for a tangle $L$ the value $Z_A(L)$ is Ohtsuki's value of the universal invariant of the tangle $L'$ with is the diagram of $L$ rotated 180 degrees on the plane.

Habiro \cite{habiro} also considers the downwards orientation as the preferred one, and he reads the beads following the inverse orientation of the strand. The value $Z_A(L)$ equals Habiro's value of the universal invariant  of $L'' \subset (D^1)^{\times 3}$ obtained by rotating $L$ 180 degrees about the $z$ axis and reverse the orientation of all strands.

Lastly, a fourth convention is used in \cite{BBG} where tangles are oriented upwards but the universal invariant is written following the inverse orientation of the strand.  The value $Z_A(L)$ equals the authors' value of the universal invariant of $L'''$ obtained by placing $L$ upside down (rotation of 180 degrees on the plane) and reversing the orientation of all strands.

We conclude that our convention and the three presented above are essentially equivalent up to the minor differences between the tangles $L,L', L'', L'''$.
\end{warning}

\subsection{Naturality of the universal invariant}

There are tangle operations which are natural  with respect to the Hopf algebra structure maps. Following \cite{habiro}, given an open tangle $L= L_1 \cup \cdots \cup L_n$ and $1 \leq i \leq n$, define $$S_{L_i}: A  \to A \qquad , \qquad  S_{L_i}(x) := \kappa^{h(i)} S(x) \kappa^{f(i)}$$ where $$ h(i):= \begin{cases} 0, & \text{if the head of $L_i$ points up}\\ -1, & \text{if the head of $L_i$ points down}  \end{cases}  $$ and $$ f(i):= \begin{cases} 0, & \text{if the foot of $L_i$ points up}\\ 1, & \text{if the foot of $L_i$ points down}  \end{cases} . $$

Likewise let $L^1, L^2, L^3,L^4$ be open tangles such that $s(L^1)=t(L^2)=w-+w'$ and  $s(L^3)=t(L^4)=w+-w'$ for some $w, w' \in \mathrm{Mon}(+,-)$, and assume that the tangle component at the first $\pm$ between $w$ and $w'$ is the $i$-th and the one at the second $\pm$ is the $(i+1)$-th (hence different).  Define morphisms $A \otimeshat A \to A$ as 
$$\begin{aligned}[t]
\mu_{L_i^1}^{\cupr} (x \otimes y) &:= x \kappa y\\
\mu_{L_i^3}^{\cupl} (x \otimes y) &:= y \kappa^{-1}x
\end{aligned} \qquad \qquad 
\begin{aligned}[t]
\mu_{L_i^2}^{\capl} (x \otimes y) &:= yx\\
\mu_{L_i^4}^{\capr}(x\otimes y)  &:=xy
\end{aligned}$$
extended linearly. Similarly define 
$$\begin{aligned}[t]
 \check{\mu}_{i}^{\cupr} (L^1) &:= L^1 \circ (\uparrow_w \otimes \cupr \otimes \uparrow_{w'} )\\
 \check{\mu}_{i}^{\cupl} (L^3) &:= L^3 \circ (\uparrow_w \otimes \cupl \otimes \uparrow_{w'} )
\end{aligned} \qquad \qquad 
\begin{aligned}[t]
 \check{\mu}_{i}^{\capl} (L^2) &:=   (\uparrow_w \otimes \capl \otimes \uparrow_{w'} ) \circ L^2\\
\check{\mu}_{i}^{\capr} (L^4) &:=   (\uparrow_w \otimes \capr \otimes \uparrow_{w'} ) \circ L^4
\end{aligned}$$

\begin{notation}
Given a linear map $g: A^{\otimeshat p} \to  A^{\otimeshat q}$, $n \geq 1$ and $1 \leq i \leq n$, write $$g_{(n,i)}=g_{(i)} :=\id^{\otimeshat (i-1)} \otimeshat g \otimeshat \id^{\otimeshat (n-i)}: A^{\otimeshat p+n-1} \to  A^{\otimeshat q+n-1}.$$
\end{notation}

The following result is standard. Here we have adapted to our setup.

\begin{proposition}\label{prop naturality Z_A}
Let $L= L_1 \cup \cdots \cup L_n$ be an $n$-component open tangle, and  let $L^1, L^2, L^3,L^4$ as above also have $n$ components. Let $L'$ be another open tangle. For $1 \leq i \leq n$, let $\check{\Delta}_i (L)$ (resp. $\check{\varepsilon}_i (L), \check{S}_i(L)$) be the tangle obtained from $L$ by doubling\footnote{With the convention that, if the $i$-th component runs from south to north, then the new $i$-th component lies to the east, the new $(i+1)$-th component lies to the west and the order for the rest of the components is shifted accordingly.} (resp. removing, orientation-reversing) the $i$-th component $L_i$.  Then 
\begin{align*}
Z_A ( \check{\mu}_{i}^{\bullet} (L^j))&= (\mu_{L_i^j}^{\bullet})_{(i)}(Z_A(L^j))\\
Z_A (L \otimes L' ) &= Z_A(L) \otimes Z_A(L')\\
Z_A (\check{\Delta}_i (L)) &= \Delta_{(i)}(Z_A(L))\\
Z_A (\check{\varepsilon}_i (L)) &= \varepsilon_{(i)}(Z_A(L))\\
Z_A (\check{S}_i (L)) &= (S_{L_i})_{(i)}(Z_A(L))
\end{align*}
where $\bullet = \cupr, \capl,  \cupl, \capr$ if $j=1,2,3,4$, respectively.
\end{proposition}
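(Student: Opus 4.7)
The plan is to verify each of the five assertions separately by directly inspecting what the bead-placement construction of $Z_A$ does in each case and then feeding the observation into the Hopf algebra axioms \eqref{eq Rmatrix_axiom1}--\eqref{eq Rmatrix_axiom3} and \eqref{eq balancing_axiom1}--\eqref{eq balancing_axiom4}. Every operation in the proposition is local, so in each case it suffices to understand how the beads in a small neighbourhood change while confirming that the reading rule for the remaining strands is preserved. The disjoint-union identity $Z_A(L \otimes L') = Z_A(L) \otimes Z_A(L')$ is then immediate, as the beads on $L$ and $L'$ live on disjoint components and the tensor factors are indexed by the components.

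The merging identities follow from tracing what the newly merged strand reads off. Attaching a $\cupr$ between the $i$-th and $(i+1)$-th strands (after passing to a rotational representative via \cref{lemma every tangle has rot diag}) places a $\kappa$-bead on the resulting full rotation and fuses the two component-words with the $\kappa$ wedged in the middle, reproducing $\mu_{L_i^1}^{\cupr}(x \otimes y) = x \kappa y$; the three remaining cases $\capl, \cupl, \capr$ are analogous, with the order of the factors and the sign on $\kappa$ dictated by \eqref{eq beads rot} and the strand orientations at the attachment point. The coproduct identity $Z_A(\check{\Delta}_i(L)) = \Delta_{(i)}(Z_A(L))$ uses the quasi-triangular axioms \eqref{eq Rmatrix_axiom1}--\eqref{eq Rmatrix_axiom2}: doubling the $i$-th strand turns each crossing it participates in into two parallel crossings, and the identity $(\Delta \otimeshat \id)R = R_{13} R_{23}$ (or the mirror identity for understrands) spreads the relevant bead across the two new copies, while $\Delta \kappa = \kappa \otimeshat \kappa$ handles the full rotations on that strand. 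The counit identity is similar: the derived consequences $(\epsilon \otimeshat \id)R = 1 = (\id \otimeshat \epsilon)R$ together with $\epsilon\kappa = 1$ collapse all beads on the erased strand.

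The antipode identity is the step where care is required, and I expect it to be the main technical obstacle. Upon reversing the orientation of $L_i$, each crossing bead converts into its antipode image via $(S \otimeshat \id)R = R^{-1}$, the reading order along the strand is reversed, and up to boundary corrections the resulting subword is $S$ applied to $Z_A(L)_{(i)}$. The subtlety is that reversing the orientation while keeping the diagram upward may force the insertion of a half-rotation at the foot and/or head of $L_i$, depending on whether the corresponding endpoint originally pointed up or down; these extra local pieces produce exactly the factors $\kappa^{f(i)}$ on the right and $\kappa^{h(i)}$ on the left in the definition of $S_{L_i}$. One checks the four sign combinations of $(f(i), h(i))$ by direct diagrammatic inspection and appeals to \eqref{eq balancing_axiom4} to compare $S^2$ with conjugation by $\kappa$ whenever an $S^2$ appears in simplifying the bead word, closing the argument.
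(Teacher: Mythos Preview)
The paper does not actually prove this proposition: it prefaces the statement with ``The following result is standard. Here we have adapted to our setup'' and gives no argument. Your sketch is the standard case-by-case verification (as found e.g.\ in Ohtsuki or Habiro) and is correct in outline; in particular you have correctly isolated the antipode case as the only subtle one, with the boundary $\kappa$-corrections accounting for the endpoint directions.
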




\subsection{Functoriality of the universal invariant}

If we restrict our attention to upwards tangles, then the universal tangle invariant can be viewed as a strong monoidal functor, as follows: given a (topological) ribbon Hopf algebra $A \in \mathsf{Mod}_{k[[h]]}$, let $\mathsf{A}$ denote the category whose objects are non-negative integers and 
$$\hom{\mathsf{A}}{n}{m} = \begin{cases}
    \emptyset, & n \neq m, \\ U(A^{\otimeshat n} \times \Sigma_n) , & n=m,
\end{cases}$$
where $U: \mathsf{Mod}_{k[[h]]} \times \mathsf{Grp} \to \mathsf{Set}$ denotes the canonical forgetful functor. The composite law is defined as follows: if $f=(u, \sigma)$ and $g= (v, \tau)$ are arrows of $\mathsf{A}$, then $$g \circ f := ( u \cdot \sigma^{-1}_* v \  , \  \tau \circ \sigma),$$
where $\sigma^{-1}_*: A^{\otimeshat n} \to  A^{\otimeshat n}$ is the map that permutes the factors induced by $\sigma^{-1}$ and the symmetric braiding of $\mathsf{Mod}_{k[[h]]}$, and $ \tau \circ \sigma$ denotes the composite of bijections of elements of the symmetric group. The associativity of this composite law easily follows from the fact that $\sigma^{-1}_*$ is an algebra map and $(\tau \circ \sigma)_* = \tau_* \circ \sigma_*$.  The identity of the object $n$ is the pair $(1 \otimeshat \cdots \otimeshat 1 \ , \ \id)$. This category is strict monoidal with respect to the addition of integers and the product $$f \otimes f' := (u \otimeshat u', \sigma \otimes \sigma' )$$
with $f' = (u', \sigma ') \in \hom{\mathsf{A}}{m}{m}$ and $\sigma \otimes \sigma'\in \Sigma_{n+m}$ the block permutation. The unit for the monoidal structure is the integer zero.

\begin{proposition}\label{prop functoriality}
Let $A$ be a (topological) ribbon Hopf algebra and let $\mathsf{A}$ be the strict monoidal category induced by $A$ as before. Then the universal tangle invariant induces a strong monoidal functor
$$Z_A : \mathcal{T}^{\mathrm{up}} \to \mathsf{A}.$$
\end{proposition}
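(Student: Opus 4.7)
The plan is to define the functor $Z_A$ on objects by sending a word $w \in \mathrm{Mon}(+)$ of length $n$ to the integer $n$, and on morphisms by sending an upwards tangle $L \in \hom{\mathcal{T}^{\mathrm{up}}}{+^n}{+^n}$ to the pair $(Z_A(L), \sigma_L) \in U(A^{\otimeshat n} \times \Sigma_n)$, where $\sigma_L \in \Sigma_n$ is the permutation recording which top endpoint is reached by the strand starting at each bottom endpoint. First, I will check that this respects identities: the trivial tangle $\uparrow_{+^n}$ carries no beads, so $Z_A(\uparrow_{+^n}) = 1^{\otimeshat n}$, and $\sigma_{\uparrow_{+^n}} = \id$, matching the identity $(1^{\otimeshat n},\id)$ of $n$ in $\mathsf{A}$.

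The central step is functoriality under composition. Given $L_1, L_2 \in \hom{\mathcal{T}^{\mathrm{up}}}{+^n}{+^n}$, the $i$-th component of $L_2 \circ L_1$ is obtained by traversing the $i$-th component of $L_1$ and, upon reaching the top of $L_1$ at position $\sigma_{L_1}(i)$, continuing along the $\sigma_{L_1}(i)$-th component of $L_2$. No new crossings, caps, cups or rotations are created by stacking, so the beads along the $i$-th component of $L_2 \circ L_1$ are precisely the beads along the $i$-th component of $L_1$ followed (in the order dictated by the orientation) by the beads along the $\sigma_{L_1}(i)$-th component of $L_2$. Writing $Z_A(L_1) = \sum a_1 \otimeshat \cdots \otimeshat a_n$ and $Z_A(L_2) = \sum b_1 \otimeshat \cdots \otimeshat b_n$, this translates into
\[ Z_A(L_2 \circ L_1) = \sum a_1 b_{\sigma_{L_1}(1)} \otimeshat \cdots \otimeshat a_n b_{\sigma_{L_1}(n)} = Z_A(L_1) \cdot (\sigma_{L_1}^{-1})_* Z_A(L_2), \]
while the induced permutation is $\sigma_{L_2} \circ \sigma_{L_1}$ by tracking endpoints. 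This matches exactly the composition $g \circ f = (u \cdot \sigma^{-1}_* v, \tau \circ \sigma)$ in $\mathsf{A}$.

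For the monoidal structure, the second identity of \cref{prop naturality Z_A} directly gives $Z_A(L_1 \otimes L_2) = Z_A(L_1) \otimeshat Z_A(L_2)$, and the permutation induced by a horizontal juxtaposition is the block permutation $\sigma_{L_1} \otimes \sigma_{L_2}$, so the pair assignment is compatible with $\otimes$ on both sides. The empty tangle (unit of $\mathcal{T}^{\mathrm{up}}$) is sent to the integer $0$ with the trivial permutation and empty tensor, which is the unit of $\mathsf{A}$. Since both monoidal structures are strict, all coherence isomorphisms are identities and no extra comparison is required, so the functor is strong monoidal.

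The main obstacle I expect is purely bookkeeping: one needs to track carefully that the twist $\sigma^{-1}_*$ in the composition law of $\mathsf{A}$ is precisely the reindexing forced by how strands of $L_1$ are reconnected to strands of $L_2$. Once this is matched with Ohtsuki's convention recalled in Section~\ref{section The universal tangle invariant} (which reads beads along the orientation, component-by-component), the verification is mechanical and everything else follows from \cref{prop naturality Z_A} and the Reidemeister invariance of $Z_A$.
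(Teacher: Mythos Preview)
Your proof is correct and follows exactly the same approach as the paper: define $Z_A$ on objects by $+^n \mapsto n$ and on morphisms by $L \mapsto (Z_A(L),\sigma_L)$, then verify functoriality and monoidality. The paper's own proof is a single sentence (``that this passage is functorial and strong monoidal follows directly from the construction of $\mathsf{A}$''), so your write-up simply spells out the bead-concatenation check that the paper leaves implicit. One small caveat: your description of $\sigma_L$ (``which top endpoint is reached by the strand starting at each bottom endpoint'') is the \emph{inverse} of the paper's convention, where $\sigma_L(i)$ is the label of the component hitting the target at position $i$; this does not affect the argument but you should align the convention with the paper's when invoking the composition law of $\mathsf{A}$.
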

\begin{proof}
Given  an upwards $n$-component tangle $L$, recall that we always assume that the components are ordered in increasing order according to the source $+^n \in \mathcal{T}$ of $L$. Write $ \sigma_L \in \Sigma_n$ for the permutation induced by $L$, that is $\sigma (i)$ is the label of the component that hits the target $+^n \in \mathcal{T}$ of $L$ at the $i$-th position. 

    The functor is defined as follows: the object $+^n$ is mapped to $n \geq 0$, and an upwards tangle $L$ as before is mapped to the pair $(Z_A (L), \sigma_L)$. That this passage is functorial and strong monoidal follows directly from the construction of $\mathsf{A}$. 
\end{proof}

\begin{remark}
Observe that, when studying the universal invariant, it is much more natural to consider rotational tangles than general tangles in a cube. Indeed the universal $R$-matrix and its inverse $R^{\pm 1}$ as well as the balancing element and its inverse $\kappa^{\pm 1}$ have geometric counterparts $X^{\pm}$ and $C^{\pm}$, and there are no other building blocks to consider. Furthermore, if one relaxes the notion of rotational tangle (roughly, allowing virtual crossings) then one can view the universal invariant as a certain monoidal natural transformation, which generalises both \cref{prop naturality Z_A} and \cref{prop functoriality}. This will be the topic of  a forthcoming paper \cite{becerra}.
\end{remark}

\subsection{Relation with Reshetikhin-Turaev invariants} \label{subsec relation with classical quantum invariants}

It is well-known that the universal invariant $Z_A$ of a  ribbon Hopf algebra $A$ over $k[[h]]$  dominates the Resheti\-khin-Turaev invariants produced with the ribbon category $\mathsf{Mod}_A$ of finite-rank left $A$-modules \cite{ohtsukibook, turaev}, where $V \in \mathsf{Mod}_A$ is of ``finite-rank'' if $V \cong \bigoplus_{\mathrm{finite}} k[[h]]$ as $k[[h]]$-module.

More precisely, let $\mathcal{T}_{\mathsf{Mod}_A}$ be the ribbon category of tangles whose components are coloured by elements of $\mathsf{Mod}_A$ and let $RT: \mathcal{T}_{\mathsf{Mod}_A} \to \mathsf{Mod}_A$ be the Reshetikhin-Turaev functor invariant. Given  an upwards ordered $n$-component tangle $L$, suppose by convenience that the components are ordered in increasing order according to the source $+^n \in \mathcal{T}$ of $L$. Write $\sigma = \sigma_L \in \Sigma_n$ for the permutation induced by $L$, that is $\sigma (i)$ is the label of the component that hits the target $+^n \in \mathcal{T}$ of $L$ at the $i$-th position. 

Given  $(V_i, \rho_{V_i}) \in \mathsf{Mod}_A$, $1 \leq i \leq n$, if we write $$RT(L; V_1, \ldots , V_n) \in \hom{A}{V_1 \otimeshat \cdots \otimeshat V_n}{V_{\sigma(1)} \otimeshat \cdots \otimeshat V_{\sigma(n)}} $$ for the value of $L$ under $RT$ where the $i$-th component has been coloured with $V_i$,  then we have
\begin{equation}
RT(L; V_1, \ldots , V_n) = \sigma_* \circ (\rho_{V_1}\otimeshat \cdots \otimeshat \rho_{V_n})(Z_A(L)),
\end{equation}
where $\sigma_* : V_1 \otimeshat \cdots \otimeshat V_n  \to V_{\sigma(1)} \otimeshat \cdots \otimeshat V_{\sigma(n)}$ is the map induced by $\sigma$ and the symmetric braiding of $\mathsf{Mod}_{k[[h]]}$ that permutes the factors.

In particular, the universal invariant for upwards tangles can also be used to recover the Reshetikhin-Turaev of links.  Indeed any link is the braid closure $\mathrm{cl}(L)$ of a string link $L$, that is, an upwards tangle $L$ with the property that its induced permutation $\sigma_L$ is the identity. If $(V, \rho_{V}) \in \mathsf{Mod}_A$, write $\mathrm{tr}_q^V : A \to k[[h]]$ for the \textit{quantum trace} of $V$, $\mathrm{tr}_q^V (x) := \mathrm{tr}^V (\kappa x)$, where $\mathrm{tr}^V : \mathrm{End}_{k[[h]]}(V) \to  k[[h]]$ is the  (categorical) trace. Then the following equality holds: 
\begin{equation}\label{eq RT ZA}
RT(\mathrm{cl}( L); V_1, \ldots , V_n) = (\mathrm{tr}_q^{V_1} \otimeshat \cdots \otimeshat \mathrm{tr}_q^{V_n})(Z_A(L)),
\end{equation}
where the order of the components of $\mathrm{cl}( L)$ is induced by the order of $L$, and we have made the identification $\hom{k[[h]]}{k[[h]]}{k[[h]]} \cong k[[h]]$.

\section{Gaussian calculus} \label{sec gauss}

In this section we introduce the central algebraic object of this paper, a topological ribbon Hopf algebra $\D$. We also provide an extensive discussion to motivate the usage of generating series and  give concrete examples. After that we recall Bar-Natan - van der Veen's novel technique coined \textit{Gaussian calculus} for the algebra $\D$ \cite{barnatanveengaussians}. 

Generating series and the Gaussian calculus will be essential tools in the study of the universal tangle invariant subject to $\D$. As we saw for this algebra (and for many other topological ribbon Hopf algebras), the universal $R$-matrix  is given by an infinite sum,  so that the expression of $Z_\D(K)$ is unmanageable and the invariant becomes hardly possible to compute. The Gaussian calculus will provide a strategy to effectively compute the universal invariant modulo a power of $\varepsilon$ in polynomial time.

\subsection{The algebra $\mathbb{D}$}\label{subsec algebra D}

Let $X$ be a set, and denote by $k\langle X \rangle$ the free $k$-algebra generated by $X$. The \textit{topologically free $k[[h]]$-algebra generated by $X$} is the $k[[h]]$-algebra $k\langle X \rangle[[h]]$  of formal series with coefficients in $k\langle X \rangle$, endowed with the $h$-adic topology as a topologically free $k[[h]]$-module. If $R$ is a subset of  $k\langle X \rangle[[h]]$, the quotient of $k\langle X \rangle[[h]]$ by the $h$-adic closure of the two-sided ideal generated by $R$ is called the\textit{ $k[[h]]$-algebra topologically generated by $X$ subject to the relations $R$.}

For the rest of the subsection we fix $k$ to be the ring $\Q_\varepsilon:=\Q[\varepsilon]$ of rational polynomials in the unknown $\varepsilon$.

We now define our main algebraic object. The algebra $\D$  is the $\Qeh$-algebra topologically generated by $\bm{y,t,a,x}$ subject to the relations
\begin{equation}
[\bm{t},-]=0 \quad , \quad [\bm{a},\bm{x}]=\bm{x} \quad , \quad [\bm{a},\bm{y}]=-\bm{y} \quad , \quad [\bm{x},\bm{y}]_q=\frac{1-\bm{A}^2\bm{T}}{h},
\end{equation}
where $q:= e^{h \varepsilon}$, $[u,v]:= uv-vu$, $[u,v]_q:= uv-q\cdot vu$, $\bm{A}:= e^{-h \varepsilon \bm{a}}$ and $\bm{T}:= e^{-h \bm{t} }$. The exponential is to be understood as a power series, $e^u= 1+u+u^2/2+ \cdots$.

The following proposition describes the structure of $\D$. Set $[n]_q := \frac{1-q^n}{1-q}$ and $[n]_q! := [n]_q [n-1]_q \cdots [1]_q$, and write $\bm{b}:= \bm{t}+\bm{a}\varepsilon$ (which can obviously replace $\bm{t}$ as a generator).

\begin{proposition}[\cite{barnatanveengaussians}]\label{prop Hopf structure D}
The algebra $\D$ is a topological ribbon Hopf algebra over $\Qeh$. The comultiplication and the antipode are determined by 
\begin{align*}
\Delta (\bm{y})&=  1 \otimeshat \bm{y}+ \bm{y} \otimeshat \bm{TA} \qquad &, \qquad S(\bm{y}) &= -\bm{y}(\bm{TA})^{-1}\\
\Delta (\bm{t})&= \bm{t} \otimeshat 1+ 1\otimeshat \bm{t} \qquad &, \qquad S(\bm{t})&= -\bm{t}\\
\Delta (\bm{a})&= \bm{a} \otimeshat 1+ 1\otimeshat \bm{a} \qquad &, \qquad S(\bm{a})&= -\bm{a}\\
\Delta (\bm{x})&= 1\otimeshat\bm{x}  +\bm{x}\otimeshat \bm{A}  \qquad &, \qquad S(\bm{x})&= -\bm{x}\bm{A}^{-1},
\end{align*}
whereas  the counit $\epsilon$ maps all the generators to $0$. The universal $R$-matrix and the balancing element are given by
$$ \bm{R}=\sum_{n,m=0}^\infty  \frac{h^{n+m}}{[m]_q!n!}  \bm{y}^m \bm{b}^n \otimeshat \bm{a}^n \bm{x}^m \qquad , \qquad \bm{\kappa}= +\sqrt{\bm{A}^2\bm{T}}.$$
\end{proposition}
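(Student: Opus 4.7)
The plan is a direct, essentially mechanical verification of the Hopf, quasi-triangular, and ribbon axioms on the topological generators, organised in four stages.

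First I would check that the stated formulas for $\Delta$, $\epsilon$ and $S$ extend to well-defined maps on $\D$ by confirming that they respect the four defining relations. Centrality of $\bm{t}$ is preserved because $\Delta(\bm{t})$ is primitive and $S(\bm{t})=-\bm{t}$; the relations $[\bm{a},\bm{x}]=\bm{x}$ and $[\bm{a},\bm{y}]=-\bm{y}$ follow from the fact that $\bm{A}=e^{-h\varepsilon\bm{a}}$ is grouplike and the corresponding exponentials commute. The only nontrivial check is $[\bm{x},\bm{y}]_q = h^{-1}(1-\bm{A}^2\bm{T})$, where one expands $\Delta(\bm{x})\Delta(\bm{y}) - q\,\Delta(\bm{y})\Delta(\bm{x})$ in the four tensor slots and uses that $\bm{A}^2\bm{T}$ is grouplike. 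Coassociativity, the counit property, and the antipode identity $\mu(S\otimeshat\id)\Delta = \eta\epsilon = \mu(\id\otimeshat S)\Delta$ are then each immediate one-line checks on the four generators (for instance $\mu(S\otimeshat\id)\Delta(\bm{y}) = S(1)\bm{y} + S(\bm{y})\bm{TA} = \bm{y}-\bm{y}(\bm{TA})^{-1}\bm{TA}=0$).

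Second, for quasi-triangularity I would first note that $\bm{R}$ lies in $\D\otimeshat \D$ because each term carries $h^{n+m}$, ensuring $h$-adic convergence. I would factorise $\bm{R}$ formally as the product of a ``nilpotent'' part $E_q(h\,\bm{y}\otimes \bm{x})$ (a $q$-exponential of a suitable $q$-commuting element) and a ``Cartan'' part $e^{h\,\bm{b}\otimes\bm{a}}$. The axioms \eqref{eq Rmatrix_axiom1}--\eqref{eq Rmatrix_axiom2} then reduce to the classical $q$-exponential identity $E_q(u+v)=E_q(u)E_q(v)$ when $uv=q\,vu$, together with the usual exponential identity for the Cartan factor, applied to $\Delta(\bm{y}\otimeshat\bm{x})$ and $\Delta(\bm{b}\otimeshat\bm{a})$. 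For \eqref{eq Rmatrix_axiom3}, it suffices to check the identity $\tau\Delta(z) = \bm{R}\Delta(z)\bm{R}^{-1}$ on $z=\bm{y},\bm{t},\bm{a},\bm{x}$; after commuting the generators past the components of $\bm{R}$ using $[\bm{a},\bm{x}^m]=m\bm{x}^m$, $[\bm{a},\bm{y}^m]=-m\bm{y}^m$ and the centrality of $\bm{t}$, the telescoping collapses to a finite calculation.

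Third, for the ribbon axioms, the grouplike identities \eqref{eq balancing_axiom1}--\eqref{eq balancing_axiom2} are immediate from $\bm{\kappa}=e^{-h(\varepsilon\bm{a}+\bm{t}/2)}$ and the primitivity of $\bm{a},\bm{t}$. For \eqref{eq balancing_axiom3}--\eqref{eq balancing_axiom4}, I would compute the Drinfeld element $u=\mu(S\otimeshat\id)\bm{R}_{21}$ explicitly as a series in $\bm{y}^m\bm{x}^m$ times Cartan factors; this yields $u S(u^{-1}) = \bm{A}^2\bm{T} = \bm{\kappa}^2$. The identity $S^2(z)=\bm{\kappa}z\bm{\kappa}^{-1}$ is then checked on generators: it holds trivially on $\bm{a},\bm{t}$, and on $\bm{x},\bm{y}$ the conjugation by $\bm{\kappa}=e^{-h\varepsilon\bm{a}-h\bm{t}/2}$ produces the scalar $e^{\mp h\varepsilon}=q^{\mp 1}$, matching $S^2(\bm{x})=\bm{A}\bm{x}\bm{A}^{-1}$ and $S^2(\bm{y})=\bm{A}^{-1}\bm{y}\bm{A}$.

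I expect the main technical obstacle to be the bookkeeping involved in verifying \eqref{eq Rmatrix_axiom1}--\eqref{eq Rmatrix_axiom2}: one must cleanly separate the $q$-exponential factor from the purely commutative Cartan factor, and ensure the $q$-commutation relations used in the Hall--Andruskiewitsch type argument for $E_q(u+v)=E_q(u)E_q(v)$ are satisfied after applying $\Delta$. The explicit evaluation of $u$ and its square-root relation to $\bm{\kappa}$ is the second delicate point, since it is what singles out the precise sign/choice $\bm{\kappa}=+\sqrt{\bm{A}^2\bm{T}}$ in the statement.
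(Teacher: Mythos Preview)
The paper does not give its own proof of this proposition; it is quoted from \cite{barnatanveengaussians}. What the paper does supply is the remark immediately following the statement, which explains the conceptual origin of the structure: $\D$ is realised as the Drinfeld double $\mathcal{D}(\mathbb{B})$ of the quantised Borel $\mathbb{B}$, with $\mathbb{A}$ its restricted dual. For a Drinfeld double, the Hopf structure and the quasi-triangular axioms \eqref{eq Rmatrix_axiom1}--\eqref{eq Rmatrix_axiom3} are automatic, and the $R$-matrix is canonically $\sum_i e_i\otimeshat e^i$ over dual PBW bases --- which here gives exactly $\sum_{n,m} \frac{h^{n+m}}{[m]_q! n!}\,\bm{y}^m\bm{b}^n\otimeshat\bm{a}^n\bm{x}^m$. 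This is the route taken in the cited reference, and it replaces essentially all of your stages one and two by general theory; only the ribbon/balancing axioms \eqref{eq balancing_axiom1}--\eqref{eq balancing_axiom4} require a separate check, roughly along the lines you sketch.

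Your direct-verification approach is a legitimate alternative and would succeed, but one technical point deserves a warning. Your proposed factorisation $\bm{R}=E_q(h\,\bm{y}\otimes\bm{x})\cdot e^{h\,\bm{b}\otimes\bm{a}}$ does not literally reproduce the ordered monomials $\bm{y}^m\bm{b}^n\otimeshat\bm{a}^n\bm{x}^m$, because neither $\bm{y}$ and $\bm{b}$ nor $\bm{a}$ and $\bm{x}$ commute (one has $[\bm{b},\bm{y}]=-\varepsilon\bm{y}$ and $[\bm{a},\bm{x}]=\bm{x}$). Either ordering of the two exponential factors produces a mismatch in one tensor leg. One can repair this with reordering identities, but the bookkeeping becomes exactly the sort of thing the Drinfeld-double machinery is designed to absorb.
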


In the above proposition, the positive square root is again to be understood as a formal power series, $\sqrt{1+u} = 1+u/2-u^2/8+ \cdots$.

\begin{remark}
The algebra $\D$ is the Drinfeld double of a quantum enveloping algebra \cite{chari_pressley}, in the same spirit as the Jimbo-Drinfeld  $U_h(\mathfrak{sl}_2)$ algebra ``almost'' is. Even more, the algebra $\D$ arises from the simplest non-trivial Lie algebra (over $\Q$), namely the unique non-abelian 2-dimensional Lie algebra $\mathfrak{h}$, with bracket $[e_1,e_2]= e_1$.  More precisely, let $\mathbb{B}$ be the $\Qeh$-algebra topologically generated by $\bm{y,b}$  subject to the single relation $[\bm{y}, \bm{b}]= \varepsilon \bm{y}$. This happens to be a topological Hopf algebra by setting
\begin{align*}
\Delta (\bm{y})&= 1 \otimeshat \bm{y} +\bm{y} \otimeshat \bm{B}  \qquad &, \qquad S(\bm{y}) &= -\bm{y}\bm{B}^{-1}\\
\Delta (\bm{b})&= \bm{b} \otimeshat 1+ 1\otimeshat \bm{b} \qquad &, \qquad S(\bm{b})&= -\bm{b}.
\end{align*}
where $\bm{B}:= e^{-h\bm{b}}$. Observe that this is a quantisation of the universal enveloping algebra $U(\mathfrak{h})$ of $\mathfrak{h}$ with its standard Hopf algebra structure (after setting $\varepsilon =1)$. It can readily be seen that $\mathbb{B}$ is topologically free, concretely the \textit{ordering} $\Qeh$-homomorphism $$\mathcal{O}: \Qe [y,b][[h]] \toiso \mathbb{B} \qquad , \qquad y^p b^q \mapsto \bm{y}^p \bm{b}^q $$ is an isomorphism of topological $\Qeh$-modules (note that the obvious isomorphism $ \Qe [y,b] \cong \bigoplus_{p,q\geq 0} \Qe  y^p b^q$ expresses that $\Qe [y,b]$ is a free $\Qe$-module).

In analogy with the $U_h(\mathfrak{sl}_2)$ case \cite{chari_pressley}, let $\widetilde{\mathbb{B}}$ be the $\Qeh$-subalgebra generated by $\widetilde{\bm{y}} := h\bm{y}$ and $\widetilde{\bm{b}} := h\bm{b}$.  Set $\mathbb{A} :=  \hom{\Qe[[h]]}{\widetilde{\mathbb{B}}}{\Qeh} $ for the dual module of $\widetilde{\mathbb{B}}$. A careful analysis shows that $\mathbb{A}$ is topologically generated by two elements $\bm{a,x}$ subject to the single relation $[\bm{a},\bm{x}]= \bm{x}$, and  setting  
\begin{align*}
\Delta (\bm{a})&= \bm{a} \otimeshat 1+ 1\otimeshat \bm{a} \qquad &, \qquad S(\bm{a})&= -\bm{a}\\
\Delta (\bm{x})&= \bm{x} \otimeshat 1 + \bm{A} \otimeshat \bm{x}  \qquad &, \qquad S(\bm{x}) &= -\bm{A}^{-1}\bm{x}
\end{align*}
where $\bm{A}:= e^{-h \varepsilon \bm{a}} $, turns $\mathbb{A}$ into a topological Hopf algebra. Besides, $\mathbb{A}$ is also a topologically free $\Qeh$-module via the ordering map
$$\mathcal{O}: \Qe [a,x][[h]] \toiso \mathbb{B} \qquad , \qquad a^p x^q \mapsto \bm{a}^p \bm{x}^q.$$

Then the algebra $\D$ arises as the Drinfeld  double $\D := \mathcal{D}(\mathbb{B})= \mathbb{A} \otimeshat \mathbb{B}$ with the Hopf algebra structure determined by the Hopf algebra structures of $\mathbb{A}$ and $\mathbb{B}$ and the natural pairing  $\langle -,-\rangle: \mathbb{A} \otimeshat \widetilde{\mathbb{B}} \to \Qeh$.
\end{remark}

The following proposition directly follows from the discussion above and \eqref{eq top free tensor}:

\begin{proposition}\label{prop ordering D}
The algebra $\D$ is topologically free. More precisely, the ordering map
$$\mathcal{O}: \Qe [y,t,a,x][[h]] \toiso \mathbb{D} \qquad , \qquad y^p t^q a^r x^s \mapsto \bm{y}^p \bm{t}^q \bm{a}^r \bm{x}^s $$
is an isomorphism of topological $\Qeh$-modules.
\end{proposition}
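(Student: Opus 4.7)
The plan is to bootstrap from the PBW isomorphisms for $\mathbb{A}$ and $\mathbb{B}$ recalled in the preceding remark and exploit the Drinfeld double decomposition $\D = \mathcal{D}(\mathbb{B})$. By construction of the Drinfeld double, the multiplication map gives a topological $\Qeh$-module isomorphism $\mathbb{A} \otimeshat \mathbb{B} \toiso \D$. Combining this with the two ordering isomorphisms for $\mathbb{A}$ and $\mathbb{B}$ and the compatibility \eqref{eq top free tensor} of $\otimeshat$ with topological freeness, one obtains at once a topological $\Qeh$-module isomorphism $\Qe[a,x,y,b][[h]] \toiso \D$ sending $a^p x^q y^r b^s$ to $\bm{a}^p \bm{x}^q \bm{y}^r \bm{b}^s$.

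The crucial step is then to reorder the generators to get the $(y,b,a,x)$ layout, which amounts to showing that the opposite multiplication map $\mathbb{B} \otimeshat \mathbb{A} \to \D$ is also a topological $\Qeh$-module isomorphism. This is a general feature of Drinfeld doubles: the antipode $S$ of $\D$ is a continuous $\Qeh$-module bijection that preserves the sub-Hopf-algebras $\mathbb{A}$ and $\mathbb{B}$, and it interchanges their positions in the factorisation (up to the individual antipodes of $\mathbb{A}$ and $\mathbb{B}$), from which the swapped factorisation $\mathbb{B} \otimeshat \mathbb{A} \toiso \D$ follows. A direct computational alternative would be to verify explicitly that the commutation relations of $\D$, in particular $[\bm{x},\bm{y}]_q = (1-\bm{A}^2\bm{T})/h$ whose right-hand side lies in the closed subalgebra topologically generated by $\bm{a},\bm{t}$, encode an $h$-adically triangular and hence invertible change of basis between the two orderings. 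This produces the intermediate PBW isomorphism $\Qe[y,b,a,x][[h]] \toiso \D$, $y^p b^q a^r x^s \mapsto \bm{y}^p \bm{b}^q \bm{a}^r \bm{x}^s$.

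To finish, we switch from the generator $\bm{b}$ to $\bm{t} = \bm{b}-\bm{a}\varepsilon$. Since $\bm{t}$ is central in $\D$, the elements $\bm{a}$ and $\bm{b}$ commute there, so the binomial expansion $\bm{t}^q = \sum_{k=0}^q \binom{q}{k} (-\varepsilon)^k \bm{b}^{q-k} \bm{a}^k$ holds term-by-term. Composing the isomorphism from the previous paragraph with the elementary polynomial ring automorphism $\Qe[y,t,a,x][[h]] \toiso \Qe[y,b,a,x][[h]]$ induced by $t \mapsto b-a\varepsilon$, and tracking the binomial expansion through on both sides, one verifies that the monomial $y^p t^q a^r x^s$ is mapped precisely to $\bm{y}^p \bm{t}^q \bm{a}^r \bm{x}^s$, as required. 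The main obstacle in this strategy is the middle step: carefully justifying the $\mathbb{B}$-first factorisation in the topological setting, since the Drinfeld double literature usually emphasises only the $\mathbb{A}$-first decomposition, and the infinite sums entering the $q$-commutator must be controlled under $h$-adic convergence for the triangularity argument to go through.
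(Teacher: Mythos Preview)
Your approach is essentially the same as the paper's: both rely on the Drinfeld double decomposition $\D = \mathbb{A} \otimeshat \mathbb{B}$, the PBW ordering isomorphisms for each factor, and the compatibility \eqref{eq top free tensor} of $\otimeshat$ with topological freeness. The paper's justification is a single sentence (``directly follows from the discussion above and \eqref{eq top free tensor}''), which literally only produces the $(a,x,y,b)$-ordered basis coming from $\mathbb{A}\otimeshat\mathbb{B}$ and leaves the passage to the stated $(y,t,a,x)$ ordering entirely implicit.

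You go further and actually address that gap: the antipode argument for swapping the factorisation to $\mathbb{B}\otimeshat\mathbb{A}$ is correct (since $S$ is an anti-algebra automorphism preserving each sub-Hopf-algebra, the multiplication map in the reversed order is a composite of bijections), and the final change $\bm{b}\rightsquigarrow\bm{t}$ via the commuting pair $\bm{a},\bm{b}$ is a unitriangular substitution, hence invertible. So your proof is correct and in fact more complete than what the paper records; the only comment is that your closing caveat about the ``main obstacle'' is overly cautious --- the antipode argument you already gave handles the swapped factorisation cleanly without needing the $h$-adic triangularity computation.
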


\begin{remark}
The topological Hopf algebra $\D_{\varepsilon=1}:= \D / (\varepsilon -1)\D$ can be seen a quantisation of $U(\mathfrak{gl}_2)$, the universal enveloping algebra of the (rational) Lie algebra $\mathfrak{gl}_2$, in the same terms as in  \cref{ex quantisation}. This follows easily using generators $H,X,Y,Z$ for $\mathfrak{gl}_2$, where $H,X,Y$ are a $\mathfrak{sl}_2$-triple and $Z$ is central, and replacing $H$ by the generator $H'=(H+Z)/2$, since the $q$-commutator $[\bm{x},\bm{y}]_q$ becomes  $[\bm{x},\bm{y}]= 2\varepsilon \bm{a} + \bm{t}$ for $h=0$.

Even more, the Lie algebra projection $\mathfrak{gl}_2  \longtwoheadrightarrow  \mathfrak{sl}_2$ can be upgraded to a surjective Hopf algebra homomorphism $\D_{\varepsilon=1} \longtwoheadrightarrow U_h(\mathfrak{sl}_2)$ with kernel the submodule generated by $\bm{t}$.
\end{remark}

The previous remark together with \eqref{eq RT ZA} imply

\begin{corollary}[\cite{barnatanveengaussians}]\label{cor ZD recovers}
The universal invariant $Z_\D$ of $\D$ determines the universal invariant $Z_{U_h(\mathfrak{sl}_2)}$ of $U_h(\mathfrak{sl}_2)$ and hence all coloured Jones polynomials.
\end{corollary}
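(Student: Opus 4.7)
The plan is to build the corollary from two general principles: naturality of the universal invariant under maps of ribbon Hopf algebras, and the relation \eqref{eq RT ZA} between $Z_A$ and Reshetikhin--Turaev invariants.

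First, I would promote the surjective Hopf algebra homomorphism $\pi: \D_{\varepsilon=1} \twoheadrightarrow U_h(\mathfrak{sl}_2)$ from the preceding remark to a morphism of topological \emph{ribbon} Hopf algebras. This amounts to checking that $\pi$ sends the universal $R$-matrix and the balancing element of $\D_{\varepsilon=1}$ to those of $U_h(\mathfrak{sl}_2)$. Using \cref{prop Hopf structure D}, under the identifications $\varepsilon=1$ and $\bm t \mapsto 0$, the element $\bm{b}=\bm{t}+\bm{a}\varepsilon$ becomes $\bm a$, and the closed form $\bm R = \sum_{n,m} \tfrac{h^{n+m}}{[m]_q!\, n!}\, \bm y^m \bm b^n \otimeshat \bm a^n \bm x^m$ collapses to a quantum exponential of $\bm y\otimes \bm x$ together with a $\bm a \otimes \bm a$ Gaussian factor, which matches the standard Drinfeld--Jimbo $R$-matrix of $U_h(\mathfrak{sl}_2)$. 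Similarly $\bm\kappa = +\sqrt{\bm A^2 \bm T}$ specialises to the standard balancing element $e^{-h \bm a}$ after killing $\bm t$. Thus $\pi$ is a morphism in the category of topological ribbon Hopf algebras.

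Second, I would invoke naturality of the universal tangle invariant: for any morphism $f: A \to B$ of topological ribbon Hopf algebras and any $n$-component upwards tangle $L$, one has $f^{\otimeshat n}(Z_A(L)) = Z_B(L)$. This is immediate from the construction in \cref{section The universal tangle invariant}, because the beads labelling crossings and rotations are obtained from $R^{\pm 1}$ and $\kappa^{\pm 1}$, which $f$ preserves, and the multiplication along each strand is intertwined by $f$ since it is an algebra map. Applying this to $\pi$ and a knot $K$, we get
\begin{equation*}
\pi\bigl( Z_\D(K)|_{\varepsilon=1} \bigr) = Z_{U_h(\mathfrak{sl}_2)}(K),
\end{equation*}
so $Z_\D(K)$ indeed determines $Z_{U_h(\mathfrak{sl}_2)}(K)$.

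Finally, I would use \eqref{eq RT ZA}: for any finite-dimensional $U_h(\mathfrak{sl}_2)$-module $V_N$ (the $N$-dimensional irreducible representation), the colored Jones polynomial $J_N(K)$ is obtained as the quantum trace $\mathrm{tr}_q^{V_N}\bigl(Z_{U_h(\mathfrak{sl}_2)}(K)\bigr)$, since every knot is the braid closure of a string link. Composing with the previous equality shows that $Z_\D(K)$ determines every $J_N(K)$. The main obstacle is the first step—verifying that the explicit $R$-matrix of $\D$ in \cref{prop Hopf structure D} specialises correctly to the Drinfeld--Jimbo $R$-matrix under $\pi$; once that is checked, the remaining two steps are formal consequences of the framework developed earlier in the paper.
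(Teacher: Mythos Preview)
Your proposal is correct and follows exactly the route the paper indicates: the paper simply states that the corollary follows from ``the previous remark together with \eqref{eq RT ZA}'', and you have spelled out what that sentence means in practice --- namely, that the surjection $\D_{\varepsilon=1}\twoheadrightarrow U_h(\mathfrak{sl}_2)$ must be a morphism of ribbon Hopf algebras so that naturality of $Z_A$ applies, and then \eqref{eq RT ZA} recovers the coloured Jones polynomials via quantum trace. Your expansion is faithful to the paper's terse justification.
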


\subsection{A motivating example for Gaussian expressions}\label{sect mot example}
We now let $k$ be a ring which is \textit{uniquely divisible} as an abelian group, ie  $\Q \subseteq k$. As pointed out above, we are mostly interested in computing the universal invariant $Z_A$ for a choice of topological ribbon Hopf algebra $A$ over some power series ring $k[[h]]$ in an effective way. If we are to compute the universal invariant of a knot from a diagram with $r$ beads, note that we will have to perform the composites 
\begin{equation}\label{eq composites A^tensors}
k[[h]] \to A^{\otimeshat r} \overset{\mu \otimeshat \id}{\to} A^{\otimeshat r-1} \overset{\mu \otimeshat \id}{\to} A^{\otimeshat r-2} \overset{\mu \otimeshat \id}{\to} \cdots \overset{\mu \otimeshat \id}{\to} A
\end{equation}
where the first arrow maps $1 \in k[[h]]$ to the element determined by the $r$ beads. The main issue is that this composite is in general extremely hard to understand, $A$ being noncommutative and of infinite rank. This means that instead we would like to have an alternative description of the sets  $\hom{k[[h]]}{A^{\otimeshat p}}{A^{\otimeshat q}}$ and the composite law $$\hom{k[[h]]}{A^{\otimeshat p}}{A^{\otimeshat q}} \otimeshat \hom{k[[h]]}{A^{\otimeshat q}}{A^{\otimeshat r}} \overset{\circ}{\to} \hom{k[[h]]}{A^{\otimeshat p}}{A^{\otimeshat r}},$$ in which the previous sequence of composites becomes feasible.

As an illustrative example let us consider $V,W$ free $k$-modules of finite rank (for instance two finite-dimensional vector spaces).  The canonical map $$V^* \otimes W \toiso \hom{k}{V}{W} \qquad , \qquad \xi \otimes w \mapsto (v \mapsto \xi (v) w)$$ is an isomorphism with inverse given by sending a linear map $T$ to $\sum_{i=1}^{n} \omega_i \otimes T(e_i)$, where $(e_1, \ldots , e_n)$ is a basis of $V$ and $(\omega_1, \ldots , \omega_n)$ is its dual basis. Under this isomorphism, the composite of maps is precisely given by the dual pairing $\langle -,- \rangle$ between a $k$-module and its dual,
$$\begin{tikzcd}
\hom{k}{V}{W}  \otimes \hom{k}{W}{U}   \rar{\circ} & \hom{k}{V}{U}\\
V^* \otimes W \otimes W^* \otimes U \rar{\id \otimes \langle -,- \rangle \otimes \id} \arrow{u}{\cong} & V^* \otimes U \arrow{u}{\cong}
\end{tikzcd}$$

Now suppose that $V,W$ are free $k$-modules of infinite countable rank, and for convenience write $V= k[x]$ and $W=k[y]$. Note that $V^* \cong k[[\xi]]$, for $$\hom{k}{\bigoplus_i k\cdot x^i}{k} \cong \prod_i \hom{k}{k\cdot x^i}{k} \cong \prod_i k\cdot \xi^i.$$  Now the canonical map $$  k[x]^* \otimes k[y] \cong k[[\xi]] \otimes k[y] \to \hom{k}{k[x]}{k[y]} $$ which is in these terms given by $$ \left( \sum_i  \lambda_i \xi^i \right) \otimes p(y) \mapsto \left( x^i \mapsto \lambda_i p(y) \right)$$ is still injective but fails to be an isomorphism, for the ``renaming'' map $T(p(x)):= p(y)$ cannot lie in the image. To promote this map to an isomorphism we are then forced to extend it to its $\xi$-adic completion $\widehat{k[[\xi]] \otimes k[y]} \cong k[y][[\xi]]$,
$$\begin{tikzcd}
k[y][[\xi]] \arrow[dashed]{dr}  &  \\
k[[\xi]] \otimes k[y] \arrow[hook]{u} \arrow{r} &  \hom{k}{k[x]}{k[y]} 
\end{tikzcd}$$
where the dashed arrow must be given, by the commutativity of the diagram, by 
\begin{equation}\label{eq correspondence hom power series}
\sum_i p_i(y) \xi^i  \mapsto (x^i \mapsto p_i(y)).
\end{equation}
Observe that, even though we passed to the completion of $k[[\xi]] \otimes k[y]$, we still have the duality between $\xi$ and $x$ as in the base case. The dashed arrow is indeed an isomorphism, for 
$$\hom{k}{\bigoplus_i k\cdot x^i}{k[y]} \cong \prod_i \hom{k}{k\cdot x^i}{k[y]} \cong \prod_i k[y] \cdot \xi^i \cong k[y][[\xi]]$$  and it is readily verified that the map is precisely given by \eqref{eq correspondence hom power series}. The composite in the infinite countable case is still given by the duality pairing,
$$\begin{tikzcd}
\hom{k}{k[x]}{k[y]}  \otimes \hom{k}{k[y]}{k[z]}   \rar{\circ} & \hom{k}{k[x]}{k[z]}\\
k[y][[\xi]] \otimes k[z][[\eta]] \rar{ \langle - \rangle_{y, \eta}} \arrow{u}{\cong} & k[z][[\xi]]  \arrow{u}{\cong}
\end{tikzcd}$$
where we put $\eta$ for the variable dual to $y$ and $\langle - \rangle_{y, \eta}$ is the duality paring between these two variables, $\eta^i (y^j)= \delta_{ij}$. More precisely if $p_i (y)= \sum_r^{N_i} \lambda_r^i y^r$ then 
\begin{equation}\label{eq pairing base case}
\left\langle  \left( \sum_i p_i(y) \xi^i \right) \otimes \left( \sum_j q_j(z) \eta^j \right)   \right\rangle_{y, \eta} := \sum_i \left( \sum_r^{N_i} \lambda_r^i q_r(z)    \right) \xi^i .
\end{equation}

What we have learned is that replacing a map by a power series amounts to keeping track of the image of all basis elements under such a map. For instance, the renaming map $T$ from above is given by $T= \sum_i y^i \xi^i$. However keeping track of infinite sums is simply too cumbersome. If we are to deal with power series, it would be most convenient to introduce a factor $1/i!$ in every summand before so that $T$ becomes an exponential, $$T= \sum_i \frac{1}{i!} y^i \xi^i = e^{y \xi}$$
obtaining a compact, easier-to-handle expression instead. We can achieve this by slightly modifying the isomorphism \eqref{eq correspondence hom power series} as
\begin{equation}\label{eq intermm step isomorp}
\sum_i p_i(y) \xi^i  \mapsto (x^i \mapsto i!  p_i(y)).
\end{equation}
In doing so, the pairing between dual variables $\eta$ and $y$ now becomes $\eta^i (y^j)= i! \ \delta_{ij}$ and the formula \eqref{eq pairing base case} is now given by 
\begin{equation}\label{eq composite intermm}
\left\langle  \left( \sum_i p_i(y) \xi^i \right) \otimes \left( \sum_j q_j(z) \eta^j \right)   \right\rangle_{y, \eta} := \sum_i \left( \sum_r^{N_i} r!  \lambda_r^i q_r(z)    \right) \xi^i .
\end{equation}

\begin{remark}
For convenience we will identify $k[y][[\xi]] \otimes k[z][[\eta]]$ with its image under the canonical injection $$k[y][[\xi]] \otimes k[z][[\eta]] \hooklongrightarrow k[y,z][[\xi, \eta]] \qquad , \qquad P(y, \xi) \otimes Q(z, \eta) \mapsto P(y, \xi) Q(z, \eta).$$ In this way we obtain more reduced expressions, for instance the composite of two ``renaming'' maps is just $\langle e^{y \xi + z \eta}  \rangle_{y, \eta} = e^{z \xi}.$

We warn the reader that the pairing $\langle - \rangle_{y, \eta}$ does \textit{not} extend to $k[y,z][[\xi, \eta]]$: for instance, the pairing $\langle \sum_i y^i \eta^i  \rangle_{y, \eta} = \sum_i 1$ is not well-defined.
\end{remark}

\begin{warning}
Since we will mostly be working with algebras given by generators and relations (which are free as modules), it is convenient to write instead $V=k[x_1, \ldots , x_n]$ rather than $V=k[x]$ (both are clearly isomorphic $k$-modules).  In doing so very little changes: we have now an isomorphism of $k$-modules 
\begin{equation}\label{eq THE isomorphism}
k[y_1, \ldots, y_m][[\xi_1, \ldots , \xi_n]] \toiso \hom{k}{k[x_1, \ldots, x_n]}{k[y_1, \ldots, y_m]}
\end{equation}
which is given by (compare with \eqref{eq intermm step isomorp})
\begin{equation}\label{eq almost final step isomorp}
\sum_{i_1, \ldots, i_n \geq 0} p_{i_1, \ldots, i_n}  \xi^{i_1} \cdots \xi^{i_n}  \mapsto (x^{i_1} \cdots x^{i_n} \mapsto i_1! \cdots i_n!  p_{i_1, \ldots, i_n}),
\end{equation}
where $ p_{i_1, \ldots, i_n}= p_{i_1, \ldots, i_n} (y_1, \ldots , y_m)$. As before we must modify the pairing accordingly, namely $(\eta^{j_1}_1 \cdots \eta^{j_m}_m)(y^{i_1}_1 \cdots y^{i_m}_m)= i_1! \cdots i_m!  \delta_{i_1 j_1} \cdots \delta_{i_m j_m}$, as well as the formula \eqref{eq composite intermm}. By doing this, we can write the composite of multivariable renaming maps as $\langle e^{\sum_i( y_i \xi_i +  z_i \eta_i)}  \rangle_{y, \eta} = e^{\sum_i z_i \xi_i}.$ 
\end{warning}

Let $\zeta_1, \ldots , \zeta_n $ be formal variables. Given a $k$-linear map $g: V \to W$ between $k$-modules, there is a natural extension $$\widehat{g}: V[[\zeta_1, \ldots , \zeta_n]] \to W[[\zeta_1, \ldots , \zeta_n]] $$ defined as $$\widehat{g} \left( \sum_{i_1, \ldots , i_n \geq 0} v_{i_1, \ldots , i_n} \zeta_1^{i_1} \cdots \zeta_n^{i_n}  \right) := \sum_{i_1, \ldots , i_n \geq 0} g(v_{i_1, \ldots , i_n}) \zeta_1^{i_1} \cdots \zeta_n^{i_n} $$ that we call the \textit{$(\zeta_1, \ldots , \zeta_n )$-extension} of $g$.  The following lemma will be very useful:

\begin{lemma}\label{lemma extension hat}
Let $V_1, V_2$ be free $k$-modules of infinite countable rank with preferred isomorphisms $\mathcal{O}_1: k[x_1, \ldots , x_n] \toiso V_1$ and $\mathcal{O}_2: k[y_1, \ldots ,y_m] \toiso V_2$. The element $k[y_1, \ldots , y_m][[\xi_1,  \ldots , \xi_n]]$
corresponding to $$\mathcal{O}_2^{-1} \circ f \circ \mathcal{O}_1: k[x_1, \ldots , x_n] \to  k[y_1, \ldots ,y_m] $$ under the isomorphism \eqref{eq THE isomorphism} equals $$(\widehat{\mathcal{O}_2^{-1}} \circ \widehat{f} \circ \widehat{\mathcal{O}_1} )(e^{x_1 \xi_1 + \cdots + x_n \xi_n}) ,$$ where all the hats denote $(\xi_1,  \ldots , \xi_n)$-extensions.
\end{lemma}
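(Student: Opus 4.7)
The plan is a short, direct unwinding of definitions. Using multi-index notation $I=(i_1,\ldots,i_n)$, $I!=i_1!\cdots i_n!$, $x^I = x_1^{i_1}\cdots x_n^{i_n}$ and $\xi^I = \xi_1^{i_1}\cdots \xi_n^{i_n}$, I would first invert the isomorphism \eqref{eq THE isomorphism} in order to identify which series $P(y,\xi)\in k[y_1,\ldots,y_m][[\xi_1,\ldots,\xi_n]]$ corresponds to the linear map $g:=\mathcal{O}_2^{-1}\circ f\circ \mathcal{O}_1$. Writing $P=\sum_I p_I(y)\,\xi^I$ and reading off the prescription of \eqref{eq almost final step isomorp}, namely $(x^I \mapsto I!\,p_I)$, one obtains $p_I = g(x^I)/I!$, so that
\[
P \;=\; \sum_I \frac{g(x^I)}{I!}\,\xi^I.
\]

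Next, I would compute the right-hand side of the lemma. The Taylor expansion gives $e^{x_1\xi_1+\cdots+x_n\xi_n}=\sum_I \frac{x^I}{I!}\,\xi^I$ as a well-defined element of $k[x_1,\ldots,x_n][[\xi_1,\ldots,\xi_n]]$, since each fixed total $\xi$-degree contributes only finitely many monomials. Two structural properties of $(\xi_1,\ldots,\xi_n)$-extensions are immediate from the definition: they act coefficient-wise on the monomials $\xi^I$, and they are functorial, in the sense that $\widehat{h_2\circ h_1}=\widehat{h_2}\circ\widehat{h_1}$. In particular $\widehat{\mathcal{O}_2^{-1}}\circ \widehat{f}\circ\widehat{\mathcal{O}_1}=\widehat{g}$, and applying $\widehat{g}$ to the Taylor expansion of the exponential yields exactly $\sum_I \frac{g(x^I)}{I!}\,\xi^I$, which coincides with $P$.

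No real obstacle is anticipated: the lemma is essentially bookkeeping, making precise the slogan that a $k$-linear map between polynomial rings is fully encoded by what it does to the generating function $e^{\sum x_i\xi_i}$. Its usefulness is that, throughout the sequel, one can systematically replace maps between topologically free modules by their commuting-variable generating series, which is precisely the setting in which the Gaussian calculus for $\mathbb{D}$ will operate.
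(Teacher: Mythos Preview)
Your proposal is correct and follows essentially the same approach as the paper's own proof: both directly unwind the definition of the isomorphism \eqref{eq THE isomorphism} to identify the series corresponding to $g=\mathcal{O}_2^{-1}\circ f\circ\mathcal{O}_1$ as $\sum_I \frac{g(x^I)}{I!}\,\xi^I$, and then observe that applying the $\xi$-extension of $g$ to the exponential $e^{\sum x_i\xi_i}$ yields exactly this expression. Your version is slightly more explicit in noting the functoriality $\widehat{\mathcal{O}_2^{-1}}\circ\widehat{f}\circ\widehat{\mathcal{O}_1}=\widehat{g}$, but the argument is the same.
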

\begin{proof}
If $p_{i_1, \ldots , i_n} := (\mathcal{O}_2^{-1} \circ f \circ \mathcal{O}_1)(x_1^{i_1} \cdots x_n^{i_n})$, then by definition the power series corresponding to $\mathcal{O}_2^{-1} \circ f \circ \mathcal{O}_1$ is $$ \sum_{i_1, \ldots , i_n \geq 0} \frac{1}{i_1! \cdots i_n!} p_{i_1, \ldots , i_n} \xi_1^{i_1} \cdots \xi_n^{i_n}.$$ Since for the hat maps the variables $\xi_1,  \ldots , \xi_n$ are treated as constants,  this is precisely the element $(\widehat{\mathcal{O}_2^{-1}} \circ \widehat{f} \circ \widehat{\mathcal{O}_1} )(e^{x_1 \xi_1 + \cdots + x_n \xi_n})$.
\end{proof}

\subsection{Contraction of perturbed Gaussian expressions}\label{sect contraction}

There are many interesting $k$-linear maps between free infinite countable rank modules that can be expressed as an exponential of a finite expression, or as a product of an exponential expression and something else.

\begin{examples}
\begin{enumerate}
\item The identity $\Q[x_1, \ldots , x_n] \to \Q[x_1, \ldots , x_n] $ corresponds to $e^{x_1 \xi_1 + \cdots + x_n \xi_n}$.
\item For $n \geq 0$, the map ``multiplication by $x^n$'',  $\Q[x] \to \Q[x]$, $x^i \mapsto x^{i+n}$ corresponds to $e^{x \xi} x^n$.
\item The map ``derivation''  $\Q[x] \to \Q[x]$, $x^i \mapsto i x^{i-1}$ corresponds to $e^{x \xi} \xi$.
\item The multiplication law of $\Q[x]$, that is $\Q[x_1, x_2] \cong \Q[x]\otimes \Q[x] \to \Q[x]$, $x_1^n x_2^m \mapsto x^{m+n}$, corresponds to the element $e^{x(\xi_1 + \xi_2)}$. Here subindices  have been added to distinguish the two generators.
\end{enumerate}
\end{examples}

Here is a more elaborate example.

\begin{example}[\cite{barnatanveengaussians}]
Let $A$ be the (rational) Weyl algebra, that is, the free $\Q$-algebra generated by $\bm{p}, \bm{x}$ subject to the relation $[\bm{p}, \bm{x}]=1$. Applying recursively $\bm{xp}= \bm{px}-1$, we can write any element of $A$ as a sum of ``ordered monomials'' $\sum_i \bm{p}^{n_i} \bm{x}^{m_i}$, which amounts to saying that there is an isomorphism $$\mathcal{O}: \Q [p,x] \toiso A \qquad , \qquad p^n x^m \mapsto \bm{p}^n \bm{x}^m$$ (compare with \cref{prop ordering D}). Under this isomorphism, the multiplication of $A$ can be seen as a map $\Q[p_1, x_1, p_2 , x_2] \to \Q[x,p]$, which maps the monomial $p_1^{n_1 }x_1^{m_1}p_2^{n_2 }x_2^{m_2}$  to $\mathcal{O}^{-1}(\bm{p}^{n_1 }\bm{x}^{m_1}\bm{p}^{n_2 }\bm{x}^{m_2})$. Via our identification $$\hom{\Q}{\Q[p_1, x_1, p_2 , x_2] }{\Q[x,p]} \cong \Q[p,x][[\pi_1, \xi_1, \pi_2, \xi_2]]$$ described in \eqref{eq THE isomorphism}, the multiplication of the Weyl algebra corresponds with the element 
\begin{equation}\label{eq power series for mu Weyl}
e^{p(\pi_1 + \pi_2) + x(\xi_1 + \xi_2)-\xi_1 \pi_2}.
\end{equation}
For the key observation is the following equality in $A[[\pi_1, \xi_1, \pi_2, \xi_2]]$: 
\begin{equation}\label{eq Weyl commutation relation}
e^{ \bm{x} \xi_1} e^{ \bm{p} \pi_2}= e^{ \bm{p}\pi_2}e^{ \bm{x}\xi_1}e^{-\xi_1 \pi_2}.
\end{equation}
This is easy to see using the Baker-Campbell-Hausdorff formula, since the commutator $[\pi_2 \bm{p}, \xi_1 \bm{x}]$ is central (both sides of the equation equals $e^{\bm{x}\xi_1 + \bm{p\pi_2}-\frac{1}{2}\xi_1 \pi_2}$).

Then, according to \cref{lemma extension hat}, the power series corresponding to the multiplication $\mu: A  \otimes A  \to A$ of the Weyl algebra is
\begin{align*}
(\widehat{\mathcal{O}^{-1}} \circ \widehat{\mu} \circ \widehat{\mathcal{O}\otimes \mathcal{O} } ) &(e^{p_1 \pi_1 + x_1 \xi_1+p_2 \pi_2 + x_2 \xi_2}) =\\  &= (\widehat{\mathcal{O}^{-1}} \circ \widehat{\mu} )(e^{(\bm{p} \otimes 1)\pi_1}e^{(\bm{x} \otimes 1)\xi_1}e^{(1 \otimes \bm{p})\pi_2}e^{(1 \otimes \bm{x})\xi_2}) \\ &=\widehat{\mathcal{O}^{-1}} (e^{\bm{p} \pi_1}e^{\bm{x} \xi_1}e^{\bm{p}\pi_2}e^{\bm{x}\xi_2}) \\
&\overset{\eqref{eq Weyl commutation relation}}{=}  \widehat{\mathcal{O}^{-1}} (e^{\bm{p} \pi_1}e^{ \bm{p}\pi_2}e^{ \bm{x}\xi_1}e^{\bm{x}\xi_2}e^{-\xi_1 \pi_2})\\
&= e^{p(\pi_1 + \pi_2) + x(\xi_1 + \xi_2)-\xi_1 \pi_2}
\end{align*}
as claimed in \eqref{eq power series for mu Weyl}.
\end{example}

\begin{remark}
Abusing notation we will denote with the same symbol a $k$-linear map and its corresponding power series  according to \eqref{eq THE isomorphism}, that we will call \textit{generating series} or \textit{generating function} of the linear map, so that in the previous example we simply write $$\mu = e^{p(\pi_1 + \pi_2) + x(\xi_1 + \xi_2)-\xi_1 \pi_2}.$$
\end{remark}

At this point the reader should be convinced of the importance of exponential expressions, more precisely of those with quadratic exponential, known as \textit{Gaussian expressions}, as well as the product of a Gaussian expresion and some other series, that we call \textit{perturbed Gaussian expressions}. If we are to focus on Gaussians, we should have a way of pairing (the operation corresponding to the composite of $k$-linear maps) Gaussian generating series effectively. In doing so it will be convenient to expand the source of the pairing map as  much as possible.

For $n > 0$ let $r_1, \ldots , r_{2n}$ and $s_1,  \ldots, s_{2n}$ be unknowns, that we will interpret as dual of each other. Define $n$-tuples $r'=(r_1, \ldots , r_{n})$, $r''= (r_{n+1}, \ldots , r_{2n})$, $s'=(s_1, \ldots , s_{n})$, $s''= (s_{n+1}, \ldots , s_{2n})$ and set $r=(r', r'')$ and $s=(s',s'')$. Besides let $u'= (u_1, \ldots , u_n)$, $u''= (u_{n+1}, \ldots , u_{2n})$, $v'=(v_1, \ldots, v_n)$, $v''= (v_{n+1}, \ldots , v_{2n})$ be $n$-tuples of unknowns. Furthermore consider $n^2$ unknowns $w_{ij}$ for $1\leq i,j \leq n$  and form the  matrix $W= (w_{ij})$.

Given an element $P  \in k[r', s'', u'', v'][[r'',s', u', v'', W]]$ (where the variables in the power series ring are to be understood the corresponding unknowns), the \textit{contraction of $P$ along $r,s$}, if it exists, is the power series $\langle P \rangle_{r,s} \in k[u'', v'][[ u', v'', W]]$ resulting from replacing every product $r_k^i s_k^j$ in $P$ by $i! \delta_{ij}$.

Denote by $ \mathscr{C}( k[r', s'', u'', v'][[r'',s', u', v'', W]])$ the $k$-submodule of contractible\footnote{This is analogous to the set of integrable functions among measurable functions.} power series along $r,s$. We then have a well-defined $k$-module map
\begin{equation}
 \langle - \rangle_{r,s} :  \mathscr{C}( k[r', s'', u'', v'][[r'',s', u', v'', W]]) \to  k[u'', v'][[ u', v'', W]]
\end{equation}
that extends the pairing that corresponds to the composite in the following sense:  for $\ell = y, z, \xi$ or $\eta$, set $\ell ' = (\ell_1, \ldots , \ell_{n})$, $\ell '' = (\ell_{n+1}, \ldots , \ell_{2n})$. If $$\varphi :  \mathscr{C}( k[r', s'', u'', v'][[r'',s', u', v'', W]]) \to k[y', y''][[\xi', \xi'']] \otimes k[z',z''][[\eta', \eta'']]$$ is a $k$-algebra morphism such that $$\varphi(r')=y' \quad , \quad \varphi (r'')= \eta ''  \quad , \quad \varphi (s')= \eta'  \quad , \quad \varphi (s'')= y'' $$
and restricts to a $k$-algebra map $$\varphi_{\mathrm{restr}} :  k[u'', v'][[u', v'', W]] \to k[z',z''][[\xi', \xi'']],$$
then we have the following commutative diagram:
$$\begin{tikzcd}
\mathscr{C}( k[r', s'', u'', v'][[r'',s', u', v'', W]])  \rar{\langle - \rangle_{r,s}} \dar{\varphi} &  k[u'', v'][[u', v'', W]] \dar{\varphi_{\mathrm{restr}} }  \\
k[y', y''][[\xi', \xi'']] \otimes k[z',z''][[\eta', \eta'']] \rar{ \langle - \rangle_{y, \eta}}  & k[z',z''][[\xi', \xi'']] 
\end{tikzcd}$$

In other words, if the product of two generating series lies in the image of $\varphi$, then we can use the contraction to compute the pairing of them (hence the composite of their corresponding linear maps). Even though it may seem we have gained little, there is a key difference: under the map $\varphi$, the variables $y', y''$ are now ``duals'' in the sense that $\varphi(r')=y'$ and $\varphi (s'')= y''$. This will allow us to heavily rely on the following theorem for the contraction of perturbed Gaussian expressions to perform composites:

\begin{theorem}[Contraction, \cite{barnatanveengaussians}]\label{thm contraction}
With the notations above, consider the matrix $\widetilde{W} := (I-W)^{-1}$ and set $\bar{r}:= r+v\widetilde{W}$ and $\bar{s}:= \widetilde{W} (s+u)$, where we view $s$ and $u$ as column vectors for the matrix multiplication. Then the following equality holds:
$$ \Big\langle e^{ru+vs+rWs} P(r,s)  \Big\rangle_{r,s}  = \det ( \widetilde{W} ) \  e^{v  \widetilde{W} u} \Big\langle P(\bar{r}, \bar{s})  \Big\rangle_{r,s}. $$
\end{theorem}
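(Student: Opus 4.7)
My plan is to prove the formula by reducing it, via a translation identity, to the case $u=v=0$ (the ``pure'' Gaussian), and then computing that case directly.

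\emph{Step 1 (translation identity).} For any $a,b$ independent of $r,s$ one has
$$\langle Q(r,s)\, e^{as+rb}\rangle_{r,s} = e^{ab}\,\langle Q(r+a,s+b)\rangle_{r,s}.$$
This is immediate on monomials $Q=r^\alpha s^\beta$ using $\langle r^\alpha s^\beta\rangle = \alpha!\,\delta_{\alpha\beta}$ and the binomial theorem, and extends by linearity and continuity.

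\emph{Step 2 (absorbing the linear terms).} Writing $e^{ru+vs+rWs} = e^{rWs}\cdot e^{vs+ru}$ and applying Step 1 with $(a,b)=(v,u)$ to $Q = e^{rWs}P(r,s)$, then expanding $(r+v)W(s+u) = rWs + rWu + vWs + vWu$, yields
$$\langle e^{ru+vs+rWs}P(r,s)\rangle = e^{vu+vWu}\,\langle e^{rWs}\cdot e^{r(Wu)+(vW)s}\,P(r+v,s+u)\rangle.$$
The right-hand side has the \emph{same} shape as the original contraction, but with $(u,v,P(r,s))$ replaced by $(Wu,\,vW,\,P(r+v,s+u))$. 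Iterating this reduction — which converges in the relevant adic topology because $W$ lies in an augmentation ideal, so $W^k\to 0$ — the shifts in the two arguments of $P$ accumulate to the geometric sums $v\widetilde{W}$ and $\widetilde{W}u$, while the prefactor collects into $e^{v\widetilde{W}u}$. The outcome is
$$\langle e^{ru+vs+rWs}P(r,s)\rangle = e^{v\widetilde{W}u}\,\big\langle e^{rWs}\,P(r+v\widetilde{W},\,s+\widetilde{W}u)\big\rangle_{r,s}.$$

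\emph{Step 3 (the pure Gaussian).} Next I would establish the rescaling identity
$$\big\langle e^{rWs}\,Q(r,s)\big\rangle_{r,s} = \det(\widetilde{W})\,\big\langle Q(r,\widetilde{W}s)\big\rangle_{r,s}$$
for arbitrary $Q$. Taking $Q=1$, this is the formula $\langle e^{rWs}\rangle = \det(I-W)^{-1}$, which I would prove by expanding $(rWs)^n$ as a sum over pairs of index tuples, using that $\langle r^\alpha s^\beta\rangle$ forces the $r$- and $s$-multi-indices to coincide (so that the surviving terms are indexed by permutations), and summing the resulting permanent-type series into the determinantal power series of $(I-W)^{-1}$. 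The general-$Q$ case, on monomials $Q=r^\alpha s^\beta$, follows by the same combinatorial bookkeeping: each $s_j$ factor must be paired with an $r_i$ factor from $e^{rWs}$, and the residual weight is precisely that produced by $\det(\widetilde{W})\,\langle r^\alpha(\widetilde{W}s)^\beta\rangle$. Applying this rescaling identity to $Q(r,s)=P(r+v\widetilde{W},s+\widetilde{W}u)$ — so that $Q(r,\widetilde{W}s) = P(r+v\widetilde{W},\,\widetilde{W}s+\widetilde{W}u) = P(\bar r,\bar s)$ — completes the proof.

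The main obstacle I expect is the multi-index bookkeeping in Step 3: in the one-dimensional case all three steps collapse to short verifications (reducing to identities like $\binom{\alpha}{j}j! = \alpha!/(\alpha-j)!$ and $\sum_n \binom{i+n}{n}W^n = \widetilde{W}^{i+1}$), but the multivariable rescaling identity requires a careful Wick-type combinatorial argument, and ensuring that products like $rW\widetilde{W}s$ assemble the correct matrix powers is where the technical effort concentrates.
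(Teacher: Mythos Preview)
The paper does not prove this theorem; it is quoted from \cite{barnatanveengaussians} with only the remark that $I-W$ is invertible in $k[[W]]$, so there is no in-paper proof to compare against.

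Your outline is a reasonable route to the formula. Step~1 is correct, and the iteration in Step~2 is fine once you note that $W$ lies in the ideal $(w_{11},\ldots,w_{nn})$ so that $W^k\to 0$ adically; the accumulated shifts and exponential prefactor are exactly as you state. In Step~3 the asymmetric rescaling identity $\langle e^{rWs}Q(r,s)\rangle = \det(\widetilde W)\,\langle Q(r,\widetilde W s)\rangle$ is the correct intermediate statement (one can sanity-check it by differentiating the $P=1$ case of the theorem in $u$ and $v$), and your plan to verify it on monomials via a Wick-type pairing argument is the standard one. Just be aware that the multivariable combinatorics you flag as the ``main obstacle'' is genuinely where the content lies: you need that $\langle \prod_m r_{i_m}\prod_m s_{j_m}\rangle$ vanishes unless the multisets $\{i_m\}$ and $\{j_m\}$ coincide, and that the surviving contributions reassemble into matrix products of $\widetilde W$; this is precisely the permanent/determinant bookkeeping that the Bar-Natan--van der Veen paper carries out.
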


Note that the matrix $(I-W)$ is indeed invertible in $k[[W]]$, for its determinant equals 1 mod $(w_{11}, \ldots, w_{nn})$.

\subsection{Generating series for $\mathbb{D}$} We now apply the Gaussian techniques developed in the two last subsections to the ribbon Hopf algebra $\mathbb{D}$ from \cref{subsec algebra D}. According to \cref{prop ordering D}, the algebra $\D$ is a topologically free $\Qe[[h]]$-module. We can easily extend the isomorphism \eqref{eq THE isomorphism} to topologically free $k[[h]]$-modules in view of \cref{prop adjuction and top free}: we have an analogous bijection
\begin{equation}\label{eq THE isomorphism2}
k[y_1, \ldots, y_m][[\xi_1, \ldots , \xi_n]][[h]] \toiso \hom{k[[h]]}{k[x_1, \ldots, x_n][[h]]}{k[y_1, \ldots, y_m][[h]]}
\end{equation}
where the correspondence is still the same as in \eqref{eq almost final step isomorp} for every power of $h$. Now recall from \eqref{eq composites A^tensors} that we are mostly interested in studying maps between $n$-fold tensor products of the Hopf algebra at hand. To keep track of the factors of the tensor product in the passage to the polynomial rings under the isomorphism $\mathcal{O}: \Qe [y,t,a,x][[h]] \toiso \mathbb{D}$, we consider the following: given a finite ordered set $I=(i_1,  \ldots , i_n)$ with $n$ elements, write $\ell_I= (\ell_{i_1}, \ldots , \ell_{i_n})$ for $\ell = y,t,a,x$. Again by \cref{prop adjuction and top free} we have  an isomorphism 
\begin{equation}\label{eq O^otimeshat I}
 \mathcal{O}^{\otimeshat I}: \Qe [y_I,t_I,a_I,x_I][[h]] \toiso \mathbb{D}^{\otimeshat n}.
\end{equation}
Combining this isomorphism with \eqref{eq THE isomorphism2} we obtain the following

\begin{proposition}\label{prop bij Dn -> Dm with polys}
Let $I,J$ be finite ordered sets with $\# I=n$ and $\# J=m$. Then the bijection \eqref{eq THE isomorphism} gives rise to a bijection
$$\Qe [y_J, t_J, a_J, x_J][[\eta_I, \tau_I, \alpha_I, \xi_I]][[h]] \toiso \hom{\Qe[[h]]}{\mathbb{D}^{\otimeshat n}}{\mathbb{D}^{\otimeshat m}}$$
where $\eta, \tau, \alpha,  \xi$ are the dual variables of $y,t,a,x$, respectively.
\end{proposition}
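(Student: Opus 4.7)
The plan is to deduce the claim by composing the two isomorphisms already established in the preceding paragraph, so the proof amounts to essentially a bookkeeping argument on free generators.

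First, I would apply the ordering isomorphism $\mathcal{O}^{\otimeshat I}$ from \eqref{eq O^otimeshat I} (and its counterpart $\mathcal{O}^{\otimeshat J}$) to rewrite
$$\hom{\Qe[[h]]}{\mathbb{D}^{\otimeshat n}}{\mathbb{D}^{\otimeshat m}} \cong \hom{\Qe[[h]]}{\Qe[y_I,t_I,a_I,x_I][[h]]}{\Qe[y_J,t_J,a_J,x_J][[h]]},$$
using the functoriality of $\mathrm{Hom}$ in the category $\mathsf{Mod}_{\Qe[[h]]}^{\mathsf{tf}}$ of topologically free modules (cf.\ \cref{prop adjuction and top free}).

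Second, I would invoke the bijection \eqref{eq THE isomorphism2} with the role of the $n$ generators $x_1,\ldots,x_n$ now played by the $4n$-tuple $(y_I,t_I,a_I,x_I)$, and the role of the $m$ generators $y_1,\ldots,y_m$ played by the $4m$-tuple $(y_J,t_J,a_J,x_J)$. The corresponding dual variables are precisely $(\eta_I,\tau_I,\alpha_I,\xi_I)$. Strictly speaking, one must observe that the argument producing \eqref{eq THE isomorphism} from the identity $\mathrm{Hom}_k(\bigoplus_i k\cdot m_i, W) \cong \prod_i W$ (for $\{m_i\}$ a monomial basis and $W$ free) is purely formal and is insensitive to whether the monomial basis involves $n$ or $4n$ commuting generators; the same calculation gives the bijection for any finite family of commuting generators. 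The passage to $h$-adic completions is handled as in \eqref{eq THE isomorphism2}, which is immediate since both sides of the desired bijection are complete in the $h$-adic topology and the correspondence is $h$-linear level-by-level.

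There is no real obstacle here: both ingredients are already in place and the content of the proposition is just the naming convention that records, for each tensor factor, the four dual formal variables $\eta,\tau,\alpha,\xi$ associated to the four ordered generators $y,t,a,x$ of $\D$. Composing the two bijections yields the stated identification, and one may further note that under this correspondence a map $\D^{\otimeshat n}\to\D^{\otimeshat m}$ is recovered from its generating series $P(y_J,t_J,a_J,x_J,\eta_I,\tau_I,\alpha_I,\xi_I;h)$ by the rule analogous to \eqref{eq almost final step isomorp}, with the appropriate factorials arising from contracting each pair of dual variables.
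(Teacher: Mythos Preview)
Your proposal is correct and follows exactly the same approach as the paper, which simply states that the proposition is obtained by ``combining this isomorphism [\eqref{eq O^otimeshat I}] with \eqref{eq THE isomorphism2}.'' If anything, you have spelled out in more detail than the paper does how the $4n$ generators play the role of the $n$ generators in \eqref{eq THE isomorphism2} and why the argument is insensitive to this relabelling.
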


Furthermore, we can use the Gaussian calculus explained in the previous subsection also for the structure maps of the ribbon Hopf algebra $\D$. Their generating series are not perturbed Gaussian expressions, but their exponential part can be divided into two such, in a sense that we make precise now.

A generating series in $\Qe [y_J, t_J, a_J, x_J][[\eta_I, \tau_I, \alpha_I, \xi_I]][[h]]$ is a \textit{two-step Gaussian} if it is of the form $e^G$ with
$$G= \begin{pmatrix}
\tau_I & a_J
\end{pmatrix} 
 \begin{pmatrix}
\Gamma_{11} & \Gamma_{12}\\
\Gamma_{21} & 0
\end{pmatrix} 
 \begin{pmatrix}
t_J \\ \alpha_I
\end{pmatrix} +  \begin{pmatrix}
y_J & \xi_I
\end{pmatrix} \Lambda 
 \begin{pmatrix}
\eta_I \\ x_J
\end{pmatrix} $$
where the matrices $\Gamma_{k\ell}$ have coefficients in $\Z \oplus \Z h$ and $\Lambda$ is a matrix with coefficients finite expressions in $\mathcal{A}_i:= e^{\alpha_i}$,  $T_j := e^{-t_jh}$ and $h$,  for $i\in I, j \in J$

Write $G_1$ and $G_2$ for the first and second summand of $G$, so $G=G_1+G_2$. Given a two-step Gaussian $e^G$, if we view $y,x, \eta, \xi$ as constants, then $e^G$ is a perturbed Gaussian expression $e^{G_1}e^{G_2}$ (where the perturbation is $e^{G_2}$). Alternatively, if we view $t,a,\tau, \alpha$ as constants, then $e^G$ is a perturbed Gaussian expression $e^{G_2}e^{G_1}$ with constant perturbation part $e^{G_1}$. The upshot is that we can still compose two-step Gaussians using the Contraction \cref{thm contraction} twice: first for the variables $a,t, \tau, \alpha$ treating $y,x, \eta, \xi$ as constants, and later for the remaining $y,x, \eta, \xi$.

\begin{notation}
If $f: \D^{\otimeshat n} \to \D^{\otimeshat m}$ is a $\Qe [[h]]$-module map and $I,J$ are choices of finite ordered sets with $\# I=n$, $\# J=m$, then we will write $$f_I^J  :=  (\mathcal{O}^{\otimeshat J})^{-1} \circ  f \circ  \mathcal{O}^{\otimeshat I} :  \Qe [y_I,t_I,a_I,x_I][[h]] \to  \Qe [y_J,t_J,a_J,x_J][[h]]$$ and abusing notation its corresponding generating series will also be denoted by $f_I^J$.
\end{notation}

The following key theorem is highly nontrivial and heavily relies on the construction of $\D$ as a Drinfeld double. Recall that, by \cref{prop adjuction and top free}, the datum of an element of $\D^{\otimeshat m}$ is equivalent to a $\Qe [[h]]$-module map $\Qe [[h]] \to \D^{\otimeshat m}$, and by \cref{prop bij Dn -> Dm with polys} this is determined by a generating series in $\Qe [y_J, t_J, a_J, x_J][[h]]$, where $\# J=m$.

\begin{theorem}[\cite{barnatanveengaussians}]\label{thm DoPeGDO}
The generating series of all structure maps of the ribbon Hopf algebra $\D$ are perturbed two-step Gaussians with perturbation a power series in $\varepsilon$, that is, they are all of the form $$e^G (P_0 + P_1 \varepsilon + P_2 \varepsilon^2 + P_3 \varepsilon^3 + \cdots)$$ where $e^G$ is a two-step Gaussian and every $P_k$ is a finite expression in $y$, $t$, $a$, $x$, $\eta$, $\alpha$, $\xi$, $T$, $\mathcal{A}$ and $h$.

More concretely, we have the following equalities:

\begin{align*}
R^{ij} &= \exp \Big[ (t_i a_j + y_i x_j)h  \Big] \quad , \quad (R^{-1})^{ij} = \exp \Big[ -(t_i a_j + y_i x_j T_i^{-1})h  \Big] \pmod \varepsilon\\
\kappa^i &= T_i^{1/2}  \quad , \quad (\kappa^{-1})^i = T_i^{-1/2} \pmod \varepsilon\\
\mu_{ij}^k &= \begin{aligned}[t]
 \exp \Bigg[  a_k (\alpha_i + \alpha_j) + t_k (\tau_i + \tau_j) &+ y_k(\eta_i + \A_i^{-1} \eta_j) \\ &+   x_k (\xi_j + \A_j^{-1}\xi_i) + \frac{1-T_k}{h}\eta_j \xi_i \Bigg]  \pmod \varepsilon
\end{aligned}\\
\Delta_i^{jk} &=  \begin{aligned}[t]
 \exp \Bigg[  \alpha_i (a_j + a_k) &+ \tau_i (t_j + t_k) + \eta_i (T_k y_j + y_k) +  \xi_i (x_j+x_k)  \Bigg]  \pmod \varepsilon
\end{aligned}\\
S_i^j &=  \begin{aligned}[t]
 \exp \Bigg[- \alpha_i a_j - \tau_i t_j - \A_i T_j^{-1} y_j \eta_i- \A_i x_j \xi_i - \A_i \frac{1-T_j^{-1}}{h} \eta_i \xi_i   \Bigg]  \pmod \varepsilon
\end{aligned}
\end{align*}
\end{theorem}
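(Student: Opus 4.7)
The strategy is to compute each of the five generating series directly using \cref{lemma extension hat}. For any linear map $f:\D^{\otimeshat n}\to\D^{\otimeshat m}$,
\begin{equation*}
f_I^J = \widehat{(\mathcal{O}^{\otimeshat J})^{-1}}\bigl(\widehat{f}\bigl(\widehat{\mathcal{O}^{\otimeshat I}}(e^{\sum_{i\in I}(y_i\eta_i+t_i\tau_i+a_i\alpha_i+x_i\xi_i)})\bigr)\bigr).
\end{equation*}
Since the dual variables $\eta,\tau,\alpha,\xi$ commute with the bold generators and the PBW isomorphism sends $y^pt^qa^rx^s$ to $\bm{y}^p\bm{t}^q\bm{a}^r\bm{x}^s$, the input becomes $\prod_{i\in I}e^{\bm{y}\eta_i}e^{\bm{t}\tau_i}e^{\bm{a}\alpha_i}e^{\bm{x}\xi_i}$ in the appropriate tensor slot. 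After applying $\widehat{f}$, the task reduces to rearranging the result into PBW order using exponentiated commutation identities in $\D$, and then reading off the generating series via $\widehat{(\mathcal{O}^{\otimeshat J})^{-1}}$.

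\textbf{Commutation toolbox.} The relations $[\bm{a},\bm{x}]=\bm{x}$ and $[\bm{a},\bm{y}]=-\bm{y}$ yield, by a direct induction,
\begin{equation*}
e^{\bm{x}\xi}e^{\bm{a}\alpha} = e^{\bm{a}\alpha}e^{\bm{x}\xi\mathcal{A}^{-1}},\qquad e^{\bm{a}\alpha}e^{\bm{y}\eta} = e^{\bm{y}\eta\mathcal{A}^{-1}}e^{\bm{a}\alpha},
\end{equation*}
with $\mathcal{A}:=e^{\alpha}$, while centrality of $\bm{t}$ makes $e^{\bm{t}\tau}$ commute with everything. The subtle $q$-commutator $[\bm{x},\bm{y}]_q=(1-\bm{A}^2\bm{T})/h$ becomes, modulo $\varepsilon$ (where $q=1$ and $\bm{A}=1$), the central commutator $[\bm{x},\bm{y}]=(1-\bm{T})/h$, so the Baker-Campbell-Hausdorff formula yields
\begin{equation*}
e^{\bm{x}\xi}e^{\bm{y}\eta} = e^{\bm{y}\eta}e^{\bm{x}\xi}e^{\xi\eta(1-\bm{T})/h}\pmod\varepsilon,
\end{equation*}
with higher-order $\varepsilon$-corrections obtained by expanding $q$, $\bm{A}$ and $\bm{A}^2$ as power series in $\varepsilon$.

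\textbf{Case analysis.} The balancing element $\bm{\kappa}=e^{-h(\bm{t}+\varepsilon\bm{a})/2}$ gives $\kappa^i=e^{-h(t_i+\varepsilon a_i)/2}$, already a two-step Gaussian reducing to $T_i^{1/2}$ mod $\varepsilon$; $\bm{\kappa}^{-1}$ is analogous. For $\bm{R}$, the tensor factors $\bm{y}^m\bm{b}^n$ and $\bm{a}^n\bm{x}^m$ are each in PBW order and commute across tensor slots, so the generating series is read off directly; mod $\varepsilon$ one has $[m]_q!\to m!$ and $\bm{b}\to\bm{t}$, yielding $R^{ij}=e^{h(t_ia_j+y_ix_j)}$, and $\bm{R}^{-1}$ follows from $(S\otimeshat\id)\bm{R}=\bm{R}^{-1}$ together with the antipode formula. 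The comultiplication and antipode, being algebra and anti-algebra morphisms, reduce to evaluating on generators and then reordering with the toolbox (for $S$, the PBW monomial must be reversed first). The most involved case is $\mu$: one expands $\widehat{\mu}$ applied to the tensor product of two input exponentials and pushes $e^{\bm{x}\xi_i}$ to the right past $e^{\bm{y}\eta_j}$ (producing the term $(1-T_k)\eta_j\xi_i/h$ mod $\varepsilon$) and past $e^{\bm{a}\alpha_j}$ (producing the twist $\mathcal{A}_j^{-1}\xi_i$), and moves $e^{\bm{a}\alpha_i}$ past $e^{\bm{y}\eta_j}$ to obtain the twist $\mathcal{A}_i^{-1}\eta_j$. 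Combining matching exponentials and applying $\widehat{\mathcal{O}^{-1}}$ gives the stated formula.

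\textbf{Main obstacle.} The genuine difficulty is controlling the full $\varepsilon$-perturbation at all orders, not just mod $\varepsilon$: the exact $q$-commutator and the factors $\bm{A}^{\pm 1}$ contribute at every order, and one must verify that these corrections assemble into a power series in $\varepsilon$ (not a formal Laurent series) and that the Gaussian exponent $G$ retains its required block structure with coefficients in $\mathbb{Z}[h]$ independent of $\varepsilon$. This compatibility reflects the Drinfeld double structure of $\D$ from \cref{subsec algebra D} and is the heart of the Bar-Natan -- van der Veen formalism.
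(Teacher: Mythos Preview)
The paper does not prove this theorem: it is cited from \cite{barnatanveengaussians}, with only the remark that it ``is highly nontrivial and heavily relies on the construction of $\D$ as a Drinfeld double.'' Your proposal therefore goes well beyond what the paper itself attempts, supplying a direct verification of the mod~$\varepsilon$ formulas via \cref{lemma extension hat} and exponentiated commutation identities, in the same spirit as the Weyl-algebra example preceding the theorem. That computational strategy is sound and produces the stated mod~$\varepsilon$ expressions. One small slip: from $\bm{A}^2\bm{T}=e^{-h(\bm{t}+2\varepsilon\bm{a})}$ one gets $\bm{\kappa}=e^{-h(\bm{t}+2\varepsilon\bm{a})/2}$, not $e^{-h(\bm{t}+\varepsilon\bm{a})/2}$; this does not affect the mod~$\varepsilon$ answer but matters at order~$\varepsilon$.

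You are right to flag the ``main obstacle.'' The mod~$\varepsilon$ calculations are elementary, but the actual content of the theorem is the claim that at \emph{all} orders the generating series retain the perturbed two-step Gaussian form, with the Gaussian exponent $G$ having the prescribed block structure and each $P_k$ a finite expression. Your sketch does not establish this; it only indicates that higher-order corrections arise from expanding $q$, $\bm{A}$, $[m]_q!$ in $\varepsilon$. Showing that these corrections organise into the required shape, and in particular that the class of such perturbed two-step Gaussians is closed under the contraction of \cref{thm contraction}, is exactly what the cited paper proves using the Drinfeld double construction. So your proposal is a correct outline for the displayed mod~$\varepsilon$ identities, together with an honest acknowledgement that the structural statement is deferred to the original source---which is also what the present paper does.
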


\begin{remark}\label{remark commutative gen series}
Leaving out for a moment the parameters $h$ and $\varepsilon$, the reader should compare the above generating series for the structure maps of $\D$ with those of a standard Hopf algebra structure of $\Q [y,t,a,x]$. Recall that if $\mathfrak{g}$ is a (rational) Lie algebra, then its universal enveloping algebra $U(\mathfrak{g})$ has a canonical Hopf algebra structure with comultiplication determined by $\Delta (v)= v \otimes 1 + 1\otimes v $, counit  by $\epsilon (v)=0$ and antipode by $S(v)=-v$ for $v \in \mathfrak{g}$. As $\Q [y,t,a,x]$ is isomorphic to the universal enveloping algebra of the abelian Lie algebra $\Q y \oplus \Q t \oplus \Q a \oplus \Q x $ (in fact isomorphic to its symmetric algebra), we get a canonical Hopf algebra structure determined by 
$$\mu_{ij}^k (z_i z'_j)= z_k z'_k \qquad , \qquad   \Delta_i^{jk} (z_i) = z_j + z_k \qquad , \qquad S_i^j (z_i)= -z_j $$ for $z,z'= y,t,a,x$. A straightforward application of \cref{lemma extension hat} yields the following generating series:
\begin{align}\label{eq gen func commutative}
\begin{split}
\mu_{ij}^k &= \begin{aligned}[t]
 \exp \Bigg[  a_k (\alpha_i + \alpha_j) + t_k (\tau_i + \tau_j) &+ y_k(\eta_i +  \eta_j) + x_k (\xi_j + \xi_i)  \Bigg] 
\end{aligned}\\
\Delta_i^{jk} &=  \begin{aligned}[t]
 \exp \Bigg[  \alpha_i (a_j + a_k) &+ \tau_i (t_j + t_k) + \eta_i (y_j + y_k) +  \xi_i (x_j+x_k)  \Bigg]  
\end{aligned}\\
S_i^j &=  \begin{aligned}[t]
 \exp \Bigg[- \alpha_i a_j - \tau_i t_j - y_j \eta_i-  x_j \xi_i  \Bigg] 
\end{aligned}
\end{split}
\end{align}
We can then regard the presence of $T$ and $\A$ in the generating series of \cref{thm DoPeGDO} as consequence of the nontriviality of the bracket of $\D$ and its different structure maps.
\end{remark}

\begin{example}\label{example contraction}
Let us illustrate how to compose $\Qeh$-module maps between $n$-fold tensor products of $\D$ using generating series and the Contraction \cref{thm contraction}.

Consider maps $f: \Qeh \to \D^{\otimeshat 4}$ and  $g: \D^{\otimeshat 4} \to \D^{\otimeshat 3}$ whose generating series are given by $$f^{1,2,3,4} = \exp \Big[ (t_1 a_2 + t_3 a_4 + y_1 x_2 + y_3 x_4)h \Big]  (a_1 a_2 + a_3 a_4 + y_1^2 x_2^2) \varepsilon  \pmod {\varepsilon^2}$$ and $$g_{1,3}^{0} =  \exp \Big[ a_0 (\alpha_1 + \alpha_3) + t_0 (\tau_1 + \tau_3) + y_0 (\eta_1 + \mathcal{A}_1^{-1}\eta_3 ) + x_0 (\xi_3 + \mathcal{A}_3^{-1}\xi_1)  \Big] \pmod {\varepsilon^2} $$ (strictly speaking, the second generating series should be $g_{1,3}^0 \otimes \id_{2,4}^{2,4}$, but since the composite with the identity is trivial we simply omit it in the formulas).  Both expressions are two-step Gaussians and so by the discussion above we can perform the composite
\begin{align*}
g  \circ f &= \Big\langle f^{1,2,3,4} \cdot g_{1,3}^{0}   \Big\rangle_{1,3} \\ &= \Big\langle e^{ a_0 (\alpha_1 + \alpha_3) + t_0 (\tau_1 + \tau_3) + (t_1 a_2 + t_3 a_4 + y_1 x_2 + y_3 x_4)h + y_0 (\eta_1 + \mathcal{A}_1^{-1}\eta_3 ) + x_0 (\xi_3 + \mathcal{A}_3^{-1}\xi_1) } \cdot \\ &\phantom{d} \hspace*{7.5cm} \cdot (a_1 a_2 + a_3 a_4 + y_1^2 x_2^2) \varepsilon  \Big\rangle_{1,3}
\end{align*}
using the contration formula twice as explained above.

First we contract the variables $\alpha , t, a, \tau$ (for the indices $1,3$), taking
\begin{align*}
r &= ( t_1, t_3, \alpha_1, \alpha_3) \quad , \quad s = ( \tau_1, \tau_3, a_1, a_3) \quad , \quad  W=0\\ u &=( a_2 h, a_4 h , a_0, a_0) \ , \ v= ( t_0, t_0, 0,0) 
\end{align*} 
so that $\widetilde{W}=I$ and then
\begin{align*}
 r' & = r+ v = ( t_0 + t_1, t_0 t_3, \alpha_1, \alpha_3), \\ s' & =s+u = ( \tau_1 + a_2 h, \tau_3 + a_4 h, a_0 + a_1 , a_0 + a_3).
\end{align*}
  According to \cref{thm contraction} we have 
\begin{align*}
\Big\langle f^{1,2,3,4} \cdot g_{1,3}^{0}   \Big\rangle_{a_{13}, t_{13}} &= e^{(y_1 x_2 + y_3 x_4)h +  y_0\eta_1 +  x_0 \xi_3 }  \Big\langle e^{ a_0 (\alpha_1 + \alpha_3) + t_0 (\tau_1 + \tau_3) + (t_1 a_2 + t_3 a_4 )h } \cdot \\ &\phantom{==}  e^{ y_0  \mathcal{A}_1^{-1}\eta_3  + x_0  \mathcal{A}_3^{-1}\xi_1} (a_1 a_2 + a_3 a_4 + y_1^2 x_2^2) \varepsilon    \Big\rangle_{a_{13}, t_{13}}  \\ &= e^{(y_1 x_2 + y_3 x_4)h +  y_0\eta_1 +  x_0 \xi_3 }e^{t_0(a_2+a_4)h} \cdot   \\ &  \Big\langle  e^{ y_0  \mathcal{A}_1^{-1}\eta_3  + x_0  \mathcal{A}_3^{-1}\xi_1} ((a_0 +a_1) a_2 + (a_0 + a_3) a_4 + y_1^2 x_2^2) \varepsilon         \Big\rangle_{a_{13}, t_{13}}\\
&= e^{(y_1 x_2 + y_3 x_4)h +  y_0\eta_1 +  x_0 \xi_3 }e^{t_0(a_2+a_4)h} \cdot   \\ &\phantom{=}  e^{ y_0  \eta_3  + x_0 \xi_1} ((a_0 - y_0 \eta_3 )a_2   + (a_0 -x_0\xi_1) a_4 + y_1^2 x_2^2) \varepsilon  
\end{align*}
where for the last equality we have used that $$\left\langle e^{\lambda \mathcal{A}^{\pm 1} } a \right\rangle_a = e^\lambda (\pm \lambda),$$ which can be easily verified expanding the left-hand side in power series and contracting term by term.

The second step is to use once more \cref{thm contraction} to contract the remainder variables $y,\xi, \eta, x$ (at the same indices $1,3$). Now we consider 
\begin{align*}
r &= ( y_1, y_3, \xi_1, \xi_3) \quad , \quad s = ( \eta_1, \eta_3, x_1, x_3) \quad , \quad  W=0\\ u &=( x_2 h, x_4 h , x_0, x_0) \ , \ v= ( y_0, y_0, 0,0) 
\end{align*} 
so that $\widetilde{W}=I$ and then
\begin{align*}
 r' & = r+ v = ( y_0 + y_1, y_0 y_3, \xi_1, \xi_3), \\ s' &= s+u = ( \eta_1 + x_2 h, \eta_3 + x_4 h, x_0 + x_1 , x_0 + x_3).
\end{align*}

We finally obtain
\begin{align*}
g \circ f &=  \Big\langle \Big\langle f^{1,2,3,4} \cdot g_{1,3}^{0}   \Big\rangle_{a_{13}, t_{13}} \Big\rangle_{y_{13}, x_{13}}\\ &= e^{t_0(a_2+a_4)h} \Big\langle    e^{(y_1 x_2 + y_3 x_4)h +  y_0(\eta_1+ \eta_3 ) +  x_0 ( \xi_1+ \xi_3) } \cdot \\ &\phantom{==}  ((a_0 - y_0 \eta_3 )a_2   + (a_0 -x_0\xi_1) a_4 + y_1^2 x_2^2) \varepsilon   \Big\rangle_{y_{13}, x_{13}}\\
&= e^{t_0(a_2+a_4)h} e^{y_0 (x_2 + x_4)h} \cdot \\   &\phantom{=}  \Big\langle      ((a_0 - y_0 (\eta_3 + x_4 h) )a_2   + (a_0 -x_0\xi_1) a_4 + (y_0+y_1)^2 x_2^2)  \Big\rangle_{y_{13}, x_{13}}   \\
&= e^{(t_0(a_2+a_4) + y_0 (x_2 + x_4))h} (a_0 (a_2+a_4) -y_0 a_2 x_4 h +y_0^2x_2^2 ) \varepsilon.
\end{align*}
Let us point out that in concrete examples as this one it is best to use a computer to do the calculation (the previous one only takes a few hundredths of a second), for reasons that most likely the reader will find obvious at the moment after this cumbersome (but conceptually and computationally simple) calculation. Yet in what follows the two-step contraction will play a central role in general arguments and not only on the computational side. 
\end{example}

\section{The universal invariant \texorpdfstring{$Z_\D$}{Z\_D}} \label{sect ZD}

In this section, we will define the thickening and band maps for Habiro's bottom tangles, which will provide us with a powerful strategy to study the universal invariant of 0-framed knots.  We will  use these to show our two main results about the collection of Bar-Natan - van der Veen polynomials $\rho_K^{i,j}$ in \cref{thm rho ij =0 for j>i} and \cref{thm rho 2 sth}.

In order to show these two Theorems we will need a few technical results about the generating series of (part of) the thickening map. We have included these in \cref{sect band map}.

\subsection{The thickening map}\label{sect thickening map}

In this subsection we will construct the key tool of this paper. Recall that the \textit{genus} of a knot $K$ is the minimum among the genera of its \textit{Seifert surfaces}, that is,  compact, connected, oriented surfaces in $S^3$ bounding $\mathrm{cl} (K)$. We will make use of a well-known procedure to create knots of genus $\leq g$ out of thickening of $2g$-component tangles. To that end, we will borrow (and slightly modify) the notion of bottom tangle from \cite{habiro}.

Let $n \geq 0$. An $n$-component \textit{bottom tangle} is an open tangle with $n$ components which is an element of $\hom{\mathcal{T}}{(+-)^n}{\emptyset}$ and such that the $i$-th component starts at the $(2i-1)$-th point on the bottom and ends at the $2i$-th. A \textit{vertical bottom tangle} the image of a bottom tangle $L \subset (D^1)^{\times 3}$ under a $(-\pi/2)$ rotation about the $y$ axis.

Here is an example of a vertical bottom tangle:
\begin{equation*}
\centre{
\centering
\includegraphics[width=0.18\textwidth]{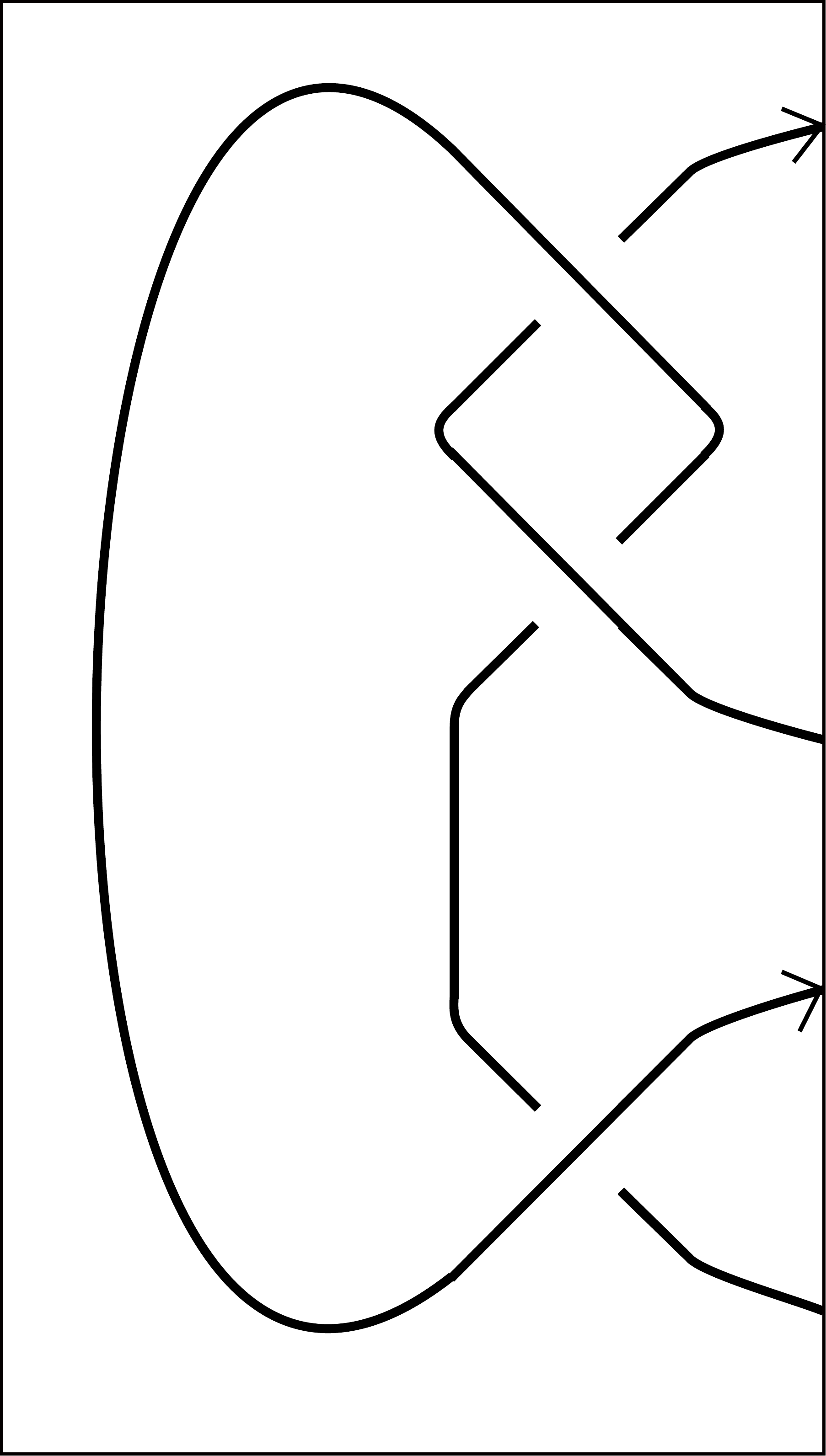}}
\end{equation*}

We endow any vertical bottom tangle with the canonical order starting from the lowest component. Also observe that both head and tail of every component of a vertical bottom tangle point up. In this sense, they behave similarly to upwards tangles.  The same argument used in \cref{lemma every tangle has rot diag} yields the following:

\begin{lemma}\label{lemma vbt rot form}
Every vertical bottom tangle has a diagram in rotational form.
\end{lemma}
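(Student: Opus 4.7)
The plan is to mimic verbatim the proof of \cref{lemma every tangle has rot diag}. The decisive observation is that every component of a vertical bottom tangle has both its head and foot pointing upward (even though both sit on the ``top'' boundary of the cube, in consecutive pairs, rather than being split between top and bottom), so the combinatorial straightening algorithm developed for upwards tangles applies without essential modification.

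Concretely, starting from an arbitrary diagram $D$ of a vertical bottom tangle $L$, I would first label the edges of each component by pairs $(i,j)$ according to the orientation of the $i$-th strand, and equip the crossings with the lexicographic order introduced in the proof of \cref{lemma every tangle has rot diag}. I would then place the crossings in successive horizontal bands $\mathbb{R}\times [k,k+1]$ in a uniformly upward-pointing fashion, attaching a cup at the end of the foot of the edge carrying the greater pair. Finally I would connect the edges of each component in order, extending loose ends upward and inserting a cap whenever two edges must merge.

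By construction every crossing in the resulting diagram points upward, and each cup introduced at a crossing gets paired with a cap introduced at the subsequent merger, producing exactly one of the two allowed full rotations of \eqref{eq rot tangle}; any residual isolated maxima or zig-zag curves are removed by a planar isotopy, exactly as at the end of the proof of \cref{lemma every tangle has rot diag}. The tangle is planar isotopic to $L$ throughout, so the final diagram still represents the original vertical bottom tangle.

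The only point that requires slight attention, and which I expect to be the mildest obstacle, is the boundary behaviour at the top: the endpoints of a vertical bottom tangle appear in consecutive pairs $(2i-1,2i)$, and one has to verify that after the procedure the foot and head of the $i$-th component land in the correct pair of positions. This is automatic because the algorithm is orientation-preserving along each component and all loose ends are extended upward without inserting any cup or cap between the head and foot of a given component, so the pairing of endpoints is preserved up to a final planar isotopy at the top boundary.
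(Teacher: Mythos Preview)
Your proposal is correct and follows exactly the paper's approach: the paper's proof is simply the one-line remark that the same argument as in \cref{lemma every tangle has rot diag} applies, precisely because both head and foot of every component of a vertical bottom tangle point upward. One minor imprecision: the endpoints of a vertical bottom tangle sit on a \emph{side} face of the cube (it is a $-\pi/2$ rotation of a bottom tangle), not the top, but this does not affect your argument.
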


Furthermore, given a ribbon Hopf algebra $A$, we can also define the universal invariant for a vertical bottom tangle by using the rules given in \eqref{eq beads norot} (or those in \eqref{eq beads rot}, according to the previous Lemma).

Let $g \geq 1$ and let $L$ be a $2g$-component vertical bottom tangle. The \textit{thickening} of $L$ is the knot $\widecheck{Th}(L)$ given as below,
\begin{equation}
\centre{
\labellist \small \hair 2pt
\pinlabel{$\vdots$}  at 514 880
\pinlabel{$\vdots$}  at 1756 870
\pinlabel{$\vdots$}  at 2057 870
\pinlabel{$\vdots$}  at 2511 870
\pinlabel{ \normalsize $L$}  at 131 1437
\pinlabel{ \normalsize $D(L)$}  at 1430 1440
\pinlabel{ \LARGE $\overset{\widecheck{Th}}{\longmapsto}$}  at 894 894
\endlabellist
\centering
\includegraphics[width=0.7\textwidth]{sketch_figures/thickening}}
\end{equation}
where $D(L)$ is the result of doubling every component of $L$ and reversing the orientation of the inner one for each pair of doubled component, and the dashed components of $L$ indicate that they are possibly knotted. Observe that (the closure of) the knot $\widecheck{Th}(L)$ comes with a choice of Seifert surface of genus $g$. Furthermore, such a knot is always 0-framed, for every crossing gives rise to two positive and two negative crossings, hence the total writhe of the resulting knot will always be zero.

If $\mathsf{vBT}_n$ denotes the set of isotopy classes of $n$-component vertical bottom tangles, then the previous construction defines a surjective\footnote{Given a closed knot of genus $g' \leq g$ in $S^3$, attach $(g-g')$ 2-dimensional 1-handles to a genus $g'$ Seifert surface for the knot, obtaining a genus $g$ Seifert surface $F$. Elementary algebraic topology ensures that there exist $2g$ simple closed curves $\gamma_1, \ldots , \gamma_{2g}$ such that $H_1 (F; \Z) \cong \Z \gamma_1 \oplus \cdots \oplus \Z \gamma_{2g}$. Deformation retract $F$ to the (closure of the) union of tubular neighbourhoods of the curves $\gamma_i$ to obtain a new Seifert surface in band form, from which the vertical bottom tangle can be read off the cores of the bands.} map
\begin{equation}
\widecheck{Th} : \mathsf{vBT}_{2g} \to \left\lbrace \parbox[c]{6.5em}{\centering   {\small  genus $\leq g$ 0-framed knots }}  \right\rbrace \subset \hom{\mathcal{T}}{+}{+}.
\end{equation}

We now aim to construct the algebraic counterpart of this map. Fix a topological ribbon Hopf algebra $A$ with universal $R$-matrix $R= \sum_i \alpha_i \otimeshat \beta_i\in A \otimeshat A$ and balancing element $\kappa \in A$. The \textit{cross map} $Cr $ and the \textit{band map} $B$ are  defined as follows:
$$ Cr: A \otimeshat A \to A \otimeshat A \qquad , \qquad Cr(x \otimeshat y) := \sum_i x \alpha_i \otimeshat \beta_i y$$ and $$ B: A \otimeshat A \to A \qquad , \qquad B(x \otimeshat y) := \sum_{(x), (y)} x_{(2)} S(y_{(1)}) \kappa S( x_{(1)} ) \kappa y_{(2)}  , $$
where  for $z \in A$, we use the Sweedler notation $\Delta (z)= \sum_{(z)} z_{(1)} \otimeshat z_{(2)}$.

For $g \geq 1$, the \textit{algebraic thickening map} $Th$ is the composite
\begin{equation}
\begin{tikzcd}
A^{\otimeshat 2g} \rar{Cr^{\otimeshat g}} & A^{\otimeshat 2g} \rar{B^{\otimeshat g}} & A^{\otimeshat g} \rar{\mu^{[g]}} & A
\end{tikzcd}
\end{equation}
where $\mu^{[g]}$ denotes the $g$-fold multiplication map.

The following result is essentially well-known, but we include a detailed proof for its relevance in this paper.
\begin{proposition}
Let $g \geq 1$. We have the following commutative diagram:
$$\begin{tikzcd}
\mathsf{vBT}_{2g} \arrow{r}{\widecheck{Th}} \arrow{d}[swap]{Z_A} & \left\lbrace \parbox[c]{6.5em}{\centering   {\small  \textnormal{genus $\leq g$ 0-framed knots} }}  \right\rbrace \arrow{d}{Z_A}\\
A^{\otimeshat 2g } \arrow{r}{Th} & A 
\end{tikzcd}  $$
\end{proposition}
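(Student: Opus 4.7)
The plan is to realise the geometric thickening $\widecheck{Th}(L)$ as an explicit finite sequence of the elementary tangle operations appearing in \cref{prop naturality Z_A} (doubling, orientation-reversal, addition of caps, and tangle composition), and then translate this sequence into algebra using that same proposition. Concretely, for each $1 \le i \le g$ I would perform three local stages on the pair of adjacent components $(L_{2i-1}, L_{2i})$: first insert a positive crossing at the top between the two components (the \emph{cross} stage), which is what glues them into a single handle of the Seifert surface rather than leaving them as two disjoint bands; then double each of the two components, reverse the orientation of the inner doubled strand of each pair, and close off the four resulting strands at the top by two caps, producing a single closed loop running around the $i$-th handle (the \emph{band} stage); finally merge the $g$ resulting loops into a single knot by further cap/cup operations along the top (the \emph{multiplication} stage).

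Applying \cref{prop naturality Z_A} to each of these three stages translates them directly into algebra. The first stage multiplies the appropriate tensor factors by $R = \sum_i \alpha_i \otimeshat \beta_i$, yielding $Cr^{\otimeshat g}$. The second stage applies $\Delta_{(\cdot)}$, $(S_{L_j})_{(\cdot)}$, and the appropriate $\mu^{\bullet}_{(\cdot)}$ maps, with all $\kappa^{\pm 1}$ factors arising automatically from the head/foot orientation rules. The third stage gives the $g$-fold multiplication $\mu^{[g]}$. Provided that the composite coming from the second stage on a single pair is identified with the explicit formula
\[
B(x \otimeshat y) = \sum_{(x),(y)} x_{(2)} S(y_{(1)}) \kappa S(x_{(1)}) \kappa y_{(2)},
\]
the equality $Z_A(\widecheck{Th}(L)) = (\mu^{[g]} \circ B^{\otimeshat g} \circ Cr^{\otimeshat g})(Z_A(L))$ follows at once by functoriality of the invariant under composition of tangles.

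The main obstacle is precisely the verification of this band-map identity. After applying $\Delta_{(2i-1)} \otimeshat \Delta_{(2i)}$ one has four tensor factors $x_{(1)} \otimeshat x_{(2)} \otimeshat y_{(1)} \otimeshat y_{(2)}$; reversing the orientation of the two inner strands introduces the antipodes $S(x_{(1)})$ and $S(y_{(1)})$ (together with possible $\kappa^{\pm 1}$ terms from the $S_{L_j}$ formula); and the two top caps multiply the four resulting strands together in a cyclic order dictated by the orientation of the resulting band, each cap contributing one of the two $\kappa$'s appearing in $B$. The real content of the proof is to set up the local picture of the band stage precisely enough to track which ends of each doubled strand point up or down and in which order the cap multiplications concatenate the four factors, so that the two $\kappa$'s and the product order in $B(x \otimeshat y)$ come out exactly as in the displayed formula. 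Once that local diagram is drawn, every substitution along the way is a mechanical application of \cref{prop naturality Z_A}.
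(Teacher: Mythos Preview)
Your proposal is correct and follows essentially the same approach as the paper: decompose the geometric thickening into a \emph{cross} stage, a \emph{band} stage, and a \emph{multiplication} stage, translate each via \cref{prop naturality Z_A}, and then verify that the composite of the algebraic operations in the band stage equals the explicit formula for $B$. The paper's proof is organised slightly differently---it first performs an isotopy dragging the caps to the bottom so that all the cup/cap operations happen there, and in its intermediate decomposition the cross stage produces $\widetilde{Cr}(x\otimeshat y) = Cr(x\otimeshat y)\cdot(\kappa^{-1}\otimeshat\kappa^{-1})$ rather than $Cr$ itself, with these extra $\kappa^{-1}$'s later absorbed into the band-map composite---but the substance is the same, and your final paragraph correctly pinpoints the only nontrivial bookkeeping.
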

\begin{proof}
We will actually show a more general claim. Namely, we aim to construct a commutative diagram 
\begin{equation}\label{eq comm diagram thickening}
\begin{tikzcd}
[column sep={7em,between origins}]
\mathsf{vBT}_{2g} \arrow{r}{\mathrm{cross}} \arrow{d}[swap]{Z_A} & \T^{(1)} \arrow{r}{\check{\Delta}} \arrow{d}[swap]{Z_A} & \T^{(2)}  \arrow{r}{\check{S}} \arrow{d}[swap]{Z_A} & \T^{(3)}  \arrow{r}{\mathrm{cups}} \arrow{d}[swap]{Z_A}  & \left\lbrace \parbox[c]{6.5em}{\centering   {\small  \textnormal{genus $\leq g$ 0-framed knots} }}  \right\rbrace   \arrow{d}[swap]{Z_A} \\
A^{\otimeshat 2g } \arrow{r}{\widetilde{Cr}^{\otimeshat g}} & A^{\otimeshat 2g }  \arrow{r}{\Delta^{\otimeshat 2g}} & A^{\otimeshat 4g } \arrow{r}{(S_{\capr} \otimeshat \id )^{\otimeshat 2g}} & A^{\otimeshat 4g } \arrow{r}{\widetilde{\mu}} & A
\end{tikzcd} 
\end{equation}
such that the top row composite is the geometric thickening map $\widecheck{Th}$ and the bottom row composite is the algebraic thickening map $Th$.

Let $L= L_1 \cup \cdots \cup L_{2g}$ be a vertical bottom tangle. The main observation is that $\widecheck{Th}(L)$ is isotopic to the following knot obtained by dragging the caps to the bottom:
$$
\labellist \small \hair 2pt
\pinlabel{$\widecheck{Th}(L) =$}  at -260 710
\pinlabel{$\vdots$}  at 400 740
\pinlabel{$\cdots$}  at 1150 10
\endlabellist
\centering
\includegraphics[width=0.45\textwidth]{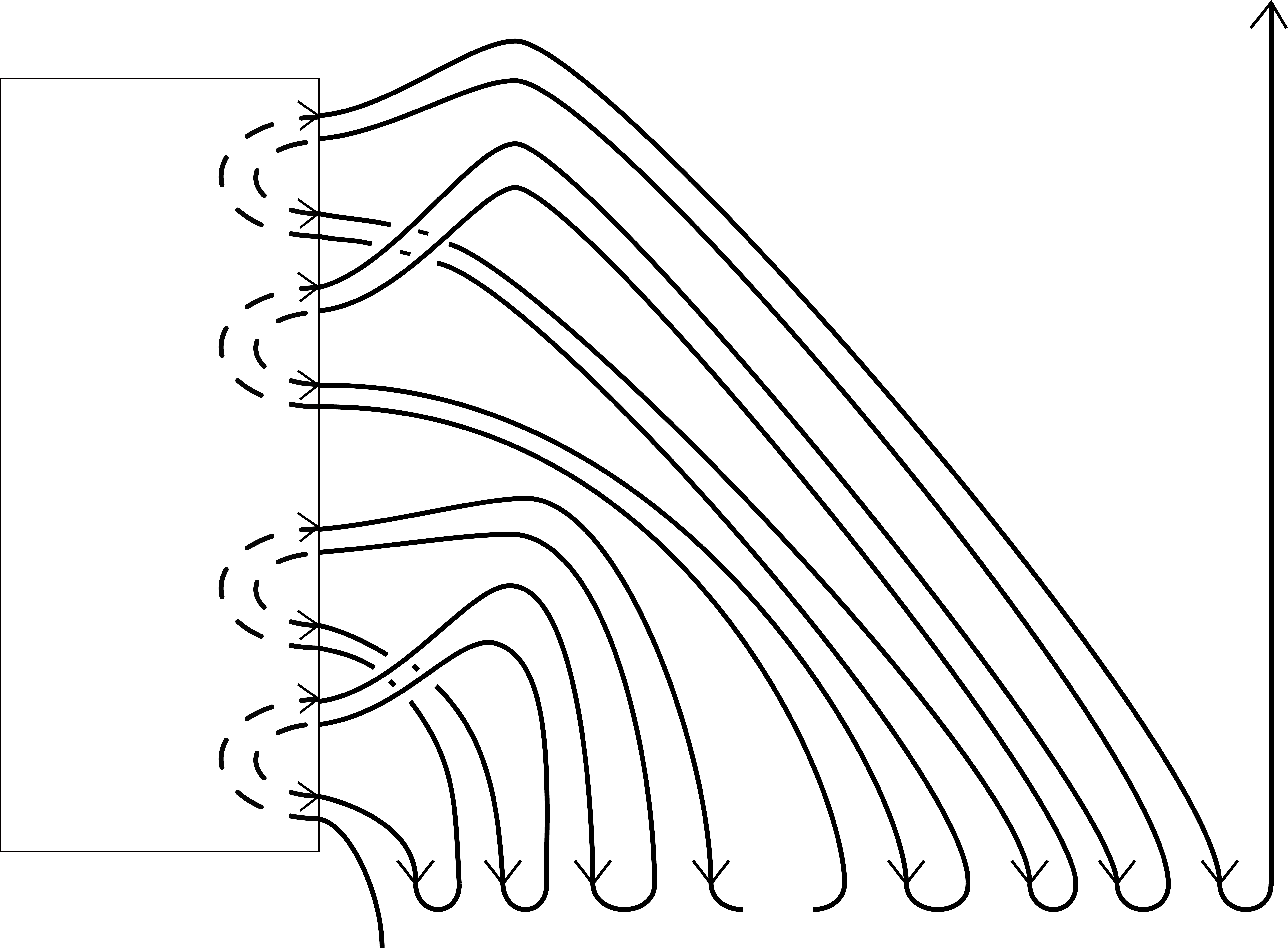}
$$
From this form it can be seen that $\widecheck{Th}(L)$ arises from $L$ as follows: first add crossings and bring the endpoints to the bottom part of the plane to create an element $L^{(1)}$ of $\hom{\mathcal{T}}{(++--)^g}{\emptyset}$, as depicted below:

$$
\labellist \small \hair 2pt
\pinlabel{$L^{(1)} =$}  at -210 640
\pinlabel{$\vdots$}  at 400 645
\pinlabel{$\cdots$}  at 1150 20
\endlabellist
\centering
\includegraphics[width=0.45\textwidth]{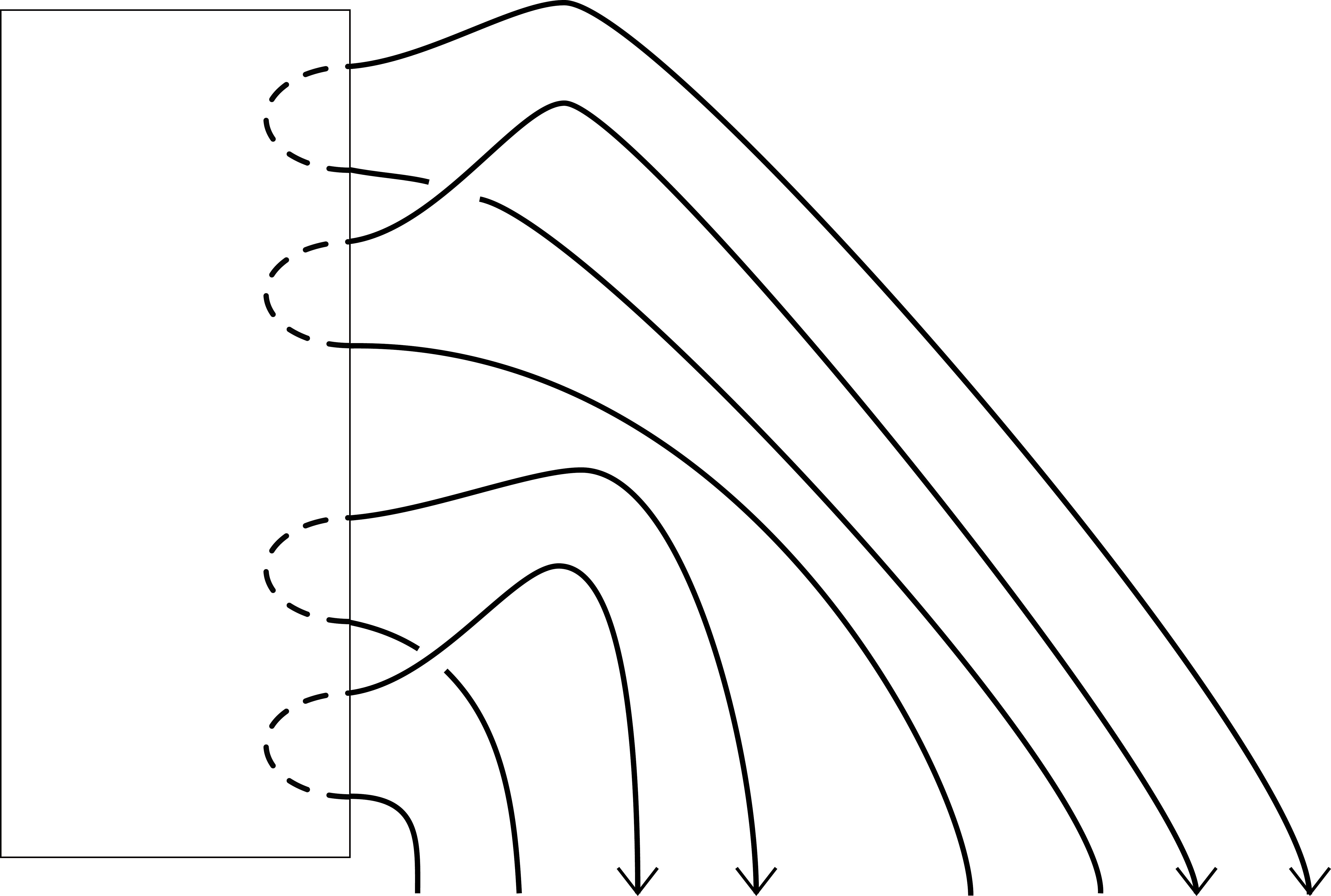}
$$

Next double every strand of $L^{(1)}$, that is, apply  $\check{\Delta}_\ell$ to every component $\ell$ of $L^{(1)}$ to obtain a $4g$-component tangle in $\hom{\mathcal{T}}{(+^4 -^4)^g}{\emptyset}$, denoted $L^{(2)}$. Further reverse the orientation of every component labelled with an odd integer applying $\check{S}_\ell$ to such every component, to obtain a tangle in $\hom{\mathcal{T}}{(+-)^{2g}}{\emptyset}$ as shown below,
$$
\labellist \small \hair 2pt
\pinlabel{$L^{(3)} =$}  at -210 640
\pinlabel{$\vdots$}  at 400 645
\pinlabel{$\cdots$}  at 1150 20
\endlabellist
\centering
\includegraphics[width=0.45\textwidth]{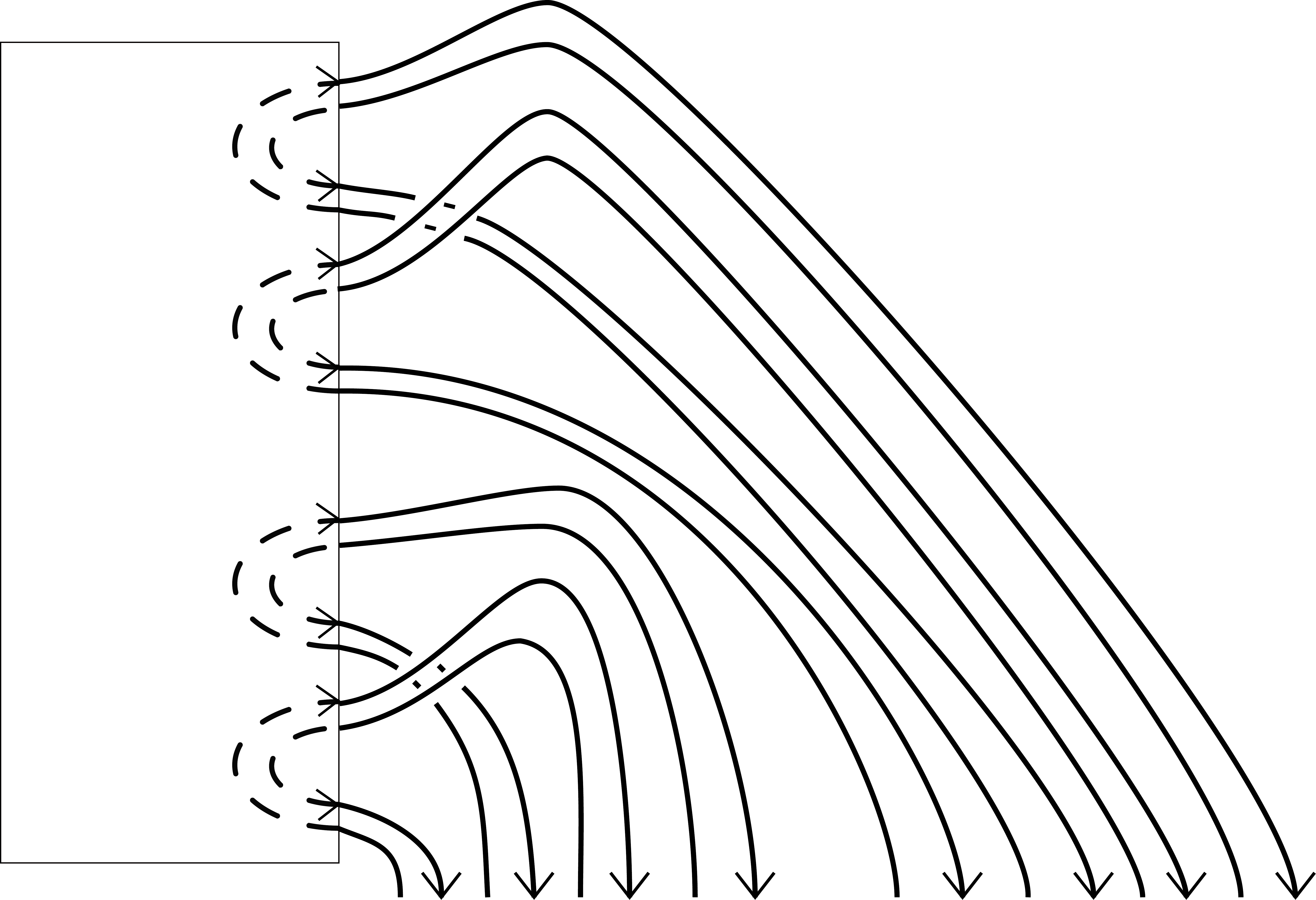}
$$
where the order of the components have been also indicated. Then the knot $\widecheck{Th}(L)$ is obtained from $L^{(3)}$ by attaching $4g$ cups to the bottom, that is $$\widecheck{Th}(L)= (L^{(3)}\otimes \uparrow_+) \circ (\uparrow_+ \otimes \cupr \otimes \overset{4g}{\cdots} \otimes \cupr).   $$
If $\T^{(i)}$ denotes the set of all such $L^{(i)}$ built out of vertical bottom tangles, then the previous description realises the geometric thickening map as top row composite  of the diagram \eqref{eq comm diagram thickening}.

The universal invariant $Z_A (\widecheck{Th}(L))$ can be then computed using the naturality properties of \cref{prop naturality Z_A}. Indeed if we set $\widetilde{Cr}(x \otimeshat y) := [Cr (x \otimeshat y)] \cdot (\kappa^{-1} \otimeshat \kappa^{-1})$  by definition we have that $$Z_A (L^{(1)}) = \widetilde{Cr}^{\otimeshat g} (Z_A(L)) .$$

Now \cref{prop naturality Z_A} implies at once the commutativity of the three leftmost squares of \eqref{eq comm diagram thickening}, where we put $S_{\capr} (x) = \kappa^{-1}S(x)$. Now  for $n \geq 2$ write 
 $$(\mu^{\cupr} \,  )^{[n]}(x_1 \otimeshat \cdots \otimeshat x_n) := x_1   \kappa  x_2  \kappa   \cdots   \kappa  x_n \in A,$$ 
and set  $\tau_{(123)}$ is as in \S \ref{subsec relation with classical quantum invariants} for the permutation $(123) \in \Sigma_4$. Then the map $\widetilde{\mu}$ appearing at the bottom right of  \eqref{eq comm diagram thickening} is defined as the following composite:
$$\begin{tikzcd}
[column sep={7em,between origins}]
A^{\otimeshat 4g} \rar{\tau_{(123)}^{\otimeshat g}} & A^{\otimeshat 4g} \rar{\id \otimes \eta} & A^{\otimeshat 4g +1} \rar{(\mu^{\cupr})^{[4g+1]}} & A
\end{tikzcd}$$
It then follows from \cref{prop naturality Z_A} that the rightmost square of \eqref{eq comm diagram thickening} also commutes, keeping in mind that we had to permute the factors to change the order of the components before applying the multiplication.

What is left to check is that the bottom row composite equals the algebraic thickening map $Th$. It can be readily seen that the band map $B: A \otimeshat A \to A$ equals the following composite:
$$\begin{tikzcd}
[column sep={5.5em,between origins}]
A^{\otimeshat 2} \rar{(\cdot \kappa^{-1})^{\otimeshat 2}} & A^{\otimeshat 2} \rar{\Delta \otimeshat \Delta} & A^{\otimeshat 4}  \rar{(S_{\capr} \otimeshat \id )^{\otimeshat 2}} & A^{\otimeshat 4} \rar{\tau_{(123)}} & A^{\otimeshat 4} \rar{\id \otimeshat \eta} & A^{\otimeshat 5} \rar{(\mu^{\cupr} \, )^{[5]}} & A
\end{tikzcd}$$
Therefore the algebraic thickening map equals the composite
$$ \mu^{[g]} \circ  [ (\mu^{\cupr} \, )^{[5]} ]^{\otimeshat g} \circ (\id \otimeshat \eta)^{\otimeshat g} \circ \tau_{(123)}^{\otimeshat g} \circ (S_{\capr} \otimeshat \id )^{\otimeshat 2g} \circ \Delta^{\otimeshat 2g} \circ  (\cdot \kappa^{-1})^{\otimeshat 2g} \circ Cr^{\otimeshat g}. $$
Noting that $\mu^{[g]} \circ  [ (\mu^{\cupr} \, )^{[5]} ]^{\otimeshat g} \circ (\id \otimeshat \eta)^{\otimeshat g} \circ \tau_{(123)}^{\otimeshat g} = \widetilde{\mu}$, the claim follows directly.
\end{proof}

In particular, the previous result claims that to understand the universal invariant of a knot, it suffices to understand the universal invariant of a vertical bottom tangle whose thickening is the knot. We will heavily rely on this fact to derive properties about the invariant $Z_\D$ for knots.

\begin{remark}
Despite we built the commutative diagram \eqref{eq comm diagram thickening} as explained in the proof above, we were not able to define maps between classes of tangles that were precisely the topological counterparts of the three maps whose composite define the algebraic thickening map (hence giving rise to a more natural commutative diagram). One should regard this as a deficiency of the notion of tangle that we are using. One way to fix this is to allow virtual crossings, see \cite{becerra}.
\end{remark}

For the rest of the section, we will be arguing with generating series. The following lemmas will prove to be extremely useful. We write the statements for free $k$-modules for simplicity, but trivially all of them hold for topologically free $k[[h]]$-modules as well.

\begin{lemma}\label{lemma y_i=0}
Let 
$$\begin{tikzcd}
 k[x_1, \ldots , x_n] \rar{f} &  k[y_1, \ldots , y_m] \rar{g} & k[z_1, \ldots , z_p]
\end{tikzcd}$$
be $k$-module maps, and suppose that the generating series $\bar{g}$ of $g$ has no $\eta_i$ for some $i=1, \ldots, m$.  If $\bar{f}_{| y_i =0}$ denotes the evaluation of the generating series $\bar{f}$ of $f$ at $y_i=0$, then we have
\begin{equation}\label{eq in lemma y_i=0}
\langle \bar{f}  \cdot  \bar{g}  \rangle_y = \langle \bar{f}_{| y_i =0}  \cdot  \bar{g} \rangle_y 
\end{equation}

Similarly, if $\bar{f}$ has no $y_i$, then we have 
\begin{equation}\label{eq in lemma eta_i=0}
\langle \bar{f}  \cdot  \bar{g}  \rangle_y = \langle \bar{f} \cdot  \bar{g}_{| \eta_i =0}  \rangle_y 
\end{equation}
(and the same is true for maps between free $k[[h]]$-modules).
\end{lemma}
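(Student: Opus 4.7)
The plan is to unfold the definition of the contraction $\langle - \rangle_y$ from \cref{sect contraction} and observe that the stated hypotheses kill all but one term in each pair of dual variables $(y_i, \eta_i)$. Recall that the pairing is monomial-wise defined by the rule $\eta_j^{b}(y_j^{a}) = a! \, \delta_{ab}$, extended multiplicatively over the indices and linearly; consequently, for each index $j$, only those monomials of $\bar f \cdot \bar g$ in which the exponent of $y_j$ coincides with the exponent of $\eta_j$ give a nonzero contribution to $\langle \bar f \cdot \bar g\rangle_y$.

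For \eqref{eq in lemma y_i=0}, decompose
$$\bar f = \sum_{a \geq 0} f_a \cdot y_i^a,$$
where each $f_a$ is a power series not involving the variable $y_i$; in particular $f_0 = \bar f_{|y_i=0}$. By hypothesis $\bar g$ has no occurrence of $\eta_i$, i.e.\ every monomial of $\bar g$ carries $\eta_i^0$, so the Kronecker delta at the index $i$ forces $a=0$ in the pairing. Hence only the summand $f_0$ survives, yielding
$$\langle \bar f \cdot \bar g\rangle_y = \langle f_0 \cdot \bar g\rangle_y = \langle \bar f_{|y_i=0} \cdot \bar g\rangle_y.$$

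The proof of \eqref{eq in lemma eta_i=0} is entirely symmetric: write $\bar g = \sum_{b \geq 0} g_b \cdot \eta_i^b$ with $g_b$ independent of $\eta_i$, so that $g_0 = \bar g_{|\eta_i=0}$; the hypothesis that $\bar f$ has no $y_i$ forces $b=0$ by the same delta, and only $g_0$ contributes. I do not foresee any real obstacle: both identities reduce to the observation that the contraction is diagonal in each pair of dual variables. The extension from free $k$-modules to topologically free $k[[h]]$-modules requires no modification, since the contraction (and hence the truth of \eqref{eq in lemma y_i=0} and \eqref{eq in lemma eta_i=0}) is computed coefficient by coefficient in powers of $h$.
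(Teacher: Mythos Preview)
Your proof is correct and follows essentially the same approach as the paper's first proof: both argue directly from the pairing rule $\eta_i^b(y_i^a)=a!\,\delta_{ab}$, observing that the absence of $\eta_i$ in $\bar g$ (respectively of $y_i$ in $\bar f$) forces the surviving exponent to be zero. Your version is simply a bit more explicit about the decomposition $\bar f=\sum_a f_a y_i^a$.
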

\begin{proof}
If $\bar{g}$ has no $\eta_i$, then any $y_i$ in $\bar{f}$ will be paired with $1=\eta_i^0$ yielding $\langle y_i^k \eta_i^0 \rangle_{y_i}=0$ for $k>0$. Hence we might just set $y_i=0$ directly in $\bar{f}$. The second statement is similar.
\end{proof}
\begin{proof}[Alternative proof.]
Unravelling definitions, that $\bar{g}$ has no $\eta_i$ is equivalent to saying that the ideal $\mathfrak{a} \subset k[y_1, \ldots , y_m]$ generated by $y_i$ lies in the kernel of $g$. Besides, the $k$-module map corresponding with $\bar{f}_{| y_i =0}$ is the map $\widetilde{f}$ which is identically zero on $f^{-1}(\mathfrak{a})$ and coincides with $f$ elsewhere. Trivially $g \circ f = g \circ \widetilde{f}$ as $k$-module maps, but this is equivalent to \eqref{eq in lemma y_i=0}, which concludes the first part.

For the second part,  $\bar{f}$ having no $y_i$ means that $\mathfrak{a} \cap \im f =0$; and replacing $\bar{g}$ by $\bar{g}_{| \eta_i =0}$ amounts to replace $g$ by $\widetilde{g}$ which is identically zero on $\mathfrak{a}$ and coincides with $g$ elsewhere. The result follows as above.
\end{proof}

\begin{notation}
As stated above, we will usually blur the distinction between a $k$-module map as in the lemma and its generating series, so that we will mostly rewrite  \eqref{eq in lemma y_i=0} and \eqref{eq in lemma eta_i=0} as
$$g \circ f = g \circ f_{|y_i=0} \qquad \mathrm{and} \qquad g \circ f = g_{| \eta_i=0} \circ f,$$
respectively.
\end{notation}

The following lemma is shown using the same argument as before:

\begin{lemma}\label{lemma z_i=0}
Let $f,g $ be $k$-module maps as above. Then $$(g \circ f)_{| z_i=0} = g_{|z_i =0} \circ f$$ and similarly $$(g \circ f)_{| \xi_i=0} = g \circ f_{| \xi_i=0}.$$
\end{lemma}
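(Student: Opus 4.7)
My plan is to mimic the alternative proof of the preceding lemma (\cref{lemma y_i=0}), translating each ``setting a variable to zero in the generating series'' operation into a composition with a specific canonical $k$-module map, after which both identities follow from associativity of composition.

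First I would unravel the effect of the substitution $z_i = 0$ at the level of linear maps. Given any $k$-module map $\phi: V \to k[z_1,\ldots,z_p]$ with generating series $\bar{\phi} = \sum p_{i_1,\ldots,i_n}(z) \xi_1^{i_1}\cdots \xi_n^{i_n}$, the explicit correspondence in \eqref{eq almost final step isomorp} shows directly that $\bar{\phi}_{|z_i=0}$ is the generating series of $\pi_i \circ \phi$, where $\pi_i: k[z_1,\ldots,z_p] \to k[z_1,\ldots,z_p]$ is the $k$-algebra endomorphism sending $z_i \mapsto 0$ and fixing the other generators. Similarly, for a map $\psi: k[x_1,\ldots,x_n] \to W$ with generating series $\bar{\psi}$, the series $\bar{\psi}_{|\xi_i=0}$ corresponds to the $k$-module map $\psi \circ \sigma_i$, where $\sigma_i: k[x_1,\ldots,x_n] \to k[x_1,\ldots,x_n]$ is the projection killing every monomial in which $x_i$ appears with positive exponent (equivalently, the composite of the quotient $k[x_1,\ldots,x_n] \to k[x_1,\ldots,x_n]/(x_i)$ followed by the obvious section).

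With these two dictionary entries, both identities become formal. For the first:
\begin{equation*}
(g \circ f)_{|z_i=0} \ \longleftrightarrow \ \pi_i \circ (g \circ f) = (\pi_i \circ g) \circ f \ \longleftrightarrow \ g_{|z_i=0} \circ f,
\end{equation*}
where the middle equality is just associativity of composition of $k$-module maps. For the second:
\begin{equation*}
(g \circ f)_{|\xi_i=0} \ \longleftrightarrow \ (g \circ f) \circ \sigma_i = g \circ (f \circ \sigma_i) \ \longleftrightarrow \ g \circ f_{|\xi_i=0}.
\end{equation*}

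There is no real obstacle here; the only thing to be careful about is the bookkeeping in step one, namely checking that ``evaluating at $z_i=0$ in $\bar{\phi}$'' really corresponds to post-composition with $\pi_i$ (and not, say, with some dual operation). This is immediate from \eqref{eq almost final step isomorp}, since setting $z_i=0$ in the coefficients $p_{i_1,\ldots,i_n}(z)$ is the same as replacing each $p_{i_1,\ldots,i_n}$ by $\pi_i \circ p_{i_1,\ldots,i_n}$, and the factorials $i_1!\cdots i_n!$ that appear in the correspondence are unaffected by this substitution. The dual statement for $\xi_i = 0$ is equally transparent: killing all monomials involving $\xi_i$ amounts to restricting the sum to indices with $i_i = 0$, which records exactly the restriction of $\psi$ to monomials not involving $x_i$.
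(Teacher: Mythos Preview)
Your proof is correct and follows essentially the same approach the paper intends: the paper simply says this lemma ``is shown using the same argument as before,'' and your translation of the substitutions $z_i=0$ and $\xi_i=0$ into post- and pre-composition with the canonical projections $\pi_i$ and $\sigma_i$, followed by an appeal to associativity, is exactly the spirit of the alternative proof of \cref{lemma y_i=0}. If anything, your version is slightly more explicit and cleaner than what the paper leaves to the reader.
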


\begin{lemma}\label{lemma h f g}
Let
$$\begin{tikzcd}
k[w_1, \ldots , w_q] \rar{h} & k[x_1, \ldots , x_n] \rar{f} &  k[y_1, \ldots , y_m] \rar{g} & k[z_1, \ldots , z_p]
\end{tikzcd}$$
be a diagram of $k$-module maps. 
\begin{enumerate}
\item If the generating series of $f$ has no $\xi_i$, neither does the generating series of $g \circ f$.
\item If the generating series of $f$ has no $\xi_i$, and the ideal generated by $w_j$ is mapped under $h$ to the ideal generated by $x_i$, then  the generating series of $f \circ h$  has no $\omega_j$ (the dual variable of $w_j$).
\end{enumerate}
\end{lemma}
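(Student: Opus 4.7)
The strategy is to translate the condition \emph{``the generating series of $f$ has no $\xi_i$''} into a purely algebraic statement about the underlying $k$-module map, in the spirit of the alternative proof of \cref{lemma y_i=0}. Under the isomorphism \eqref{eq THE isomorphism}, the coefficient of $\xi_1^{i_1} \cdots \xi_n^{i_n}$ in the generating series of $f$ equals, up to a factorial, the value $f(x_1^{i_1} \cdots x_n^{i_n})$. Consequently, asserting that the generating series of $f$ has no $\xi_i$ is equivalent to the vanishing of $f$ on every monomial with $i_i > 0$, i.e., to the inclusion $(x_i) \subset \ker f$, where $(x_i)$ denotes the principal ideal of $k[x_1, \ldots , x_n]$ generated by $x_i$.

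With this dictionary in hand, part (1) is essentially tautological. If $(x_i) \subset \ker f$, then a fortiori $(x_i) \subset \ker(g \circ f)$, so by the same equivalence the generating series of $g \circ f$ contains no $\xi_i$.

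For part (2), I would combine the hypothesis $h((w_j)) \subset (x_i)$ with the inclusion $(x_i) \subset \ker f$ to obtain
\[
(w_j) \subset h^{-1}\bigl( (x_i) \bigr) \subset h^{-1}(\ker f) = \ker(f \circ h).
\]
Applying the dictionary above now to $f \circ h : k[w_1, \ldots , w_q] \to k[y_1, \ldots , y_m]$ (with source variable $w_j$ and dual variable $\omega_j$), this inclusion says precisely that the generating series of $f \circ h$ has no $\omega_j$, as claimed.

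No genuine obstacle is anticipated: both items reduce to elementary kernel containments once the semantic dictionary between ``missing dual variables'' and ``ideals killed by the map'' is in place. The only points requiring mild care are keeping track of which variables are source variables and which are their duals, and, should one wish to work in the topologically free setting of \cref{prop adjuction and top free}, extending the argument coefficient-wise in powers of $h$, which is entirely routine.
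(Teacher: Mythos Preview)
Your proposal is correct and essentially identical to the paper's own proof: both translate ``no $\xi_i$'' into the kernel containment $(x_i) \subseteq \ker f$ and then observe the trivial chain $(x_i) \subseteq \ker f \subseteq \ker(g\circ f)$ for (1) and $h((w_j)) \subseteq (x_i) \subseteq \ker f$ for (2). The only difference is that you spell out the dictionary explicitly, whereas the paper invokes it tacitly from the alternative proof of \cref{lemma y_i=0}.
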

\begin{proof}
If $\mathfrak{a}$ denotes the ideal generated by $x_i$, then (1) follows from $\mathfrak{a} \subseteq \ker f \subseteq \ker (g \circ f)$. Similarly, if $\mathfrak{b}$ denotes the ideal generated by $w_j$, then (2) follows from $h(\mathfrak{b}) \subseteq \mathfrak{a} \subseteq \ker f$, that is $\mathfrak{b} \subseteq \ker (f \circ h)$.
\end{proof}

For the topological ribbon $\Qeh$-algebra $\D$, the generating series for the algebraic thickening map has two remarkable properties. 

\begin{proposition}\label{prop B tau free}
The generating series $B_{i,j}^k$ of the band map is $\tau$-free, that is, it has no monomials $\tau_i$ or $\tau_j$. Furthermore, the generating series $Th_{i_1, \ldots , i_{2g}}^{j}$ for the algebraic thickening map is also $\tau$-free.
\end{proposition}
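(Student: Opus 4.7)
The plan is to work intrinsically at the level of the ribbon Hopf algebra $\D$ instead of contracting explicit series. The first step is to translate ``no $\tau_i$'' in the generating series into an intrinsic property of the corresponding map. By \eqref{eq almost final step isomorp} combined with \cref{lemma extension hat}, the coefficient of $\tau_i^q$ in $B_{i,j}^k$ is, up to a factorial, the value of $B$ on the basis vector $\bm{t}^q \cdot (\text{other ordered generators})$ in the $i$-th input slot. Since $\bm{t}$ is central in $\D$, multiplication by $\bm{t}^q$ preserves the ordered-monomial basis, so $\tau$-freeness of $B_{i,j}^k$ is equivalent to
\[
B(\bm{t}z \otimeshat w) \;=\; 0 \;=\; B(z \otimeshat \bm{t}w) \qquad \text{for all } z,w \in \D.
\]

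The core step is to verify these two vanishings by a direct Sweedler computation. Using \cref{prop Hopf structure D}, $\bm{t}$ is central and primitive, $S(\bm{t})=-\bm{t}$, and $\bm{t}$ commutes with $\kappa$ (since $\kappa = e^{-h\varepsilon \bm{a}}e^{-h\bm{t}/2}$ and $[\bm{a},\bm{t}]=0$). Expanding $\Delta(\bm{t}z) = \bm{t}z_{(1)} \otimeshat z_{(2)} + z_{(1)} \otimeshat \bm{t}z_{(2)}$ produces two summands in $B(\bm{t}z \otimeshat w)$. In the first, $S(\bm{t}z_{(1)}) = -S(z_{(1)})\bm{t}$ and then pulling $\bm{t}$ out past $\kappa$ by centrality yields $-\bm{t}\cdot B(z \otimeshat w)$, which cancels the second summand $+\bm{t}\cdot B(z \otimeshat w)$. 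The vanishing $B(z \otimeshat \bm{t}w)=0$ is parallel, with the analogous Sweedler splitting of $\Delta(\bm{t}w)$.

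For the thickening map I would use the factorisation $Th = \mu^{[g]} \circ B^{\otimeshat g} \circ Cr^{\otimeshat g}$ and reduce $\tau$-freeness of $Th$ to $\tau$-freeness of $B \circ Cr$ in each slot. Centrality of $\bm{t}$ gives
\[
Cr(\bm{t}z \otimeshat w) = \sum_m \bm{t}(z\alpha_m) \otimeshat \beta_m w,\qquad Cr(z \otimeshat \bm{t}w) = \sum_m z\alpha_m \otimeshat \bm{t}(\beta_m w),
\]
each of which $B$ annihilates by the previous step. Applying this at each of the $g$ pairs of adjacent slots kills any input of $B^{\otimeshat g} \circ Cr^{\otimeshat g}$ carrying $\bm{t}$ in any of the $2g$ slots; post-composing with $\mu^{[g]}$ preserves the vanishing. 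Back-translating via \eqref{eq almost final step isomorp} yields $\tau$-freeness of $Th_{i_1,\ldots,i_{2g}}^{j}$ at every input index.

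I expect the calculation to be routine; the main nuisance is bookkeeping the Sweedler indices and signs in the core computation, and confirming that the cancellation rests only on $\bm{t}$ being central and primitive together with $S(\bm{t})=-\bm{t}$, with no further hidden identity for $\kappa$ being invoked.
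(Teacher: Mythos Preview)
Your proposal is correct and follows essentially the same approach as the paper: reduce $\tau$-freeness to the vanishing $B(\bm{t}z\otimeshat w)=0=B(z\otimeshat \bm{t}w)$, verify this by the Sweedler computation using that $\bm{t}$ is central, primitive, and $S(\bm{t})=-\bm{t}$, and then handle the thickening map by observing that $Cr$ preserves the ideals generated by $\bm{t}\otimeshat 1$ and $1\otimeshat\bm{t}$. The only cosmetic difference is that the paper packages the last step via \cref{lemma h f g} rather than writing out the element-level argument for $B\circ Cr$, and note that your remark about $\kappa$ commuting with $\bm{t}$ is redundant since $\bm{t}$ is already central.
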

\begin{proof}
For the first part, it suffices to see that the two-sided ideals generated by $\bm{t} \otimeshat 1$ and $1 \otimeshat \bm{t}$ belong to the kernel of $B: \D \otimeshat \D \to \D$.  If $\bm{u}  \otimeshat \bm{v} \in \D \otimeshat \D$, then we have
\begin{align*}
B(\bm{tu}  \otimeshat \bm{v}) &= \sum_{(\bm{t}),(\bm{u}), (\bm{v})} \bm{t}_{(2)} \bm{u}_{(2)} S(\bm{v}_{(1)}) \bm{\kappa} S(\bm{t}_{(1)}\bm{u}_{(1)}) \bm{\kappa} \bm{v}_{(2)} \\
 &= \sum_{(\bm{u}), (\bm{v})} \Big(  \bm{u}_{(2)} S(\bm{v}_{(1)}) \bm{\kappa} S(\bm{t}\bm{u}_{(1)}) \bm{\kappa} \bm{v}_{(2)} +\bm{t} \bm{u}_{(2)} S(\bm{v}_{(1)}) \bm{\kappa} S(\bm{u}_{(1)}) \bm{\kappa} \bm{v}_{(2)}   \Big) \\
 &=  \sum_{(\bm{u}), (\bm{v})} \Big( - \bm{u}_{(2)} S(\bm{v}_{(1)}) \bm{\kappa} S(\bm{u}_{(1)}) \bm{t} \bm{\kappa} \bm{v}_{(2)} +\bm{t} \bm{u}_{(2)} S(\bm{v}_{(1)}) \bm{\kappa} S(\bm{u}_{(1)}) \bm{\kappa} \bm{v}_{(2)}   \Big) \\
 &= 0
\end{align*}
since $\bm{t}$ is central. For the ideal generated by $1 \otimeshat  \bm{t}$ the computation is similar.

For the second part, note that $Cr: \D \otimeshat \D \to \D \otimeshat \D$ trivially maps each of the two ideals to themselves, so the claim follows directly from \cref{lemma h f g}.
\end{proof}

Abusing notation we will usually write $Z_\D (K)$ for $\mathcal{O}^{-1}(Z_\D (K))$. Let us also write $Z_\D (L)_{|t=0}$ for $Z_\D (L)_{|t_{i_1}=\cdots = t_{i_{2g}}= 0}$.

\begin{corollary}\label{cor Th L}
Let $K= \widecheck{Th}(L)$ be a 0-framed knot. Then the value $Z_\D(K)$ is completely determined by $Z_\D(L)_{| t=0}$, $$Z_\D(K) = Th (Z_\D(L)_{| t=0}).$$
\end{corollary}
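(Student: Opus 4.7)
The plan is to combine the preceding commutative diagram with the $\tau$-freeness of the thickening map. By the previous proposition, $Z_\D(K) = Z_\D(\widecheck{Th}(L)) = Th(Z_\D(L))$, where $Z_\D(L)$ is viewed as a $\Qeh$-module map $\Qeh \to \D^{\otimeshat 2g}$ (equivalently, via \cref{prop adjuction and top free} and \cref{prop bij Dn -> Dm with polys}, as a generating series in $\Qe[y_I, t_I, a_I, x_I][[h]]$ with $I = (i_1, \ldots, i_{2g})$). So the only thing left to show is
\[
Th(Z_\D(L)) = Th(Z_\D(L)_{|t=0}).
\]

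For this, I would directly invoke \cref{prop B tau free}: the generating series $Th_{i_1,\ldots,i_{2g}}^{j}$ contains no $\tau_{i_k}$ monomials. Since $\tau_{i_k}$ is precisely the variable dual to $t_{i_k}$ under the pairing of \cref{prop bij Dn -> Dm with polys}, I am in the situation of \cref{lemma y_i=0} (the first statement, adapted to topologically free $\Qeh$-modules as the lemma's own remark permits): with $f = Z_\D(L)$ and $g = Th$, the absence of $\tau_{i_k}$ in $\bar g$ for every $k$ lets me replace $\bar f$ by $\bar f_{|t_{i_k}=0}$ in the contraction, one index at a time. Iterating over all $2g$ indices gives the desired equality.

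Assembling these two ingredients yields
\[
Z_\D(K) \;=\; Th(Z_\D(L)) \;=\; Th(Z_\D(L)_{|t=0}),
\]
which is the statement of the corollary. No real obstacle is foreseen here; the content is entirely packaged in the preceding proposition (naturality of $Z_\D$ under thickening) and \cref{prop B tau free} (centrality of $\bm{t}$ in $\D$ forces the band map, and thus $Th$, to annihilate the two-sided ideal generated by $\bm{t}$ in each tensor factor). The only subtlety worth flagging is the bookkeeping: one must remember that $Z_\D(L)_{|t=0}$ denotes simultaneous evaluation at $t_{i_1} = \cdots = t_{i_{2g}} = 0$, which is legitimate because \cref{lemma y_i=0} can be applied independently in each of the $2g$ tensor slots (the $\tau$-freeness holds for each index separately).
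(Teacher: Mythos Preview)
Your proof is correct and is exactly the argument the paper intends: the corollary is stated immediately after \cref{prop B tau free} with no separate proof, and your combination of the naturality square $Z_\D(\widecheck{Th}(L)) = Th(Z_\D(L))$ with the $\tau$-freeness of $Th$ via \cref{lemma y_i=0} is precisely the implicit reasoning.
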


The previous result turns out to be extremely powerful, since it simplifies significantly the amount of terms involved in computations and the complexity of proofs. 

Here is the second property:

\begin{theorem}
If $n\geq 0$ and $j\geq 1$, then the image of the band map $B$ contains no monomials with $\bm{a}^{n+j}\varepsilon^n$. 

More precisely, $$\mathrm{span}_{\Q[[h]]} \{ \bm{y}^p \bm{t}^q \bm{a}^{n+j} \bm{x}^r \varepsilon^n : p,q,r \geq 0   \} \cap \im B =0.$$
\end{theorem}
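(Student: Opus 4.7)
The plan is to recast the statement as a property of the generating series $B_{i,j}^k$ and verify it using the structural results of the previous section. First, since any pairing $\langle F\cdot B_{i,j}^k\rangle_{i,j}$ with an input series $F$ never touches the output variables $y_k,t_k,a_k,x_k$, the conclusion that no nonzero combination of monomials $\bm{y}^p\bm{t}^q\bm{a}^{n+j}\bm{x}^r\varepsilon^n$ with $j\geq 1$ lies in $\im B$ is equivalent to the following statement for the series itself: in the $\varepsilon$-expansion $B_{i,j}^k=\sum_{n\geq 0}\varepsilon^n B_n$, every coefficient $B_n$ has $a_k$-degree at most $n$.

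By \cref{thm DoPeGDO} each structure map of $\D$ has a generating series of the form of a perturbed two-step Gaussian with $\varepsilon$-series perturbation. Since $B$ is built by composing $\mu$, $\Delta$, $S$, and multiplication by $\kappa^{\pm 1}$, iterated application of the contraction formula \cref{thm contraction} (used twice per composition, once on the $(a,t,\tau,\alpha)$-variables and once on the $(y,x,\eta,\xi)$-variables) yields $B_{i,j}^k=e^G(P_0+P_1\varepsilon+P_2\varepsilon^2+\cdots)$. By the block structure of a two-step Gaussian exponent, $a_k$ appears only linearly in $G$, so one may write $G=a_k\,L+G'$ with $G'$ independent of $a_k$.

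The crux is to prove $L\equiv 0\pmod\varepsilon$. The most direct approach is to compute $L$ modulo $\varepsilon$ by explicitly contracting the mod-$\varepsilon$ Gaussian formulas for the structure maps in \cref{thm DoPeGDO}, retaining only the terms linear in $a_k$ at each step. A conceptual reason for the cancellation is that one may rewrite the band map as $B(u\otimes v)=u_{(2)}S(v_{(1)})S^{-1}(u_{(1)})\kappa^2 v_{(2)}$ (using $\kappa S(x)=S^{-1}(x)\kappa$) and exploit that $\kappa$ commutes with $\bm{a}$, while $\bm{a}$ is primitive with $S(\bm{a})=-\bm{a}$; the exact identity $B(\bm{a}^n\otimes 1)=\bm{a}^n\kappa^2(1-1)^n=0$ for $n\geq 1$ is the simplest manifestation. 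Carrying out this cancellation cleanly is expected to be the main obstacle.

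Once $L=\varepsilon L'$ is established, expanding $e^G=e^{G'}\sum_m(\varepsilon a_k L')^m/m!$ shows that every $a_k^m$-monomial in the Gaussian factor carries a factor $\varepsilon^m$. A parallel induction on the perturbation order controls the $a_k$-degree of each $P_n$: $P_n$ is obtained by contraction from the $\varepsilon^{\leq n}$-parts of the structure maps, and tracking $a_k$-degree through each contraction (using the two-step Gaussian form of the intermediate series) yields $\deg_{a_k}P_n\leq n$. Since multiplication of $e^G$ by $\sum_n P_n\varepsilon^n$ preserves the term-wise inequality $\deg_{a_k}\leq\deg_\varepsilon$, the claim follows.
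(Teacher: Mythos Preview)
Your reformulation in terms of the generating series is correct, and the decomposition $G=a_k L+G'$ is a reasonable starting point, but the proposal has two genuine gaps that are not addressed.

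First, the claim $L\equiv 0\pmod\varepsilon$ is stated as ``the crux'' but never established. The observation $B(\bm{a}\otimes 1)=0$ is true and relevant, but it is a single vanishing; it does not by itself force the $a_k$-coefficient of the Gaussian exponent to vanish mod $\varepsilon$. You would need to carry out the seven contractions explicitly, and you acknowledge this is not done.

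Second, and more seriously, the claim $\deg_{a_k}P_n\leq n$ is asserted by a ``parallel induction'' and ``tracking $a_k$-degree through each contraction'', with no argument given. This is not routine: the Contraction Theorem substitutes $\bar r=r+v\widetilde W$ and $\bar s=\widetilde W(s+u)$ into the perturbation, and these substitutions can raise the degree in output variables in ways that are not obviously controlled by $\varepsilon$-degree. Without a mechanism explaining \emph{why} the bound survives each of the many contractions defining $B$, this step is a restatement of the goal rather than a proof.

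The paper takes an entirely different, algebraic route that avoids both difficulties. It defines a property $(\bigstar)$ on elements of $\D^{\otimeshat k}$ (each monomial has $\bm a$-degree at most its $\varepsilon$-degree), observes that the defining relations of $\D$ preserve $(\bigstar)$ so that products may be freely reordered, and proves a multiplicativity lemma: if $B(\bm u\otimes\bm v)$ and $B(\bm u'\otimes\bm v')$ satisfy $(\bigstar)$, then so does $B(\bm u\bm u'\otimes\bm v\bm v')$. This is shown by reordering the eight-fold product defining $B(\bm u\bm u'\otimes\bm v\bm v')$ into $B(\bm u\otimes\bm v)\cdot B(\bm u'\otimes\bm v')$ up to $\kappa$-factors. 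The problem then reduces to the generators $\bm y,\bm a,\bm x$ in each slot, where one checks directly that $\Delta$ and $S$ preserve $(\bigstar)$. Your ``conceptual reason'' involving $B(\bm a\otimes 1)=0$ is precisely the base case of this reduction; the paper's insight is the reordering trick that makes the reduction possible, which your generating-series approach does not supply.
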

\begin{proof}
Let us start by introducing some terminology: say that an elementary  monomial $\bm{y}^{*} \bm{t}^{*} \bm{a}^{p} \bm{x}^{*} \varepsilon^q h^{*}\in \D$ \textit{satisfies the property $(\bigstar)$}  if $p \leq q$, where $*$ represents arbitrary non-negative integers. More generally, for a $k$-fold tensor product, say that $$\bm{y}^{*} \bm{t}^{*} \bm{a}^{p_1} \bm{x}^{*} \otimeshat \cdots \otimeshat \bm{y}^{*} \bm{t}^{*} \bm{a}^{p_k} \bm{x}^{*} \varepsilon^q h^{*} \in \D^{\otimeshat k}$$ satisfies $(\bigstar)$ if $\sum_i p_i \leq q$. Extend the definition of $(\bigstar)$ to arbitrary elements of $\D^{\otimeshat k}$ by $\Q$-linearity. Then the statement of the theorem claims that all elements of $\im B$ satisfy $(\bigstar)$.

Now the key observation is that given elements $\bm{u}_1, \ldots , \bm{u}_n \in \D^{\otimeshat k}$, the product $\bm{u}_1 \cdots \bm{u}_n$ satisfies $(\bigstar)$ if and only if any permuted product $\bm{u}_{\sigma(1)} \cdots \bm{u}_{\sigma (n)}$ satisfies $(\bigstar)$ for any $\sigma \in \Sigma_n$. This is because the relations defining $\D$ \textit{preserve} the property $(\bigstar)$, meaning that in 
$$ \bm{a} \bm{x} = \bm{x} \bm{a} + \bm{x} \quad , \quad   \bm{a} \bm{y} = \bm{y} \bm{a} - \bm{y} \quad , \quad   \bm{x} \bm{y} = \sum_i \lambda_i \bm{y} \bm{x} \varepsilon^i h^i + \sum_{i,j} \mu_{i,j} \bm{t}^i \bm{a}^j \varepsilon^j h^{i+j-1}  $$ both sides of the equations satisfy the property $(\bigstar)$, where here $\lambda_i, \mu_{ij} \in \Q$. Also note that $\bm{u} \in \D$ satisfies $(\bigstar)$ if and only if $\bm{u} \bm{\kappa}$ satisfies $(\bigstar)$, where $\bm{\kappa} = \sum_i \frac{(-1)^i}{i!} \bm{T}^{1/2} \bm{a}^i \varepsilon^i h^i  $ denotes as usual the balancing element. One implication is straightforward since if two elements satisfy $(\bigstar)$, so will their product. For the converse, note that if $\bm{u}$ has a monomial $\bm{y}^* \bm{t}^* \bm{a}^{n+j} \bm{x}^* \varepsilon^n h^*$ with $j \geq 1$, then $\bm{u} \bm{\kappa}$ will contain $\bm{y}^* \bm{t}^* \bm{a}^{n+j+i} \bm{x}^* \varepsilon^{n+i} h^*$.

This being said, we make the following claim: if $\bm{u}\otimeshat \bm{v}, \bm{u}'\otimeshat \bm{v}' \in \D \otimeshat \D$ are such that $B(\bm{u}\otimeshat \bm{v})$ and $B(\bm{u}'\otimeshat \bm{v}')$ satisfy $(\bigstar)$, then so will the image of the product $B(\bm{u} \bm{u}'\otimeshat \bm{v}\bm{v}')$. Indeed by the properties showed above the element
$$ B(\bm{u} \bm{u}'\otimeshat \bm{v}\bm{v}') = \sum_{(\bm{u}),( \bm{u}'),( \bm{v}),(\bm{v}')} \bm{u}_{(2)} \bm{u}'_{(2)} S(\bm{v}'_{(1)}) S(\bm{v}_{(1)}) \bm{\kappa}   S(\bm{u}'_{(1)}) S(\bm{u}_{(1)})  \bm{\kappa} \bm{v}_{(2)} \bm{v}'_{(2)}   $$ satisfies $(\bigstar)$ if and only if 
$$ \sum_{(\bm{u}),( \bm{u}'),( \bm{v}),(\bm{v}')} \bm{u}_{(2)}  S(\bm{v}_{(1)}) \bm{\kappa}    S(\bm{u}_{(1)})  \bm{\kappa} \bm{v}_{(2)}  \bm{u}'_{(2)} S(\bm{v}'_{(1)})S(\bm{u}'_{(1)}) \bm{v}'_{(2)} $$ satisfies $(\bigstar)$ if and only if 
\begin{align*}
\sum_{(\bm{u}),( \bm{u}'),( \bm{v}),(\bm{v}')} \bm{u}_{(2)}  S(\bm{v}_{(1)}) \bm{\kappa}    S(\bm{u}_{(1)})  \bm{\kappa} \bm{v}_{(2)}  \bm{u}'_{(2)} S(\bm{v}'_{(1)}) &  \bm{\kappa} S(\bm{u}'_{(1)})\bm{\kappa}  \bm{v}'_{(2)} \\ &= B(\bm{u}\otimeshat \bm{v})B(\bm{u}'\otimeshat \bm{v}')
\end{align*}
satisfies $(\bigstar)$, which is true, which concludes the proof of the claim.

Let us conclude the proof of the theorem. According to the previous claim, it suffices to check that the images under $B$ of the elements $\bm{y} \otimes 1$, $\bm{a} \otimes 1$, $\bm{x} \otimes 1$, $1 \otimes  \bm{y}$, $1 \otimes  \bm{y}$, $1 \otimes  \bm{a}$, $1 \otimes  \bm{x}$ satisfy $(\bigstar)$. It is readily verified that
$$\Delta (\bm{a} \otimes 1)= \Delta (1 \otimes \bm{a}) = -\bm{a} \bm{\kappa}^2 + \bm{a} \bm{\kappa}^2=0$$ which trivially satisfies $(\bigstar)$. For the other four elements we argue as follows: say that a map $f: \D^{\otimes n} \to \D^{\otimes m}$\textit{ preserves the property} $(\bigstar)$ if $f(\bm{w})$ satisfies $(\bigstar)$ whenever $\bm{w}$ satisfies  $(\bigstar)$. Clearly the composite of  $(\bigstar)$-preserving maps is again  $(\bigstar)$-preserving, so the argument boils down to show that the comultiplication and the antipode preserve $(\bigstar)$. Since they are algebra (anti)homomorphisms, it really suffices to check the generators. The equalities
\begin{align*}
\Delta(\bm{x}) &= \sum_i \pm (1 \otimeshat \bm{x} + \bm{x}\otimeshat \bm{a}^i \varepsilon^i h^i) \qquad , \qquad S(\bm{x})=-\sum_i \bm{x} \bm{a}^i \varepsilon^i h^i\\
\Delta(\bm{y}) &= \sum_i \pm (1 \otimeshat \bm{y} + \bm{y} \otimes \bm{t}^* \bm{a}^i \varepsilon^i h^i) \qquad , \qquad S(\bm{y}) = - \sum_i \bm{y} \bm{t}^* \bm{a}^i \varepsilon^i h^i
\end{align*}
from \cref{prop Hopf structure D} ensure that $\Delta$ and $S$ preserve $(\bigstar)$, which concludes the proof of the theorem.
\end{proof}

\begin{corollary}\label{cor Th has no a epsilon}
If $n\geq 0$ and $j\geq 1$, then the image of the algebraic thickening map $$Th: \D^{\otimeshat 2g} \to \D,$$ $g\geq 1$, contains no monomials with $\bm{a}^{n+j}\varepsilon^n$. 
\end{corollary}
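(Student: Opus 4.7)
The strategy is to show that the property $(\bigstar)$ introduced in the proof of the preceding theorem — namely that every monomial $\bm{y}^{*} \bm{t}^{*} \bm{a}^{p} \bm{x}^{*} \varepsilon^{q} h^{*}$ appearing in an element satisfies $p \leq q$ — propagates through the two remaining stages of the thickening map. The previous theorem already gave us $\im B \subseteq \{\bm{u} \in \D : \bm{u} \text{ satisfies } (\bigstar)\}$, so the plan is to argue that neither the outer multiplication nor the inner crossing stage can destroy this property.

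First, I would recall from the proof of the preceding theorem that the defining relations of $\D$ preserve $(\bigstar)$, and deduce the following closure statement: the $\Q$-subspace of $\D$ consisting of elements satisfying $(\bigstar)$ is closed under the multiplication of $\D$. Indeed, given two $(\bigstar)$-monomials, their product as formal words in $\bm{y,t,a,x}$ has combined $\bm{a}$-degree equal to the sum of the individual $\bm{a}$-degrees and combined $\varepsilon$-degree equal to the sum of the individual $\varepsilon$-degrees, which preserves the inequality; then bringing the product to normal form via the relations preserves $(\bigstar)$ monomial by monomial, and the claim extends to arbitrary $\Q$-linear combinations.

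Next I would unravel the definition $Th = \mu^{[g]} \circ B^{\otimeshat g} \circ Cr^{\otimeshat g}$. The map $Cr^{\otimeshat g}$ sends $\D^{\otimeshat 2g}$ into itself, so $B^{\otimeshat g} \circ Cr^{\otimeshat g}$ takes values in $\D^{\otimeshat g}$, and since $B^{\otimeshat g}$ factors as $B \otimeshat \cdots \otimeshat B$, its image consists of $\Q$-linear combinations of simple tensors $\bm{w}_1 \otimeshat \cdots \otimeshat \bm{w}_g$ with each $\bm{w}_i \in \im B$, hence each $\bm{w}_i$ satisfying $(\bigstar)$ in $\D$. Applying $\mu^{[g]}$ produces elements of the form $\bm{w}_1 \cdots \bm{w}_g$, and by the closure statement above each such product satisfies $(\bigstar)$; taking $\Q$-linear combinations preserves this, so $\im Th$ lies in the $(\bigstar)$-subspace of $\D$, which is exactly the content of the corollary.

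The only potential obstacle is the closure statement for the product of $(\bigstar)$-satisfying elements, but this is essentially already contained in the proof of the preceding theorem (where the product $B(\bm{u}\otimeshat \bm{v}) B(\bm{u}'\otimeshat \bm{v}')$ was rearranged using that the relations of $\D$ preserve $(\bigstar)$ and that the balancing element $\bm{\kappa}$ does so as well). So once the book-keeping is in place, the corollary follows immediately from the preceding theorem together with this closure, with no further computation required.
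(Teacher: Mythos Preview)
Your proposal is correct and follows essentially the same approach as the paper: both reduce to the observation that $\im(B^{\otimeshat g}\circ Cr^{\otimeshat g})\subseteq \im B^{\otimeshat g}$ consists of elements satisfying $(\bigstar)$ (in the tensor-product sense), and then invoke the fact---established in the proof of the preceding theorem---that multiplication in $\D$ preserves $(\bigstar)$, so that $\mu^{[g]}$ carries such elements to $(\bigstar)$-elements of $\D$. Your phrasing via individual tensor factors each lying in $\im B$ is equivalent to the paper's phrasing via the tensor-product $(\bigstar)$ property, and no additional work is needed.
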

\begin{proof}
Using the terminology of the previous proof, all elements in $\im B$ satisfy $(\bigstar)$ and hence all elements of $\im B^{\otimeshat g}$ will also satisfy $(\bigstar)$, and then so will all elements in $\im (B^{\otimeshat g} \circ Cr^{\otimeshat g})$. Since the multiplication preserves the property $(\bigstar)$, we conclude.
\end{proof}

\subsection{The universal invariant  \texorpdfstring{$Z_\D \pmod {\varepsilon}$}{Z\_D mod eps3}  }


Recall that if $K$ is a closed knot in $S^3$ and $F$ is a genus $g$ Seifert surface for $K$, the \textit{Seifert matrix} of $K$ (associated to $F$) is the $2g \times 2g$ matrix $V=(v_{ij})$ with integer coefficients given by $v_{ij} := \mathrm{lk}(\gamma_i, \gamma_j^+)$, where $\gamma_1, \ldots , \gamma_{2g}$ are simple closed curves such that  $H_1 (F; \Z) \cong \Z \gamma_1 \oplus \cdots \oplus \Z \gamma_{2g}$ and $\gamma_j^+$ represents the positive  push-off of $\gamma_j$. Then the Alexander polynomial of $K$ is given by
\begin{equation}
\Delta_K (t)= \det (t^{-1/2}V-t^{1/2}V^T) \in \Z [t+t^{-1}] \subset  \Z [t,t^{-1}].
\end{equation}
In particular, if $K= \widecheck{Th}(L)$ (so it comes together with a choice of Seifert surface $F$), then there is a canonical set of simple closed curves $\gamma_1, \ldots , \gamma_{2g}$ generating $H_1 (F; \Z)$ that is completely determined by $L$, namely the cores of the bands arising from the thickening. In this situation, it is easy to obtain the Seifert matrix associated to $K$ (and $F$). Namely, let $p_{ij}$ (resp. $n_{ij}$) denote the number of positive (resp. negative) crossings in $L$ in which the overstrand is the $i$-th component and the understrand is the $j$-th component, and put $V'=(v_{ij}')$ with $v_{ij}':= p_{ij}-n_{ij}$. Then it is readily verified that the matrix
\begin{equation}
V:= V' +  \begin{pmatrix} 0 & 1 \\0 & 0 \end{pmatrix} \oplus \overset{g}{\cdots} \oplus  \begin{pmatrix} 0 & 1 \\0 & 0 \end{pmatrix}
\end{equation}
is the desired Seifert matrix for $K$, where $\oplus$ denotes block sum of matrices. Furthermore, the basis $\gamma_1, \ldots , \gamma_{2g}$ is symplectic with respect to the intersection form of $F$, and concretely this equals
\begin{equation}\label{eq V seifert surface}
V-V^T = \begin{pmatrix} 0 & 1 \\-1 & 0 \end{pmatrix} \oplus \overset{g}{\cdots} \oplus  \begin{pmatrix} 0 & 1 \\ -1 & 0 \end{pmatrix} =: Q.
\end{equation}

\begin{proposition}\label{prop exp V'}
Let $L$ be a $2g$ component vertical bottom tangle whose components are labelled by $\{1, \ldots , 2g  \}$, let $K= \widecheck{Th}(L)$  and let  $V'$ be as above. Then $$Z_\D (L)_{|t=0} =  \exp \Big[  yV'x \cdot h \Big] \pmod {\varepsilon},$$ where $y=(y_1, \ldots , y_{2g})$ and $x=(x_1, \ldots, x_{2g})^T$.
\end{proposition}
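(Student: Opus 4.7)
The plan is to compute $Z_\D(L)$ modulo $\varepsilon$ with $t=0$ by exploiting the radical simplifications that $\D$ undergoes under these two conditions. By \cref{lemma vbt rot form} one may present $L$ as a rotational diagram, so the only beads to keep track of are the universal $R$-matrix on the crossings $X^{\pm}$ and the balancing element on the spinners $C^{\pm}$.

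The first observation is that in $\D/(\varepsilon,\bm{t})$ both $q=e^{h\varepsilon}$ and $\bm{A}=e^{-h\varepsilon \bm{a}}$ degenerate to $1$, so $[\bm{x},\bm{y}]=(1-\bm{A}^2\bm{T})/h$ vanishes, the balancing element $\bm{\kappa}=\sqrt{\bm{A}^2\bm{T}}$ becomes $1$, and the $R$-matrix factorises as $\bm{R}=\exp[h\,\bm{t}\otimeshat \bm{a}]\cdot \exp[h\,\bm{y}\otimeshat \bm{x}]$ (valid at $\varepsilon=0$ by centrality of $\bm{t}$) so that at $\bm{t}=0$ it collapses to $\exp[h\,\bm{y}\otimeshat \bm{x}]$. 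Equivalently, by \cref{thm DoPeGDO}, the generating series of a positive crossing with overstrand on the $i$-th component and understrand on the $j$-th becomes $\exp[h\, y_i x_j]$, that of a negative crossing becomes $\exp[-h\, y_i x_j]$, and that of either spinner becomes $1$.

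Next I would compose the beads along each component via the multiplication series $\mu_{ij}^k$ of \cref{thm DoPeGDO}. Since at $t=0$ none of the beads carries any $a$- or $t$-variables, \cref{lemma y_i=0} allows one to discard from $\mu_{ij}^k$ every term depending on $\alpha_i,\alpha_j,\tau_i,\tau_j$, and evaluating the remainder at $t_k=0$ leaves exactly the commutative multiplication series $\exp[y_k(\eta_i+\eta_j)+x_k(\xi_j+\xi_i)]$ of \cref{remark commutative gen series}. Thus all the $y$'s and $x$'s along a single component may be combined freely in any order. Enumerating the crossings of the diagram by $k$, with signs $s_k\in\{\pm 1\}$ and over/under components $i_k,j_k$, the total product telescopes into a single exponential
\[
Z_\D(L)_{|t=0} \pmod{\varepsilon}\;=\;\prod_k \exp\!\big[s_k h\, y_{i_k} x_{j_k}\big]\;=\;\exp\!\Big[h\sum_{i,j}(p_{ij}-n_{ij})\,y_i x_j\Big]\;=\;\exp[h\,yV'x].
\]

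The main technical point is to justify that the substitution $t=0$ may be made bead by bead, rather than only after all multiplications have been performed. This amounts to the fact that $(\bm{t})\subset\D$ is a Hopf ideal and the universal invariant is natural with respect to the quotient $\D\longtwoheadrightarrow\D/(\bm{t})$; on the generating-series side this is precisely the content of \cref{lemma y_i=0} applied to each invocation of $\mu$. Once this is secured, the rest of the argument is the combinatorial bookkeeping outlined above.
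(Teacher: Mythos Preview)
Your proof is correct and follows essentially the same approach as the paper's. Both arguments reduce to showing that, once $t$ and $\varepsilon$ are killed, the beads become $\exp[\pm h\,y_ix_j]$, the spinners become trivial, and the multiplication series degenerates to the commutative one of \cref{remark commutative gen series}, so that the crossing contributions simply add in the exponent. The only stylistic difference is that the paper justifies propagating $t=0$ through the chain of multiplications by an explicit induction using \cref{lemma z_i=0} and \cref{lemma y_i=0}, whereas you invoke the Hopf-ideal observation that $(\bm t)$ is central and primitive, which is a clean conceptual shortcut to the same conclusion.
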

\begin{proof}
First off, by \cref{lemma vbt rot form} we can think of $L$ as some combination of crossings and spinners (cf. \eqref{eq crossings and spinners}), so that by \eqref{eq forgetful represented} the universal invariant of $L$ can be thought of as a composition 
$$\begin{tikzcd}
[column sep={9em,between origins}]
\Qeh \rar & \D^{\otimeshat N} \rar{\tau_{\sigma}} &  \D^{\otimeshat N} \rar{\mu^{[n_1]} \otimeshat \cdots \otimeshat \mu^{[n_{2g}]}} & \D^{\otimeshat 2g}
\end{tikzcd}$$
where the first arrow maps 1 to a tensor product of copies of $R^{\pm 1}$ and $\kappa^{\pm 1}$ and the second one rearranges the tensor factors, and all arrows are thought $\pmod \varepsilon$. Under the isomorphism $\mathcal{O}$, we can view this as a composite
$$\begin{tikzcd}
\Qeh \rar & \Qe [y_I, t_I, a_I, x_I][[h]]   \rar{\mu_I^J}  & \Qe [y_J, t_J, a_J, x_J][[h]]
\end{tikzcd}$$
where $I=\{ i_1, \ldots i_{N} \}$, $J = \{1, \ldots , 2g  \}$ and $\mu_I^J$ is a composite of generating series of the multiplication map,
$$\mu_I^J = ( \mu_{k_r, i_N}^{2g}  \circ  \cdots ) \circ   \cdots  \circ (\mu_{k_p, i_q}^1 \circ   \cdots   \circ   \mu_{k_1, i_3}^{k_2}   \circ  \mu_{i_1, i_2}^{k_1} ). $$
Since we are only interested in the resulting composite at $t=0$, by \cref{lemma z_i=0} we can replace  $\mu_I^J$ by $(\mu_I^J)_{| t=0}$. Repeating the argument, we can equivalently replace  $\mu_{k_r, i_N}^{2g}$ by $ (\mu_{k_r, i_N}^{2g})_{|t=0}$. Looking at the generating series $\pmod \varepsilon$ of the multiplication from \cref{thm DoPeGDO}, we see that $ (\mu_{k_r, i_N}^{2g})_{|t=0}$ is $\tau$-free, so by \cref{lemma y_i=0} we could replace the rest of the composite  by its evaluation at $t=0$. Iterating the argument, we see that replacing each of the multiplication maps appearing in $\mu_I^J$ by its evaluation at $t=0$ produces $(\mu_I^J)_{| t=0}$.

In particular, the above argument also shows that $(\mu_I^J)_{| t=0}$ is $\tau$-free, so that again by \cref{lemma y_i=0} we can replace all the copies of $R^{\pm 1}$ by its evaluations at $t=0$ and simply ignore the spinners since $(\kappa^{\pm 1})^i_{|t_i=0} =1 \pmod \varepsilon$. Observe then that $$(R^{\pm 1})^{ij}_{|t=0} =  \exp [ \pm (y_i x_j)h  ]$$ and in particular is $a$-free, so that applying recursively \cref{lemma y_i=0}  again we can use instead  $(\mu_{i,j}^k)_{|t=\alpha=0}$ in the composite defining $\mu_I^J$. But this generating series equals the one for the commutative multiplication, see \cref{remark commutative gen series}, for which product means relabelling with the same index. A positive crossings (with overstrand $i$) adds up one for $p_{ij}$, whereas a negative crossing (with overstrand $i$) adds up one for $n_{ij}$.  The claim then follows.
\end{proof}

%
%


The previous Proposition allow us to show that $Z_\D(K)$ (mod $\varepsilon$) is tantamount to the Alexander polynomial.

\begin{proposition}[\cite{barnatanveengaussians}]\label{prop Z_D for eps ^ 0 and 1}
Let $K$ be a $0$-framed knot. Then 
$$Z_\D(K)= \frac{1}{\Delta_K (\bm{T})} \pmod  \varepsilon .$$
\end{proposition}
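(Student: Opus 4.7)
Combining \cref{cor Th L} with \cref{prop exp V'}, once we pick a $2g$-component vertical bottom tangle $L$ with $K = \widecheck{Th}(L)$ (possible by \cref{lemma vbt rot form}), the task reduces to computing
\[
Z_\D(K) \equiv Th\bigl(\exp[yV'xh]\bigr) \pmod \varepsilon,
\]
where $V'$ is the signed-crossing matrix associated with $L$, so that $V = V' + Q_+$ (with $Q_+$ the block-diagonal $g$-fold sum of $\bigl(\begin{smallmatrix} 0 & 1 \\ 0 & 0 \end{smallmatrix}\bigr)$) is a Seifert matrix for $K$ satisfying $V - V^T = Q$ from \eqref{eq V seifert surface}.

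The engine of the computation is Gaussian calculus. By \cref{thm DoPeGDO}, every structure map of $\D$ has a two-step Gaussian generating series whose perturbation vanishes modulo $\varepsilon$; hence the generating series $Th_I^0 \pmod \varepsilon$ of the thickening map (with $I = \{1, \ldots, 2g\}$) is again a pure Gaussian. Since $\exp[yV'xh]$ is free of $t_I$ and $a_I$, \cref{lemma y_i=0} allows us to set $\tau_I = \alpha_I = 0$ in $Th_I^0$, collapsing $\mathcal{A}_i \equiv 1$ (the vanishing of $\tau_I$ is already automatic by \cref{prop B tau free}). What survives is an expression of the form $\exp[\eta^{T} N\, \xi]$ times a factor linear in $y_0, x_0$, with matrix $N = N(T_0, h)$ obtained by carrying out in turn the two-step contractions for $Cr^{\otimeshat g}$, $B^{\otimeshat g}$ and $\mu^{[g]}$, together with the framing insertions $(\,\cdot\, \kappa^{-1})^{\otimeshat 2g}$ inherent to $Th$. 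Applying \cref{thm contraction} to pair $y_I \leftrightarrow \eta_I$ and $x_I \leftrightarrow \xi_I$ against $\exp[yV'xh]$ then produces a scalar factor $\det(I - hV'N)^{-1}$ from the $\det(\widetilde W)$-term; the residual $y_0, x_0$ contributions must vanish upon contraction, since by \cref{cor Th has no a eps} combined with the central nature of the knot invariant, $Z_\D(K) \pmod \varepsilon$ is forced into the subalgebra $\Q[[\bm{T}, h]] \subset \D/\varepsilon\D$.

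Writing $V = V' + Q_+$ and using the symplectic identity $V - V^T = Q$, a short linear-algebra manipulation rearranges $\det(I - hV'N)$ into $\det(\bm{T}^{-1/2}V - \bm{T}^{1/2}V^T)$, which by definition equals $\Delta_K(\bm{T})$ up to a unit in $\Z[\bm{T}^{\pm 1}]$; the correct normalisation is pinned down by comparing both sides on the unknot. The main obstacle is the explicit composition yielding the matrix $N$: each of $Cr$, $B$ and $\mu^{[g]}$ is itself a multi-step composite whose generating series, though Gaussian modulo $\varepsilon$, mixes $T_0$-weights coming from the $\kappa^{\pm 1}$ insertions with the nontrivial $\mathcal{A}$- and $T$-exponentials of \cref{thm DoPeGDO}. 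The simplification lemmas \cref{lemma y_i=0,lemma z_i=0,lemma h f g} are essential for keeping the bookkeeping tractable; once $N$ is in hand, identifying its determinant with the Alexander matrix is straightforward.
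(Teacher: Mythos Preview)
Your strategy is the paper's: write $K=\widecheck{Th}(L)$, use \cref{cor Th L} and \cref{prop exp V'}, then contract a Gaussian against the generating series of $\mu^{[g]}\circ B^{\otimeshat g}$ evaluated at $\alpha=0$. But the proposal stops exactly where the content is.

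The gap is the matrix $N$. You treat ``the explicit composition yielding the matrix $N$'' as an obstacle and leave it unresolved; the paper resolves it via \cref{cor gen funct mu circ B} (proved through the rather nontrivial \cref{prop exp band map}), which gives
\[
\bigl[(\mu^{[g]}\circ B^{\otimeshat g})_{|\alpha=0}\bigr]_{1,\ldots,2g}^{0}
= T_0^{g}\exp\!\left[\tfrac{T_0-1}{h}\,\xi Q\eta\right]\pmod\varepsilon.
\]
In particular your description of what survives is off: there is \emph{no} factor linear in $y_0,x_0$ modulo $\varepsilon$, and the exponent sits in the $\xi,\eta$ variables alone. No appeal to centrality or to \cref{cor Th has no a epsilon} is needed to kill $y_0,x_0$ terms; they were never there.

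Two smaller points align your outline with the actual argument. First, the paper absorbs the cross map before contracting, so one works with the Seifert matrix $V$ directly (via $Cr^{\otimeshat g}(Z_\D(L))_{|t=0}=\exp[yVxh]$), not with $V'$; this is what makes the determinant step clean. Second, once you know $N$, the contraction with $W=\bigl(\begin{smallmatrix}0 & Vh\\ \frac{T_0-1}{h}Q & 0\end{smallmatrix}\bigr)$ and $u=v=0$ gives $T_0^{g}\det(I-(T_0-1)VQ)^{-1}$, and the paper's explicit chain
\[
T_0^{g}\det(I-(T_0-1)VQ)^{-1}=\det(T_0^{-1/2}V-T_0^{1/2}V^{T})^{-1}
\]
uses $Q^{2}=-I$, $\det(Q)=1$ and $V-V^{T}=Q$ --- not a normalisation-by-the-unknot argument. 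Without the explicit $N$ this manipulation cannot even be set up, so the proposal as written is a plan rather than a proof.
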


\begin{proof}
Write $K= \widecheck{Th}(L)$ for some $2g$-com\-po\-nent vertical bottom tangle $L$. We aim to compute $Z_\D (K) = Th (Z_\D (L))$ by studying the composite
$$
\begin{tikzcd}
[column sep={7em,between origins}]
\Qeh \rar{Z_\D (L)} & \D^{\otimeshat 2g} \rar{Cr^{\otimeshat g}} & \D^{\otimeshat 2g} \rar{\mu^{[g]} \circ B^{\otimeshat g}} & \D
\end{tikzcd}
$$ in this fashion.

The map $Cr^{\otimeshat g}$ consists of multiplications and copies of $R$. The same argument as the one given in   \cref{prop exp V'} shows that if $V$ is given by \eqref{eq V seifert surface}, that is, $V$ is the Seifert matrix of $K$ associated to the thickening, then $$Cr^{\otimeshat g} (Z_\D (L))_{|t=0} =  \exp \Big[  yVx \cdot h \Big] \pmod {\varepsilon}.$$

Now since $B$ is $\tau$-free, \cref{lemma h f g} ensures that so is $\mu^{[g]} \circ B^{\otimeshat g}$, hence it suffices to use the evaluation at $t=0$ of $Cr^{\otimeshat g} (Z_\D (L))$ above. But  $Cr^{\otimeshat g} (Z_\D (L))_{| t=0}$ is $a$-free mod $\varepsilon$, which means that we use $(\mu^{[g]} \circ B^{\otimeshat g})_{| \alpha =0}$ instead by \cref{lemma y_i=0}. Using \cref{cor gen funct mu circ B}, we have
\begin{align}
Z_\D (K) &=  (\mu^{[g]} \circ B^{\otimeshat g})_{| \alpha =0} \circ Cr^{\otimeshat g} (Z_\D (L))_{| t=0} \nonumber \\
&=  \left\langle   T_k^g \exp \left[  yVx \cdot h  +  \frac{T_k-1}{h} \cdot \xi Q \eta \right]    \right\rangle_{1, \ldots , 2g} \pmod \varepsilon.\label{eq 1/Alex contraction}
\end{align}
To perform the previous composition, we use the contraction \cref{thm contraction} with
$$ u=v=0 \quad , \quad r= ( y \ \xi ) \quad , \quad s =(\eta \ x) \quad , \quad W=  \begin{pmatrix} 0 & V h \\\frac{T_k -1}{h} Q & 0 \end{pmatrix},$$ which yields
\begin{align*}
Z_\D (K) &= T^g \det (I-W)^{-1} =   T^g \det (I - (T-1)VQ)^{-1} \\
&= T^g \det (-Q^2 -(T-1)VQ)^{-1} =  T^g \det (Q) \det (-Q -(T-1)V)^{-1}\\ &= T^g \det ( -V+V^T - (T-1)V )^{-1} =  T^g \det ( V^T - TV )^{-1} \\ &=  (T^{-1/2})^{-2g} \det ( V - TV^T )^{-1} = \det (T^{-1/2} V - T^{1/2} V^T ) \\ &= \Delta_K (T)^{-1} \pmod \varepsilon
\end{align*}
(we suppressed the index $k$), where we have used that $\det (Q)=1$, $Q^2 = -I$, $V-V^T=Q$ and that for any square matrices $A,B$ of the same order, the equality $$ \det \begin{pmatrix}
I & A \\ B & I
\end{pmatrix} = \det (I - AB)$$ holds.
\end{proof}

\subsection{A normal form for $Z_\D$} We will now focus on the universal invariant of (long) knots. It is well-known (see e.g. \cite[Proposition 8.2]{habiro}) that the universal invariant $Z_A(K)$ of a knot $K$ takes places in the centre $\mathcal{Z}(A)$ of the ribbon Hopf algebra $A$. 

We can fully  determine the centre of the algebra $\D$.    


\begin{proposition}[\cite{barnatanveengaussians}]\label{prop w}
Suppose that $\bm{w}' \in \mathcal{Z}(\D)$ and satisfies $$\bm{w}' = \bm{yx} + g(\bm{a}, \bm{t}) \pmod h$$ for some $g \in \Qe \langle  X,Y \rangle$. Then $\mathcal{Z}(\D)$ is generated, as a topological $\Qe [[h]]$-subalgebra, by the elements $\bm{t}$ and $\bm{w}'$.

In particular, the element $$\bm{w} := \bm{y}\bm{A}^{-1}\bm{x} + \frac{q\bm{A}^{-1} + \bm{TA} - \frac{1}{2} (q+1)(\bm{T}+1)}{h(q-1)}$$ satisfies the hypothesis above.
\end{proposition}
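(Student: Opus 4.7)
The plan is to reduce to the classical limit $\D_0 := \D / h \D$, identify its centre there, and lift via an $h$-adic iteration. By \cref{prop ordering D} the algebra $\D$ is topologically free over $\Qeh$ and in particular $h$-torsion-free. Modulo $h$ we have $q \equiv 1$, $\bm{A} \equiv 1$, $\bm{T} \equiv 1$, and $(1 - \bm{A}^2\bm{T})/h \equiv 2\varepsilon \bm{a} + \bm{t}$, so $\D_0$ becomes the universal enveloping algebra $U(\mathfrak{g})$ of the four-dimensional $\Qe$-Lie algebra $\mathfrak{g}$ with basis $\bm{y}, \bm{t}, \bm{a}, \bm{x}$ and brackets $[\bm{t},-] = 0$, $[\bm{a}, \bm{x}] = \bm{x}$, $[\bm{a}, \bm{y}] = -\bm{y}$, $[\bm{x}, \bm{y}] = 2\varepsilon \bm{a} + \bm{t}$. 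This $\mathfrak{g}$ is a $\Qe$-form of $\mathfrak{gl}_2$ (via $\bm{a} \leftrightarrow H/2$, $\bm{x} \leftrightarrow E$, $\bm{y} \leftrightarrow F$, with $\bm{t}$ central), so classical PBW and Harish--Chandra for $U(\mathfrak{gl}_2)$ yield $\mathcal{Z}(\D_0) = \Qe[\bm{t}, C_0]$ where $C_0 := \bm{yx} + \varepsilon \bm{a}^2 + \varepsilon \bm{a} + \bm{ta}$ (centrality of $C_0$ is a direct check using $\bm{ax} = \bm{x}(\bm{a}+1)$ and $\bm{ya} = (\bm{a}+1)\bm{y}$, and the identity $(a+1)^2 - a^2 = 2a+1$).

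\textbf{Matching $\bm{w}'$ with $C_0$.} Given any $\bm{w}' \in \mathcal{Z}(\D)$ with $\bm{w}' \equiv \bm{yx} + g(\bm{a}, \bm{t}) \pmod h$, the reduction $\bm{w}'_0 \in \mathcal{Z}(\D_0)$ must satisfy $[\bm{w}'_0, \bm{x}] = 0$. Since $g(\bm{a},\bm{t})\bm{x} = \bm{x}\, g(\bm{a}+1, \bm{t})$, this translates into the difference equation
\[
g(a+1, t) - g(a, t) = 2\varepsilon a + 2\varepsilon + t,
\]
forcing $g(a,t) - (\varepsilon a^2 + \varepsilon a + ta) \in \Qe[t]$. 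Hence $\bm{w}'_0 = C_0 + p(\bm{t})$ for some $p \in \Qe[t]$, so $\Qe[\bm{t}, \bm{w}'_0] = \Qe[\bm{t}, C_0] = \mathcal{Z}(\D_0)$.

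\textbf{Lifting to $\D$.} For any $\bm{z} \in \mathcal{Z}(\D)$, the reduction $\bm{z} \bmod h$ lies in $\mathcal{Z}(\D_0)$, hence equals $P_0(\bm{t}, \bm{w}'_0)$ for some $P_0 \in \Qe[X, Y]$. Then $\bm{z} - P_0(\bm{t}, \bm{w}') \in \mathcal{Z}(\D) \cap h \D$; as $\D$ is $h$-torsion-free and the commutator is $\Qeh$-bilinear, dividing by $h$ produces a new central element to which the same argument applies. Iterating yields polynomials $P_n \in \Qe[X, Y]$ with $\bm{z} \equiv \sum_{n < N} h^n P_n(\bm{t}, \bm{w}') \pmod{h^N}$ for every $N \geq 1$, and the $h$-adic limit expresses $\bm{z}$ in the topological $\Qeh$-subalgebra generated by $\bm{t}$ and $\bm{w}'$.

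\textbf{The explicit element $\bm{w}$.} For the ``in particular'' clause, expanding $q\bm{A}^{-1}$, $\bm{TA}$ and $\tfrac{1}{2}(q+1)(\bm{T}+1)$ to second order in $h$ shows that the $h^0$- and $h^1$-coefficients of the numerator cancel by design and that its $h^2$-coefficient is $\varepsilon^2 \bm{a} + \varepsilon^2 \bm{a}^2 + \varepsilon \bm{at} + \tfrac{1}{2}\varepsilon \bm{t}$; dividing by $h(q-1) = h^2 \varepsilon + O(h^3)$ then yields $\bm{w} \equiv \bm{yx} + \varepsilon \bm{a}^2 + \varepsilon \bm{a} + \bm{at} + \tfrac{1}{2}\bm{t} \pmod h$, which is of the required form. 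Centrality of $\bm{w}$ is a direct $q$-commutation check: using $\bm{A}\bm{x} = q^{-1}\bm{x}\bm{A}$, $\bm{T}\bm{x} = \bm{x}\bm{T}$ and $\bm{x}\bm{y} = q\bm{y}\bm{x} + (1 - \bm{A}^2 \bm{T})/h$, the commutator $[\bm{y}\bm{A}^{-1}\bm{x}, \bm{x}]$ simplifies to $\bm{x}(q^{-1}\bm{A}\bm{T} - q\bm{A}^{-1})/h$, which is precisely cancelled by $[F, \bm{x}] = \bm{x}\bigl(F(q^{-1}\bm{A}, \bm{T}) - F(\bm{A}, \bm{T})\bigr)$ with $F$ the rational-function part of $\bm{w}$; the analogous computation (together with symmetry of the defining expression under swapping $\bm{x} \leftrightarrow \bm{y}$ and $\bm{A}^{\pm 1} \leftrightarrow \bm{A}^{\pm 1}$ combined with $\bm{A} \leftrightarrow \bm{TA}$) handles $[\bm{w}, \bm{y}] = 0$. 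The main obstacle is the centre computation for $\D_0$: the most conceptual route is the identification with $U(\mathfrak{gl}_2)$ on the locus $\varepsilon \ne 0$ followed by a flatness/specialisation argument, but a hands-on PBW verification (showing $[P, \bm{a}] = 0$ forces equal $\bm{y}$- and $\bm{x}$-degrees in each monomial, then inducting on that degree against $C_0$) is also feasible.
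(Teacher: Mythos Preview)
The paper does not actually prove this proposition: it is stated with a citation to \cite{barnatanveengaussians} and no argument is given beyond the Remark that follows, which only explains why the numerator of the fraction defining $\bm{w}$ is divisible by $h(q-1)$. So there is no in-paper proof to compare against.

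On its own merits, your strategy is the natural one and is essentially correct: reduce modulo $h$, compute the centre there, and lift by $h$-adic induction using that $\D$ is $h$-torsion-free. Two points deserve more care. First, the computation $\mathcal{Z}(\D_0)=\Qe[\bm t,C_0]$ is the crux, and you invoke Harish--Chandra for $U(\mathfrak{gl}_2)$; but $\Qe=\Q[\varepsilon]$ is not a field, and at $\varepsilon=0$ the Lie algebra is solvable rather than reductive, so the clean Harish--Chandra statement does not apply uniformly. Your fallback ``hands-on PBW'' route (weight grading from $[\bm a,-]$ forces equal $\bm y$- and $\bm x$-degrees, then peel off powers of $C_0$) does work over $\Qe$ and should be the primary argument rather than an afterthought. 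Second, your centrality sketch for $\bm w$ is only a sketch: the commutation identities you quote (e.g.\ $\bm A\bm x=q^{-1}\bm x\bm A$) are right, but the displayed formula for $[\bm y\bm A^{-1}\bm x,\bm x]$ and the claimed symmetry need to be written out carefully, since the defining expression is not literally symmetric under $\bm x\leftrightarrow\bm y$. These are fillable gaps, not strategic errors.
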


\begin{remark}
The numerator in the statement is indeed multiple of $h(q-1)$. For its image\footnote{In the power series ring $\Q [[x]]$, the ideal generated by $x$ equals the ideal generated by $e^x-1$.} in $\D/ (q-1) \D = \D/ \varepsilon h \D  $ equals $1+\bm{T} - (\bm{T}+1) =0$ as $q$ and $\bm{A}^{\pm 1}$ are mapped to $1$. Applying the L'Hôpital rule to $(q\bm{A}^{-1} + \bm{TA} - \frac{1}{2} (q+1)(\bm{T}+1))/(q-1)$, which is possible since $\bm{t}$ is central, yields the claim.
\end{remark}

What the proposition says is that we should be able to express the universal invariant $Z_\D (K)$ of a knot $K$ in terms of $\bm{t}$ and $\bm{w}$. For knots with framing zero, following fundamental theorem says that we can express $Z_\D (K)$ in a particularly neat form.

For a knot $K$ we write $\Delta_K \in  \Z[t,t^{-1}]$ for the Alexander polynomial of (the closure of) $K$, normalised so that $\Delta_K (t^{-1})=\Delta_K (t)$ and $\Delta_K (1)=1$. 

\begin{theorem}[\cite{barnatanveengaussians}]\label{thm rhos existence}
For any $0$-framed knot $K$, there exist knot polynomial invariants $$\rho_K^{i,j} \in \Q [t,t^{-1}],$$ $i \geq 0, \ 0 \leq j \leq 2 i$, such that
$$ Z_\D (K)= \sum_{i=0}^\infty \left(  \sum_{j=0}^{2i}  h^{i+j} \frac{\rho_K^{i,j}(\bm{T})}{\Delta_K^{2i+1-j} (\bm{T})}   \bm{w}^j     \right) \varepsilon^i  $$
\end{theorem}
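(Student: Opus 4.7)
The plan is to build on the thickening picture of \cref{sect thickening map} together with the centrality of the universal invariant of a knot and the description of $\mathcal{Z}(\D)$ from \cref{prop w}. Write $K=\widecheck{Th}(L)$ for some $2g$-component vertical bottom tangle $L$, so that by \cref{cor Th L} one has $Z_\D(K)=Th(Z_\D(L)_{|t=0})$, and decompose $Th=\mu^{[g]}\circ B^{\otimeshat g}\circ Cr^{\otimeshat g}$. I would compute this composite through two successive contractions (as explained in the two-step Gaussian formalism of \cref{sect contraction}), exactly mirroring the proof of \cref{prop Z_D for eps ^ 0 and 1} but now tracking the dependence on $\varepsilon$. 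The first step gives that $Cr^{\otimeshat g}(Z_\D(L))_{|t=0}$ is a perturbed two-step Gaussian whose Gaussian part is $\exp[yVx\cdot h]$ and whose perturbation is a power series in $\varepsilon$, each of whose coefficients is a polynomial in the $y_i,x_i,a_i$ variables (with no pure $\bm{a}^{n+j}\varepsilon^n$ monomials for $j\geq 1$ by \cref{cor Th has no a epsilon}).

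Next, using \cref{prop B tau free} and \cref{thm DoPeGDO}, the generating series of $\mu^{[g]}\circ B^{\otimeshat g}$ is $\tau$-free and its two-step Gaussian part involves the matrix block $\tfrac{T_k-1}{h}Q$ that already appeared in \eqref{eq 1/Alex contraction}. Feeding the perturbed Gaussian of the previous paragraph into the Contraction \cref{thm contraction} twice, the Gaussian factor yields $\det(I-W)^{-1}=\bm{T}^g/\Delta_K(\bm{T})$ as in the mod-$\varepsilon$ calculation. The perturbation coefficients at order $\varepsilon^i$, being polynomial, contract into rational functions of $\bm{T}$ whose denominators are additional powers of $\det(I-W)$; a careful bookkeeping shows that each extra $\varepsilon$ contributes at most two extra factors of $\Delta_K(\bm{T})$ in the denominator, while each pairing of $y_i$ with $\xi_j$ variables left over from the perturbation converts into a $\bm{y}\cdot\bm{A}^{-1}\bm{x}$-factor (coming with $h^{-1}$ via the $\bar r,\bar s$ substitutions) that will later be absorbed into $\bm{w}$. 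This is precisely the bookkeeping that produces the denominator $\Delta_K^{2i+1-j}$ and explains the overall power $h^{i+j}$.

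Now comes the centralisation step. The value $Z_\D(K)$ is central in $\D$ (standard for universal invariants of knots, cf.\ \cite[Prop.~8.2]{habiro}), so by \cref{prop w} it lies in the closed $\Qeh$-subalgebra generated by $\bm t$ and $\bm w$. I would use this to rewrite the residual polynomial part of the contracted expression in the basis $\bm t^{n}\bm w^{m}$: after contraction the result only depends on $\bm t$ through $\bm T=e^{-h\bm t}$, and every residual $\bm y\bm A^{-1}\bm x$ combination gets packaged into $\bm w$ using the defining formula of $\bm w$ from \cref{prop w}, which introduces the compensating $h$-powers that combine with the $h^{-1}$ from $(I-W)^{-1}$ to give exactly $h^{i+j}$ in front of each $\bm w^j\varepsilon^i$ term.

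The main obstacle, and the part that I expect to require the most care, will be the last two points: first, showing that the numerator $\rho_K^{i,j}$ is a genuine Laurent polynomial in $t$ and not merely a rational function or a power series in $h$. For this, one notes that the perturbation at order $\varepsilon^i$ is a finite polynomial expression (not a series) by \cref{thm DoPeGDO}, and after contraction the only $\bm T$-dependence entering the numerator comes from finitely many $\mathcal{A}^{\pm1}$-type substitutions that collapse to Laurent polynomials in $\bm T$; this gives a uniform bound on the degree of $\rho_K^{i,j}$ as a Laurent polynomial. Second, the bound $0\leq j\leq 2i$: the perturbation at order $\varepsilon^i$ has degree at most $2i$ in the $y,x$-variables (visible already in the generating series of $R^{\pm1}$ and the structure maps modulo higher powers of $\varepsilon$), so at most $2i$ factors can be paired off into $\bm w$, forcing $\bm w^j$ with $j\leq 2i$.
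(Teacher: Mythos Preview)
This theorem is not proved in the present paper: it is quoted from \cite{barnatanveengaussians} (note the bracketed citation in the theorem header) and used as a black box. There is therefore no in-paper proof to compare your proposal against.

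That said, your outline captures the correct high-level ingredients---centrality of $Z_\D(K)$, the description of $\mathcal{Z}(\D)$ in \cref{prop w}, and the appearance of $\Delta_K(\bm T)^{-1}$ via the contraction determinant---but several of the crucial quantitative claims are asserted rather than argued. The statement that ``each extra $\varepsilon$ contributes at most two extra factors of $\Delta_K(\bm T)$ in the denominator'' is exactly the content of the theorem and cannot be dispatched as ``careful bookkeeping''; one has to track precisely how the perturbation polynomials at order $\varepsilon^i$ interact with the matrix $\widetilde W=(I-W)^{-1}$ under the substitutions $\bar r=r+v\widetilde W$, $\bar s=\widetilde W(s+u)$ and control the resulting pole order. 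Similarly, your bound $j\le 2i$ via ``degree at most $2i$ in the $y,x$-variables'' is not obvious from the generating series in \cref{thm DoPeGDO}: the perturbation terms $P_k$ there are stated to be finite but no degree bound in $y,x$ is recorded, so this would have to be extracted separately. Finally, the step where ``every residual $\bm y\bm A^{-1}\bm x$ combination gets packaged into $\bm w$'' hides real work: after contraction one obtains an element of $\Qe[y_0,t_0,a_0,x_0][[h]]$, and rewriting it in the $\bm t,\bm w$ basis while simultaneously controlling that the coefficients are Laurent polynomials in $\bm T$ (rather than general power series in $h$) is the technical core of the result in \cite{barnatanveengaussians}. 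Your sketch is a reasonable roadmap, but as written it does not close these gaps.
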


Note that \cref{prop Z_D for eps ^ 0 and 1} amounts to saying that $\rho_K^{0,0}=1$.

\subsection{Vanishing of the higher $\rho_K^{i,j}$}

\cref{thm rhos existence} states that studying the universal invariant $Z_\D(K)$ amounts to studying the collection of  Bar-Natan - van der Veen polynomials $\rho_K^{i,j}$ for $i \geq 0$, $0 \leq j \leq 2i$. The natural question then is to ask how strong these knot polynomial invariants are.  The polynomials $\rho_K^{i,0}$ are expected to be separate a large number of knots (see \cref{sect perpectives}). For $j>0$, we expect these polynomials to be either trivial or consequence of the ones for $j=0$ (and the Alexander polynomial). In fact, solving a conjecture in \cite{barnatanveengaussians}, we show below  that in fact the summation only runs through $j=0, \ldots , i$. 

First we need an explicit formula for the central element $\bm{w}$ mod $\varepsilon^2$.

\begin{lemma}\label{lemma w mod eps2}
Let $\bm{w} \in \D$ as in \cref{prop w}. Then we have
$$ \bm{w} = \frac{1-\bm{T}}{h} \left( \bm{a} + \frac{1}{2} \right) + \bm{yx} + \left[ \frac{1}{2} (1+\bm{T})(\bm{a}+ \bm{a}^2) + \bm{yax}   h \right] \varepsilon \pmod {\varepsilon^2}  .$$
\end{lemma}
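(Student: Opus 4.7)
The plan is to establish the identity by a direct power-series expansion of the defining formula
$$\bm{w} = \bm{y}\bm{A}^{-1}\bm{x} + \frac{q\bm{A}^{-1} + \bm{TA} - \tfrac{1}{2}(q+1)(\bm{T}+1)}{h(q-1)}$$
to order $\varepsilon^1$, treating $\bm{t}$ (hence $\bm{T}$) and $\bm{a}$ as formal variables throughout (using that $\bm{t}$ is central and that $\bm{a}$ commutes with itself, so ordering issues do not arise in the expression to be expanded).

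First I would handle the easy summand: since $\bm{A}^{-1} = e^{h\varepsilon \bm{a}} = 1 + h\varepsilon \bm{a} + O(\varepsilon^2)$, we immediately get
$$\bm{y}\bm{A}^{-1}\bm{x} = \bm{yx} + \bm{y}\bm{a}\bm{x}\, h\varepsilon + O(\varepsilon^2),$$
which accounts for the $\bm{yx}$ term and for the contribution $\bm{yax}\,h$ to the $\varepsilon$-coefficient.

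Next I would expand the fraction. The remark preceding the lemma already shows that the numerator $N$ vanishes at $\varepsilon = 0$, and similarly $h(q-1) = h^2\varepsilon(1 + \tfrac{h\varepsilon}{2} + O(\varepsilon^2))$, so the quotient is a regular power series in $\varepsilon$. Expanding
$$q\bm{A}^{-1} = e^{h\varepsilon(1+\bm{a})} = 1 + h\varepsilon(1+\bm{a}) + \tfrac{1}{2}h^2\varepsilon^2(1+\bm{a})^2 + O(\varepsilon^3),$$
together with analogous expansions of $\bm{TA}$ and $\tfrac{1}{2}(q+1)(\bm{T}+1)$, and collecting by powers of $h\varepsilon$, one finds
$$N = h\varepsilon \,(1-\bm{T})\!\left(\bm{a} + \tfrac{1}{2}\right) + h^2\varepsilon^2\!\left[\tfrac{1-\bm{T}}{4} + \bm{a} + \tfrac{1+\bm{T}}{2}\bm{a}^2\right] + O(\varepsilon^3).$$

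Finally I would multiply by $\frac{1}{h(q-1)} = \frac{1}{h^2 \varepsilon}\bigl(1 - \tfrac{h\varepsilon}{2} + O(\varepsilon^2)\bigr)$ and collect by powers of $\varepsilon$. The $\varepsilon^0$-piece gives $\frac{(1-\bm{T})(\bm{a}+1/2)}{h}$, while the $\varepsilon^1$-piece is
$$-\tfrac{1}{2}(1-\bm{T})\!\left(\bm{a}+\tfrac{1}{2}\right) + \tfrac{1-\bm{T}}{4} + \bm{a} + \tfrac{1+\bm{T}}{2}\bm{a}^2,$$
which after cancelling the two $\pm\tfrac{1-\bm{T}}{4}$ constants and combining $\bm{a}-\tfrac{1-\bm{T}}{2}\bm{a} = \tfrac{1+\bm{T}}{2}\bm{a}$ simplifies to $\tfrac{1+\bm{T}}{2}(\bm{a} + \bm{a}^2)$. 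Adding in the $\bm{yax}\,h$ contribution from the first summand yields the claimed formula.

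The proof is purely computational and the only real obstacle is bookkeeping: one must track the $\tfrac{h\varepsilon}{2}$ correction coming from inverting $q-1$ and verify that it conspires with the $\varepsilon^2$-coefficient of $N$ to produce the compact symmetric expression $\tfrac{1+\bm{T}}{2}(\bm{a}+\bm{a}^2)$. No deeper structural input is required.
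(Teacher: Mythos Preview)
Your proof is correct and follows essentially the same approach as the paper: a direct power-series expansion in $\varepsilon$. The paper notes this direct expansion as one option and offers as an alternative the equivalent reformulation of clearing the denominator $h(q-1)$ and verifying the resulting identity modulo $\varepsilon^3$, but both are the same straightforward computation you carried out.
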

\begin{proof}
One way to show this is by expanding the expression defining $\bm{w}$ in power series of $\varepsilon$. Alternatively, since $h(q-1) \in \varepsilon \D$ and $\bm{y}\bm{A}^{-1}\bm{x} = \bm{yx} + \bm{yax}h \pmod {\varepsilon^2}$, it suffices to show that 
\begin{align*}
q\bm{A}^{-1} + \bm{TA} - \frac{1}{2} (q+1)(\bm{T}+1) = h (q-1) \Bigg[ \frac{1-\bm{T}}{h} \left( \bm{a} + \frac{1}{2} \right) +\frac{1}{2} ( 1 + &\bm{T}) (  \bm{a}+ \bm{a}^2) \varepsilon   \Bigg]  \\  &\pmod {\varepsilon^3}
\end{align*}
what is readily verified expanding both sides in powers of $\varepsilon$.
\end{proof}

\begin{theorem}\label{thm rho ij =0 for j>i}
For any $0$-framed knot $K$, $$\rho_K^{i,j}(t)=0$$ for $j>i\geq 1$.
\end{theorem}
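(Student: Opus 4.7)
The plan is to combine the constraint of \cref{cor Th has no a epsilon}---that $Z_\D(K)\in\im Th$ contains no PBW-monomial of the form $\bm{y}^p\bm{t}^q\bm{a}^{n+k}\bm{x}^s\varepsilon^n$ with $k\geq 1$---with the normal form of \cref{thm rhos existence}. Introduce the $\Qeh$-linear projection $\pi:\D\to\Qe[\bm{t},\bm{a}][[h]]$ given, via the PBW isomorphism $\mathcal{O}$, by $\bm{y}^p\bm{t}^q\bm{a}^r\bm{x}^s\mapsto\delta_{p,0}\delta_{s,0}\bm{t}^q\bm{a}^r$. The Corollary then forces $[\pi Z_\D(K)]_{\bm{a}^m\varepsilon^n}=0$ whenever $m>n$. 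Set $\psi:=\frac{q\bm{A}^{-1}+\bm{T}\bm{A}-\frac{1}{2}(q+1)(\bm{T}+1)}{h(q-1)}\in\Qe[\bm{a},\bm{t}][[h]]$, so that $\bm{w}=\psi+\bm{y}\bm{A}^{-1}\bm{x}$ by \cref{prop w}. The key technical step is the identity $\pi(\bm{w}^j)=\psi^j$.

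A direct computation from $[\bm{a},\bm{y}]=-\bm{y}$ and $[\bm{a},\bm{x}]=\bm{x}$ yields $[\bm{a},\bm{y}\bm{A}^{-1}\bm{x}]=0$; as $\bm{T}$ is central, $\psi$ and $\bm{y}\bm{A}^{-1}\bm{x}$ commute in $\D$, so $\bm{w}^j=\sum_{k=0}^{j}\binom{j}{k}\psi^{j-k}(\bm{y}\bm{A}^{-1}\bm{x})^k$. The plan is to show that for every $k\geq 1$, all PBW-summands of $(\bm{y}\bm{A}^{-1}\bm{x})^k$ carry $p,s\geq 1$. When bringing the product $\bm{y}\bm{A}^{-1}\bm{x}\cdots\bm{y}\bm{A}^{-1}\bm{x}$ into PBW order, the only move that can destroy a $\bm{y}$ or an $\bm{x}$ is $\bm{x}\bm{y}=q\bm{y}\bm{x}+\tfrac{1-\bm{A}^2\bm{T}}{h}$, and it can act only on adjacent \emph{interior} $\bm{x}$-$\bm{y}$ pairs; the leftmost $\bm{y}$ of the first factor and the rightmost $\bm{x}$ of the last factor cannot be matched and hence always survive. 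Left multiplication by $\psi^{j-k}\in\Qe[\bm{t},\bm{a}][[h]]$ preserves this property via $f(\bm{a})\bm{y}^p=\bm{y}^p f(\bm{a}-p)$, so $\pi$ annihilates every $k\geq 1$ summand, leaving $\pi(\bm{w}^j)=\pi(\psi^j)=\psi^j$.

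Next, a routine Taylor analysis of $\psi=N(h\varepsilon)/(h(e^{h\varepsilon}-1))$, with $N(u)=e^{u(\bm{a}+1)}+\bm{T}e^{-u\bm{a}}-\tfrac{1}{2}(e^u+1)(\bm{T}+1)$, shows that $\psi$ at $\varepsilon^m$ is a polynomial in $\bm{a}$ of degree at most $m+1$; hence $\psi^j$ at $\varepsilon^n$ has $\bm{a}$-degree at most $n+j$. Moreover $\psi|_{\varepsilon=0}=\alpha(\bm{a}+\tfrac{1}{2})$ with $\alpha=(1-\bm{T})/h$ by \cref{lemma w mod eps2}, so $[\psi^j]_{\bm{a}^j\varepsilon^0}=\alpha^j=(1-\bm{T})^j/h^j$. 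Applying $\pi$ to the normal form and extracting the coefficient of $\bm{a}^j\varepsilon^i$, the pair $(i',j')$ contributes only if $i'\leq i$, $j'\leq 2i'$ and $(i-i')+j'\geq j$; this last inequality rearranges as $i'\geq j-i$ and $j'\geq j-i+i'$, which under the hypothesis $j>i$ yields $j'\geq i'+1$ whenever $i'<i$.

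I would conclude by nested induction: outer on $i\geq 1$, inner on $j$ decreasing from $2i$ to $i+1$. In the inner step, any contributing $(i',j')\neq(i,j)$ has either $i'=i$ and $j'>j$ (killed by the inner hypothesis) or $i'<i$ and $j'>i'$ (killed by the outer hypothesis), so only $(i,j)$ survives, contributing $h^i\,\rho_K^{i,j}(\bm{T})(1-\bm{T})^j/\Delta_K^{2i+1-j}(\bm{T})$; the Corollary forces this to vanish, yielding $\rho_K^{i,j}=0$. The base cases $i=1,j=2$ and, at each $i$, $j=2i$ are immediate, since then only $(i,j)$ is in the contribution range. The principal obstacle is establishing the identity $\pi(\bm{w}^j)=\psi^j$: the projection $\pi$ is \emph{not} a ring homomorphism, because the two-sided ideal $(\bm{y},\bm{x})\subset\D$ contains the nonzero element $\tfrac{1-\bm{A}^2\bm{T}}{h}=\bm{x}\bm{y}-q\bm{y}\bm{x}$, so the identity is a genuinely combinatorial PBW statement about $(\bm{y}\bm{A}^{-1}\bm{x})^k$ rather than a formal consequence of $\pi(\bm{w})=\psi$.
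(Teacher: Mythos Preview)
Your proof is correct and follows the same overall strategy as the paper: combine \cref{cor Th has no a epsilon} with the normal form of \cref{thm rhos existence} by comparing $\bm{a}$-degrees against $\varepsilon$-orders. Your treatment is in fact more complete than the paper's. The paper only records the observation $\bm{w}^j\varepsilon^i \equiv ((1-\bm{T})/h)^j\bm{a}^j\varepsilon^i + (\text{lower }\bm{a}) \pmod{\varepsilon^{i+1}}$ and then asserts the conclusion; it leaves implicit both the double induction and the control of the cross-terms coming from pairs $(i',j')$ with $i'<i$, which require knowing the $\bm{a}$-degree of $\bm{w}^{j'}$ at positive $\varepsilon$-orders. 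Your introduction of the projection $\pi$, the identity $\pi(\bm{w}^j)=\psi^j$, and the bound $\deg_{\bm{a}}[\psi^{j'}]_{\varepsilon^m}\leq m+j'$ supply exactly these missing ingredients, and your nested induction makes the logic explicit. The PBW argument for $\pi((\bm{y}\bm{A}^{-1}\bm{x})^k)=0$ is also sound and can be said even more simply than you do: since left-multiplication by $\bm{y}$ and right-multiplication by $\bm{x}$ each send a PBW monomial to a single PBW monomial (just raising the relevant exponent by one), writing $(\bm{y}\bm{A}^{-1}\bm{x})^k=\bm{y}\cdot u\cdot\bm{x}$ immediately gives $\bm{y}$- and $\bm{x}$-degree at least one in every summand.
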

\begin{proof}

Let us start by noticing that it suffices to show that for a 0-framed knot $K$, the value $Z_\D (K)$ does not contain monomials with $\bm{a}^{n+p}\varepsilon^n$ for $p>0$. Indeed by \cref{lemma w mod eps2} we have that $$\bm{w}^j \varepsilon^i = \left( \frac{1-\bm{T}}{h} \right)^j \bm{a}^j \varepsilon^i + \text{(terms with lower powers of $\bm{a}$)} \pmod {\varepsilon^{i+1}} ,$$ so looking at the expression in \cref{thm rhos existence} we must conclude that $\rho_K^{i,j} =0$ for $j>i>0$.

As usual for a $2g$-component vertical bottom tangle $L$  whose strands are labelled by $1, \ldots , 2g$ let  us write $$(Z_\D (L)_{| t=0})^{1, \ldots , 2g} = e^{y V'x\cdot h}(P_0+P_1 \varepsilon + \cdots+  P_n  \varepsilon^n) \pmod {\varepsilon^{n+1}}$$
and 
$$ Th_{1, \ldots , 2g}^0 = e^{(*)}(P_0'+P'_1 \varepsilon + \cdots+  P'_n  \varepsilon^n) \pmod {\varepsilon^{n+1}} $$ where $(*)$ is an expression that does not contain any $a_0$, by \cref{cor Th has no a epsilon}. We then compute
\begin{align*}
Z_\D (K) &= Th_{1, \ldots , 2g}^0    \circ  (Z_\D (L)_{| t=0})^{1, \ldots , 2g}\\
&=  \left\langle   e^{y V'x\cdot h}(P_0+P_1 \varepsilon + \cdots+  P_n  \varepsilon^n) e^{(*)}(P_0'+P'_1 \varepsilon + \cdots+  P'_n  \varepsilon^n)  \right\rangle_{1, \ldots , 2g}\\
&= \left\langle  e^{y V'x\cdot h + (*)} P_0 P'_0 \right\rangle  + \cdots + \left\langle  e^{y V'x\cdot h + (*)} (P_0 P'_n  + \cdots + P_n P_0') \right\rangle \varepsilon^n\\
&\hspace*{9cm}  \pmod {\varepsilon^{n+1}}.
\end{align*}
To see that the contraction $$ \left\langle  e^{y V'x\cdot h + (*)} (P_0 P'_n  + \cdots + P_n P_0') \right\rangle_{1, \ldots , 2g}$$ does not have any term $a_0^{n+p}$, $p>0$, we argue as follows: any power of $a_0$ after the contraction comes either (1) because it was already in one of the $P_i'$ or (2) it is produced from one $a_\ell$, $\ell =1, \ldots , 2g$ when applying the Contraction \cref{thm contraction}. But if (1), according to \cref{cor Th has no a epsilon}, such a power of $a_0$ must be at most $n$. On the other hand, no $a_0$ can be created when applying the contraction formula because such a term would come from the exponential part and there is no $a_0$ in the exponential $e^{y V'x\cdot h + (*)}$ (cf. \cref{example contraction}), which concludes.
\end{proof}



\subsection{Some concrete values}
For a 0-framed knot $K$, let us put $\mathrm{Coeff}_{a_0 \varepsilon^2} (Z_\D(K))$ for the coefficient in $a_0 \varepsilon^2$ (independent of $a_0$ and $\varepsilon$) of  $Z_\D(K)^0$, where $0$ denotes the only label of $K$, that is, 
$$ Z_\D(K)^0 = \frac{1}{\Delta_K (T_0)} + (\cdots) \varepsilon + \left(\mathrm{Coeff}_{a_0 \varepsilon^2}  (Z_\D(K)) a_0 + \left( \parbox[c]{5em}{\centering  {\small  terms with $a_0^k, k\neq 1$ }} \right) \right)  \varepsilon^2 \pmod {\varepsilon^3} $$
and write $\mathrm{Coeff}_{ \mathbf{1} \varepsilon} (Z_\D(K))$ for the term, multiple of $\varepsilon^1$, that is independent of $y_0, a_0, x_0$.

\begin{proposition}\label{prop coef a esp2 = -2hT coeff 1eps}
    Let $K$ be a 0-framed knot. The we have
    $$ \mathrm{Coeff}_{a_0 \varepsilon^2} (Z_\D(K)) =   -2hT_0 \cdot \partial_{T_0}  \mathrm{Coeff}_{ \mathbf{1} \varepsilon} (Z_\D(K)).  $$
\end{proposition}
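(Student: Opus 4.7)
The strategy is to express $K = \widecheck{Th}(L)$ for some $2g$-component vertical bottom tangle $L$ and exploit the factorisation $Z_\D(K) = Th(Z_\D(L)_{|t=0})$ from \cref{cor Th L}. To begin with, I would write both $Z_\D(L)_{|t=0}$ and the thickening generating series $Th^0_{1,\ldots,2g}$ as perturbed two-step Gaussians modulo $\varepsilon^3$, appealing to \cref{thm DoPeGDO}, \cref{prop B tau free} (for $\tau$-freeness), and \cref{cor Th has no a epsilon} (which bounds the $a_0$-degree at level $\varepsilon^n$ by $n$, so that at $\varepsilon^1$ only monomials of $a_0$-degree $\leq 1$ appear and at $\varepsilon^2$ only $\leq 2$). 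I would then apply the Contraction \cref{thm contraction} twice, as in \cref{example contraction}, to compute $Z_\D(K)^0$ modulo $\varepsilon^3$ and extract explicit expressions for both $\mathrm{Coeff}_{\mathbf{1}\varepsilon}(Z_\D(K))$ and $\mathrm{Coeff}_{a_0\varepsilon^2}(Z_\D(K))$; note that the $\varepsilon^0$ coefficient already matches $1/\Delta_K(T_0)$ by \cref{prop Z_D for eps ^ 0 and 1}.

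The identity then reduces to a structural claim about the thickening generating series: the $a_0$-linear part at $\varepsilon^2$ of $Th^0_{1,\ldots,2g}$, after contraction against an input of the form $(Z_\D(L)_{|t=0})^{1,\ldots,2g}$, equals $2\partial_{t_0}$ applied to the $a_0$-free part at $\varepsilon^1$. Converting $2\partial_{t_0} = -2hT_0\partial_{T_0}$ (since $T_0 = e^{-ht_0}$) would then yield the stated identity. Heuristically, the factor $2$ originates from the balancing element $\bm{\kappa} = +\sqrt{\bm{A}^2\bm{T}}$: the combination $\bm{A}^2\bm{T} = e^{-h(2\varepsilon\bm{a} + \bm{t})}$ couples $\bm{a}$ and $\bm{t}$ with coefficient $2$ on $\bm{a}$, and this coupling propagates through the two insertions of $\bm{\kappa}$ in the band map $B$ into the $\varepsilon^1$ and $\varepsilon^2$ expansions of $Th^0_{1,\ldots,2g}$.

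The main obstacle is establishing this structural property of the band map's generating series at $\varepsilon^1$: one must match the $a_0\varepsilon$ coefficient of $B^k_{i,j}$ to $2\partial_{t_0}$ of its $\varepsilon^0$ part, in a way that is stable under composition with $Cr^{\otimeshat g}$ and $\mu^{[g]}$. This is a concrete but delicate calculation relying on the technical lemmas about the generating series of the band map developed in \cref{sect band map}, and is the only non-formal input of the proof; the remaining steps are straightforward applications of the contraction formalism.
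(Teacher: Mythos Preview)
Your proposal is essentially correct and follows the same route as the paper: write $K=\widecheck{Th}(L)$, expand both the input $Cr^{\otimeshat g}(Z_\D(L))_{|t=0}$ and the map $\mu^{[g]}\circ B^{\otimeshat g}$ as perturbed Gaussians $\pmod{\varepsilon^3}$, contract, and reduce the identity to structural properties of the band-map generating series from \cref{sect band map}.

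One point of imprecision is worth flagging. You describe the main obstacle as matching ``the $a_0\varepsilon$ coefficient of $B^k_{i,j}$ to $2\partial_{t_0}$ of its $\varepsilon^0$ part''. That is indeed one of the two technical inputs (it is \cref{prop gen ser Th with alpha neq 0}), but by itself it is not enough. After expanding the contraction you get \emph{two} contributions to $\mathrm{Coeff}_{a_0\varepsilon^2}(Z_\D(K))$: one from $\mathrm{Coeff}_{a_0}(P_1^{\alpha\neq 0})\cdot\overline{P}_1$ (the $\varepsilon^1$-piece of the thickening against the $\varepsilon^1$-piece of the input) and one from $\mathrm{Coeff}_{a_0}(P_2)$ (the $\varepsilon^2$-piece of the thickening against the $\varepsilon^0$-input). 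Likewise $\mathrm{Coeff}_{\mathbf{1}\varepsilon}(Z_\D(K))$ splits into $T_0^g\cdot\overline{P}_1$ and $\mathrm{Coeff}_{\mathbf{1}}(P_1)$. The first pair is handled by \cref{prop gen ser Th with alpha neq 0}; the second pair requires a separate identity, namely \cref{lemma 77}, which relates $\mathrm{Coeff}_{a_0}(P_2)$ to $-2hT_0\partial_{T_0}$ of $\mathrm{Coeff}_{\mathbf{1}}(P_1)e^{\frac{T_0-1}{h}\xi Q\eta}$. Your heuristic about the factor $2$ coming from $\bm\kappa=\sqrt{\bm A^2\bm T}$ is reasonable, but the actual verification of \cref{lemma 77} in the paper is by explicit (computer-assisted) computation rather than by that heuristic.
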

\begin{proof}
As usual, let $L$ be a $2g$-component vertical bottom tangle such that $K=  \widecheck{Th} (L)$ and write
$$\Big[ Cr^{\otimeshat g} \circ Z_\D (L))_{| t=0}\Big]^{1, \ldots , 2g} = e^{y Vx\cdot h}(1+\overline{P}_1 \varepsilon +   \overline{P}_2  \varepsilon^2) \pmod {\varepsilon^{3}}$$ for the generating series of the universal invariant of $L$ composed with the $g$-fold cross map and
 $$\Big[  \mu^{[g]} \circ B^{\otimeshat g} \Big]_{1, \ldots , 2g}^0 =e^{G}(T_0^g + P_1^{\alpha \neq 0}\varepsilon + P_2^{\alpha \neq 0} \varepsilon^2)  \pmod {\varepsilon^3}$$
 for the generating series of (part of) the thickening map as in \cref{prop gen ser Th with alpha neq 0}, so that $G_{| \alpha =0} =\frac{T_0-1}{h}\xi Q \eta$, and for the whole expression
  $$\Big[ ( \mu^{[g]} \circ B^{\otimeshat g})_{|\alpha =0} \Big]_{1, \ldots , 2g}^0 =e^{\frac{T_0-1}{h}\xi Q \eta}(T_0^g + P_1 \varepsilon + P_2 \varepsilon^2)  \pmod {\varepsilon^3}$$
  where $P_i = (P_i^{\alpha \neq 0})_{| \alpha=0}$. We then have
\begin{align*}
Z_\D (K)^0 &=  \begin{aligned}[t]
    &\left\langle  T_0^g  e^{y Vx\cdot h + G}   \right\rangle + \left\langle  (T_0^g \overline{P}_1 + P_1^{\alpha \neq 0} )  e^{y Vx\cdot h + G}   \right\rangle \varepsilon\\
    &+ \left\langle  (T_0^g \overline{P}_2 + P_1^{\alpha \neq 0} \overline{P}_1 + P_2^{\alpha \neq 0} )  e^{y Vx\cdot h + G}   \right\rangle \varepsilon^2
\end{aligned}\\
&= \begin{aligned}[t]
    &\left\langle  T_0^g  e^{y Vx\cdot h +\frac{T_0-1}{h}\xi Q \eta}   \right\rangle \\
    &+  \left(  \left\langle  T_0^g \overline{P}_1   e^{y Vx\cdot h + G}   \right\rangle + \left\langle    P_1   e^{y Vx\cdot h +\frac{T_0-1}{h}\xi Q \eta}  \right\rangle \right) \varepsilon\\
    &+ \left( \left\langle  (T_0^g \overline{P}_2 + P_1^{\alpha \neq 0} \overline{P}_1  )  e^{y Vx\cdot h + G}   \right\rangle  + \left\langle   P_2  e^{y Vx\cdot h + \frac{T_0-1}{h}\xi Q \eta}   \right\rangle
  \right) \varepsilon^2 \pmod {\varepsilon^{3}}
\end{aligned}
\end{align*}
where we put $\left\langle  -   \right\rangle= \left\langle  -   \right\rangle_{1, \ldots , 2g} $ and replaced $G$ and $P_i^{\alpha \neq 0}$ by their evaluations at $\alpha=0$ in those contractions where no $a_i$, $1\leq i \leq 2g$ appear. From this it follows that
\begin{equation}\label{eq coeff 1 eps}
     \mathrm{Coeff}_{ \mathbf{1} \varepsilon} (Z_\D(K)) = \left\langle  T_0^g \overline{P}_1   e^{y Vx\cdot h + G}   \right\rangle + \left\langle  \mathrm{Coeff}_{\mathbf{1}}(P_1)   e^{y Vx\cdot h +\frac{T_0-1}{h}\xi Q \eta}  \right\rangle  
\end{equation}
and
\begin{equation}\label{eq coeff a eps2}
    \mathrm{Coeff}_{ a_0 \varepsilon^2} (Z_\D(K)) = \left\langle   \mathrm{Coeff}_{a_0}(P_1^{\alpha \neq 0}) \overline{P}_1    e^{y Vx\cdot h + G}   \right\rangle  + \left\langle   \mathrm{Coeff}_{a_0}(P_2)  e^{y Vx\cdot h + \frac{T_0-1}{h}\xi Q \eta}   \right\rangle.
\end{equation}

On the other hand, \cref{lemma 77} implies that
 $$ -2 h T_0 \cdot \partial_{T_0} \Big(  \mathrm{Coeff}_{\mathbf{1}}(P_1)  e^{ \frac{T_0-1}{h}\xi Q \eta}  \Big) =  \mathrm{Coeff}_{a_0}(P_2) e^{ \frac{T_0-1}{h}\xi Q \eta} , $$ which in turn means that  
 $$ -2 h T_0 \cdot \partial_{T_0} \left\langle    \mathrm{Coeff}_{\mathbf{1}}(P_1)  e^{y Vx\cdot h+ \frac{T_0-1}{h}\xi Q \eta}  \right\rangle   =  \left\langle  \mathrm{Coeff}_{a_0}(P_2) e^{ y Vx\cdot h+\frac{T_0-1}{h}\xi Q \eta} \right\rangle $$ by the linearity of the pairing and the derivative.  Now multiplying by $\overline{P}_1 e^{ \frac{T_0-1}{h}\xi Q \eta}$ on both sides of  the first part of \cref{prop gen ser Th with alpha neq 0}  and contracting we get

 $$ -2 h T_0 \cdot \partial_{T_0} \left\langle T_0^g \overline{P}_1 e^{y Vx\cdot h+G} \right\rangle  =  \left\langle \mathrm{Coeff}_{a_0}(P_1^{\alpha \neq 0})\overline{P}_1 e^{y Vx\cdot h+G} \right\rangle  $$
 and adding the two last equations and comparing with \eqref{eq coeff 1 eps} and \eqref{eq coeff a eps2} we get the desired result.
\end{proof}

\begin{theorem}\label{thm rho 2 sth}
For any 0-framed knot $K$ we have
%
\vspace*{0.3cm}
\begin{enumerate}
\setlength\itemsep{0.7em}
\item $\displaystyle \rho_K^{1,1}(t)= \frac{2t}{1-t} \Delta_K ' (t),  $
\item $\displaystyle \rho_K^{2,1}(t)= \frac{2t \Delta_K^4(t)}{t-1} \left( \frac{\rho_K^{1,0}}{ \Delta_K^3}  \right) ' (t),  $
\item $\displaystyle \rho_K^{2,2}(t)= t \left( \frac{\Delta_K(t)}{1-t} \right)^3 \left( (3-t) (\Delta_K^{-1})'(t) + 2t(1-t) (\Delta_K^{-1})''(t)  \right).$
\end{enumerate}
\vspace*{0.1cm}
\end{theorem}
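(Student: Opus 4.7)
The strategy parallels Propositions \ref{prop Z_D for eps ^ 0 and 1} and \ref{prop coef a esp2 = -2hT coeff 1eps}. Writing $K = \widecheck{Th}(L)$ for a $2g$-component vertical bottom tangle $L$, I use $Z_\D(K) = Th(Z_\D(L)|_{t=0})$ together with the normal form of Theorem \ref{thm rhos existence}, expanded via Lemma \ref{lemma w mod eps2}, to isolate each $\rho_K^{i,j}$ as a specific coefficient of $Z_\D(K)$ in the ordering $\mathcal{O}$ at $y_0 = x_0 = 0$, and then identify the same coefficient on the thickening side. For (1), the normal form gives
\[
\mathrm{Coeff}_{a_0\varepsilon}(Z_\D(K))\big|_{y_0=x_0=0} \;=\; h(1-T_0)\,\frac{\rho_K^{1,1}(T_0)}{\Delta_K^2(T_0)},
\]
whereas mimicking the proof of Proposition \ref{prop coef a esp2 = -2hT coeff 1eps} one order of $\varepsilon$ lower yields
\[
\mathrm{Coeff}_{a_0\varepsilon}(Z_\D(K)) \;=\; -2hT_0\,\partial_{T_0}\!\left(\frac{1}{\Delta_K(T_0)}\right) \;=\; \frac{2hT_0\Delta_K'(T_0)}{\Delta_K^2(T_0)}.
\]
Equating gives (1) immediately.

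For (2) and (3), expansion via Lemma \ref{lemma w mod eps2} yields
\begin{align*}
\mathrm{Coeff}_{a_0\varepsilon^2}(Z_\D(K))\big|_{y_0=x_0=0} &\;=\; h^2\frac{(1+T_0)\rho_K^{1,1}}{2\Delta_K^2} + h^2\frac{(1-T_0)\rho_K^{2,1}}{\Delta_K^4} + h^2\frac{(1-T_0)^2\rho_K^{2,2}}{\Delta_K^3}, \\
\mathrm{Coeff}_{a_0^2\varepsilon^2}(Z_\D(K))\big|_{y_0=x_0=0} &\;=\; h^2\frac{(1+T_0)\rho_K^{1,1}}{2\Delta_K^2} + h^2\frac{(1-T_0)^2\rho_K^{2,2}}{\Delta_K^3},
\end{align*}
the second line losing the $\rho_K^{2,1}$ contribution because $[\bm{w}]_{\varepsilon^0}$ depends on $a$ only linearly. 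An $a_0^2$-analogue of Proposition \ref{prop coef a esp2 = -2hT coeff 1eps} (obtained by the same argument but tracking two powers of $\alpha_0$) identifies the second coefficient with $2h^2(T_0\partial_{T_0})^2(1/\Delta_K)$; substituting the formula for $\rho_K^{1,1}$ just obtained and solving for $\rho_K^{2,2}$ gives (3) after routine manipulation using $(\Delta_K^{-1})' = -\Delta_K'/\Delta_K^2$ and $(\Delta_K^{-1})'' = -\Delta_K''/\Delta_K^2 + 2(\Delta_K')^2/\Delta_K^3$. For (2), Proposition \ref{prop coef a esp2 = -2hT coeff 1eps} applied to the first coefficient, combined with $\mathrm{Coeff}_{\mathbf{1}\varepsilon}(Z_\D(K)) = h\rho_K^{1,0}/\Delta_K^3 + h(1-T_0)\rho_K^{1,1}/(2\Delta_K^2)$ and the now-known values of $\rho_K^{1,1}$ and $\rho_K^{2,2}$, causes all $\rho_K^{1,1}$ and $\rho_K^{2,2}$ terms to cancel between the two sides, leaving the single equation $h^2(1-T_0)\rho_K^{2,1}/\Delta_K^4 = -2h^2T_0\,\partial_{T_0}(\rho_K^{1,0}/\Delta_K^3)$, which rearranges to (2).

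The key obstacle is the second-order identity invoked above for (3), namely the $a_0^2$-analogue of Proposition \ref{prop coef a esp2 = -2hT coeff 1eps}. I expect its proof to proceed exactly as for that Proposition: expand both $Cr^{\otimeshat g}(Z_\D(L))|_{t=0}$ and $\mu^{[g]}\circ B^{\otimeshat g}$ modulo $\varepsilon^3$, apply the two-step contraction of Theorem \ref{thm contraction} to extract the $a_0^2\varepsilon^2$-coefficient, and use a second-order analogue of the key technical lemma from Section \ref{sect band map} which expresses the $\alpha_0^2$-component of the $\varepsilon^2$-perturbation of $\mu\circ B$ (with $\alpha\neq 0$), after pairing with $e^{(T_0-1)\xi Q\eta/h}$, in terms of $(T_0\partial_{T_0})^2$ applied to $T_0^g e^{(T_0-1)\xi Q\eta/h}$. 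The $\varepsilon^0\to\varepsilon^1$ shift required for (1) is a strictly simpler version of the same argument.
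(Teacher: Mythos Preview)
Your approach is correct and coincides with the paper's: extract the coefficients $\mathrm{Coeff}_{a_0\varepsilon}$, $\mathrm{Coeff}_{a_0^2\varepsilon^2}$, and $\mathrm{Coeff}_{a_0\varepsilon^2}$ of $Z_\D(K)$ (at $y_0=x_0=0$) from both the normal form via Lemma~\ref{lemma w mod eps2} and from the thickening side via the band-map identities of Section~\ref{sect band map}, then solve for $\rho_K^{1,1}$, $\rho_K^{2,2}$, $\rho_K^{2,1}$ in that order, using Proposition~\ref{prop coef a esp2 = -2hT coeff 1eps} for the last. One small sharpening: the $a_0^2\varepsilon^2$ step you call the ``key obstacle'' is in fact \emph{easier} than Proposition~\ref{prop coef a esp2 = -2hT coeff 1eps}, not a second-order analogue of it. By Corollary~\ref{cor Th has no a epsilon} the perturbation $P_1^{\alpha\neq 0}$ carries $a_0$ only linearly, so the sole contribution to $\mathrm{Coeff}_{a_0^2\varepsilon^2}$ comes from $P_2$ paired with the $\varepsilon^0$-part $e^{yVxh}$ of the tangle invariant; since that Gaussian is $a$-free, Lemma~\ref{lemma y_i=0} allows one to set $\alpha=0$ from the outset and invoke the second identity of Proposition~\ref{prop a a2 for mu circ B} directly, bypassing the $\alpha\neq 0$ analysis entirely.
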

\begin{proof}
(1) Let us denote by $\mathrm{Coeff}_{a_0 \varepsilon} (Z_\D(K))$ the coefficient in $a_0 \varepsilon$ (independent of $a_0$ and $\varepsilon$) of  $Z_\D(K)^0$, where $0$ denotes the only label, that is
$$ Z_\D(K)^0 = \frac{1}{\Delta_K (T_0)} + \Big(\mathrm{Coeff}_{a_0 \varepsilon}  (Z_\D(K)) a_0 +  (\text{terms indep. of $a_0$}) \Big)  \varepsilon \pmod {\varepsilon^2}.$$
Comparing this with \cref{thm rhos existence} and \cref{lemma w mod eps2} yields
\begin{equation}\label{eq rho11}
\mathrm{Coeff}_{a_0 \varepsilon} (Z_\D(K)) = h \frac{(1-T_0)\rho^{1,1}_K(T_0) }{\Delta_K^2 (T_0)} .
\end{equation}
Let $L$ be a vertical bottom tangle such that $K= \widecheck{Th}(L)$, so that 
$$Cr^{\otimeshat g} (Z_\D (L))_{|t=0} =  e^{ yVx \cdot h } \pmod {\varepsilon}$$
as in the proof of \cref{prop Z_D for eps ^ 0 and 1}, where $V$ is a Seifert matrix for $K$. Another application of \cref{cor Th L}, using the expression of $\mathrm{Coeff}_{a_0} (P_1)$ for the first perturbation term $P_1$ of $( \mu^{[g]} \circ B^{\otimeshat g})_{|\alpha =0}$ derived in   \cref{prop a a2 for mu circ B}, yields 
\begin{align*}
    \mathrm{Coeff}_{a_0 \varepsilon} (Z_\D(K)) &= \left\langle  \mathrm{Coeff}_{a_0} (P_1) e^{\frac{T_0-1}{h}\xi Q \eta}   e^{   yVx \cdot h  }   \right\rangle_{1, \ldots , 2g}\\
    &= \left\langle  -2 h T_0 \cdot \partial_{T_0} \left( T_0^g e^{\frac{T_0-1}{h}\xi Q \eta} \right)    e^{   yVx \cdot h  }   \right\rangle_{1, \ldots , 2g}\\
    &= -2 h T_0 \cdot \partial_{T_0}\left\langle   T_0^g e^{\frac{T_0-1}{h}\xi Q \eta + yVx \cdot h }     \right\rangle_{1, \ldots , 2g}\\
    &= -2 h T_0 \cdot \partial_{T_0} \left( \frac{1}{\Delta_K (T_0)}\right),
\end{align*}
where we have used the linearity of the pairing and the derivative and the fact that \eqref{eq 1/Alex contraction} equals the inverse of the Alexander polynomial as showed in  \cref{prop Z_D for eps ^ 0 and 1}. This together with \eqref{eq rho11} imply that 
\begin{align*}
    \rho^{1,1}_K(T_0)&= \frac{\mathrm{Coeff}_{a_0 \varepsilon} (Z_\D(K)) \cdot \Delta_K (T_0)}{h (1-T_0)}\\
    &=\frac{-2 h T_0 \cdot \partial_{T_0} \left( \frac{1}{\Delta_K (T_0)}\right) \Delta_K^2(T_0)}{h(1-T_0)}\\
    &= \frac{2T_0}{1-T_0} \partial_{T_0}\Delta_K (T_0).
\end{align*}

(3) Let us define now $\mathrm{Coeff}_{a_0^2\varepsilon^2} (Z_\D(K))$ in a similar way. Again combining \cref{thm rhos existence} and \cref{lemma w mod eps2} we obtain
\begin{equation}\label{eq rho22}
    \mathrm{Coeff}_{a_0^2\varepsilon^2} (Z_\D(K)) =  \frac{h^2 (1+T_0) \rho^{1,1}_K(T_0) }{2\Delta_K^2 (T_0)} + \frac{h^2 (1-T_0)^2 \rho^{2,2}_K(T_0)}{\Delta_K^3}.
\end{equation}
On the other hand, the term $\mathrm{Coeff}_{a_0^2\varepsilon^2} (Z_\D(K))$ can be obtained from \cref{cor Th L} using the second part of  \cref{prop a a2 for mu circ B} as we did before:
\begin{align*}
    \mathrm{Coeff}_{a_0^2 \varepsilon^2} (Z_\D(K)) &= \left\langle  \mathrm{Coeff}_{a_0^2} (P_2) e^{\frac{T_0-1}{h}\xi Q \eta}   e^{   yVx \cdot h  }   \right\rangle_{1, \ldots , 2g}\\
    &= \left\langle  2 h^2 T_0 \left[ (\partial_{T_0} + T_0 \cdot \partial_{T_0, T_0} ) \left( T_0^g e^{\frac{T_0-1}{h}\xi Q \eta} \right) \right]   e^{   yVx \cdot h  }   \right\rangle_{1, \ldots , 2g}\\
    &= 2 h^2 T_0 (\partial_{T_0} + T_0 \cdot \partial_{T_0, T_0} )\left\langle    T_0^g e^{\frac{T_0-1}{h}\xi Q \eta + yVx \cdot h }   \right\rangle_{1, \ldots , 2g}\\
    &= 2h^2 T_0 \cdot \partial_{T_0} \left( \frac{1}{\Delta_K (T_0)} \right) +  2h^2 T_0^2 \cdot \partial_{T_0, T_0} \left( \frac{1}{\Delta_K (T_0)} \right).
\end{align*}
From \eqref{eq rho22} and (1) we obtain
\begin{align*}
    \rho^{2,2}_K(T_0)&= \frac{\Delta_K^3 (T_0)}{(1-T_0)^2} \left( \frac{ \mathrm{Coeff}_{a_0^2 \varepsilon^2} (Z_\D(K))}{h^2}   -  \frac{T_0(1+T_0)  }{(1-T_0) } \partial_{T_0}\Delta_K^{-1}(T_0) \right)\\
    &= \begin{aligned}[t]
     \frac{\Delta_K^3 (T_0)}{(1-T_0)^2} [  2(1-T_0)T_0 \partial_{T_0}\Delta_K^{-1}(T_0)  +  2(1-T_0) &T_0^2 \partial_{T_0, T_0}\Delta_K^{-1}(T_0) \\ &+T_0(T_0 +1) \partial_{T_0}\Delta_K^{-1}(T_0)   ]
\end{aligned}\\
&=T_0 \frac{\Delta_K^3 (T_0)}{(1-T_0)^2} [  (3-T_0) \partial_{T_0}\Delta_K^{-1}(T_0) + 2T_0 (1-T_0) \partial_{T_0, T_0}\Delta_K^{-1}(T_0) ].
\end{align*}

(2) Once again \cref{thm rhos existence} and \cref{lemma w mod eps2} imply that
\begin{equation}\label{eq another one}
\mathrm{Coeff}_{a_0 \varepsilon^2} (Z_\D(K)) = 
 \frac{h^2 \rho_K^{1,1} (T_0) (1+T_0)^2 }{2\Delta_K^2(T_0) } + \frac{h^2 \rho_K^{2,1} (T_0) (1-T_0) }{\Delta_K^4(T_0)} + \frac{h^2 \rho_K^{2,2} (T_0) (1-T_0)^2}{\Delta_K^3(T_0)} 
\end{equation}
and 
$$  \mathrm{Coeff}_{\mathbf{1}} (Z_\D(K)) =  \frac{h \rho_K^{1,0} (T_0)  }{\Delta_K^3(T_0) } + \frac{h \rho_K^{1,1} (T_0) (1-T_0) }{2\Delta_K^2(T_0) } ,$$
and by (1) 
\begin{align*}
    \partial_{T_0} \mathrm{Coeff}_{\mathbf{1}} (Z_\D(K)) &= h \cdot \partial_{T_0} \left(   \frac{\rho_K^{1,0}   }{\Delta_K^3 } \right)(T_0) -h \cdot  \partial_{T_0} \left(   
 T_0 \cdot \partial_{T_0} \Delta_K^{-1}(T_0) \right) \\
 &= h \cdot \partial_{T_0} \left(   \frac{\rho_K^{1,0}   }{\Delta_K^3 } \right)(T_0) -h \left( \partial_{T_0} \Delta_K^{-1}(T_0) + T_0 \cdot \partial_{T_0, T_0} \Delta_K^{-1}(T_0)    \right).
\end{align*}
This expression together with \eqref{eq another one} yield
\begin{align*}
    \rho^{2,1}_K(T_0) &= \frac{\Delta_K^4}{h^2(1-T_0)}  \left(  \mathrm{Coeff}_{a_0 \varepsilon^2} (Z_\D(K)) -   \frac{h^2 \rho_K^{1,1} (T_0) (1+T_0)^2 }{2\Delta_K^2(T_0) } - \frac{h^2 \rho_K^{2,2} (T_0) (1-T_0)^2}{\Delta_K^3(T_0)} \right)\\
    &= \frac{\Delta_K^4}{(1-T_0)} \left[ 2T_0  \cdot \partial_{T_0} \Delta_K^{-1}(T_0)  + 2T_0^2\cdot \partial_{T_0, T_0} \Delta_K^{-1}(T_0) - 2T_0  \cdot \partial_{T_0} \left(   \frac{\rho_K^{1,0}   }{\Delta_K^3 } \right)(T_0)  - \frac{ \rho_K^{2,2} (T_0) (1-T_0)^2}{\Delta_K^3(T_0)} \right]\\
    &= \begin{aligned}[t]
 \frac{\Delta_K^4}{(1-T_0)} &\Bigg[ \left(  2T_0 - \frac{T_0 }{1-T_0}(3-T_0) + \frac{T_0 (1+T_0)}{1-T_0} \right)\cdot \partial_{T_0} \Delta_K^{-1}(T_0) \\ &+   \left( 2T_0^2 - \frac{T_0}{1-T_0} 2T_0 (1-T_0)  \right)\cdot \partial_{T_0, T_0} \Delta_K^{-1}(T_0)  - 2T_0  \cdot \partial_{T_0} \left(   \frac{\rho_K^{1,0}   }{\Delta_K^3 } \right)(T_0) \Bigg]   
\end{aligned}\\
    &=\frac{2T_0 \Delta_K^4 (T_0)}{T_0-1} \cdot \partial_{T_0} \left(   \frac{\rho_K^{1,0}   }{\Delta_K^3 } \right)(T_0)
\end{align*}
where we have used  (1) and \cref{prop coef a esp2 = -2hT coeff 1eps} in the second equality and (2) in the third equality.
\end{proof}

\subsection{Behaviour under connected sums}

We would also like study the behaviour of $Z_\D (K)$ with respect to connected sums. Recall that for the Alexander polynomial, for knots $K, K'$, we have that
\begin{equation}\label{eq alexander K + K'}
\Delta_{K \# K'}= \Delta_{K }\Delta_{ K'}.
\end{equation}

\begin{theorem}\label{thm connected sum}
Let $K, K'$ be 0-framed knots, and let $n \geq 0$ and $0 \leq r \leq n$ be integers. Then
$$
 \rho_{K \# K'}^{n,r} = \sum_{i=0}^n \sum_{j=\max (0,r+i-n)}^{\min (i,r)}  \rho_{K }^{i,j} \rho_{K' }^{n-i,r-j}  \Delta^{2(n-i)-(r-j)}_{K} \Delta^{2i-j}_{K'}.
$$
\end{theorem}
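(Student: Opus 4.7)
The plan is to reduce the statement to two facts about the universal invariant: its multiplicativity under connected sum, and the unique expansion in the normal form of Theorem~\ref{thm rhos existence}. The connected sum $K \# K'$ of long knots is (up to isotopy) the concatenation of the two long knots, and by construction the universal invariant multiplies the beads along the unique strand; hence
\begin{equation*}
Z_\D(K \# K') = Z_\D(K) \cdot Z_\D(K'),
\end{equation*}
where the product is taken inside $\mathcal{Z}(\D)$ (both factors are central, as $Z_\D$ of a knot always is). Since $\bm{t}$ and $\bm{w}$ commute, this product can be expanded naively.

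Next I would expand both sides of this equality using Theorem~\ref{thm rhos existence}. Writing
\begin{equation*}
Z_\D(K) Z_\D(K') = \sum_{i,i'} \sum_{j,j'} h^{(i+j)+(i'+j')}\, \frac{\rho_K^{i,j}(\bm{T})\, \rho_{K'}^{i',j'}(\bm{T})}{\Delta_K^{2i+1-j}(\bm{T})\, \Delta_{K'}^{2i'+1-j'}(\bm{T})}\, \bm{w}^{j+j'}\, \varepsilon^{i+i'}
\end{equation*}
and collecting terms via the substitution $n = i+i'$, $r = j+j'$, while using the multiplicativity \eqref{eq alexander K + K'} of the Alexander polynomial to rewrite $\Delta_{K\#K'}^{2n+1-r} = \Delta_K^{2n+1-r}\Delta_{K'}^{2n+1-r}$ on the left-hand side, an elementary exponent count gives
\begin{equation*}
\rho_{K\#K'}^{n,r}(\bm{T}) = \sum_{i,j} \rho_K^{i,j}(\bm{T})\, \rho_{K'}^{n-i,r-j}(\bm{T})\, \Delta_K^{2(n-i)-(r-j)}(\bm{T})\, \Delta_{K'}^{2i-j}(\bm{T})
\end{equation*}
at the level of coefficients of $h^{n+r}\bm{w}^r\varepsilon^n$. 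Finally, I would invoke Theorem~\ref{thm A} (i.e.\ $\rho^{i,j}=0$ whenever $j>i$) applied to both $K$ and $K'$ to cut the effective summation range down to $0\leq j \leq i$ and $0 \leq r-j \leq n-i$, which is exactly $\max(0,r+i-n) \leq j \leq \min(i,r)$, as claimed.

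The only non-trivial step is the comparison of coefficients. This requires knowing that the elements $\{h^{n+r} \Delta_K^{-(2n+1-r)}(\bm{T})\bm{w}^r \varepsilon^n\}_{n,r}$ appearing in Theorem~\ref{thm rhos existence} are linearly independent in a suitable sense, i.e.\ that the decomposition furnished by that theorem is unique; otherwise the matching of coefficients of $\bm{w}^r$ on both sides is not justified. This follows from Proposition~\ref{prop w}: the centre $\mathcal{Z}(\D)$ is topologically generated by $\bm{t}$ and $\bm{w}$, so after the change of variable $\bm{T}=e^{-h\bm{t}}$ each central element admits a unique expansion as a power series in $h,\varepsilon$ whose coefficients are polynomials in $\bm{T}^{\pm 1}$ and $\bm{w}$, and the normal form of Theorem~\ref{thm rhos existence} is then unique. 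Once this point is settled, the argument is purely bookkeeping.
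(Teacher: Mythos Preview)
Your proposal is correct and follows essentially the same approach as the paper: both start from $Z_\D(K\#K')=Z_\D(K)\,Z_\D(K')$, expand each factor using the normal form of Theorem~\ref{thm rhos existence}, match coefficients of $\bm{w}^r\varepsilon^n$, and use the multiplicativity of the Alexander polynomial together with the vanishing $\rho^{i,j}=0$ for $j>i$ to restrict the summation range. The only difference is cosmetic: you are more explicit than the paper about why the comparison of coefficients is legitimate (uniqueness of the expansion via Proposition~\ref{prop w}), which is a nice touch.
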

\begin{proof}
First observe that, by definition, we have that $$Z_\D (K \# K') = Z_\D (K ) Z_\D ( K').$$ Now, by \cref{thm rhos existence},  the coefficient multiplying $\bm{w}^r \varepsilon^n$ in the left-hand side of the equation must be $$\frac{ \rho_{K \# K'}^{n,r} }{ \Delta_{K \# K'}^{2n+1-r} } = \frac{ \rho_{K \# K'}^{n,r} }{ \Delta_{K}^{2n+1-r} \Delta_{K'}^{2n+1-r}}, $$ whereas such a coefficient for the right-hand side must be $$ \sum_{i+u=n} \sum_{\substack{j+v=r\\ j \leq i \ , \ v \leq u}}  \frac{ \rho_{K }^{i,j} }{\Delta^{2i+1-j}_{K}} \frac{ \rho_{K' }^{u,v} }{\Delta^{2u+1-v}_{K'}} .$$
Here the $\rho^{i,j}$ and the Alexander polynomials are evaluated at $\bm{T}$ and we have suppressed $h$ by convenience. Setting $u:= n-i$ and $v:= r-j$, we must have $j \geq r+i-n$ in the second summation.  Rearranging terms we conclude the proof.
\end{proof}

\begin{remark}
Note that the argument of the previous theorem reproves \eqref{eq alexander K + K'}.
\end{remark}

\begin{corollary}
Let $K, K'$ be 0-framed knots and  $n \geq 0$. Then
$$
 \rho_{K \# K'}^{n,0} = \sum_{i=0}^n  \rho_{K }^{i,0} \rho_{K' }^{n-i,0} \Delta^{2(n-i)}_{K} \Delta^{2i}_{K'}.
$$
\end{corollary}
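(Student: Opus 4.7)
The plan is to derive this corollary as a direct specialisation of \cref{thm connected sum} to the case $r=0$.

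Setting $r=0$ in the formula of \cref{thm connected sum}, the index $j$ in the inner summation ranges over $\max(0, i-n) \leq j \leq \min(i, 0)$. Since we assume $0 \leq i \leq n$, the upper bound reads $j=0$ and the lower bound reads $j=0$, so the inner summation collapses to the single term $j=0$. Substituting $j=0$ and $r-j=0$ in the general expression yields
\begin{equation*}
\rho_{K \# K'}^{n,0} = \sum_{i=0}^n \rho_{K}^{i,0}\,\rho_{K'}^{n-i,0}\,\Delta^{2(n-i)}_{K}\,\Delta^{2i}_{K'},
\end{equation*}
which is exactly the claimed identity.

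I expect no obstacles: the corollary is a purely combinatorial consequence of restricting the range of $j$ imposed by the constraints $j \leq r$ and $j \geq r+i-n$ once one sets $r=0$. The only sanity check needed is that the Alexander-polynomial exponents in \cref{thm connected sum} specialise correctly, namely $2(n-i)-(r-j) = 2(n-i)$ and $2i-j = 2i$ for $j=r=0$, which is immediate.
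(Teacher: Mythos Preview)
Your proof is correct and is exactly the intended argument: the paper states this corollary immediately after \cref{thm connected sum} without a separate proof, precisely because it is the $r=0$ specialisation you carry out.
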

\begin{remark}
  The case $n=1$ was already showed in \cite{quarles}, but the argument used there involves a cumbersome, lengthy computation.  
\end{remark}

For convenience, let us write down explicitly the first values of  $\rho_{K \# K'}^{n,r} $:

\vspace*{0.5em}
\begin{enumerate}
\itemsep0.5em 
\item $\displaystyle   \rho_{K \# K'}^{1,0} = \rho_{K }^{1,0} \Delta^2_{K'}+ \rho_{ K'}^{1,0}\Delta^2_{K},$
\item $\displaystyle   \rho_{K \# K'}^{1,1} = \rho_{K }^{1,1} \Delta_{K'}+ \rho_{ K'}^{1,1}\Delta_{K},$
\item $\displaystyle  \rho_{K \# K'}^{2,0} = \rho_{K }^{2,0} \Delta^4_{K'}+ \rho_{ K'}^{2,0}\Delta^4_{K} + \rho_{K }^{1,0} \rho_{K' }^{1,0}\Delta^2_{K}\Delta^2_{K'}, $
\item $\displaystyle  \rho_{K \# K'}^{2,1} = \rho_{K }^{2,1} \Delta^3_{K'}+ \rho_{ K'}^{2,1}\Delta^3_{K} + \rho_{K }^{1,1} \rho_{K' }^{1,0}\Delta^2_{K}\Delta_{K'}+\rho_{K' }^{1,1} \rho_{K }^{1,0}\Delta^2_{K'}\Delta_{K},$
\item $\displaystyle  \rho_{K \# K'}^{2,2} = \rho_{K }^{2,2} \Delta^2_{K'}+ \rho_{ K'}^{2,2}\Delta^2_{K'} + \rho_{K }^{1,1} \rho_{K' }^{1,1}\Delta_{K}\Delta_{K'}.$
\end{enumerate}

\section{Properties of the band map} \label{sect band map}

In this section we derive properties of the generating series of the  band map that we use several times in  \cref{sect ZD}. For some of these results, which involve a large number of monomials, we have performed the computation with the help of a Mathematica code that we include in \cref{sect appendix}.

\begin{proposition}\label{prop exp band map} We have
$$(B_{|\alpha =0})_{ij}^k = T_k \exp \left[   \frac{T_k-1}{h}  \cdot \begin{pmatrix} \xi_i & \xi_j  \end{pmatrix}  \begin{pmatrix} 0 & 1 \\-1 & 0 \end{pmatrix}    \begin{pmatrix} \eta_i  \\ \eta_j  \end{pmatrix} \right]  \pmod \varepsilon $$
\end{proposition}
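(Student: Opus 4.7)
My plan is to compute the generating series of $B|_{\alpha=0}$ mod $\varepsilon$ directly, via the Contraction Theorem \ref{thm contraction} applied to the explicit generating series of Theorem \ref{thm DoPeGDO}. First, I would simplify the defining formula of $B$ modulo $\varepsilon$: since $\bm{t}$ is central in $\D$ and $\bm{A} = e^{-h\varepsilon \bm{a}} \equiv 1 \pmod \varepsilon$, the balancing element $\kappa = \sqrt{\bm{A}^2 \bm{T}} \equiv \bm{T}^{1/2} \pmod \varepsilon$ is itself central mod $\varepsilon$. Hence
$$B(u \otimeshat v) = \sum u_{(2)} S(v_{(1)}) \kappa S(u_{(1)}) \kappa v_{(2)} \equiv \bm{T} \cdot \sum u_{(2)} S(v_{(1)}) S(u_{(1)}) v_{(2)} \pmod \varepsilon,$$
which already accounts for the overall factor $T_k$ on the right-hand side of the proposition.

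Next, I would decompose the auxiliary map $\tilde B(u \otimeshat v) := \sum u_{(2)} S(v_{(1)}) S(u_{(1)}) v_{(2)}$ as the composite $\mu^{[4]} \circ (\id \otimeshat S \otimeshat S \otimeshat \id) \circ \tau_{(123)} \circ (\Delta \otimeshat \Delta)$ and compute its generating series by successive applications of the Contraction Theorem \ref{thm contraction}. Because we restrict to $\alpha_i = \alpha_j = 0$ by hypothesis and, by Proposition \ref{prop B tau free}, may also take the input $\tau$-free, and because mod $\varepsilon$ all $\bm{A}^{\pm 1}$ reduce to $1$, the relevant generating series from Theorem \ref{thm DoPeGDO} simplify substantially.

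The heart of the calculation is then the sequence of Gaussian contractions, which pair off all intermediate variables at the four interior indices. The asymmetry between $\Delta(\bm{y}) \equiv 1 \otimeshat \bm{y} + \bm{y} \otimeshat \bm{T}$ and $\Delta(\bm{x}) \equiv 1 \otimeshat \bm{x} + \bm{x} \otimeshat 1 \pmod \varepsilon$, combined with the commutator term $\tfrac{1-T_k}{h} \eta_j \xi_i$ in the generating series of $\mu$, is what produces the antisymmetric combination $\xi_i \eta_j - \xi_j \eta_i$ in the final exponent. All $y_k, a_k, x_k$-dependence cancels (morally by the Hopf algebra axiom $\mu(S \otimeshat \id)\Delta = \eta \epsilon$ together with the centrality of $\kappa^2$), leaving an expression purely in $T_k^{\pm 1}, \eta_i, \xi_i, \eta_j, \xi_j$.

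The main obstacle will be the bookkeeping through the nested contractions. A useful sanity check---and a potentially cleaner alternative route via verification on a determining family of test inputs---is that the claimed formula gives the correct values on low-degree elements: $B(1 \otimeshat 1) \equiv T_k$, $B(\bm{x}_i \otimeshat \bm{y}_j) \equiv T_k \tfrac{T_k-1}{h}$, $B(\bm{y}_i \otimeshat \bm{x}_j) \equiv -T_k \tfrac{T_k-1}{h}$, and $B(\bm{y}_i \otimeshat 1) \equiv 0$, all directly verifiable from $[\bm{x}, \bm{y}] \equiv \tfrac{1-\bm{T}}{h} \pmod \varepsilon$ and the simplified Hopf structure on the $\bm{a}$-free subalgebra.
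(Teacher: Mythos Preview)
Your approach is correct and is in fact the alternative the paper explicitly acknowledges at the start of its own proof: ``One way to show this is by letting the computer perform the calculation by applying the contraction formula seven times.'' The paper then deliberately takes a different route, computing $B(\bm{y}^r\bm{x}^s \otimeshat \bm{y}^p\bm{x}^q)$ directly on ordered monomials via explicit formulas for $\Delta$, $S$, and the commutation $\bm{x}^n\bm{y}^m = \sum_i \binom{n}{i}\binom{m}{i} i!\, \bm{\lambda}^i \bm{y}^{m-i}\bm{x}^{n-i}$ mod~$\varepsilon$, and then reducing a sixfold sum to the claimed closed form through a chain of binomial identities (equations \eqref{eq binomial 1}--\eqref{eq binomial 3}). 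Your extraction of the overall $T_k$ from $\kappa^2 \equiv \bm{T}$ mod~$\varepsilon$ matches the paper's first step; thereafter the paper does combinatorics while you propose Gaussian contraction.

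What each buys: your route is the one the whole Gaussian-calculus machinery is designed for, and with the simplifications you note ($\mathcal{A}\equiv 1$, $\tau$-freeness, $\alpha=0$) the seven contractions are genuinely tractable by hand, not just by computer. The paper's route is more self-contained---it never invokes Theorem~\ref{thm contraction}---and makes the vanishing for $(r,s)\neq(q,p)$ and the precise constant $r!s!(-1)^s$ visible as binomial cancellations, at the cost of a much longer argument. Your sanity checks on $B(1\otimeshat 1)$, $B(\bm{x}\otimeshat\bm{y})$, etc.\ are exactly the low-degree cases of the paper's formula \eqref{eq B ( yr xs yp xq ) }.
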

\begin{proof}
One way to show this is by  letting the computer perform the calculation by applying the contraction formula seven times. However, for the sake of exposition, we would like to include an explicit proof here.

Since $B$ is $\tau$-free by \cref{prop B tau free} and we are setting $\alpha =0$ for $B$, all monomials with $\bm{t}$ or $\bm{a}$ are mapped to zero and hence by \cref{lemma extension hat} $(B_{|\alpha =0})_{ij}^k$ equals
\begin{align}\label{eq yr xs yp xq 1}
\begin{split}
(\widehat{(\mathcal{O}^{\otimeshat 2})^{-1}} \circ \widehat{B} &\circ \widehat{\mathcal{O}} ) (e^{y_i \eta_i +  x_i \xi_i +y_j \eta_j +  x_j \xi_j  })\\ &= (\widehat{(\mathcal{O}^{\otimeshat 2})^{-1}} \circ \widehat{B}) \left( \sum_{r,s,p,q \geq 0} \frac{1}{r! s! p! q!} \bm{y}^r \bm{x}^s \otimeshat \bm{y}^p \bm{x}^q \eta_i^r \xi_i^s \eta_j^p x_j^q  \right)
\end{split}
\end{align}
Now the key observation is that it suffices to show that, modulo $\varepsilon$,
\begin{equation}\label{eq B ( yr xs yp xq ) }
B( \bm{y}^r \bm{x}^s \otimeshat \bm{y}^p \bm{x}^q ) = \begin{cases} \bm{T} \left( \frac{1-\bm{T}}{h} \right)^{r+s} r!s! (-1)^s, & \text{if } r=q \text{ and } s=p,\\ 0 & \text{else.} \end{cases}
\end{equation}
For \eqref{eq yr xs yp xq 1} becomes
\begin{align*}
\widehat{(\mathcal{O}^{\otimeshat 2})^{-1}}  &\left( \sum_{r,s \geq 0} \frac{1}{r! s! s! r!} \bm{T} \left( \frac{1-\bm{T}}{h} \right)^{r+s} r!s! (-1)^s \eta_i^r \xi_i^s \eta_j^p x_j^q    \right)\\ 
&= \widehat{(\mathcal{O}^{\otimeshat 2})^{-1}}   \left( \bm{T}  \sum_{r,s \geq 0} \frac{1}{r! s!}  \left( \frac{1-\bm{T}}{h} \right)^{r+s} (\eta_i \xi_j)^r (-\eta_j \xi_i)^s \right)\\ 
&= \widehat{(\mathcal{O}^{\otimeshat 2})^{-1}}   \left( \bm{T}  \sum_{r,s \geq 0} \frac{1}{r! s!}  \left( \frac{\bm{T}-1}{h} \right)^{r+s} (-\eta_i \xi_j)^r (\eta_j \xi_i)^s \right)\\
&= T_k \sum_{r,s \geq 0} \left( \frac{1}{r! s!}   \left( \frac{T_k -1}{h} \right)^{r+s} (-\eta_i \xi_j)^r (\eta_j \xi_i)^s \right) \pmod \varepsilon
\end{align*}
which equals the expression in the statement.

In order to show \eqref{eq B ( yr xs yp xq ) }, we start by noting the following equalities for the elementary monomials $\bm{y}^n \bm{x}^m$, which are routine calculations to show:
\begin{align*}
\Delta (\bm{y}^n \bm{x}^m) &= \sum_{i=0}^n \sum_{j=0}^m {n \choose i} {m \choose j} \bm{y}^i \bm{x}^j \otimeshat \bm{T}^i \bm{y}^{n-i} \bm{x}^{m-j} \pmod \varepsilon,\\
S(\bm{y}^n \bm{x}^m)  &= (-1)^{n+m} \bm{T}^{-n} \bm{x}^m \bm{y}^n \pmod \varepsilon,\\
\bm{x}^n \bm{y}^m &= \sum_{i=0}^m {n \choose i} {m \choose i} i! \bm{\lambda}^i \bm{y}^{m-i}  \bm{x}^{n-i} \pmod \varepsilon,
\end{align*}
where we put $\bm{\lambda} :=  \frac{1-\bm{T}}{h} $. These expressions imply that
\begin{align*}
(S \otimeshat &\id \otimeshat S \otimeshat\id ) (\Delta \otimeshat \Delta)(\bm{y}^r \bm{x}^s \otimeshat \bm{y}^p \bm{x}^q)\\
&= \sum_{i=0}^r \sum_{j=0}^s \sum_{u=0}^p \sum_{v=0}^q {r \choose i} {s \choose j} {p \choose u} {q \choose v} (-1)^{i+j+u+v} \bm{T}^{-i} \bm{x}^{j} \bm{y}^{i} \otimeshat \bm{T}^{i} \bm{y}^{r-i} \bm{x}^{s-j} \\ &\phantom{jajajajajajjajajajajajjajajaj} \otimeshat \bm{T}^{-u} \bm{x}^{v} \bm{y}^{u} \otimeshat \bm{T}^{u} \bm{y}^{p-u} \bm{x}^{q-v} \pmod \varepsilon
\end{align*}
and therefore $B(\bm{y}^r \bm{x}^s \otimeshat \bm{y}^p \bm{x}^q)$ equals
\begin{align*}
\bm{T} \sum_{i=0}^r \sum_{j=0}^s \sum_{u=0}^p \sum_{v=0}^q {r \choose i} {s \choose j} {p \choose u} {q \choose v} (-1)^{i+j+u+v} \bm{y}^{r-i} \bm{x}^{s-j+v}\bm{y}^{u} \bm{x}^{j} \bm{y}^{p-u+i} \bm{x}^{q-v}
\end{align*}
modulo $\varepsilon$. Moreover we have
\begin{align*}
\bm{y}^{r-i} \bm{x}^{s-j+v}\bm{y}^{u} &\bm{x}^{j} \bm{y}^{p-u+i} \bm{x}^{q-v} \\ &= \sum_{k=0}^u {s-j+v \choose k} {u \choose k} k! \bm{\lambda}^{k} \bm{y}^{r+u-i-k} \bm{x}^{s+v-k} \bm{y}^{p+i-u} \bm{x}^{q-v} \\
&= \sum_{k=0}^u \sum_{\ell =0}^{p+i-u} {s-j+v \choose k} {u \choose k} {s+v-k \choose\ell} {p-u + i \choose \ell} k! \ell ! \bm{\lambda}^{k+ \ell } \cdot \\ &\hspace*{6cm} \bm{y}^{(r+p)-(k+ \ell)} \bm{x}^{(s+q)-(k+ \ell)}.
\end{align*}
Putting all together and permuting summations, we have
\begin{align}\label{eq B ( yr xs yp xq ) 2}
\begin{split}
B(\bm{y}^r \bm{x}^s \otimeshat \bm{y}^p \bm{x}^q) = &\bm{T}  \sum_{k=0}^p \sum_{\ell = 0}^{p+r-k} k! \ell ! \bm{\lambda}^{k+ \ell}  \bm{y}^{(r+p)-(k+ \ell)} \bm{x}^{(s+q)-(k+ \ell)} \cdot \\ &\sum_{v=0}^q (-1)^v {q \choose v}{s+v-k \choose\ell} \sum_{j=0}^s (-1)^j {s \choose j} {s  +v-j \choose k} \cdot \\  &\hspace*{-.6cm} \sum_{u=k}^{\min (p,p+r-\ell)} (-1)^u  {p \choose u} {u \choose k} \sum_{i= \max (\ell + u-p,0)}^r  (-1)^i{r \choose i} {p-u+i \choose \ell}.
\end{split}
\end{align}
Now we claim that for every $0 \leq k \leq p$, the rest of the sum is zero for $\ell < p+r-k$. To this end, it suffices to see that  $$\sum_{u=k}^{\min (p,p+r-\ell)} (-1)^u  {p \choose u} {u \choose k} \sum_{i= \max (\ell + u-p,0)}^r  {r \choose i} {p-u+i \choose \ell}=0$$ for $\ell < p+r-k$. To begin with, note that  $ \max (\ell + u-p,0)$ can be replace by $0$ as if $0 \leq i < \ell + u - p$ then ${p-u+i \choose \ell}=0$. Now if $N,M,R \geq 0$ are non-negative integers then the following  equality, which can be showed by elementary methods, holds:
\begin{equation}\label{eq binomial 1}
\sum_{i=0}^N (-1)^i {N \choose i} {R+i \choose M} = (-1)^N {R \choose M-N},
\end{equation}
where ${R \choose M-N}:=0$ if $M<N$. In particular for $R=0$ it yields
\begin{equation}\label{eq binomial 2}
\sum_{i=0}^N (-1)^i {N \choose i} {i \choose M} = (-1)^N \delta_{N,M}
\end{equation}
with $\delta_{N,M}$ the Kroneker delta. Using these formulas we finally compute
\begin{align*}
\sum_{u=k}^{\min (p,p+r-\ell)} (-1)^u  {p \choose u} {u \choose k} &\sum_{i= 0}^r  {r \choose i} {p-u+i \choose \ell} \\ &= (-1)^r \sum_{u=0}^{p-(\ell - r)} (-1)^u  {p \choose u} {u \choose k} {p-u \choose \ell - r} \\ &=  (-1)^r {p \choose \ell - r} \sum_{u=0}^{p-(\ell - r)} (-1)^u  {u \choose k} {p - (\ell -r) \choose u} \\  &= (-1)^r {p \choose \ell - r} \delta_{p-(\ell - r), k} \\ &= 0
\end{align*}
where we have assumed that $\ell >r$ (otherwise the sum is automatically zero by \eqref{eq binomial 1}). This shows the claim.

Therefore, we can put $\ell = p+r-k$ in \eqref{eq B ( yr xs yp xq ) 2}, so that this is rewritten as 
\begin{align}\label{eq B ( yr xs yp xq ) 3}
\begin{split}
B(\bm{y}^r \bm{x}^s \otimeshat \bm{y}^p \bm{x}^q) =  &\bm{T} \bm{\lambda}^{p+r}   \bm{x}^{(s+q)-(p+r)} \sum_{k=0}^p (-1)^{k+r}  k! (p+r-k) ! {p \choose k}  \cdot \\ &\sum_{v=0}^q (-1)^v {q \choose v}{s+v-k \choose p+r-k} \sum_{j=0}^s (-1)^j {s \choose j} {s  +v-j \choose k}.
\end{split}
\end{align}

Next we claim that $$\sum_{v=0}^q (-1)^v {q \choose v}{s+v-k \choose p+r-k} \sum_{j=0}^s (-1)^j {s \choose j} {s  +v-j \choose k}$$ equals $(-1)^q {p+r-s \choose k-s} \delta_{p+r,q+s}.$ For this, we will use the following expression which easily follows from \eqref{eq binomial 1}: if $N,M,R \geq 0$ are non-negative integers and $R \geq N$, then we have
\begin{equation}\label{eq binomial 3}
\sum_{i=0}^N (-1)^i {N \choose i} {R-i \choose M} =  {R-N \choose M-N}.
\end{equation}
 Using this we compute
\begin{align*}
\sum_{v=0}^q (-1)^v {q \choose v}{s+v-k \choose p+r-k} &\sum_{j=0}^s (-1)^j {s \choose j} {s  +v-j \choose k}\\
&=  \sum_{v=0}^q (-1)^v {q \choose v}{s+v-k \choose p+r-k} {v \choose k-s} \\ &= {p + r-s \choose k-s}  \sum_{v=0}^q (-1)^v {q \choose v} {v \choose p+r-s} \\ &= (-1)^q {p + r-s \choose k-s}  \delta_{p+r-s,q}
\end{align*}
which demonstrates the claim.

This shows that $B(\bm{y}^r \bm{x}^s \otimeshat \bm{y}^p \bm{x}^q) = 0$ if $p+r \neq q+s$ and hence  \eqref{eq B ( yr xs yp xq ) 3} is rewritten as
$$
B(\bm{y}^r \bm{x}^s \otimeshat \bm{y}^p \bm{x}^q) = \bm{T}  \bm{\lambda}^{p+r} (-1)^{r+q} \delta_{p+r,q+s} \sum_{k=s}^p (-1)^{k}  k! (p+r-k) ! {p \choose k} {p + r-s \choose k-s} 
$$
and noting that
\begin{align*}
\sum_{k=s}^p (-1)^{k}  k! (p+ &r -k) ! {p \choose k} {p + r-s \choose k-s} \\
&= (-1)^s \sum_{\widetilde{k}=0}^{p-s} (-1)^{\widetilde{k}} {p \choose \widetilde{k} +s } {p+r-s \choose \widetilde{k}} (\widetilde{k} +s)! (p+r-s-\widetilde{k})! \\
&= (-1)^s \frac{p! (p+r-s)!}{(p-s)!} \sum_{\widetilde{k}=0}^{p-s}   (-1)^{\widetilde{k}} {p-s \choose \widetilde{k}}\\
&= (-1)^s \frac{p! (p+r-s)!}{(p-s)!} \delta_{p,s}\\
&= (-1)^s p! s! \delta_{p,s}
\end{align*}
we conclude that
\begin{align*}
B(\bm{y}^r \bm{x}^s \otimeshat \bm{y}^p \bm{x}^q) &= \bm{T} \bm{\lambda}^{p+r} (-1)^{r+q+s}   p! r! \delta_{p,s}\delta_{p+r,q+s} \\ &= \bm{T} \bm{\lambda}^{p+r}  (-1)^{s} r!s!  \delta_{p,s} \delta_{q,r}
\end{align*}
as claimed in \eqref{eq B ( yr xs yp xq ) }.
\end{proof}

\begin{corollary}\label{cor gen funct mu circ B}
If $g\geq 1$ then
$$\Big[ ( \mu^{[g]} \circ B^{\otimeshat g})_{|\alpha =0} \Big]_{1, \ldots , 2g}^0 = T_0^g \exp \left[ \frac{T_0-1}{h} \cdot \xi Q \eta \right]  \pmod \varepsilon $$
where $Q$ is the intersection form matrix from \eqref{eq V seifert surface}, $\xi= (\xi_1, \ldots , \xi_{2g})$ and $\eta= (\eta_1, \ldots , \eta_{2g})^T$.
\end{corollary}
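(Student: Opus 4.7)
The plan rests on the observation that the generating series produced by Proposition 6.1 contains no $y_k$, $a_k$, or $x_k$: the image of $B_{|\alpha=0}$ lies in the $\Qeh$-subalgebra of $\D$ generated by $\bm{t}$, which is central. Tensoring $g$ copies gives immediately
$$\bigl[B^{\otimeshat g}_{|\alpha=0}\bigr]_{1,\ldots,2g}^{k_1,\ldots,k_g}=\prod_{i=1}^g T_{k_i}\exp\!\left[\frac{T_{k_i}-1}{h}\bigl(\xi_{2i-1}\eta_{2i}-\xi_{2i}\eta_{2i-1}\bigr)\right]\pmod{\varepsilon}.$$

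Because this expression has no $y_{k_i}$, $a_{k_i}$, or $x_{k_i}$, \cref{lemma y_i=0} allows one to replace the generating series of $\mu^{[g]}$ in the composite by its evaluation at $\alpha_{k_i}=\eta_{k_i}=\xi_{k_i}=0$. From the formula for $\mu_{ij}^k\pmod{\varepsilon}$ in \cref{thm DoPeGDO}, this substitution collapses the two-fold multiplication into $\exp[t_k(\tau_i+\tau_j)]$; iterating yields
$$\bigl[\mu^{[g]}_{\,|\alpha=\eta=\xi=0}\bigr]_{k_1,\ldots,k_g}^{0}=\exp\!\left[t_0\sum_{i=1}^g\tau_{k_i}\right]\pmod{\varepsilon}.$$

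It remains to perform the contraction. Since no variables on different strands interact, it factors as a product of $g$ independent pairings over the pairs $(t_{k_i},\tau_{k_i})$, each of the form $\langle \Phi(t_{k_i})\,e^{t_0\tau_{k_i}}\rangle_{t_{k_i},\tau_{k_i}}$ with $\Phi$ a power series in $t_{k_i}$ (entering through $T_{k_i}=e^{-ht_{k_i}}$). A direct Taylor expansion using $\langle t^n\tau^m\rangle=n!\,\delta_{nm}$—equivalently, an application of \cref{thm contraction} with $u=0$, $v=t_0$ and $W=0$—shows that this pairing amounts to the substitution $t_{k_i}\mapsto t_0$. Each $T_{k_i}$ is consequently replaced by $T_0$, and collecting the $g$ factors produces
$$T_0^g\exp\!\left[\frac{T_0-1}{h}\sum_{i=1}^g\bigl(\xi_{2i-1}\eta_{2i}-\xi_{2i}\eta_{2i-1}\bigr)\right]=T_0^g\exp\!\left[\frac{T_0-1}{h}\,\xi Q\eta\right],$$
which is the claimed identity. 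The only step requiring any care is the identification of the $(t,\tau)$-pairing against $e^{t_0\tau}$ with the substitution $t\mapsto t_0$; everything else is a formal consequence of the centrality of $\bm{t}$ and Proposition 6.1.
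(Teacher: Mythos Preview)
Your proof is correct and follows essentially the same strategy as the paper: use Proposition~6.1 to see that $(B_{|\alpha=0})^{\otimeshat g}\pmod\varepsilon$ produces only the variable $t$ at each output index, invoke \cref{lemma y_i=0} to strip $\mu^{[g]}$ down accordingly, and observe that what remains is mere relabelling $t_{k_i}\mapsto t_0$. Your version is in fact a bit more precise than the paper's---you set $\alpha=\eta=\xi=0$ in $\mu^{[g]}$ and carry out the $(t,\tau)$-contraction explicitly, whereas the paper's shorthand ``$\mu^{[g]}_{|\alpha=t=0}$ is the commutative multiplication'' is slightly loose (taken literally it would kill the $T_0$).
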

\begin{proof}
We have $$( \mu^{[g]} \circ B^{\otimeshat g})_{|\alpha =0} =  \mu^{[g]} \circ B^{\otimeshat g}_{|\alpha =0} = \mu^{[g]}_{| \alpha = t =0} \circ (B_{|\alpha =0})^{\otimeshat g}  \pmod \varepsilon ,$$ where the first equality is consequence of \cref{lemma z_i=0} and the second of \cref{lemma y_i=0} given that that $B_{| \alpha =0} \pmod \varepsilon$ is both $\tau$ and $a$-free. Since $\mu^{[g]}_{| \alpha = t =0} \pmod \varepsilon$ is the $g$-fold commutative multiplication, the claim follows.
\end{proof}

We will also need to know some pieces of the perturbation part of the band map. The following result can be showed by a repeated application of the Contraction \cref{thm contraction}. We check this by computer in \cref{sect appendix}.

\begin{lemma}\label{lemma P1 P2 B}
Write
$$(B_{|\alpha =0})_{ij}^k =  \exp \left[   \frac{T_k-1}{h}  \cdot \xi Q \eta \right] (T_k + P_1 \varepsilon + P_2 \varepsilon^2)  \pmod {\varepsilon^3} $$  for the generating series of the band map, and put $\mathrm{Coeff}_{a_k}(P_1)$ for the term (independent of $a_k$)   that multiplies $a_k$ in the expression of $P_1$. Then $$\mathrm{Coeff}_{a_k}(P_1) = -2T_k (h+T_k (\xi_i \eta_j - \xi_j \eta_i)).$$ If $\mathrm{Coeff}_{a_k^2}(P_2)$ is defined analogously for $a_k^2$ and $P_2$, then $$\mathrm{Coeff}_{a_k^2}(P_2) = 2h^2 T_k + 6h T_k^2(\xi_i \eta_j - \xi_j \eta_i) + 2T_k^3(\xi_i \eta_j - \xi_j \eta_i)^2. $$
\end{lemma}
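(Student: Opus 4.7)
The strategy is to write the band map as a composition of structure maps of $\D$ and then compose the corresponding perturbed two-step Gaussian generating series supplied by Theorem \ref{thm DoPeGDO} using iterated applications of the Contraction Theorem \ref{thm contraction}. The exponential prefactor and the $\varepsilon^0$ contribution $T_k$ are already determined in Proposition \ref{prop exp band map}, so only the perturbation $P_1 + P_2\varepsilon$ mod $\varepsilon^3$ requires attention, and within that only the monomials containing $a_k$ or $a_k^2$.

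My plan is first to Taylor expand $\bm{A}_i^{\pm 1} = 1 \mp h\varepsilon \bm{a}_i + \tfrac{1}{2} h^2\varepsilon^2 \bm{a}_i^2 + O(\varepsilon^3)$ (and similarly for the factors of $\bm{\kappa}$) wherever they appear in the generating series of $\Delta$, $S$, $\mu$ and $\kappa$ from Theorem \ref{thm DoPeGDO}, producing for each structure map an explicit perturbed Gaussian of the shape $e^{G}(P_0^{(\mathrm{str})} + P_1^{(\mathrm{str})}\varepsilon + P_2^{(\mathrm{str})}\varepsilon^2)$ with polynomial $P_i^{(\mathrm{str})}$. Second, I would use the decomposition of the band map displayed in the proof of the thickening diagram \eqref{eq comm diagram thickening} and apply the Contraction Theorem stage by stage, contracting in turn against the intermediate auxiliary variables. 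Because $B$ is $\tau$-free (Proposition \ref{prop B tau free}) and the input is $\alpha$-free, a large proportion of the intermediate summands vanish immediately, which helps to organise the bookkeeping.

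The main obstacle is simply the sheer volume of terms produced by the iterated contractions needed to assemble $B$: each stage generates new cross-monomials in the Gaussian variables and in $h, \varepsilon$, and even after discarding those that drop out by the freeness properties above, the number of surviving summands contributing to $\mathrm{Coeff}_{a_k}(P_1)$ and $\mathrm{Coeff}_{a_k^2}(P_2)$ is too large to track reliably by hand. This is precisely why the authors delegate the verification to the Mathematica implementation in Appendix \ref{sect appendix}, where the full symbolic contraction confirms the closed-form expressions in $T_k$, $h$ and the symplectic pairing $\xi_i\eta_j - \xi_j\eta_i$ stated in the lemma.
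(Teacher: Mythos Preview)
Your proposal is correct and matches the paper's own treatment: the lemma is stated there without a hand proof, with the remark that it follows from repeated applications of the Contraction Theorem \ref{thm contraction} and is verified by the Mathematica implementation in Appendix \ref{sect appendix}. Your description of the strategy and of why a hand computation is impractical is accurate.
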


\begin{proposition}\label{prop a a2 for mu circ B} Let $g \geq 1$ and write
 $$\Big[ ( \mu^{[g]} \circ B^{\otimeshat g})_{|\alpha =0} \Big]_{1, \ldots , 2g}^0 =e^{\frac{T_0-1}{h}\xi Q \eta}(T_0^g + P_1 \varepsilon + P_2 \varepsilon^2)  \pmod {\varepsilon^3}.$$ If  $\mathrm{Coeff}_{a_0}(P_1)$ and
 $\mathrm{Coeff}_{a_0^2}(P_2)$ are defined as above, then we have
 $$  \mathrm{Coeff}_{a_0}(P_1) e^{\frac{T_0-1}{h}\xi Q \eta} = -2 h T_0 \cdot \partial_{T_0} \left( T_0^g e^{\frac{T_0-1}{h}\xi Q \eta} \right)  $$
 and
 $$  \mathrm{Coeff}_{a_0^2}(P_2) e^{\frac{T_0-1}{h}\xi Q \eta} = 2 h^2 T_0 \left[ (\partial_{T_0} + T_0 \cdot \partial_{T_0, T_0} ) \left( T_0^g e^{\frac{T_0-1}{h}\xi Q \eta} \right) \right]. $$ 
\end{proposition}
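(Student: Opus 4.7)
The plan is to deduce the two formulas from the single-band data of Lemma \ref{lemma P1 P2 B} by tracking how the perturbations assemble through the $g$-fold composition with $\mu^{[g]}$.

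The first step is to recast the content of Lemma \ref{lemma P1 P2 B} as derivatives in the $T$-variable. Setting $F_j := \xi_{2j-1}\eta_{2j} - \xi_{2j}\eta_{2j-1}$, a direct expansion gives
\begin{align*}
\mathrm{Coeff}_{a_{k_j}}(P_1^{(j)})\, e^{\frac{T_{k_j}-1}{h}F_j} &= -2h\,T_{k_j}\, \partial_{T_{k_j}}\!\left(T_{k_j}\, e^{\frac{T_{k_j}-1}{h}F_j}\right), \\
\mathrm{Coeff}_{a_{k_j}^2}(P_2^{(j)})\, e^{\frac{T_{k_j}-1}{h}F_j} &= 2h^2 T_{k_j}\,\bigl(\partial_{T_{k_j}} + T_{k_j}\partial_{T_{k_j}, T_{k_j}}\bigr)\!\left(T_{k_j}\, e^{\frac{T_{k_j}-1}{h}F_j}\right),
\end{align*}
where $P_m^{(j)}$ denotes the $\varepsilon^m$-perturbation of the $j$-th band. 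I would then compute $(\mu^{[g]} \circ B^{\otimeshat g})_{|\alpha=0}$ via Theorem \ref{thm contraction}, factoring $B^{\otimeshat g}|_{\alpha=0}$ as a product of $g$ band generating series expanded to order $\varepsilon^2$ and contracting against $[\mu^{[g]}]_{k_1,\ldots,k_g}^0$. The $a_0$-content of the output only arises from pairing the $a_{k_j}$-monomials in the band perturbations against the $a_0 \sum_j \alpha_{k_j}$ term in $\mu^{[g]}$'s exponent (mod $\varepsilon$, cf.\ Theorem \ref{thm DoPeGDO}): the $\mathcal{A}_{k_j}^{-1}$ factors present there couple $\alpha_{k_j}$ only to $y_0$ or $x_0$, never to $a_0$. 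On the remaining $(y,x,t,\tau)$-variables, $\mu^{[g]}$ restricted to the $T_{k_j}$-sector acts as the commutative $g$-fold multiplication, identifying each $T_{k_j}$ with $T_0$ (cf.\ Corollary \ref{cor gen funct mu circ B}).

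For $\mathrm{Coeff}_{a_0}(P_1)$, only a single band $B_j$ provides the $\varepsilon^1$-piece through its $\mathrm{Coeff}_{a_{k_j}}(P_1^{(j)})$-term while the remaining $g-1$ bands contribute $T_{k_i}$; summing over $j$ and identifying every $T_{k_j}$ with $T_0$ turns the sum $\sum_j \partial_{T_{k_j}}$ into $\partial_{T_0}$ acting on $T_0^g e^{\frac{T_0-1}{h}\xi Q \eta}$, giving the first claim. For $\mathrm{Coeff}_{a_0^2}(P_2)$, one must combine one-band $\varepsilon^2$-contributions (from $P_2^{(j)}$, producing a $(\partial + T\partial^2)$-piece per band via the second single-band identity) with two-band $\varepsilon^1 \cdot \varepsilon^1$-contributions (from $P_1^{(j)} P_1^{(j')}$ with $j \neq j'$); the expansion $\tfrac{1}{2}\bigl(\sum_j \alpha_{k_j}\bigr)^2 = \tfrac{1}{2}\sum_j \alpha_{k_j}^2 + \sum_{j<j'}\alpha_{k_j}\alpha_{k_{j'}}$ distributes the two contributions appropriately, and after the $T_{k_j} \to T_0$ identification they reassemble into $2h^2 T_0\bigl(\partial_{T_0} + T_0 \partial_{T_0, T_0}\bigr)\bigl(T_0^g e^{\frac{T_0-1}{h}\xi Q \eta}\bigr)$.

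The main obstacle will be the careful bookkeeping of higher-order $\varepsilon$-corrections to $\mu^{[g]}$ beyond what is explicitly recorded in Theorem \ref{thm DoPeGDO}. Since $B^{\otimeshat g}|_{\alpha=0, \varepsilon=0}$ is $a$-free and $(\mathrm{Coeff}_\varepsilon B^{\otimeshat g})|_{\alpha=0}$ is at most $a$-linear, these corrections only interact with low-order $\alpha_{k_j}$-content of $\mu^{[g]}$, so their possible effect on the $a_0$- and $a_0^2$-coefficients is tightly constrained. A rigorous completion of this bookkeeping is most efficiently handled through the symbolic framework of Appendix \ref{sect appendix}, paralleling the computer verification used for Lemma \ref{lemma P1 P2 B}.
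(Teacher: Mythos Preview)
Your approach is essentially the same as the paper's: you take the single-band data of \cref{lemma P1 P2 B}, assemble the $g$ bands through $\mu^{[g]}$, and collect contributions (one band at $\varepsilon^1$ for $\mathrm{Coeff}_{a_0}(P_1)$; one band at $\varepsilon^2$ plus pairs of bands at $\varepsilon^1\cdot\varepsilon^1$ for $\mathrm{Coeff}_{a_0^2}(P_2)$). Your preliminary step of rewriting the single-band coefficients as derivative identities is a nice touch that the paper performs only at the end of the computation rather than the beginning.

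The one place where your argument is more laboured than necessary is the justification for treating $\mu^{[g]}$ commutatively, and the ``main obstacle'' you flag about higher-$\varepsilon$ corrections to $\mu^{[g]}$. The paper dispatches this in one line: the coefficients from \cref{lemma P1 P2 B} depend only on $T_{k_j}$, $h$, $\xi$, $\eta$ among the Latin variables, and $\bm t$ is central in $\D$. Hence the full multiplication $\mu^{[g]}$ (with all its $\varepsilon$-corrections) acts on these pieces exactly as the commutative multiplication does, sending each $T_{k_j}\mapsto T_0$. No contraction bookkeeping against the $\mathcal{A}^{-1}$-factors is needed, and no appeal to the computer appendix is necessary for this step. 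Once you replace your contraction-based tracking with this centrality observation, your proof becomes complete and coincides with the paper's.
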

\begin{proof}
The main observation is that we still can argue as in \cref{cor gen funct mu circ B}, meaning that when performing the composite $$( \mu^{[g]} \circ B^{\otimeshat g})_{|\alpha =0} =  \mu^{[g]} \circ B^{\otimeshat g}_{|\alpha =0},$$ we can treat the $g$-fold multiplication as commutative multiplication. This is because the coefficients displayed \cref{lemma P1 P2 B}, which are the only ones that play a role in determining the coefficients required in the statement, only depend on $T$ as a Latin variable, which is central.

It readily follows, using \cref{lemma P1 P2 B}, that
\begin{align*}
\mathrm{Coeff}_{a_0}(P_1) &= T_0^{g-1} \sum_{i=1}^g (-2T_0 (h+T_0 (\xi_{2i-1} \eta_{2i} - \xi_{2i} \eta_{2i-1})))\\
&= -2g h T_0^g - 2T_0^{g+1} \xi Q \eta \\
&= -2  T_0 (g h T_0^{g-1} + T_0^g  \xi Q \eta)
\end{align*}
and hence 
\begin{align*}
\mathrm{Coeff}_{a_0}(P_1) e^{\frac{T_0-1}{h}\xi Q \eta} &= -2  T_0 (g h T_0^{g-1} + T_0^g  \xi Q \eta)e^{\frac{T_0-1}{h}\xi Q \eta}\\
&= -2 h T_0 \cdot \partial_{T_0} \left( T_0^g e^{\frac{T_0-1}{h}\xi Q \eta} \right).
\end{align*}

For the coefficient in $a_0^2$, the argument is similar but slightly more involved. Namely, we have
\begin{align*}
 (B_{|\alpha =0})_{ij}^k =  e^{(*)} [ T_k + ( \mathrm{Coeff}_{a_k}(P'_1)_{ij}^k a_k + (*) ) \varepsilon +  ( \mathrm{Coeff}_{a_k^2}(P'_2)_{ij}^k a^2_k + &(*) ) \varepsilon^2]  \\ &\pmod {\varepsilon^3}   
\end{align*}
where as usual $(*)$ indicates some irrelevant terms and $P_1', P_2'$ denote the perturbation terms of $B$. Using \cref{lemma P1 P2 B}, this means that
\begin{align*}
 \mathrm{Coeff}_{a_0^2}(P_2)   &= 
 \begin{aligned}[t] &T_0^{g-1} \sum_{i=1}^g  \mathrm{Coeff}_{a^2_0}(P'_2)_{2i-1,2i}^0 \\ &+ T_0^{g-2} \sum_{1 \leq i < j \leq g} \mathrm{Coeff}_{a_0}(P'_1)_{2i-1,2i}^0 \mathrm{Coeff}_{a_0}(P'_1)_{2j-1,2j}^0 \end{aligned}\\
 &=  \begin{aligned}[t]
     &2g h^2 T_0^g + 6 h T_0^{g+1} \xi Q \eta + 2 T_0^{g+2} \sum_{i=1}^g (\xi_{2i-1} \eta_{2i} - \xi_{2i} \eta_{2i-1})^2 \\
     &+ 2g(g-1) h^2 T_0^g + 4(g-1) h T_0^{g+1} \xi Q \eta\\ &+ 4 T_0^{g+2}  \sum_{1 \leq i < j \leq g} (\xi_{2i-1} \eta_{2i} - \xi_{2i} \eta_{2i-1})(\xi_{2j-1} \eta_{2j} - \xi_{2j} \eta_{2j-1})
 \end{aligned}\\
 &= 2g h^2 T_0^g  + 2g(g-1) h^2 T_0^g + (4g+2) h T^{g+1} \xi Q \eta + 2T_0^{g+2}(\xi Q \eta)^2\\
 &= \begin{aligned}[t]
&2T_0(g h^2 T_0^{g-1} + h T_0^g \xi Q \eta )+2T_0^2 (g(g-1)h^2 T_0^2 + 2g h T_0^{g-1}\xi Q \eta + T_0^g (\xi Q \eta)^2),
  \end{aligned}
\end{align*}
where we have used that
$$(\xi Q \eta)^2= \sum_{i=1}^g (\xi_{2i-1} \eta_{2i} - \xi_{2i} \eta_{2i-1})^2 + 2 \sum_{ i < j } (\xi_{2i-1} \eta_{2i} - \xi_{2i} \eta_{2i-1})(\xi_{2j-1} \eta_{2j} - \xi_{2j} \eta_{2j-1})$$ in the third equality. From this it follows that
\begin{align*}
\mathrm{Coeff}_{a_0^2}(P_2) e^{\frac{T_0-1}{h}\xi Q \eta} &= \begin{aligned}[t]
&2T_0(g h^2 T_0^{g-1} + h T_0^g \xi Q \eta )e^{\frac{T_0-1}{h}\xi Q \eta} \\ &+2T_0^2 (g(g-1)h^2 T_0^2 + 2g h T_0^{g-1}\xi Q \eta + T_0^g (\xi Q \eta)^2)e^{\frac{T_0-1}{h}\xi Q \eta}  \end{aligned}\\
&=  2 h^2 T_0 \cdot \partial_{T_0} \left( T_0^g e^{\frac{T_0-1}{h}\xi Q \eta} \right) + 2 h^2 T_0^2 \cdot  \partial_{T_0, T_0} \left( T_0^g e^{\frac{T_0-1}{h}\xi Q \eta} \right)
\end{align*}
which concludes.
\end{proof}

In the previous results we have set $\alpha =0$, as it is our main case of interest. However, it is further true that the two equalities of \cref{prop a a2 for mu circ B} hold in general:

\begin{proposition}\label{prop gen ser Th with alpha neq 0}
 Let $g \geq 1$ and write
 $$\Big[  \mu^{[g]} \circ B^{\otimeshat g} \Big]_{1, \ldots , 2g}^0 =e^{G}(T_0^g + P_1^{\alpha \neq 0}\varepsilon + P_2^{\alpha \neq 0} \varepsilon^2)  \pmod {\varepsilon^3},$$ so that $G_{| \alpha =0} =\frac{T_0-1}{h}\xi Q \eta$.  If  $\mathrm{Coeff}_{a_0}(P_1^{\alpha \neq 0})$ and
 $\mathrm{Coeff}_{a_0^2}(P_2^{\alpha \neq 0})$ are defined as above, then we have
 $$  \mathrm{Coeff}_{a_0}(P_1^{\alpha \neq 0}) e^{G} = -2 h T_0 \cdot \partial_{T_0} \left( T_0^g e^{G} \right)  $$
 and
 $$  \mathrm{Coeff}_{a_0^2}(P_2^{\alpha \neq 0}) e^{G} = 2 h^2 T_0 \left[ (\partial_{T_0} + T_0 \cdot \partial_{T_0, T_0} ) \left( T_0^g e^{G} \right) \right]. $$ 
\end{proposition}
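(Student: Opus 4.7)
The proof plan is to emulate the argument of Proposition \ref{prop a a2 for mu circ B}, retaining the full $\alpha$-dependence throughout. The key step is a strengthened version of Lemma \ref{lemma P1 P2 B} at the level of a single band map $B_{ij}^k$: write
$$B_{ij}^k = e^{G^B}\bigl(T_k + P'^{\alpha\ne 0}_1\varepsilon + P'^{\alpha\ne 0}_2\varepsilon^2\bigr) \pmod{\varepsilon^3},$$
with $G^B = G^B(T_k, \alpha_i, \alpha_j, \xi_i, \xi_j, \eta_i, \eta_j)$ restricting to $\frac{T_k-1}{h}\xi Q_{ij}\eta$ at $\alpha = 0$. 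Iterating the contraction formula (\cref{thm contraction}) applied to the seven structure-map generating series whose composite defines $B$, \emph{without} discarding $\alpha_i, \alpha_j$, yields explicit expressions for $G^B$, $P'^{\alpha\ne 0}_1$ and $P'^{\alpha\ne 0}_2$. This is a finite symbolic computation in the same spirit as the one automated in \cref{sect appendix}, and from the resulting formulas I would verify directly the derivative identities for a single band map:
\begin{align*}
\mathrm{Coeff}_{a_k}(P'^{\alpha\ne 0}_1)\, e^{G^B} &= -2hT_k\,\partial_{T_k}\bigl(T_k e^{G^B}\bigr),\\
\mathrm{Coeff}_{a_k^2}(P'^{\alpha\ne 0}_2)\, e^{G^B} &= 2h^2 T_k\bigl(\partial_{T_k} + T_k\,\partial_{T_k,T_k}\bigr)\bigl(T_k e^{G^B}\bigr),
\end{align*}
which specialise to Lemma \ref{lemma P1 P2 B} at $\alpha = 0$.

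With these single-band identities in hand, the lift to the composite $\mu^{[g]} \circ B^{\otimeshat g}$ mirrors the combinatorial argument of Proposition \ref{prop a a2 for mu circ B}. The $g$-fold multiplication unifies the output $t$-variables of the $g$ band maps into $t_0$, and because $\tau$-freeness (\cref{prop B tau free}) kills all cross-terms between different band-map outputs in the composition with $\mu^{[g]}$ (on the factors not involving $\xi,\eta$), the exponent $G$ of $[\mu^{[g]} \circ B^{\otimeshat g}]^0_{1,\ldots,2g}$ mod $\varepsilon$ equals $\sum_{i=1}^g G^B_i$, where each $G^B_i$ has $T_0$ in place of $T_{k_i}$. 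Extracting $\mathrm{Coeff}_{a_0}(P_1^{\alpha\ne 0})$ then picks out, for each $i$, the single $a_0$ coming from the $i$-th band map's $P'^{\alpha\ne 0}_1$ while the other $g-1$ factors contribute $T_0$; summing the contributions and applying the Leibniz rule converts this into $-2hT_0\,\partial_{T_0}(T_0^g e^G)$. Extracting $\mathrm{Coeff}_{a_0^2}(P_2^{\alpha\ne 0})$ picks out either a single $a_0^2$ from one band map's $P'^{\alpha\ne 0}_2$ or two $a_0$'s from distinct band maps, and assembling these with the second-order Leibniz rule yields $2h^2T_0(\partial_{T_0} + T_0\partial_{T_0,T_0})(T_0^g e^G)$, in parallel with the second half of the proof of \cref{prop a a2 for mu circ B}.

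The main obstacle is step one: the explicit verification of the single-band identities in the $\alpha\ne 0$ case. The perturbations $P'^{\alpha\ne 0}_1, P'^{\alpha\ne 0}_2$ of the band map are considerably more intricate than in the $\alpha = 0$ case of Lemma \ref{lemma P1 P2 B}, because the $\mathcal{A}_i, \mathcal{A}_j$ factors in the generating series of \cref{thm DoPeGDO} now contribute non-trivially, and $G^B$ acquires mixed terms such as $\alpha_i\alpha_j$, $\alpha_i\xi_j\eta_i$, etc. Nevertheless, this is a finite symbolic check on the single generating series $B_{ij}^k \pmod{\varepsilon^3}$, perfectly amenable to the computer-algebra approach of \cref{sect appendix}; once carried out, the lift to the $g$-fold composite is purely formal, requiring only the Leibniz rule and the bookkeeping already used in Proposition \ref{prop a a2 for mu circ B}.
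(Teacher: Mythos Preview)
Your proposal follows essentially the same route as the paper: verify the two derivative identities for a single band map $B_{ij}^k$ with $\alpha\ne 0$ by direct (computer-assisted) computation, then lift to $\mu^{[g]}\circ B^{\otimeshat g}$ for general $g$ via the Leibniz-rule bookkeeping of \cref{prop a a2 for mu circ B}. The paper's proof sketch rewrites the single-band identities equivalently as $\mathrm{Coeff}_{a_0}(P_1^{\alpha\ne 0}) = -2hT_0(1+T_0\,\partial_{T_0}G)$ and $\mathrm{Coeff}_{a_0^2}(P_2^{\alpha\ne 0}) = 2h^2T_0(1+3T_0\,\partial_{T_0}G + T_0^2[\partial_{T_0,T_0}G + (\partial_{T_0}G)^2])$, checks them in Mathematica, and defers the $g>1$ step exactly as you do.

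One small remark: your justification that $G=\sum_i G^B_i$ via ``$\tau$-freeness kills all cross-terms'' is not quite the right reason. The relevant fact is that the $\varepsilon^0$ part of $B_{ij}^k$ has $T_k$ as its \emph{only} output variable (no $a_k,y_k,x_k$), so contracting with $\mu^{[g]}$ mod $\varepsilon$ merely relabels $T_{k_i}\to T_0$; $\tau$-freeness is about the input side of $B$. This does not affect the correctness of your overall plan.
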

\begin{proof}[Proof sketch]
Let us set $g=1$ for simplicity. For the first equality, it sufficies to show that
$$   \mathrm{Coeff}_{a_0}(P_1^{\alpha \neq 0})  = -2h T_0 (1+T_0 \cdot \partial_{T_0}G)   $$
and for the second one that
$$ \mathrm{Coeff}_{a_0^2}(P_2^{\alpha \neq 0}) = 2h^2 T_0 (1+3T_0 \partial_{T_0}G + T_0^2 [\partial_{T_0, T_0}G + (\partial_{T_0}G)^2 ]) .  $$ Then these equalities are checked best by computer (the term $\mathrm{Coeff}_{a_0^2}(P_2^{\alpha \neq 0})$ is a polynomial with 40 monimials), and we do this in \cref{sect appendix}. The general case for $g>1$ requires an extra step as the one from \cref{lemma P1 P2 B} to \cref{prop a a2 for mu circ B}.
\end{proof}

\begin{remark}
Even for $g=1$, the full expression of the generating series of the band map is tremendously large for $\alpha\neq 0$: only the term $P_2^{\alpha \neq 0}$ has over 8,000 monomials. This obliges us to perform these calculations by computer (which is equally rigorous).
\end{remark}

\begin{lemma}\label{lemma 77}
 Let $g \geq 1$ and write
 $$\Big[ ( \mu^{[g]} \circ B^{\otimeshat g})_{|\alpha =0} \Big]_{1, \ldots , 2g}^0 =e^{\frac{T_0-1}{h}\xi Q \eta}(T_0^g + P_1 \varepsilon + P_2 \varepsilon^2)  \pmod {\varepsilon^3}$$ as in the previous Proposition. Let $\mathrm{Coeff}_{a_0}(P_2)$ denote the term that multiplies $a_0$ in $P_2$ and let $\mathrm{Coeff}_{\mathbf{1}}(P_1)$ be the term independent of $y_0, a_0, x_0$ in $P_1$ (that is, the evaluation of $P_1$ at $y_0 = a_0 = x_0 =0$). Then the following equality holds:
 $$ -2 T_0 \Big( \xi Q \eta \cdot \mathrm{Coeff}_{\mathbf{1}}(P_1) + h \cdot \partial_{T_0}  \mathrm{Coeff}_{\mathbf{1}}(P_1)   \Big) =  \mathrm{Coeff}_{a_0}(P_2) . $$
\end{lemma}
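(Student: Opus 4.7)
Plan: We first reformulate the identity. Since $h\partial_{T_0}L = \xi Q\eta$ (with $L := \tfrac{T_0-1}{h}\xi Q\eta$), for any $T_0$-dependent expression $X$ we have the product rule
\[
h\,\partial_{T_0}(X\cdot e^L) \;=\; (h\,\partial_{T_0}X + X\cdot\xi Q\eta)\cdot e^L.
\]
Applied with $X = \mathrm{Coeff}_{\mathbf{1}}(P_1)$, the claim of the lemma becomes equivalent to
\[
\mathrm{Coeff}_{a_0}(P_2)\cdot e^L \;=\; -2hT_0\,\partial_{T_0}\bigl(\mathrm{Coeff}_{\mathbf{1}}(P_1)\cdot e^L\bigr).
\]

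The strategy now parallels that of \cref{prop a a2 for mu circ B} and \cref{prop gen ser Th with alpha neq 0}. Let $F := (\mu^{[g]}\circ B^{\otimeshat g})|_{\alpha=0}$ and $\widetilde F(T_0) := F|_{y_0 = x_0 = a_0 = 0}(T_0)$. The key claim is a \emph{substitution principle}: modulo $a_0^2$ and $\varepsilon^3$,
\[
F|_{y_0 = x_0 = 0}(T_0, a_0) \;\equiv\; \widetilde F\bigl(T_0\,\mathcal{A}_0^2\bigr),
\]
where $\mathcal{A}_0 := e^{-h\varepsilon a_0}$. The structural reason is that the band map $B$ is built from the group-like element $\kappa^2 = TA^2 = T_0\mathcal{A}_0^2$, and the only commutator in $\D$ whose commutative-part projection is nonzero is $[x,y]_q = (1-TA^2)/h$, itself a polynomial in $TA^2$. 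Hence the $g$-fold multiplication of the bands preserves the feature that, at $y_0 = x_0 = 0$, the $a_0$-dependence modulo $a_0^2$ enters only through $T_0\mathcal{A}_0^2$.

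Granting this principle, Taylor expansion around $a_0 = 0$ (using $\partial_{a_0}(T_0\mathcal{A}_0^2)|_{a_0 = 0} = -2h\varepsilon T_0$) gives
\[
\widetilde F(T_0\,\mathcal{A}_0^2) \;=\; \widetilde F(T_0) \;-\; 2h\varepsilon T_0\,a_0\,\widetilde F'(T_0) \;+\; O(a_0^2).
\]
Writing out the $\varepsilon$-expansion
\[
\widetilde F(T_0) \;=\; T_0^g\,e^L \;+\; \mathrm{Coeff}_{\mathbf{1}}(P_1)\,e^L\,\varepsilon \;+\; \mathrm{Coeff}_{\mathbf{1}}(P_2)\,e^L\,\varepsilon^2 \;+\; O(\varepsilon^3)
\]
and matching the coefficient of $a_0^1\varepsilon^2$ on both sides yields the reformulated identity, while matching the coefficient of $a_0^1\varepsilon^1$ recovers \cref{prop a a2 for mu circ B}'s formula for $\mathrm{Coeff}_{a_0}(P_1)$ as a consistency check.

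The main obstacle is to justify the substitution principle. For $g = 1$ this is directly readable from \cref{lemma P1 P2 B} together with the full $\varepsilon^2$-expansion of the single band map that is verified in \cref{sect appendix}. For general $g$ it then follows from the structural observation about commutators above; alternatively, the identity can be checked by direct expansion of $F$ via the two-step Gaussian contractions, using computer algebra in the same style as \cref{sect appendix}.
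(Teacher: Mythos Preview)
Your reformulation via the product rule is correct and genuinely clarifying: the identity of the lemma is equivalent to
\[
\mathrm{Coeff}_{a_0}(P_2)\cdot e^L \;=\; -2hT_0\,\partial_{T_0}\bigl(\mathrm{Coeff}_{\mathbf 1}(P_1)\cdot e^L\bigr),
\]
which sits in exact parallel with the first identity of \cref{prop a a2 for mu circ B}. Your ``substitution principle'' $F|_{y_0=x_0=0}(T_0,a_0)\equiv\widetilde F(T_0\mathcal A_0^2)$ (mod $a_0^2,\varepsilon^3$) is a real conceptual gain: it packages this lemma, the first formula of \cref{prop a a2 for mu circ B}, and indeed the $a_0^2$ formula there, into a single statement.

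However, the justification you offer for the substitution principle is not a proof. The observation that $\kappa^2=\bm T\bm A^2$ and $[\bm x,\bm y]_q=(1-\bm T\bm A^2)/h$ are functions of $\bm T\bm A^2$ is suggestive, but the coproduct and antipode act on $\bm t$ and $\bm a$ separately (for instance $\Delta(\bm x)=1\otimeshat\bm x+\bm x\otimeshat\bm A$ introduces a bare $\bm A$), so it is not clear a priori that the projection onto $\bm y^0\bm x^0$ sees only the combination $\bm t+2\varepsilon\bm a$. Your structural reason is a heuristic, and you implicitly acknowledge this by falling back on the computer verification of \cref{sect appendix} for $g=1$.

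The paper's proof is a direct approach: compute $\mathrm{Coeff}_{\mathbf 1}(P_1)$ and $\mathrm{Coeff}_{a_0}(P_2)$ explicitly for $g=1$ via the Contraction Theorem, then verify the identity by computer; the general $g$ case is sketched by analogy with the passage from \cref{lemma P1 P2 B} to \cref{prop a a2 for mu circ B}. So at the level of rigor the two arguments are equivalent---both rest on the same computer check for $g=1$ and hand-wave for $g>1$. What your approach buys is a unifying principle that explains \emph{why} the identity should hold and makes the connection to \cref{prop a a2 for mu circ B} transparent; what the paper's approach buys is that it makes no unproven structural claim. If you could supply an honest algebraic proof of the substitution principle (even modulo $a_0^2,\varepsilon^3$), your argument would be strictly stronger; as written, it is a more insightful repackaging of the same verification.
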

\begin{proof}[Proof sketch]
For simplicity we put $g=1$, so we directly look at the generating series of the band map $(B_{|\alpha =0})_{1,2}^0$ . By direct computation using the Contraction \cref{thm contraction}, we find that
\begin{align*}
    \mathrm{Coeff}_{\mathbf{1}}(P_1) &= 2T_0 (1-T_0) (\xi_1 \eta_2 -\xi_1 \eta_1 -\xi_2 \eta_2) - \frac{T_0(1-T_0)}{h} (\xi_1^2 \eta_1 \eta_2 + \xi_1 \xi_2 \eta_2^2)\\
    &+ \frac{T_0 (1-T_0)(3-T_0)}{4h} (\xi_1^2 \eta_2^2 - \xi_2^2\eta_1^2 + \xi_1 \xi_2 \eta_1 \eta_2)
\end{align*}
and similarly
\begin{align*}
    \mathrm{Coeff}_{a_0}(P_2) &= 4hT_0(1-2T_0)(\xi_1 \eta_1 + \xi_2 \eta_2 - \xi_1 \eta_2) + 2T_0(1-2T_0^2)(\xi_1^2 \eta_1 \eta_2 + \xi_1 \xi_2 \eta_2^2)\\
    &+ \frac{T_0(-3+5T^2_0)}{2}\xi_1^2 \eta_2^2 + \frac{T_0(3-8T_0+3T^2_0)}{2}\xi_2^2 \eta_1^2\\
    &+  \frac{2T_0^2(1-T_0)}{h} (\xi_1^3\eta_1 \eta_2^2 -2h \xi_1 \xi_2 \eta_1^2 - \xi_1^2 \xi_2 \eta_1^2 \eta_2 + \xi_1^2\xi_2 \eta_2^3 - 2h \xi_2^2\eta_1 \eta_2 - \xi_1 \xi_2^2 \eta_1 \eta_2^2)\\
    &+ \frac{T_0^2 (1-T_0)(T_0-3)}{2h} (\xi_1^3 \eta_2^3 + \xi_2^3\eta_1^3 + 3\xi_1^2 \xi_2 \eta_1 \eta_2^2 - 5 \xi_1 \xi_2^2 \eta_1^2 \eta_2 )\\
    &+ 2T_0 (-3+10T_0 - 5T_0^2) \xi_1 \xi_2 \eta_1 \eta_2.
\end{align*}
Then the equality in the statement is a mere checking, that we do by computer in \cref{sect appendix}. The general case for $g>1$ requires an extra step similar to the passage from \cref{lemma P1 P2 B} to \cref{prop a a2 for mu circ B}, which is a tedious computation left to the reader.
\end{proof}

\section{Perspectives}  \label{sect perpectives}

We would like to close off stating some open problems and future directions related to the universal invariant $Z_\D$.

\subsection{The polynomials $\rho_K^{n,0}$}

Aside the concrete formulas, a consequence of \cref{thm rho 2 sth} (and \cref{thm rho ij =0 for j>i}) is that for a 0-framed knot $K$, the value of $Z_\D(K)$ is completely determined by the triple of polynomials ($\Delta_K, \rho_K^{1,0}, \rho_K^{2,0})$. More precisely, these result claim that the rest of values $\rho^{i,j}_K$ for $i=1,2$ and $j>0$ are either trivial or are expressed in terms of derivatives of $\Delta_K$ and $ \rho_K^{1,0}$.

We expect this to be the situation for higher order terms as well:

\begin{conjecture}[see also \cite{barnatanveengaussians}]\label{conj rho i0}
If $K$ is a 0-framed knot, the value of $Z_\D(K)$ is completely determined by the Alexander polynomial $\Delta_K$ and the family of polynomials $$\rho_K^{i,0} \in \Q [t,t^{-1}] \qquad , \qquad i>0.$$

More precisely, if  $j>0$, the value $\rho_K^{i,j}$ can be written in terms of the Alexander polynomial $\Delta_K$, the polynomials $\rho_K^{i,0}$, $i<j$, and derivatives of these.
\end{conjecture}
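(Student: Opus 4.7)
The plan is to generalise the strategy that established \cref{thm rho 2 sth} to arbitrary $i,j$ with $j>0$, by combining the thickening formula $Z_\D(K)=Th(Z_\D(L)_{|t=0})$ from \cref{cor Th L} with differential identities on the generating series of $(\mu^{[g]}\circ B^{\otimeshat g})_{|\alpha=0}$ at all orders in $\varepsilon$.

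First I would extend \cref{lemma w mod eps2} by induction to produce, for each $i,j$, a formula
\[
\bm{w}^j\,\varepsilon^i = \left(\tfrac{1-\bm{T}}{h}\right)^{\!j}\bm{a}^j\varepsilon^i + \sum_{s<j}c_{j,s}(\bm{T},h)\bm{a}^s\varepsilon^i + (\text{lower powers of $\bm{a}$, higher order in $\varepsilon$}),
\]
so that, comparing with \cref{thm rhos existence}, the monomial $a_0^j\varepsilon^i$ in $Z_\D(K)$ isolates $\rho_K^{i,j}$ up to a nonzero multiple of $(1-T_0)^j/\Delta_K^{2i+1-j}$ modulo the $\rho_K^{i',j'}$ with $(i',j')\prec (i,j)$ in a suitable partial order (namely $i'\le i$, $j'\le j$, $i'+j'<i+j$). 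This reduces the conjecture to showing that $\mathrm{Coeff}_{a_0^j\varepsilon^i}(Z_\D(K))$ is expressible in terms of $\Delta_K$, $\rho_K^{i',0}$ for $i'<i$, and their derivatives.

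Second, I would generalise \cref{prop a a2 for mu circ B} to arbitrary orders. Writing
\[
\bigl[(\mu^{[g]}\circ B^{\otimeshat g})_{|\alpha=0}\bigr]_{1,\dots,2g}^0 = e^{\frac{T_0-1}{h}\xi Q\eta}\bigl(T_0^g+\textstyle\sum_{r\geq1}P_r\varepsilon^r\bigr),
\]
the conjectural statement is that there exist polynomials $\mathcal D_{r,k}(X,Y)$ such that
\[
\mathrm{Coeff}_{a_0^k}(P_r)\,e^{\frac{T_0-1}{h}\xi Q\eta} = h^{r}\,\mathcal D_{r,k}\!\bigl(T_0,T_0\partial_{T_0}\bigr)\!\left(T_0^g\,e^{\frac{T_0-1}{h}\xi Q\eta}\right),
\]
for all $0\le k\le r$ (the cases $(r,k)=(1,1),(2,1),(2,2)$ being precisely \cref{prop a a2 for mu circ B} and \cref{lemma 77}). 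The structural reason one expects this is that, following the proof of \cref{prop exp band map}, each occurrence of $\bm a$ in the image of the band map produces a factor of $(\bm T-1)/h$ together with an explicit derivative-like action on the Gaussian; iterating through the $g$-fold multiplication (which behaves as commutative multiplication once $\alpha=0$ by \cref{cor gen funct mu circ B}) realises powers of the Euler operator $T_0\partial_{T_0}$. Concretely, one would prove the displayed formula by a combined induction on $r$ and $k$, using the Contraction \cref{thm contraction} to set up recursions among the $\mathcal D_{r,k}$.

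With these two inputs, the recursion completes: $\mathrm{Coeff}_{a_0^j\varepsilon^i}(Z_\D(K))$ unfolds, via \cref{cor Th L} and the generalised differential identity, into a sum of expressions of the form $\mathcal D\cdot\langle\overline{P}_{r}\,e^{yVxh+\frac{T_0-1}{h}\xi Q\eta}\rangle_{1,\dots,2g}$ with $r\le i$ and $\mathcal D$ a polynomial differential operator in $T_0$. The contractions with $r<i$ are, by the same argument applied inductively (and \cref{prop Z_D for eps ^ 0 and 1} as base case), $\Q(T_0,h)$-linear combinations of $\rho_K^{i',0}/\Delta_K^{2i'+1}$ for $i'<i$, and the $r=i$ contraction itself encodes the $\rho_K^{i,j'}$ with $j'<j$ once one compares with the basis provided by the extended \cref{lemma w mod eps2}. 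Triangular inversion then yields the desired closed form for $\rho_K^{i,j}$.

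The main obstacle is the combinatorial identity for $\mathcal D_{r,k}$ in the second step. While the pattern is strongly suggested by \cref{prop exp band map}, \cref{prop a a2 for mu circ B} and \cref{lemma 77}, the number of monomials in $\mathrm{Coeff}_{a_0^k}(P_r)$ grows rapidly (already several thousand terms for small $(r,k)$, as noted in \cref{sect band map}), so a direct term-by-term verification is not feasible; one needs a conceptual proof, most likely by finding a closed-form generating series for the $\varepsilon$-expansion of $(B_{|\alpha=0})_{ij}^k$ analogous to that of \cref{prop exp band map}, perhaps exploiting the Drinfeld-double origin of $\D$ to interpret the relevant differential operators as infinitesimal symmetries acting on the quantised enveloping algebra. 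A secondary, more technical difficulty is extending the identity from $(\mu^{[g]}\circ B^{\otimeshat g})_{|\alpha=0}$ to the full $\alpha\neq 0$ case, which is required to handle certain cross-terms in the expansion of $Z_\D(K)$ at higher orders, and where \cref{prop gen ser Th with alpha neq 0} presently only provides the first two instances.
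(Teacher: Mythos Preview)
The statement you are attempting to prove is \cref{conj rho i0}, which in the paper is explicitly a \emph{conjecture}, not a theorem; the paper offers no proof of it. What the paper does prove is the special cases $i=1,2$ (\cref{thm rho 2 sth}) and the vanishing for $j>i$ (\cref{thm rho ij =0 for j>i}), and it lists the general statement among open problems in \cref{sect perpectives}.

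Your proposal is therefore not a proof but a strategy sketch, and you are candid about this: you correctly flag the generalised differential identity for $\mathrm{Coeff}_{a_0^k}(P_r)$ as conjectural and identify it as the main obstacle. Your outline is a natural extrapolation of the paper's own method for $i=1,2$ --- the thickening formula, the expansion of $\bm w^j\varepsilon^i$, the reduction to coefficients of $a_0^k$, and the differential identities of \cref{prop a a2 for mu circ B}, \cref{lemma 77}, \cref{prop gen ser Th with alpha neq 0} --- and the triangular inversion you describe is exactly what drives the proof of \cref{thm rho 2 sth}. So as a research plan it is reasonable and aligned with the paper's perspective.

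That said, you should be aware that the step you call ``the main obstacle'' is genuinely the entire content of the conjecture: without the closed-form differential operators $\mathcal D_{r,k}$ (or some structural substitute), nothing else in your outline goes through. The paper's computations already show that even the $(r,k)=(2,1)$ case requires the $\alpha\neq 0$ version (\cref{prop gen ser Th with alpha neq 0}), verified only by computer for $g=1$; your ``secondary, more technical difficulty'' is thus not secondary at all, and the inductive scheme you propose would need both the $\alpha=0$ and $\alpha\neq 0$ identities simultaneously at every level. In short: your plan is the right shape, but it does not constitute a proof, and the paper does not claim one either.
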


For the cases $\rho_K^{1,1}$ and $\rho_K^{2,2}$, we showed in \cref{thm rho 2 sth}  that they only depend on the Alexander polynomial. We expect this to be always the case for $i=j$:

\begin{conjecture}
If $K$ is a 0-framed knot, the  family of polynomials $\rho_K^{i,i}$, $i>0$, is completely determined by the Alexander polynomial of $K$.
\end{conjecture}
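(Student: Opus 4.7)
The plan is an induction on $n$, taking Theorem~\ref{thm rho 2 sth}(1) as the base case $n=1$ and extending the overall strategy of Theorem~\ref{thm rho 2 sth}. The key idea is to show that the coefficient of $a_0^n\varepsilon^n$ in $Z_\D(K)$ is, on the one hand, an affine function of $\rho_K^{n,n}$ whose other inputs involve only the Alexander polynomial and the $\rho_K^{i,i}$ for $i<n$ (which by the inductive hypothesis depend only on $\Delta_K$), and on the other hand can be computed as a differential expression in $\Delta_K^{-1}$, thus depending only on $\Delta_K$.

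The first half of this dichotomy requires a structural observation about the central element $\bm{w}$. Starting from the defining formula in Proposition~\ref{prop w}, I would first verify that under the ordering map $\mathcal{O}^{-1}$ the top power of $a_0$ in the $\varepsilon^k$-coefficient of $\bm{w}$ is $a_0^{k+1}$, with coefficient a polynomial in $T_0$ alone: the summand $\bm{y}\bm{A}^{-1}\bm{x}$ yields only $a_0^k$ at order $\varepsilon^k$, so the top $a_0$-power is forced to come from the purely $(\bm{T},\bm{A})$-dependent summand. Consequently the top power of $a_0$ in the $\varepsilon^k$-coefficient of $\bm{w}^j$ is $a_0^{j+k}$, again with $T_0$-only coefficient. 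Plugging this into Theorem~\ref{thm rhos existence} and applying Theorem~\ref{thm rho ij =0 for j>i}, extracting $a_0^n\varepsilon^n$ from $Z_\D(K)$ selects exactly the diagonal contributions $j=i$, giving a linear relation
$$C_n(T_0) := \mathrm{Coeff}_{a_0^n\varepsilon^n}(Z_\D(K)) = \sum_{i=1}^n h^{n+i}\,\frac{\rho_K^{i,i}(T_0)\,E_{i,n}(T_0)}{\Delta_K(T_0)^{n+1-i}},$$
with universal $E_{i,n}\in\Q[T_0^{\pm}]$ and $E_{n,n}$ a nonzero constant multiple of $(1-T_0)^n$, which can be solved for $\rho_K^{n,n}$.

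For the second half, following the thickening approach of Theorems~\ref{thm rho ij =0 for j>i} and~\ref{thm rho 2 sth}, one writes $K=\widecheck{Th}(L)$ for a $2g$-component vertical bottom tangle $L$ and applies Corollaries~\ref{cor Th L} and~\ref{cor Th has no a epsilon} to rewrite $C_n(T_0)=\langle\mathrm{Coeff}_{a_0^n}(P_n)\,e^{yVx\cdot h}\rangle_{1,\ldots,2g}$, where $P_n$ is the $\varepsilon^n$-perturbation of $[(\mu^{[g]}\circ B^{\otimeshat g})_{|\alpha=0}]^0_{1,\ldots,2g}$. The crux of the argument is then the following structural generalisation of Proposition~\ref{prop a a2 for mu circ B}: for each $n\ge 1$, there exists a universal differential operator $D_n=D_n(T_0,h,\partial_{T_0})$, depending on $n$ and $g$ but not on $K$ or $L$, such that
$$\mathrm{Coeff}_{a_0^n}(P_n)\;e^{\frac{T_0-1}{h}\xi Q\eta}\;=\;D_n\bigl(T_0^g\,e^{\frac{T_0-1}{h}\xi Q\eta}\bigr).$$
Granted this, pulling $D_n$ out of the contraction (legitimate since it involves only the central variable $T_0$) yields $C_n(T_0)=D_n(\Delta_K(T_0)^{-1})$ via Proposition~\ref{prop Z_D for eps ^ 0 and 1}, which closes the induction.

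The main obstacle is establishing the structural formula above for general $n$. It holds for $n=1,2$ by Proposition~\ref{prop a a2 for mu circ B} (with $D_1=-2hT_0\partial_{T_0}$ and $D_2=2h^2T_0(\partial_{T_0}+T_0\partial_{T_0}^2)$) via computer-assisted computation, but already the $n=3$ case involves orders of magnitude more monomials, making a brute-force approach impractical. A conceptual proof would presumably first establish a band-level analogue $\mathrm{Coeff}_{a_k^n}(P_n^{\mathrm{band}})\,e^{\frac{T_k-1}{h}\xi Q\eta}=L_n(T_k\,e^{\frac{T_k-1}{h}\xi Q\eta})$ extending Lemma~\ref{lemma P1 P2 B}, and then transport it through the $g$-fold multiplication $\mu^{[g]}$ using the centrality of $T_k$. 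The underlying heuristic is that the only source of $T_k$-dependence in the band map is the balancing element $\bm{\kappa}=\bm{T}^{1/2}$, so that $T_k\partial_{T_k}$ essentially counts $\bm{\kappa}$-insertions; making this rigorous, perhaps via a generating-function identity for the family $(L_n)_{n\ge 1}$, appears to be the crux of the conjecture.
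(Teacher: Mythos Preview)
The statement is a \emph{conjecture} in the paper, presented in the Perspectives section without proof; there is no proof in the paper to compare your proposal against. Your outline is a natural extension of the strategy the paper uses for $n=1,2$ in Theorem~\ref{thm rho 2 sth}, and the first half (extracting $\mathrm{Coeff}_{a_0^n\varepsilon^n}(Z_\D(K))$ as an affine combination of the diagonal $\rho_K^{i,i}$) is essentially correct and routine given Theorem~\ref{thm rho ij =0 for j>i} and the structure of $\bm{w}$.

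However, you have correctly identified that the entire content of the conjecture lies in the structural identity
\[
\mathrm{Coeff}_{a_0^n}(P_n)\,e^{\frac{T_0-1}{h}\xi Q\eta} \;=\; D_n\bigl(T_0^g\,e^{\frac{T_0-1}{h}\xi Q\eta}\bigr)
\]
for a universal differential operator $D_n$, and you do not prove it. The paper establishes only the cases $n=1,2$ (Proposition~\ref{prop a a2 for mu circ B}), and those already rely on computer verification of explicit polynomial identities. Your heuristic that ``$T_k\partial_{T_k}$ counts $\bm{\kappa}$-insertions'' is suggestive but not a proof: one would need to show that the top $a_0$-degree part of the perturbation arises \emph{solely} from differentiating the Gaussian factor, and that no cross-terms from lower-order perturbations contribute. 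As you say yourself, this is the crux of the conjecture, so your proposal is a reasonable proof \emph{strategy} but not a proof.
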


\subsection{Relation with knot topological properties}

When studying quantum invariants of knots, it is natural to ask between the interplay of the invariant $Z_\D$ with the topological nature of the knot. For a 0-framed knot, \cref{prop Z_D for eps ^ 0 and 1} ensures that $Z_\D (K)$ (mod $\varepsilon$) is equivalent to the Alexander polynomial of the knot, which has a well-understood topological interpretation in terms of the first homology group of the universal abelian cover of the knot complement \cite{lickorish}.  If we view the rest of knot polynomial invariants $\rho_K^{i,0}$ as ``perturbed'' versions of the Alexander polynomial, it is sensible to ask for topological interpretations for these as well.

\begin{problem}
Give a topological construction  for each of  the knot polynomial invariants $\rho_K^{i,0}$.
\end{problem}

By a topological construction we mean one in terms of the knot complement. For quantum invariants this is generally a very hard problem, remarkably no topological interpretation is known even for the Jones polynomial \cite{bigelow}. However, Bar-Natan and van der Veen have recently shown that the knot polynomial $\rho_K^{1,0}$ can be computed from a quadratic expression whose variables are evaluated at the entries of the inverse of the matrix $A$ which arises from applying the Fox calculus to the Wirtinger presentation of the fundamental group of the knot complement \cite{barnatanveenAPAI}. Since the determinant of the matrix $A$ is precisely the Alexander polynomial, we believe that this approach might lead to the desired topological interpretation of $\rho_K^{1,0}$. 

Another fact that leads us to think that $\rho_K^{1,0}$ should have a purely topological interpretation is that it provides a knot genus bound, $g(K) \geq \frac{1}{4} \mathrm{breadth} \ \rho_K^{1,0}$, sometimes sharper than the Alexander bound \cite{barnatanveengaussians}. Therefore, given that the Alexander polynomial give necessary conditions for a knot to be fibred or slice, we can ask whether similar properties exist for $\rho_K^{1,0}$.

\begin{problem}
Give necessary conditions for a knot $K$ to be slice or fibred in terms of $\rho_K^{1,0}$.
\end{problem}

\subsection{Relation with the coloured Jones polynomials} \label{subsec coloured Jones}

Let us explain how the collection of polynomials $\rho_K^{i,0}$ might relate with the so-called rational expansion of the coloured Jones polynomials. 

For $n \geq 2$ and $K$ a 0-framed knot, let $J_{K}^n (q) \in \Z [q,q^{-1}]$ denote the coloured Jones polynomial of $K$, normalised so that $J_O^n (q)=1$. That is,
\begin{equation}
J_K^n (q) = \frac{q^{- c_n  }}{ \{n \}_q }  RT(K; V_n)
\end{equation}
where $q=e^h$ and  $ \{n \}_q := q^{-n+1} [n]_{q^2} = RT(O; V_n)$. Here $V_n$ stands for the rank $n$ irreducible representation of $U_h (\mathfrak{sl}_2)$,  and $c_n := (n-1)(n+1)/2$ is the Casimir element of $V_n$.

Based on the work by Melvin-Morton  on the expansion of the coloured Jones polynomials \cite{MM}, Rozansky conjectured (and later showed) a rational expansion of the coloured Jones polynomials with denominators powers of the Alexander polynomial.

\begin{theorem}[\cite{rozanskyconjectures, rozanskyRmatrix}]\label{thm rozansky}
Let $K$ be a 0-framed knot. Then there exist symmetric Laurent polynomials $$P_K^{i} \in \Z [t,t^{-1}] \qquad , \qquad i \geq 0$$ such that $$J_K^n (q)= \sum_{i=0}^\infty \frac{P_K^i (q^n)}{\Delta_K^{2i+ 1} (q^n)} (q-1)^i\in \Q [[q-1]].$$
Furthermore, $P_K^0 (t) =1$ .
\end{theorem}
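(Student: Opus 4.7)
The plan is to derive Rozansky's expansion from the universal invariant $Z_\D(K)$ and the normal form of \cref{thm rhos existence}, treating Rozansky's theorem as essentially a representation-theoretic specialisation of that normal form. Combining \cref{cor ZD recovers} with \eqref{eq RT ZA}, the Reshetikhin--Turaev value $RT(K;V_n)$ equals the quantum trace $\mathrm{tr}_q^{V_n}$ of $Z_{U_h(\mathfrak{sl}_2)}(K)$, which in turn is the image of $Z_\D(K)$ under the surjection $\D_{\varepsilon=1} \twoheadrightarrow U_h(\mathfrak{sl}_2)$. So the first step is to extend $V_n$ to a $\D$-module (choosing the central scalar by which $\bm{T}$ must act) and to express $J_K^n(q)$ directly in terms of $Z_\D(K)$ specialised on $V_n$.

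Next I would insert the normal form
\begin{equation*}
Z_\D(K) = \sum_{i=0}^\infty \left( \sum_{j=0}^{2i} h^{i+j} \frac{\rho_K^{i,j}(\bm{T})}{\Delta_K^{2i+1-j}(\bm{T})} \bm{w}^j \right) \varepsilon^i
\end{equation*}
into $\mathrm{tr}_q^{V_n}$. Since $\bm{T}$ and $\bm{w}$ are central, Schur's lemma forces them to act on $V_n$ by scalars $T_n$ and $w_n$ respectively. Pulling these scalars outside the trace and using that $\mathrm{tr}_q^{V_n}(1) = \{n\}_q$ produces, after evaluation at $\varepsilon=1$, an identity of the shape
\begin{equation*}
J_K^n(q) = \frac{q^{-c_n}}{\{n\}_q} \cdot \{n\}_q \sum_{i,j} h^{i+j} \frac{\rho_K^{i,j}(T_n)}{\Delta_K^{2i+1-j}(T_n)} w_n^j.
\end{equation*}
Step three is the identification $T_n = q^n$ (up to a fixed shift), which I would pin down by matching the mod-$\varepsilon$ term: \cref{prop Z_D for eps ^ 0 and 1} says $Z_\D(K) \equiv \Delta_K(\bm{T})^{-1} \pmod{\varepsilon}$, and this must reproduce the leading Melvin--Morton term $\Delta_K(q^n)^{-1}$ of $J_K^n$, forcing $T_n = q^n$. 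The scalar $w_n$ is then computed from the definition of $\bm{w}$ in \cref{prop w} as an explicit Laurent polynomial in $q^n$.

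Finally I would reorganise the double series in $h$ and $\varepsilon$ as a single series in $(q-1)$. Using $q=e^h$, write $h$ as a power series in $(q-1)$ with rational coefficients and collect by total order: each coefficient of $(q-1)^i$ is a finite sum of terms of the form $\rho_K^{i',j}(q^n) \Delta_K^{j}(q^n)^{-1} w_n^j \cdot (\text{polynomial in } q^n)$ divided by a common power $\Delta_K(q^n)^{2i+1}$ after homogenising denominators. Defining $P_K^i(t)$ as the numerator of this coefficient evaluated at $t=q^n$ yields the stated expansion; the normalisation $P_K^0=1$ follows from $\rho_K^{0,0}=1$ after cancellation of $q^{-c_n}\{n\}_q / \Delta_K(q^n)$. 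The symmetry $P_K^i(t^{-1})=P_K^i(t)$ is inherited from the mirror symmetry $J_K^n(q)=J_K^n(q^{-1})$ up to normalisation.

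The main obstacles will be twofold. First, verifying \emph{integrality}: the $\rho_K^{i,j}$ lie only in $\Q[t,t^{-1}]$ while Rozansky asserts $P_K^i\in\Z[t,t^{-1}]$, so nontrivial cancellations among the $\rho_K^{i,j}$ must occur once one multiplies through by $\Delta_K^{j}$ and substitutes $t=q^n$. Second, the bookkeeping of the $(h,\varepsilon)$-to-$(q-1)$ re-expansion is delicate because $\varepsilon$ is set to $1$ (not treated as a formal parameter), so one must show that the resulting sum in $i$ actually converges in $\Q[[q-1]]$ after the contributions from different $(i',j)$ with $i'+j\geq i$ are collated; this relies crucially on the fact, guaranteed by \cref{thm rho ij =0 for j>i}, that only finitely many $\rho_K^{i',j}$ contribute to any fixed order in $(q-1)$.
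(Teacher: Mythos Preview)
The paper does not prove this theorem; it is cited from Rozansky and used as external input. The remark following the statement explains the Melvin--Morton reorganisation for context but is not a proof.

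Your proposed route via \cref{thm rhos existence} has a gap at the step where you match $\bm T$ with $q^n$. The surjection $\D_{\varepsilon=1}\twoheadrightarrow U_h(\mathfrak{sl}_2)$ sends $\bm t\mapsto 0$, hence $\bm T\mapsto 1$; the pullback of $V_n$ therefore has $\bm T$ acting as $1$, and under that specialisation the normal form collapses to $\sum_{i,j} h^{i+j}\rho_K^{i,j}(1)\,\bm w_0^{\,j}$, so your mod-$\varepsilon$ matching with $\Delta_K(q^n)^{-1}$ never occurs. If instead you extend $V_n$ with a nonzero $\bm t$-action, the resulting $\D$-module is not pulled back from $U_h(\mathfrak{sl}_2)$, and its quantum trace is not \emph{a priori} equal to $RT(K;V_n)$: the $R$-matrix of $\D$ involves $\bm b=\bm t+\varepsilon\bm a$, so changing the $\bm t$-eigenvalue alters the crossing operators, not merely an overall scalar. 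The $q^n$-dependence in Rozansky's formula actually enters through the Casimir-type eigenvalue of $\bm w$ on $V_n$, and reorganising that into the stated rational form is precisely the content of Rozansky's argument. Beyond this structural issue, the integrality $P_K^i\in\Z[t,t^{-1}]$ and the symmetry $P_K^i(t^{-1})=P_K^i(t)$ are substantive conclusions that your outline flags as obstacles but does not establish; neither follows from the $\Q[t,t^{-1}]$-valued $\rho_K^{i,j}$ without additional input.
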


In the previous statement, both sides of the equality are to be understood as their expansions in powers of $q-1$ (note that $\Delta_K(1)=1$). Besides, the property ${P_K^0 (t) =1}$ is tantamount to the so-called \textit{Melvin-Morton-Rozansky conjecture}, proven in \cite{BNG}. 

\begin{remark}
The collection of polynomials $P_K^i$ in the Theorem should be understood as a repackaging of the coloured Jones polynomials (see e.g. \cite{analytic}). Indeed it can be shown that, given a 0-framed knot $K$,  there exist rational numbers $a_{ij}(K)$  such that
$$J_K^n (e^h)= \sum_{i,j \geq 0} a_{ij}(K) n^j h^i $$
for any $n \geq 2$, each $a_{ij}(K)$ being a degre $i$ finite type invariant of $K$. The key property is that $a_{ij}(K)=0$ when $j>i$, which allows us to sum over diagonals and rewrite the previous expression as
\begin{equation}\label{eq J_K MM}
J_K^n (e^h)=\sum_{\ell =0}^\infty R_K^\ell (nh)  h^\ell
\end{equation}
for $R_K^\ell(x):= \sum_{j=0}^\infty a_{j+\ell, \ell }x^j \in \Q[[x]]$. Rozansky then showed that each  $R_K^\ell$ can be written as a rational function, $$R_K^\ell (x)= \frac{\widetilde{P}_K^\ell (e^x)}{\Delta_K^{2\ell+1}(e^x)}  $$ for some symmetric Laurent polynomials $\widetilde{P}_K^\ell \in \Q[t,t^{-1}]$. After the change of variables $h= \log ( (q-1)+1)$ the formula of \cref{thm rozansky} follows.
\end{remark}

Now recall from \cref{subsec relation with classical quantum invariants} that the coloured Jones polynomials can be recovered from the universal invariant $Z_{U_h(\mathfrak{sl}_2)}$. Habiro \cite{habiro_WRT} proved that the converse is also true, and together with the previous Remark we have 

\begin{theorem}\label{thm equiv pieces of data}
Let $K$ be a 0-framed knot. The following pieces of data are all equivalent:
\begin{enumerate}
\item The universal invariant $Z_{U_h(\mathfrak{sl}_2)}(K)$,
\item The coloured Jones polynomials $ \{ J_K^n (K) \}_{n \geq 2}$,
\item The Rozansky polynomials $ \{ P_K^n (K) \}_{n \geq 1}$.
\end{enumerate}
\end{theorem}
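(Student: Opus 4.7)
The plan is to establish the three equivalences by proving implications in a cycle, with the bulk of the substance being a citation to Habiro's work.

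The implication (1) $\Rightarrow$ (2) is essentially immediate from the discussion in \S\ref{subsec relation with classical quantum invariants}. Indeed, the quantum trace formula $RT(\mathrm{cl}(L); V) = \mathrm{tr}_q^V(Z_A(L))$ shows that given $Z_{U_h(\mathfrak{sl}_2)}(K)$, one can recover $RT(K; V_n)$ for every rank-$n$ irreducible representation $V_n$ of $U_h(\mathfrak{sl}_2)$, and the normalisation factor $q^{-c_n}/\{n\}_q$ then yields each coloured Jones polynomial $J_K^n(q)$.

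The implication (2) $\Rightarrow$ (1) is the only non-trivial one and is precisely the content of Habiro's cyclotomic expansion theorem \cite{habiro_WRT}. Habiro showed that the value $Z_{U_h(\mathfrak{sl}_2)}(K)$, which is a central element of $U_h(\mathfrak{sl}_2)$, can be reconstructed explicitly from the collection $\{J_K^n\}_{n\geq 2}$ via a convergent expansion in a suitable cyclotomic completion of the centre. I would simply invoke this result; reproving it is outside the scope of the present paper. This is the main obstacle of the theorem, but it is already settled in the literature.

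Finally, for the equivalence (2) $\Leftrightarrow$ (3): the direction (3) $\Rightarrow$ (2) is immediate from the rational expansion formula of \cref{thm rozansky}, since evaluating the finite sum $\sum_i P_K^i(q^n)/\Delta_K^{2i+1}(q^n)\cdot (q-1)^i$ at any $n \geq 2$ returns $J_K^n(q)$ as a power series in $q-1$, which already determines the Laurent polynomial. For (2) $\Rightarrow$ (3), I would use the Melvin--Morton expansion described in the remark following \cref{thm rozansky}: knowing every $J_K^n(e^h)$ as a power series in $h$ lets one read off all coefficients $a_{ij}(K)$ in $J_K^n(e^h) = \sum_{i,j} a_{ij}(K) n^j h^i$; the vanishing $a_{ij}=0$ for $j>i$ allows reorganisation into diagonal series $R_K^\ell(x) = \sum_j a_{j+\ell,\ell} x^j$; and these are then converted into the Rozansky polynomials via the identity $R_K^\ell(x) = \widetilde{P}_K^\ell(e^x)/\Delta_K^{2\ell+1}(e^x)$ followed by the change of variables $h = \log q$ that turns the $\widetilde{P}_K^\ell$ into the $P_K^i$ of the theorem. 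Both directions are essentially bookkeeping once the formulas have been set up.

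In summary, the proof is a short chain of implications where (1) $\Rightarrow$ (2) and (2) $\Leftrightarrow$ (3) follow directly from the formulas already recalled in the paper, while (2) $\Rightarrow$ (1) is delegated to \cite{habiro_WRT}; this last implication is the genuine analytic content of the statement.
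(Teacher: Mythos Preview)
Your proposal is correct and matches the paper's approach exactly. The paper does not give a formal proof but states the theorem as a direct consequence of the preceding discussion: (1)$\Rightarrow$(2) from \S\ref{subsec relation with classical quantum invariants}, (2)$\Rightarrow$(1) from Habiro \cite{habiro_WRT}, and (2)$\Leftrightarrow$(3) from the Remark on the Melvin--Morton expansion immediately preceding the theorem---precisely the chain of implications you spell out.
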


By \cref{cor ZD recovers}, the universal invariant $Z_\D(K)$ of a 0-framed knot $K$ determines the first (and hence all) item of the previous list.   We conjecture that the universal invariant $Z_\D$ contains no more information than these three pieces of data:

\begin{conjecture}
For any 0-framed knot $K$, the datum of the universal invariant $Z_\D(K)$ is equivalent to any of the three pieces of data of the previous Theorem.
\end{conjecture}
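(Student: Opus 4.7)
My plan is to establish the only non-trivial implication: that the family of coloured Jones polynomials $\{J_K^n\}_{n \geq 2}$ (equivalently, by \cref{thm equiv pieces of data}, either of the other two invariants) determines $Z_\D(K)$. The reverse direction is already \cref{cor ZD recovers}.

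By \cref{thm rhos existence}, combined with \cref{thm rho ij =0 for j>i} and the conjectures of the preceding two subsections (which reduce $Z_\D(K)$ to the Alexander polynomial together with the family $\{\rho_K^{i,0}\}_{i \geq 0}$), the problem reduces to recovering the pair $(\Delta_K, \{\rho_K^{i,0}\}_{i \geq 0})$ from the Rozansky expansion $\{P_K^i\}_{i \geq 0}$. The Alexander polynomial is visible directly from the denominators appearing in that expansion, so the crux of the argument is to relate each $\rho_K^{i,0}$ to $P_K^i$ modulo lower-order terms.

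To carry this out, I would push $Z_\D(K)$ through the surjection $\pi: \D/(\varepsilon-1)\D \twoheadrightarrow U_h(\mathfrak{sl}_2)$ of the Remark following \cref{prop ordering D} and compute the quantum trace using
\[
J_K^n(q) = \frac{q^{-c_n}}{\{n\}_q}\cdot \mathrm{tr}_q^{V_n}\big(\pi(Z_\D(K))\big).
\]
Using the $\bm{T}$-$\bm{w}$ expansion of $Z_\D(K)$ provided by \cref{thm rhos existence} and the explicit action of $\bm{T}$ and $\bm{w}$ on the irreducible $V_n$ (which acts by scalars determined by the central character, namely by a simple substitution $\bm{T} \mapsto q^n$ and a polynomial in $n$ and $q$ for $\bm{w}$), one obtains a double expansion of $J_K^n$ in which every $\rho_K^{i,j}(q^n)$ appears as an explicit coefficient. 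Reorganising this as a series in $q-1$ and comparing with \cref{thm rozansky}, each $P_K^i$ should be exhibited as a $\Q[t,t^{-1}]$-linear combination of $\{\rho_K^{j,0}\}_{j \leq i}$, with coefficients that depend only on $\Delta_K$ and its derivatives.

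The principal obstacle, and the reason this remains a conjecture, is establishing invertibility of this change of basis: one must show that the matrix expressing $(P_K^0, P_K^1, \ldots, P_K^N)$ in terms of $(\rho_K^{0,0}, \rho_K^{1,0}, \ldots, \rho_K^{N,0})$ is upper triangular with an invertible diagonal, where the diagonal entry at position $i$ is a non-zero rational function of $t$ determined by the asymptotics of $\mathrm{tr}_q^{V_n}(\bm{w}^i)$. A tractable first test, which would already constitute significant evidence and which I would attempt to settle first, is the case $i=1$: demonstrating that $\rho_K^{1,0}$ is recoverable from $P_K^1$ and $\Delta_K$. Combining the explicit formula for $\rho_K^{1,1}$ in \cref{thm rho 2 sth} with the standard computation of $P_K^1$ in terms of the Burau matrix should yield this, and would simultaneously confirm the expected identification of $\rho_K^{1,0}$ with the one-variable $2$-loop polynomial of the Kontsevich integral mentioned in the Introduction.
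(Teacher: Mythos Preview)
The paper does not prove this statement: it is explicitly stated as a \emph{conjecture}, with no proof offered. So there is no ``paper's own proof'' to compare against.

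Your proposal is not a proof either, as you yourself recognise in the final paragraph. Two structural gaps are worth naming. First, your reduction step invokes \cref{conj rho i0} (that the $\rho_K^{i,j}$ for $j>0$ are determined by $\Delta_K$ and the $\rho_K^{i,0}$), which is itself an open conjecture in the paper; only the cases $i=1,2$ are established in \cref{thm rho 2 sth}. So even granting the rest of your outline, the argument would remain conditional on another unresolved conjecture. Second, the invertibility of the triangular system relating $\{P_K^i\}$ to $\{\rho_K^{i,0}\}$ is precisely the heart of the matter, and you have correctly identified it as the obstacle rather than resolved it. The suggestion to test $i=1$ first is reasonable and aligns with the paper's own \cref{conj rho10 =P1} and \cref{conj rho10 =Theta}, but these too are left open (the paper announces a proof of the genus $\leq 1$ case of \cref{conj rho10 =Theta} for a future publication).

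In short: your outline is a sensible strategy and matches the spirit of the discussion in \cref{subsec coloured Jones}, but it is a roadmap rather than a proof, and the paper makes no claim to have one.
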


By \cref{conj rho i0}, a rephrase of the previous statement is that the collection of Bar-Natan - van der Veen polynomials $\{ \rho_K^{n,0} \}_{n \geq 1}$ is equivalent to each of the pieces of data of \cref{thm equiv pieces of data}.

We would like to be more precise about the relation between $Z_\D$ and the Rozansky polynomials. Computational evidence strongly suggests

\begin{conjecture}\label{conj rho10 =P1}
For any 0-framed knot $K$, $$\rho_K^{1,0} = P_K^1.$$
\end{conjecture}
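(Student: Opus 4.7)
The plan is to identify both $\rho_K^{1,0}$ and $P_K^1$ as coefficients of a common expansion of the scalar by which $Z_{U_h(\mathfrak{sl}_2)}(K)$ acts on the family of irreducible representations $\{V_n\}_{n\geq 2}$, and then to deduce their equality as Laurent polynomials. First, one composes the surjection $\D\twoheadrightarrow\D_{\varepsilon=1}\twoheadrightarrow U_h(\mathfrak{sl}_2)$ from the Remark preceding Corollary \ref{cor ZD recovers} with the quantum traces $\{\mathrm{tr}_q^{V_n}\}_{n\geq 2}$. Since for a long knot $Z_{U_h(\mathfrak{sl}_2)}(K)$ is central, it acts on $V_n$ as a scalar $\chi_n(q)$, and \eqref{eq RT ZA} yields $J_K^n(q) = q^{-c_n}\chi_n(q)$. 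By Rozansky's Theorem \ref{thm rozansky}, the $(q-1)$-expansion of the right-hand side is explicit: $\chi_n(q) = q^{c_n}\big(\Delta_K^{-1}(q^n) + (q-1)P_K^1(q^n)\Delta_K^{-3}(q^n) + O((q-1)^2)\big)$.

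To extract the same coefficient from the $\D$-side formulas, the key step is to refine the evaluation on $V_n$ so that $\bm{T}$ becomes $q^n$ rather than $1$. Since the standard map $\D_{\varepsilon=1}\twoheadrightarrow U_h(\mathfrak{sl}_2)$ kills $\bm{t}$, one would exploit the full Drinfeld-double structure $\D = \mathcal{D}(\mathbb{B})$: the natural pairing $\langle-,-\rangle:\mathbb{A}\otimeshat\widetilde{\mathbb{B}}\to\Qeh$ should allow the construction of a parameter-dependent evaluation of $\D$ on $V_n$ where $\bm{t}$ acts by a suitable scalar $\lambda_n$ with $e^{-h\lambda_n} = q^n$, and where the central element $\bm{w}$ of Proposition \ref{prop w} acts by a scalar $\gamma_n(q)$ computable from the quantum Casimir. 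Granting this, Theorem \ref{thm rhos existence} combined with Theorem \ref{thm rho ij =0 for j>i} (which forces $\rho_K^{1,2}=0$) and Theorem \ref{thm rho 2 sth}(1) (which expresses $\rho_K^{1,1}$ in terms of $\Delta_K'$) would express $\chi_n(q) \pmod{h^3}$ purely in terms of $\Delta_K(q^n)$, $\Delta_K'(q^n)$, $\rho_K^{1,0}(q^n)$ and $\gamma_n(q)$.

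Expanding $(q-1) = h + h^2/2 + \cdots$ and $q^{-c_n} = 1 - hc_n + \cdots$, matching the coefficients of $(q-1)^1$ in the two expressions for $\chi_n(q)$ would produce an identity of the form
\begin{equation*}
P_K^1(q^n) \;=\; \rho_K^{1,0}(q^n) + F_n,
\end{equation*}
where $F_n$ collects cross-terms arising from the $h$-expansion of $\Delta_K^{-1}(q^n)$, the $\rho_K^{1,1}$-contribution, $\gamma_n(q)$ and the framing correction. The concluding step is to prove $F_n\equiv 0$. Since both sides are then Laurent polynomials in the variable $t = q^n$ agreeing on the infinite set $\{q^n : n\geq 2\}$, this would force the polynomial identity $\rho_K^{1,0}(t) = P_K^1(t)$; the computational evidence cited in the paper strongly suggests that precisely such a cancellation takes place.

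The main obstacle is the construction and justification of the parameter-dependent evaluation on $V_n$: making this precise requires a detailed analysis of the Drinfeld-double structure of $\D$ and of how $\bm{t}$, $\bm{w}$ and the ribbon element $\bm{\kappa}$ interact with the $U_h(\mathfrak{sl}_2)$-action on $V_n$. A secondary difficulty is the bookkeeping of cancellations inside $F_n$, as the Taylor expansion of $\Delta_K^{-1}(q^n)$ produces infinitely many contributions that must combine delicately with the $\rho_K^{1,1}$-term and the framing correction. A conceptually cleaner route, worth exploring, would be to compare $Z_\D$ directly with the two-loop piece of the Kontsevich integral via Rozansky's $R$-matrix description in \cite{rozanskyRmatrix}, which should produce the desired identity more transparently but is itself a substantial undertaking.
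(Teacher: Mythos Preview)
The statement under review is a \emph{conjecture} in the paper, not a theorem: the paper offers no proof, only the remark that ``computational evidence strongly suggests'' the equality together with a reference to supporting tables. There is therefore no proof in the paper to compare your proposal against.

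Your proposal is not a proof but a proof \emph{strategy}, and you yourself correctly identify its main gap. The surjection $\D_{\varepsilon=1}\twoheadrightarrow U_h(\mathfrak{sl}_2)$ sends $\bm{t}\mapsto 0$, hence $\bm{T}\mapsto 1$; pushing the expansion of Theorem~\ref{thm rhos existence} through this map therefore only yields the \emph{numerical} values $\rho_K^{i,j}(1)$, from which the Laurent polynomial $\rho_K^{1,0}(t)$ cannot be recovered. Your proposed remedy --- a ``parameter-dependent evaluation'' on $V_n$ in which $\bm{t}$ acts by a nonzero scalar $\lambda_n$ with $e^{-h\lambda_n}=q^n$ --- is precisely the missing ingredient, and you have not constructed it. It is not at all clear that such a family of algebra homomorphisms out of $\D$ (or a suitable quotient) exists and is compatible with the ribbon structure so that the universal invariant still computes $J_K^n$; the Drinfeld-double description of $\D$ does not obviously furnish this. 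Until this is supplied, the argument does not get off the ground.

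A second, more structural difficulty is that after setting $\varepsilon=1$ the $\varepsilon$-grading collapses, so \emph{all} the terms $h^{i+j}\rho_K^{i,j}(\bm{T})\Delta_K^{j-2i-1}(\bm{T})\bm{w}^j$ for every $i\geq 0$ contribute simultaneously to any fixed power of $h$ (or of $q-1$). Isolating the single polynomial $\rho_K^{1,0}$ from this mixture would require control over the full tower $\{\rho_K^{i,j}\}$, which is essentially Conjecture~\ref{conj rho i0} and is itself open. Your appeal to ``the computational evidence cited in the paper'' to justify the vanishing of the cross-term $F_n$ is circular: that evidence is exactly what motivates the conjecture you are trying to prove.
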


Actual tables supporting this Conjecture can be found in \cite{barnatanveengaussians} and \cite{rozansky12theta} (after the change of variables $z=t^{1/2}-t^{-1/2}$). For the rest of polynomials, we would like to stress the similarity between the rational expansions of \cref{thm rhos existence} and \cref{thm rozansky}, each of the $\rho_K^{i,0}$ having as a denominator the same power of Alexander as its ``counterpart'' $P_K^i$. Even more, we have $$\rho_K^{0,0} =1 = P_K^0$$ (cf. \cref{prop Z_D for eps ^ 0 and 1} and \cref{thm rozansky}). However for $i>0$ the values of $\rho_K^{i,0}$ and $P_K^i$ seem to be different. This is simply because of the choice of the central element $\bm{w} \in \D$ in \cref{prop w}, that was convenient but not canonical. Hence we have the following

\begin{problem}
Find a central element $\bm{w}' \in \D$ such that, if $\tilde{\rho}_K^{i,j}$ denote the corresponding polynomials in the rational expansion of $Z_\D$ for this central element, then $$\tilde{\rho}_K^{i,0} =P_K^i.$$
\end{problem}

\subsection{Relation with the 2-loop polynomial}\label{sect relation 2-loop}

The \textit{Kontsevich invariant} (or \textit{Kontsevich integral} due to its classical formulation) \cite{kontsevich} is a  tangle invariant that depends on a choice of \textit{Drinfeld associator}, a certain element  $\varphi \in \mathbb{Q} \langle \langle X,Y \rangle \rangle $, the ring of formal power series in two noncommutative variables $X,Y$ (for knots the invariant is independent of this choice). The Kontsevich invariant is  the strongest knot invariant we know; in fact it is a conjecture that it distinguishes all  isotopy classes of (oriented) knots \cite{ohtsukiproblems}. Moreover, it is known that it dominates the Reshtikhin-Turaev invariants for representations of the quantum groups $U_h (\mathfrak{g})$ for any semisimple complex Lie algebra $\mathfrak{g}$.

The Kontsevich invariant  can be arranged as a strong monoidal functor \cite{lemurakami, ohtsukibook, HabiroMassuyeau} $$Z= Z_\varphi: \mathcal{T}_q \to \hat{\mathcal{A}}$$ where $\mathcal{T}_q$ is the non-strictification of the category $\mathcal{T}$ of framed, oriented tangles in the cube and $\hat{\mathcal{A}}$ is the category of Jacobi diagrams in polarised 1-manifolds. 

The main point is that the value of the Kontsevich invariant $Z(K)$ of a knot $K$ can be viewed as a infinite formal linear combination of unitrivalent graphs. For instance for the unknot $O$, it takes the form 

\begin{equation}\label{eq ZO}
Z(O)  = \exp_\amalg \left( \sum_{m \geq 1} b_{2m} \begin{array}{c}
\labellist
\small\hair 2pt
 \pinlabel{$\cdots$}   at 83 83
  \pinlabel{\scriptsize $2m$}   at 83 105
\endlabellist
\centering
\includegraphics[scale=0.3]{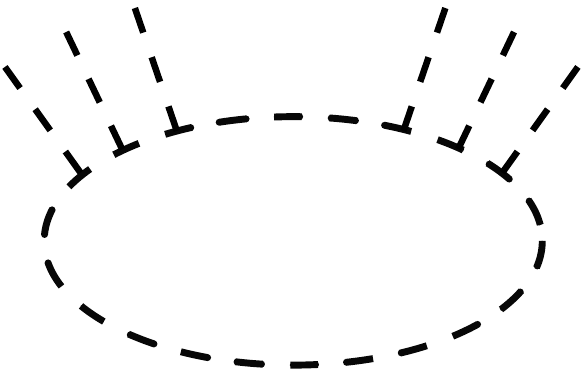} \end{array} \right)
\end{equation}
where $\exp_\amalg$ refers to the exponential power series with respect to the disjoint union of graphs and the coefficients $b_{2m}$ are the \textit{modified Bernoulli numbers}, $$\sum_{m \geq 1} b_{2m} x^{2m} := \frac{1}{2} \log \left(   \frac{\sinh (x/2)}{x/2}  \right) \in \Q [[x]],$$ so $b_2 = \frac{1}{48}, b_4 = - \frac{1}{5760}$, etc. 

The expression \eqref{eq ZO} has two remarkable features: it is expressed as an exponential and the graphs carry \textit{hairs}. It turns out that these two features are common to all knots: solving a conjecture by Rozansky \cite{rozansky12theta}, Kricker \cite{kricker}  (see also \cite{krikergaroufalidis})  showed that the Kontsevich invariant of any 0-framed knot $K$ admits a \textit{loop expansion}
\begin{equation*}
\log_\amalg (Z(K)) = \sum_i \lambda_i  \begin{array}{c}
\labellist
\small\hair 2pt
\pinlabel{$\cdots$}   at 90 95
\endlabellist
\centering
\includegraphics[scale=0.3]{sketch_figures/hairyball} \end{array} + \sum_i \mu_i  \begin{array}{c}
\labellist
\small\hair 2pt
\pinlabel{$\cdots$}   at 140 135
\pinlabel{$\cdots$}   at 140 50
\pinlabel{$\cdots$}   at 140 218
\endlabellist
\centering
\includegraphics[scale=0.3]{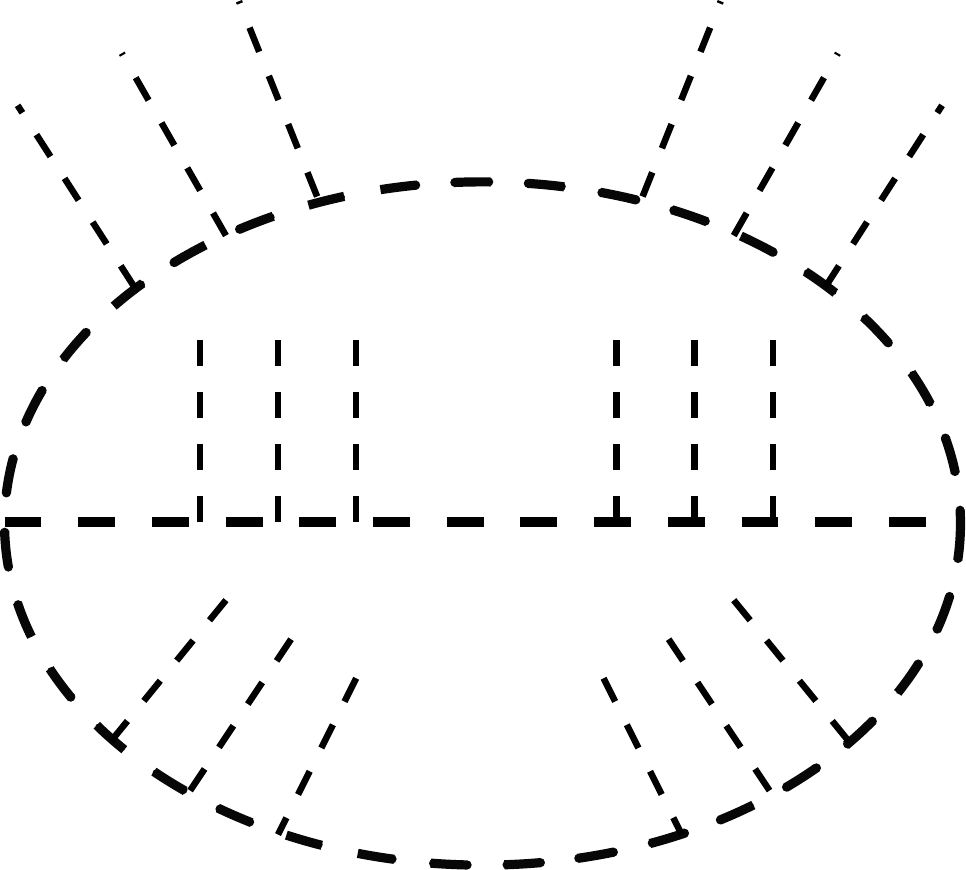} \end{array} + \left( \parbox[c]{4em}{\centering                       {\small  $n$-loop terms, $n>2$ }}  \right).
\end{equation*}
The first summand of the above expression, called the \textit{1-loop part}, can be shown to be tantamount to the Alexander polynomial $\Delta_K$ of $K$. The second summand, called the\textit{ 2-loop part}, is equivalent to a two-variable polynomial
\begin{equation*}
\Theta_K (t_1, t_2)  \in \Q [t^{\pm 1}_1, t^{\pm 1}_2]
\end{equation*}
called the \textit{2-loop polynomial} of $K$  \cite{ohtsuki2loop}. This is a strong knot polynomial invariant, even able to distinguish mutation.

Let us put $$ \hat{\Theta}_K(t):= \Theta_K(t,1) \in \Q [t,t^{-1}].$$ Using the $\mathfrak{sl}_2$-weight system, it is shown in  \cite{ rozansky12theta, ohtsuki_perturbative} that 
$$ \hat{\Theta}_K= P_K^1,$$
so that \cref{conj rho10 =P1} is equivalent to

\begin{conjecture}\label{conj rho10 =Theta}
For any 0-framed knot $K$, $$\rho_K^{1,0} =  \hat{\Theta}_K.$$
\end{conjecture}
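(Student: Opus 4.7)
The plan is to reduce Conjecture~\ref{conj rho10 =Theta} to the equivalent Conjecture~\ref{conj rho10 =P1}, namely $\rho_K^{1,0}=P_K^1$, using the already-established identity $P_K^1=\hat{\Theta}_K$ cited from \cite{rozansky12theta,ohtsuki_perturbative}. The natural route is then to compare the rational expansion of $Z_\D(K)$ given by \cref{thm rhos existence} with Rozansky's rational expansion of the coloured Jones polynomials from \cref{thm rozansky}, using the surjection $\pi\colon \D_{\varepsilon=1}\twoheadrightarrow U_h(\mathfrak{sl}_2)$ from the remark after \cref{prop ordering D} together with \cref{cor ZD recovers} and \eqref{eq RT ZA}.

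First I would identify $\pi(\bm{w})$ with the standard quantum Casimir $\Omega$ of $U_h(\mathfrak{sl}_2)$, up to an $h$-dependent additive shift, by directly reducing the explicit formula for $\bm{w}$ given in \cref{prop w} modulo the two-sided ideal generated by $\bm{t}$. Next, rather than colouring by the finite-dimensional irreducibles $V_n$ (which would force $\bm{T}=1$ and collapse the $t$-dependence on the $\D$-side), I would colour by generic-weight Verma modules, treating $n$ as a formal variable so that $q^n$ plays the role of $\bm{T}$. This is precisely the device underlying both the Rozansky expansion and the Kricker rational lift. On the Verma module of weight $\lambda$, $\Omega$ acts as a scalar whose $h$-expansion is controlled by $q^{\lambda+1}+q^{-\lambda-1}$, so the resulting quantum trace of $Z_{U_h(\mathfrak{sl}_2)}(K)$ matches Rozansky's expansion term by term.

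Substituting \cref{thm rhos existence} into this trace and expanding to first order in $h$, the $(i,j)=(1,0)$ term contributes $h\,\rho_K^{1,0}(q^n)/\Delta_K^3(q^n)$, while the $(1,1)$ term contributes an $\Omega$-dependent correction which, using the explicit formula $\rho_K^{1,1}(t)=\frac{2t}{1-t}\Delta_K'(t)$ from \cref{thm B}, should cancel precisely against the Melvin--Morton-type subleading contribution coming from the $i=0$ piece (the derivative $\Delta_K'$ appearing on both sides is the structural reason to expect this cancellation). Matching the remaining $h^1$ coefficients against Rozansky's $P_K^1(q^n)/\Delta_K^3(q^n)$ would then yield $\rho_K^{1,0}=P_K^1$ as an equality of rational functions, hence of polynomials after clearing denominators.

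The main obstacle is the generic-colour step: after $\pi$ one literally has $\bm{T}=1$, so the $t$-dependence of $\rho_K^{1,0}(t)$ has to be reconstructed from the spectral action of $\pi(\bm{w})$ on infinite-dimensional modules, and justifying that this reconstruction is faithful will require either (i) constructing a family of $\D$-modules $\widetilde{M}_\lambda$ lifting the generic $U_h(\mathfrak{sl}_2)$-Vermas on which $\bm{t}$ itself acts by a prescribed scalar, so that the quantum trace of $Z_\D(K)$ in $\widetilde{M}_\lambda$ directly recovers the Kricker rational form of the Kontsevich invariant to first loop order; or (ii) computing $\rho_K^{1,0}$ in closed form on a generating family (e.g.\ twist knots, where $\Theta_K$ is tabulated in \cite{ohtsuki2loop}) and invoking a finite-type/Vassiliev-skein argument to propagate the equality to all knots. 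Route (i) seems conceptually cleanest but demands substantial new representation theory of $\D$; route (ii) is more computational but must be supplemented with control over the behaviour of $\rho_K^{1,0}$ under crossing changes, which is currently not available in the Gaussian-calculus framework.
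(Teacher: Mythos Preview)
The statement you are attempting to prove is a \emph{conjecture} in the paper, not a theorem: the paper offers no proof of \cref{conj rho10 =Theta}. The only claim the paper makes is that a forthcoming publication will establish the conjecture for knots of genus at most one. There is therefore nothing to compare your proposal against.

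That said, your proposal is not a proof either, and you are candid about this. The core obstruction you identify is genuine and serious: under the surjection $\pi\colon \D_{\varepsilon=1}\twoheadrightarrow U_h(\mathfrak{sl}_2)$ the variable $\bm{t}$ is killed, so $\bm{T}\mapsto 1$ and the polynomial $\rho_K^{1,0}(t)$ collapses to a single value. Recovering the full $t$-dependence from the $U_h(\mathfrak{sl}_2)$ side by passing to generic-weight Verma modules is a reasonable heuristic, but as you note it requires either new representation theory of $\D$ (your route (i)) or an independent propagation argument (your route (ii)), neither of which you carry out. Your expectation that the $\rho_K^{1,1}$ contribution cancels against a Melvin--Morton subleading term is plausible but unverified. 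In short, what you have written is a well-informed research plan with honestly flagged gaps, not a proof; and since the paper itself treats the statement as open, this is the appropriate status.
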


In a future publication \cite{becerra2loop}, we will prove \cref{conj rho10 =Theta} for knots of genus less or equal to one.

\appendix

\section{Computations for the band map} \label{sect appendix}

In this appendix we include the Mathematica \cite{mathematica} implementation for the computations of the band map $B$ that we used in \cref{sect band map}.  For this we use the code written by Bar-Natan and van der Veen for \cite{barnatanveengaussians}, which computes the generating series of the ribbon algebra structure maps of $\D$ and performs composites using the Contraction \cref{thm contraction}. This is freely available on van der Veen's website \href{http://www.rolandvdv.nl/PG/}{\texttt{ http://www.rolandvdv.nl/PG/}}.

First, we define the generating series $B_{ij}^k$ of the band map as in \cref{sect thickening map}. As usual in Mathematica language, double slash stands for composition from left to right (or pairing of generating series).

\vspace*{0.3cm}
\noindent\includegraphics[scale=0.65,page=1]{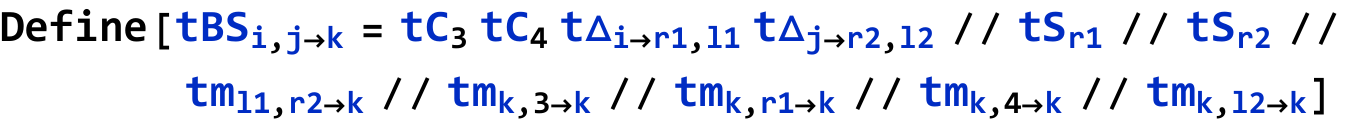}
\vspace*{0.3cm}

Here \textcolor{blue}{\texttt{tm}}, \textcolor{blue}{\texttt{t$\Delta$}}, \textcolor{blue}{\texttt{tS}} and \textcolor{blue}{\texttt{tC}} stand for the multiplication, comultiplication, antipode and balancing element, respectively. The result is a list-like type whose second entry is the exponential part and whose third entry is the perturbation part of the generating series. From this we can compute the coefficients of \cref{lemma P1 P2 B}:

\vspace*{0.3cm}
\noindent\includegraphics[scale=0.65,page=1]{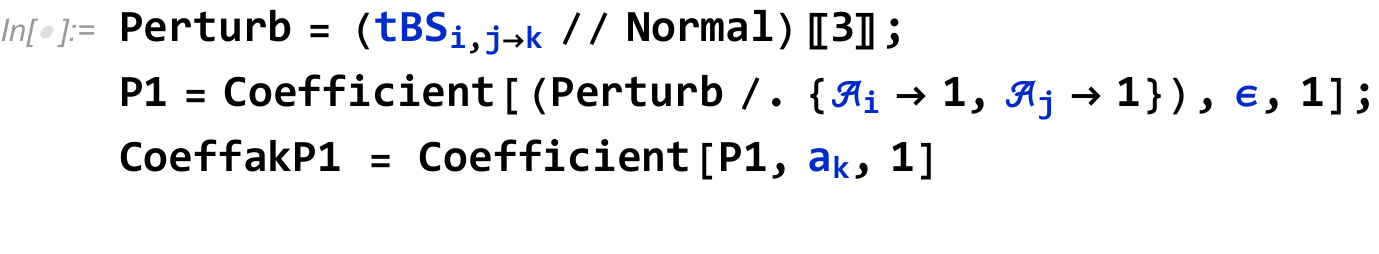}

\noindent\includegraphics[scale=0.65,page=1]{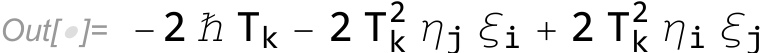}

\vspace*{0.3cm}
\noindent\includegraphics[scale=0.65,page=1]{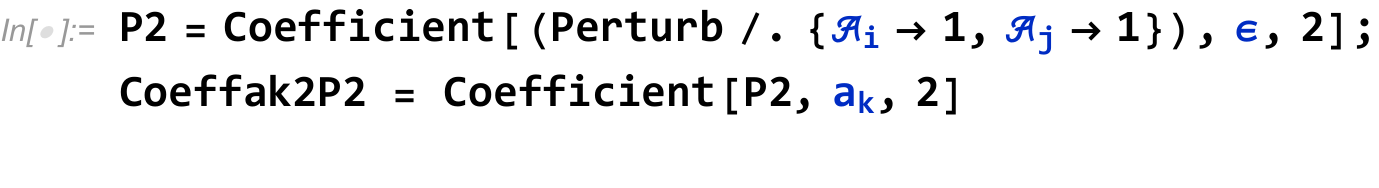}

\noindent\includegraphics[scale=0.65,page=1]{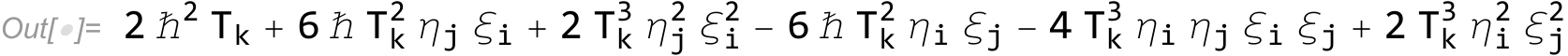}
\vspace*{0.3cm}

Next we check the two equalities that we claimed in the proof of \cref{prop gen ser Th with alpha neq 0}. 

\vspace*{0.3cm}
\noindent\includegraphics[scale=0.65,page=1]{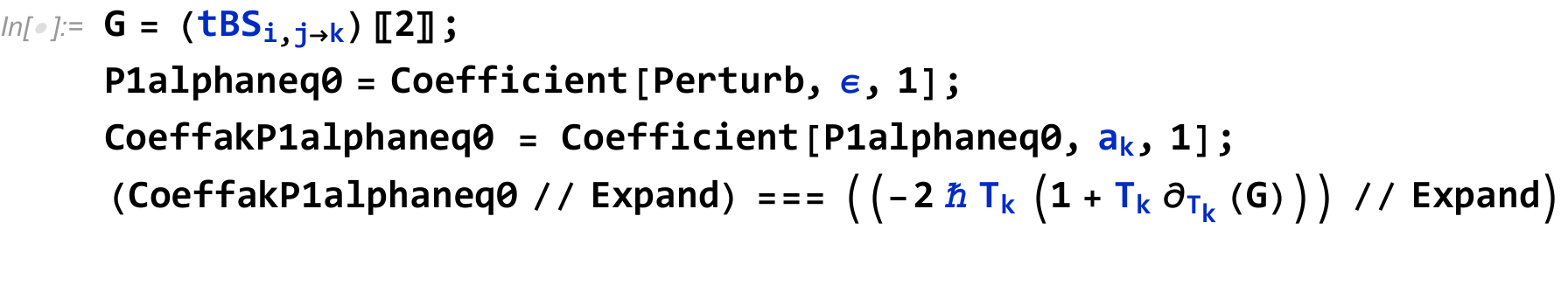}

\noindent\includegraphics[scale=0.65,page=1]{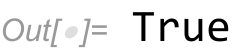}

\vspace*{0.3cm}
\noindent\includegraphics[scale=0.65,page=1]{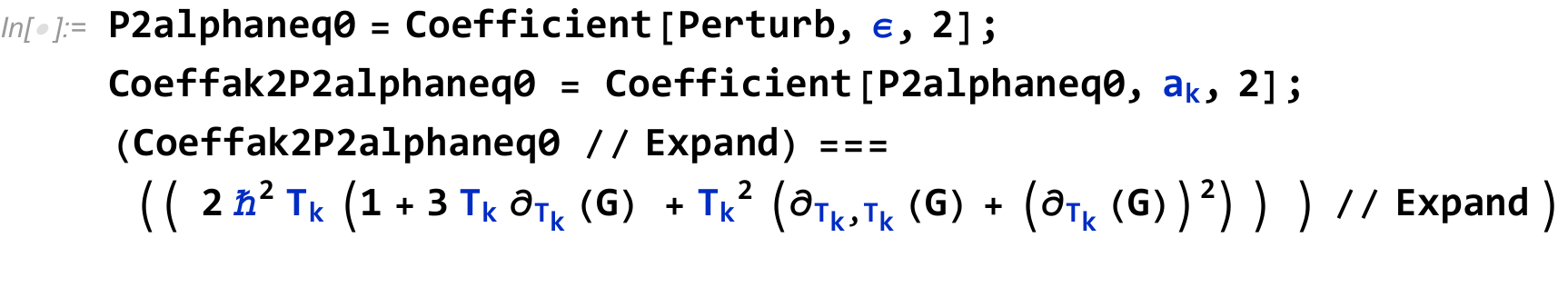}

\noindent\includegraphics[scale=0.65,page=1]{folder_code/code007.pdf}
\vspace*{0.3cm}

Finally we directly check the statement of \cref{lemma 77} (for $g=1$). 
\vspace*{0.3cm}

\noindent\includegraphics[scale=0.65,page=1]{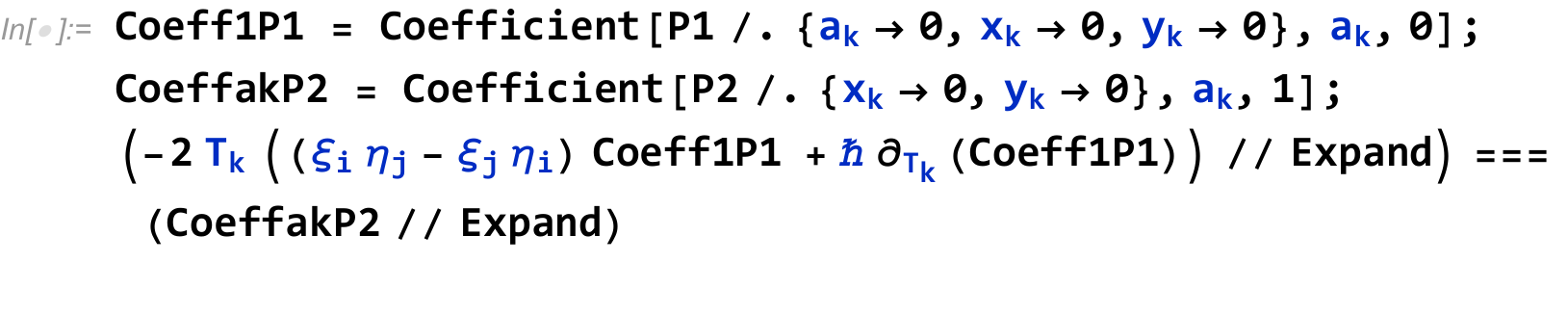}

\noindent\includegraphics[scale=0.65,page=1]{folder_code/code007.pdf}
\vspace*{0.3cm}

\bibliographystyle{halpha-abbrv}
\bibliography{bibliografia}

\end{document}